\begin{document}
	\input xy
	\xyoption{all}

	\newtheorem{innercustomthm}{{\bf Theorem}}
	\newenvironment{customthm}[1]
	{\renewcommand\theinnercustomthm{#1}\innercustomthm}
	{\endinnercustomthm}
	
	\newtheorem{innercustomcor}{{\bf Corollary}}
	\newenvironment{customcor}[1]
	{\renewcommand\theinnercustomcor{#1}\innercustomcor}
	{\endinnercustomthm}
	
	\newtheorem{innercustomprop}{{\bf Proposition}}
	\newenvironment{customprop}[1]
	{\renewcommand\theinnercustomprop{#1}\innercustomprop}
	{\endinnercustomthm}

	\newtheorem{theorem}{Theorem}[section]
	\newtheorem{acknowledgement}[theorem]{Acknowledgement}
	\newtheorem{algorithm}[theorem]{Algorithm}
	\newtheorem{axiom}[theorem]{Axiom}
	\newtheorem{case}[theorem]{Case}
	\newtheorem{claim}[theorem]{Claim}
	\newtheorem{conclusion}[theorem]{Conclusion}
	\newtheorem{condition}[theorem]{Condition}
	\newtheorem{conjecture}[theorem]{Conjecture}
	\newtheorem{construction}[theorem]{Construction}
	\newtheorem{corollary}[theorem]{Corollary}
	\newtheorem{criterion}[theorem]{Criterion}
	\newtheorem{definition}[theorem]{Definition}
	\newtheorem{example}[theorem]{Example}
	\newtheorem{exercise}[theorem]{Exercise}
	\newtheorem{lemma}[theorem]{Lemma}
	\newtheorem{notation}[theorem]{Notation}
	\newtheorem{problem}[theorem]{Problem}
	\newtheorem{proposition}[theorem]{Proposition}
	\newtheorem{solution}[theorem]{Solution}
	\newtheorem{summary}[theorem]{Summary}
	\numberwithin{equation}{section}
	
	\theoremstyle{remark}
	\newtheorem{remark}[theorem]{Remark}
	
	\makeatletter
	\newcommand{\rmnum}[1]{\romannumeral #1}
	\newcommand{\Rmnum}[1]{\expandafter\@slowromancap\romannumeral #1@}
	\def \g{\mathfrak{g}}
	\def \bH{{\mathbf H}}
	\def \bB{{\mathbf B }}
	\def \BKH{{\acute{H}}}
	\def \bBKH{{\acute{\mathbf H}}}
	\def \bK{\mathbb{K}}
	\def \bC{\mathbb{\bK_\de}}
	\def \bQ{\mathbb{Q}}
	\def \Z{\mathbb{Z}}
	\def \I{\mathbb{I}}
	\def \ch{{\mathcal H}}
	\def \cm{{\mathcal M}}
	\def \ct{{\mathcal T}}
	\def \hW{\widehat{W}}
	\def \bZ{\mathbb{Z}}
	\def \TH{\Theta}
	\def \tMH{{\cm\widetilde{\ch}(\Lambda^\imath)}}
	\def \tMHl{{\cm\widetilde{\ch}(\bs_\ell\Lambda^\imath)}}
	\def \bTH { \boldsymbol{\Theta}}
	\def \bDel{ \boldsymbol{\Delta}}
	\def \tY{\widetilde{Y}}
	\def \tH{{\widetilde{H}}}
	\def \tTH{\widetilde{\Theta}}
	\def \btau{\widehat{\tau}}
	\def \s{\varsigma}
	\def \bvs{\boldsymbol{\varsigma}}
	\def \bs{\mathbf r}
	\def \cR{\mathcal{R}}
	\def \fg{\mathfrak{g}}
	\def \Aut{\operatorname{Aut}\nolimits}
	
	\newcommand{\hgt}{\text{ht}}
	\newcommand{\dc}{\dot{\texttt c}}
	\def \ad{\text{ad}\,}
	\def \gr{\text{gr}\,}
	\def \hg{\widehat{\g}}
	\def \mf{\mathfrak}
	\def \N{\mathbb N}
	\def \tk{\widetilde{k}}
	\def \brW{{\rm Br}(W_{\tau})}
	\def \bome{\bm{\omega}}
	\def \bth{\bm{\theta}}
	\def\Br{{\rm Br}}
	\renewcommand{\t}{\boldsymbol{\omega}}
	
	\def \tfX{\widetilde{ \Upsilon}}
	\def \tT{\widetilde{T}}
	\def \tTT{ \widetilde{\mathbf{T}}}
	
	\def \tcT{\widetilde{\mathscr T}}
	\def \E{\widetilde{E}}
	
	\def \SS{\mathbb{S}}
	\def \sll{\mathfrak{sl}}
	\def \tUD{{}^{Dr}\tU}

	\newcommand{\UU}{{\mathbf U}\otimes {\mathbf U}}
	\newcommand{\UUi}{(\UU)^\imath}
	\newcommand{\tUU}{{\tU}\otimes {\tU}}
	\newcommand{\tUUi}{(\tUU)^\imath}
	\newcommand{\tK}{\widetilde{K}}
	\newcommand{\tU}{\widetilde{{\mathbf U}} }
	\newcommand{\tUi}{\widetilde{{\mathbf U}}^\imath}
	\newcommand{\tUiii}{\tUi(\widehat\sll_3,\tau)}
	\newcommand{\tUiD}{{}^{\text{Dr}}\widetilde{{\mathbf U}}^\imath}
	\newcommand{\tUigr}{{\text{gr}}\tUi}
	\newcommand{\tUigrp}{{\text{gr}}\widetilde{{\mathbf U}}^{\imath,+}}
	\newcommand{\tUiDgr}{{\text{gr}}\tUiD}

	\newcommand{\sqq}{{\bf v}}
	\newcommand{\sqvs}{\sqrt{\vs}}
	\newcommand{\dbl}{\operatorname{dbl}\nolimits}
	\newcommand{\swa}{\operatorname{swap}\nolimits}
	\newcommand{\Gp}{\operatorname{Gp}\nolimits}
	\newcommand{\Sym}{\operatorname{Sym}\nolimits}
	\newcommand{\qbinom}[2]{\begin{bmatrix} #1\\#2 \end{bmatrix} }
	
	\def \ov{\overline}
	\def \balpha{{\boldsymbol \alpha}} 
	\def \K{\mathbb K}
	
	\newcommand{\nc}{\newcommand}
	\nc{\browntext}[1]{\textcolor{brown}{#1}}
	\nc{\greentext}[1]{\textcolor{green}{#1}}
	\nc{\redtext}[1]{\textcolor{red}{#1}}
	\nc{\bluetext}[1]{\textcolor{blue}{#1}}
	\nc{\brown}[1]{\browntext{ #1}}
	\nc{\green}[1]{\greentext{ #1}}
	\nc{\red}[1]{\redtext{ #1}}
	\nc{\blue}[1]{\bluetext{ #1}}

	\def \Q {\mathbb Q}
	\def \C{\mathbb C}
	\def \TT{\mathbf T}
	\def \tt{\mathbf t}
	\newcommand{\wt}{\text{wt}}
	\def \de{\delta}
	\def \bvs{{\boldsymbol{\varsigma}}}
	\def \vs{\varsigma}
	\def \U{\mathbf U}
	\def \Ui{\mathbf{U}^\imath}
	\def \dm{\diamond}
	\def \bS{\mathbb S}
	
	\def \bR{\mathbb R}
	\def \bP{\mathbb P}
	\def \bF{\mathbb F}
	\def \II{\mathbb{I}_0}
	\allowdisplaybreaks
	
	\newcommand{\lutodo}{\todo[inline,color=violet!20, caption={}]}
	\newcommand{\wtodo}{\todo[inline,color=cyan!20, caption={}]}
	\newcommand{\ztodo}{\todo[inline,color=green!20, caption={}]}
	
	\title[Quasi-split affine $\imath$quantum groups]
	{Braid group action and quasi-split affine $\imath$quantum groups I}
	
	\author[Ming Lu]{Ming Lu}
	\address{Department of Mathematics, Sichuan University, Chengdu 610064, P.R.China}
	\email{luming@scu.edu.cn}

	\author[Weiqiang Wang]{Weiqiang Wang}
	\address{Department of Mathematics, University of Virginia, Charlottesville, VA 22904, USA}
	\email{ww9c@virginia.edu}

	\author[Weinan Zhang]{Weinan Zhang}
	\address{Department of Mathematics, University of Virginia, Charlottesville, VA 22904, USA}
	\email{wz3nz@virginia.edu}

	\subjclass[2020]{Primary 17B37.}
	\keywords{Affine $\imath$quantum groups, Braid group action, Quantum symmetric pairs, Drinfeld presentation}

	\begin{abstract}
		This is the first of our papers on quasi-split affine quantum symmetric pairs $\big(\widetilde{\mathbf U}(\widehat{\mathfrak g}), \widetilde{{\mathbf U}}^\imath \big)$,  focusing on the real rank one case, i.e., $\mathfrak g = \mathfrak{sl}_3$ equipped with a diagram involution. We construct explicitly a relative braid group action of type $A_2^{(2)}$ on the affine $\imath$quantum group $\widetilde{{\mathbf U}}^\imath$. Real and imaginary root vectors for $\widetilde{{\mathbf U}}^\imath$ are constructed, and a Drinfeld type presentation of $\widetilde{{\mathbf U}}^\imath$ is then established. This provides a new basic ingredient for the Drinfeld type presentation of higher rank quasi-split affine $\imath$quantum groups in the sequels.
	\end{abstract}
	
	\maketitle
	\setcounter{tocdepth}{1}
	\tableofcontents

	\section{Introduction}
	\subsection{Background}
	
	Besides the Drinfeld-Jimbo presentation, affine quantum groups admit a second presentation due to Drinfeld \cite{Dr87, Dr88, Da93, Be94, Da15}, which is a remarkable quantum analog of the current presentation of affine Lie algebras. The Drinfeld (current) presentation has played a fundamental role in representation theory and mathematical physics; cf., e.g.,  \cite{CP91, FR99, FM01}, and see the survey \cite{CH10} for extensive references.
	
	The $\imath$quantum groups $\Ui$ arising from quantum symmetric pairs $(\U, \Ui)$ associated to Satake diagrams \cite{Let99} (see \cite{Ko14}) can be viewed as a vast generalization of Drinfeld-Jimbo quantum groups associated to Dynkin diagrams; see the survey \cite{W22} and references therein. In this paper, we shall work with {\em universal} $\imath$quantum groups $(\tU, \tUi)$ following \cite{LW22, WZ22}, where $\tU$ is the Drinfeld double quantum group, as this allows us to formulate the relative braid group action conceptually. Just as the quantum group $\U$ is obtained by $\tU$ by a central reduction,
	a central reduction from universal $\imath$quantum groups recovers $\imath$quantum groups with parameters \cite{Let02, Ko14}.
	
	In \cite{LW21a}, two authors of this paper obtained a Drinfeld type presentation for affine $\imath$quantum groups of split ADE type; here ``split" means that the underlying Satake diagram contains only white nodes and a trivial diagram involution, and this class of algebras appeared in \cite{BB10} in connection to boundary affine Toda field theories. Subsequently, the third author streamlined some of the main arguments in the split ADE type and succeeded in generalizing to the split BCFG type \cite{Z21}.
	
	The affine $\imath$quantum group of split rank one is also known as the q-Onsager algebra in the mathematics and physics literature; cf. \cite{T18, BB10} and references therein. The real and imaginary root vectors for q-Onsager algebra were first constructed and their commutator relations in a somewhat tedious form were also obtained in \cite{BK20}. These relations were transformed and upgraded in  \cite{LW21a} into a Drinfeld type relation for the (universal) q-Onsager algebra. These constructions on q-Onsager algebra were instrumental for the Drinfeld presentation of split affine $\imath$quantum groups of higher rank.

	\subsection{Goal}
	
	In this paper and its sequels, we shall construct the Drinfeld type presentations of {\em quasi-split} affine $\imath$quantum groups, where the Satake diagrams are of affine types $\widehat A_n, \widehat D_n, \widehat E_6$ with nontrivial diagram involutions fixing the affine simple root.
	This class of $\imath$quantum groups contains 3 distinct affine $\imath$quantum groups of real rank one:
	\begin{enumerate}
		\item[(i)] Drinfeld-Jimbo quantum group $\tU(\widehat\sll_2)$;
		\item[(ii)] q-Onsager algebra $\tUi(\widehat\sll_2)$;
		\item[(iii)]  the $\imath$quantum group $\tUiii$ associated to the Satake diagram $(\I, \tau)$ in \eqref{eq:satakerank1}, where $\tau$ is the involution on $\I =\{0,1,2\}$ of affine type $\widehat A_2$
		\[
		\tau: \I \longrightarrow \I,
		\qquad\qquad
		\tau(0) =0, \qquad 1 \stackrel{\tau}{\longleftrightarrow} 2.
		\]
	\end{enumerate}
	
	The goal of this paper is to give a Drinfeld type presentation for $\tUiii$, the case (iii) in the above list of 3 affine rank one types. Toward this goal, we shall also establish the relative braid group action of twisted affine type $A_2^{(2)}$ on $\tUiii$, which is another main result of this paper. The constructions in this paper (as well as the known Drinfeld presentations in Cases (i)--(ii)) will play a basic role in the sequels, in which a Drinfeld type presentation for arbitrary quasi-split affine $\imath$quantum groups will be established.
	
	The Drinfeld type presentations of affine $\imath$quantum groups are expected to play a foundational role in their representation theory, to which we shall return elsewhere. They may have additional applications to quantum integrable systems (such as XXZ spin chain, Sine-Gordon and Liouville field theories), cf. \cite{BK05}.

	\subsection{Features of new affine rank one}
	
	There are several reasons why the algebra $\tUiii$ is substantially more involved than the other 2 affine rank one types and deserves a separate new investigation.
	
	While $\tUiii$ is viewed as a new case (iii) of affine real rank one (a building block which cannot be further reduced), it is a subalgebra of the quantum group $\tU(\widehat\sll_3)$ and behaves with complexity of affine rank two, in contrast to Cases (i)--(ii). As we shall see, the Drinfeld presentation of $\tUiii$ requires \red{2} infinite series of real (respectively, imaginary) roots and Serre relations, and moreover, $\tUiii$ admits a braid group action of twisted affine type $A_2^{(2)}$.
	
	To construct Drinfeld presentations of all affine quantum groups including twisted types, besides the usual type $\widehat  A_1$, one needs to treat the twisted affine rank one type $A_{2}^{(2)}$ separately; see \cite{Da00, Da15}. In the study of geometry of twisted affine Grassmannians, there are two special parahoric groups for $A_{2r}^{(2)}$, one of which (known as {\em absolutely special} or {\em hyperspecial}) requires a separate treatment. The quasi-split affine $\imath$quantum groups of type  $\widehat A_{n}$ with nontrivial diagram involution (which is $\tUiii$ if $n=2$) has been realized geometrically in \cite{FLLLW}, and its $\imath$canonical basis admits favorable positivity properties (compare \cite{Lus93}).
	
	The $\imath$Hall algebra constructions based on $\imath$quivers or $\imath$weighted projective lines (cf. \cite{LW22, LR21}) can be used to realize all affine quasi-split $\imath$quantum groups except $\widehat A_{2r}$ with nontrivial diagram involution. The simplest affine case not covered by the current $\imath$Hall algebra approach is exactly $\tUiii$. So unlike the split ADE types \cite{LW21b, LRW23, LR21}, we do not have access to a Hall algebra construction to gain insights into the Drinfeld type presentation for $\tUiii$.

	
	
	We shall denote $\tUiii$ by $\tUi$ in the remainder of this paper.

	\subsection{Relative braid group symmetries}
	
	The algebra $\tUi$ is generated by $B_i, \K_i$, for $i\in\I =\{0,1,2\}$, subject to Serre type  relations~\eqref{kB}--\eqref{Bi00}.
	The relative root system for $(\tU, \tUi)$, which contains 2 simple roots $\balpha_0, \balpha_1$, is of twisted affine type $A_2^{(2)}$. Very recently, two of the authors in \cite{WZ22} constructed relative braid group symmetries on $\tUi$ of arbitrary finite type, confirming a longstanding conjecture of Kolb-Pellegrini \cite{KP11} and generalizing braid group symmetries on quantum groups \cite{Lus93}.
	It is natural to hope for automorphisms $\TT_0, \TT_1$ on $\tUi$ which generate a relative braid group action of type $A_2^{(2)}$.
	
	The automorphisms $\TT_0$ and $\TT_0^{-1}$ with respect to the simple root $\balpha_0$ are not difficult to construct; see Proposition~\ref{prop:T0}. Actually, the general constructions of relative braid group symmetries in \cite{WZ22} can be applied to cover this case (even though our $\tUi$ is not of finite type), and the formulas for the actions on generators of $\tUi$ essentially arise from finite type considerations (compare \cite{KP11, LW21a}).
	
	In contrast, the automorphisms $\TT_1$ and $\TT_1^{-1}$ with respect to $\balpha_1$ are  difficult to establish.
	
	\begin{customthm} {\bf A}
		[Theorems~\ref{thm:T-1}--\ref{thm:T1}]
		\label{thm:A}
		There exists a $\Q(v)$-algebra automorphism $\TT_1$ on $\tUi$ such that
		\begin{align*}
			\TT_1(\K_1)&= v^{-1} \K_2^{-1},\quad \TT_1(\K_2)= v^{-1} \K_1^{-1},\quad \TT_1(\K_0)= v^{2} \K_0 \K_1^2\K_2^2, \\
			\TT_1(B_1)&=-v^{-2}B_1\K_2^{-1} ,\qquad \TT_1(B_2)=-v^{-2}B_2\K_1^{-1} ,\\
			\TT_1(B_0)&=
			v\Big[\big[ [B_0,B_1]_v,B_2\big] ,[B_2,B_1]_v\Big]- \big[B_0, [B_2,B_1]_{v^3} \big]\K_1+ v B_0 \K_1\K_2.
		\end{align*}
		The inverse automorphism $\TT_1^{-1}$ is given by
		\begin{align*}
			\TT_1^{-1}(\K_1)&= v^{-1} \K_2^{-1},\quad \TT_1^{-1}(\K_2)= v^{-1} \K_1^{-1},\quad \TT_1^{-1}(\K_0)= v^{2} \K_0 \K_1^2\K_2^2,
			\\
			\TT_1^{-1}(B_1)&=-vB_1 \K_1^{-1},\qquad \TT_1^{-1}(B_2)=-vB_2 \K_2^{-1},
			\\
			\TT_1^{-1}(B_0)&=
			v\Big[[B_1,B_2]_v,\big[ B_2,[B_1,B_0]_v\big] \Big]- \big[[B_1,B_2]_{v^3},B_0 \big]\K_2+ v B_0 \K_1\K_2.
		\end{align*}
		Moreover, $\TT_1$ and $\TT_1^{-1}$ are related by the $\sigma_{\tau}$-conjugation:
		$\TT_1^{-1}= \sigma_{\tau} \circ \TT_1 \circ \sigma_{\tau}.$
		(For the anti-involution $\sigma_{\tau}$,
		see Lemma~\ref{lem:sigma}.)
	\end{customthm}
	
	Let us discuss the obstacles about $\TT_1$. One has to guess the explicit formulas for the action of $\TT_1$ on generators of $\tUi$, and then to show it is indeed an automorphism. The action of $\TT_1$ on most generators of $\tUi$ can be formulated without much trouble, except $\TT_1(B_0)$. Earlier on we guessed a formula for $\TT_1(B_0)$ (as a polynomial in $B_i$ of degree 5), based on the internal consistency and desired properties of root vectors. (Recall \cite{Lus93} a particular braid group operator formula of type $A_2^{(2)}$ is of degree $5$ when the relevant Cartan integer is $-4$.) However, in contrast to the q-Onsager algebra case as done in \cite{BK20} (see also \cite{T18}), it turned out to be too difficult for a (super) computer to verify that $\TT_1$ is an automorphism of $\tUi$ (e.g., that the Serre relations for $\tUi$ is preserved by $\TT_1$).
	
	The conceptual and general approach toward relative braid group action on $\imath$quantum groups developed in \cite{WZ22} is conjectured to be valid for Kac-Moody type. One advantage of this approach is a built-in mechanism for proving $\TT_1$ is an automorphism of $\tUi$. We follow the strategy {\em loc. cit.} to define $\TT_1$ via a certain rescaled braid operator on the Drinfeld double $\tU$ and the quasi $K$-matrix of type AIII$_2$ \cite{BW18} (with an explicit formula given in \cite{DK19}). Accordingly, we obtain formulas for $\TT_1$ on $B_1, B_2$ and $\K_i$, for $i\in \I$. It takes substantial computations however to make this approach work to produce a neat closed formula for $\TT_1(B_0)$ in Theorem~\ref{thm:A} (which in particular asserts that $\TT_1(B_0) \in\tUi$). This in particular verifies \cite[Conjecture~ 5.13]{WZ22} (formulated for $\tUi$ of Kac-Moody type) in the first new case beyond finite types.

	\subsection{Drinfeld type presentation of $\tUi$}
	
	Using the ``translation" braid group operator
	\[
	\TT_{\bome} :=\TT_0 \TT_1,
	\]
	we  define the {\em real} root vectors $B_{i,k}$ in \eqref{eq:Bik}, for $i\in \{1,2\}$ and $k\in \Z$ (cf. \cite{Da93, BK20}). On the other hand, we define inductively in \eqref{TH}--\eqref{THn} the {\em imaginary $v$-root vectors} $\TH_{i,m}$ via commutators between real root vectors, for $i\in \{1,2\}$ and $m\ge 1$. The definition of $\TH_{i,m}$ is by no means obvious; see Remark~\ref{rem:imagine}. Sometimes, it is more convenient to work with a new set of imaginary root vectors $H_{i,m}$; see \eqref{exp h} for its relation to $\TH_{i,m}$.
	
	The Drinfeld type presentation of $\tUi$ is built on the real and imaginary root vectors; compare \cite{LW21b, Z21} in split affine types for similarities and differences. These earlier works (and also \cite{BK20}) help us to formulate the relations in Theorem~\ref{thm:B} below. However, the proof of this theorem remains challenging due to the affine rank two complexity of $\tUi$. 
We shall need some notations in order to formulate Theorem \ref{thm:B}. We shall denote $[A,B]_{v^a} =AB -v^aBA$. The shorthand notion $\SS(k_1,k_2\mid l;i)$ is defined in \eqref{eq:SS} while the definition of the symmetrization $\Sym_{k_1,k_2}$ can be found in \S\ref{subsec:sym}. 
	
	\begin{customthm} {\bf B}
		[Definition~\ref{def:iDR}, Theorem~\ref{thm:Dr1}]
		\label{thm:B}
		The $\Q(v)$-algebra $\tUi$ has a presentation with generators $B_{i,l}$, $H_{i,m}$, $\bK_i^{\pm1}$, $C^{\pm1}$, where $i\in \{1,2\}$, $l\in\Z$ and $m \in \Z_{\ge 1}$, subject to the following relations: for $m, n \ge 1, k_1, k_2, k, l \in \Z$, and $i, j \in \{1,2\}$,
		\begin{align}
			C \text{ is central,} \quad &
			\K_i\K_j=\K_j\K_i, \quad
			\K_i H_{j,m}=H_{j,m}\K_i,\quad
			\bK_iB_{j,l}=v^{c_{\tau i,j}-c_{ij}} B_{j,l} \bK_i,
			\label{R1} \\
			[H_{i,m},H_{j,n}] &=0,
			\label{R2} \\
			[H_{i,m},B_{j,l}] &=\frac{[mc_{ij}]}{m} B_{j,l+m}-\frac{[mc_{\tau i,j}]}{m} B_{j,l-m}C^m,
			\label{R3} \\
			[B_{i,k},B_{i,l+1}]_{v^{-2}} & -v^{-2}[B_{i,k+1},B_{i,l}]_{v^{2} }=0,
			\\
			[B_{i,k},B_{\tau i,l+1}]_v & -v[B_{i,k+1},B_{\tau i,l}]_{v^{-1}} = -\Theta_{{\tau i},l-k+1}C^k \bK_{i} +v \Theta_{ {\tau i},l-k-1}C^{k+1}\bK_{i}
			\notag \\
			& \qquad\qquad\qquad\qquad\quad\;\,  -\Theta_{i,k-l+1}C^l\bK_{{\tau i}} +v \Theta_{i,k-l-1}C^{l+1}\bK_{\tau i},
			\\  \mathbb{S}(k_1,k_2\mid l;i )
			= [2]&\Sym_{k_1,k_2}\sum_{p\geq 0}v^{2p}
            \big[\TH_{\tau i,l-k_2-p}\K_i-v\TH_{\tau i,l-k_2-p-2}C\K_i, B_{i,k_1-p} \big]_{v^{-4p-1}}C^{k_2+p}
            \notag 
            \\
             +v[2]&\Sym_{k_1,k_2}\sum_{p\geq 0}v^{2p} \big[ B_{i,k_1+p+1},\TH_{i,k_2-l-p+1}\K_{\tau i}-v\TH_{i,k_2-l-p-1}C \K_{\tau i}\big]_{v^{-4p-3}} C^{l-1}.
			\label{R6}
		\end{align}
	\end{customthm}
	If we set all the summands involving $C$ to zero in the above relations, the above presentation is essentially reduced to the Drinfeld presentation for half of $\tU(\sll_3)$.
	Theorem~\ref{thm:B} admits a generating function reformulation in terms of $\bB_{i}(z)$, $\bTH_{i}(z)$, $\bH_i(z)$ $(i=1,2)$ and $\bDel(z)$ from \eqref{eq:Genfun}; see Theorem~\ref{thm:DrqsA1}.
	
	Denoting by $\tUiD$ the $\Q(v)$-algebra with generators and relations given in Theorem~\ref{thm:B}, we are reduced to establish an algebra isomorphism $\Phi: \tUiD \longrightarrow\tUi$, which matches generators in the same notations (for $\tUi$ they stand for the root vectors). Assume that $\Phi$ is a homomorphism for now. One shows that $\Phi$ is surjective by checking all generators of $\tUi$ lie in the image of $\Phi$. The $\imath$quantum group $\tUi$ is a filtered algebra with its associated graded algebra isomorphic to $ \U^- \otimes \Q(v)[\K_i^{\pm 1} \mid i\in \I]$; see Proposition \ref{prop:graded}. 
 The injectivity of $\Phi$ is then reduced by some filtration arguments to the corresponding isomorphism for the Drinfeld presentation of $\widehat\sll_3$.
	
	It remains to show that $\Phi$ is a homomorphism, which is the most involved part of the proof, i.e., to verify all the relations stated in Theorem~\ref{thm:B} are satisfied by the root vectors in $\tUi$. As explained in \S\ref{subsec:strategy}, the overall strategy of the verification of the relations is an inductive argument which goes like a big spiral. The earlier approaches in \cite{Da93, BK20} for affine quantum group $\tU(\widehat\sll_2)$ and q-Onsager algebra $\tUi(\widehat\sll_2)$ have provided us a helpful roadmap but we have to deal with additional complexity of affine rank two.
	It is worth noting that some crucial proofs here follow more closely the approach in \cite{Z21} (instead of \cite{LW21b}), especially in establishing the relation \eqref{R3} and the Serre relation \eqref{R6}. Along the way, we establish the $\TT_{\bome}$-invariance of $\TH_{i,n}$ below, which is intimately related to the commutativity among $\TH_{i,n}$, for all $i=1,2$ and $n\ge 1$.
	
	\begin{customthm} {\bf C}
		[Theorem~\ref{thm:fix1}]
		\label{thm:C}
		We have
		\[
		\TT_{\bome} (\TH_{i,n}) =\TH_{i,n},
		\quad \text{ for all } n \ge 1 \text{ and } i=1,2.
		\]
	\end{customthm}

	\subsection{The organization}	
	
	The paper is organized as follows. In Section~\ref{sec:braid},
	we formulate the algebra $\tUi$ and some basic properties. We then formulate the relative braid group symmetries $\TT_0^{\pm 1}$ and $\TT_1^{\pm 1}$ on $\tUi$. The detailed proof of Theorem~\ref{thm:A} regarding $\TT_1^{\pm 1}$ is given in Section~\ref{sec:proofT1}. Some additional technical computations for an identity used in the proof of Theorem~\ref{thm:A} can be found in Appendix~\ref{App:A}.
	
	In Section~\ref{sec:rtvector}, we construct the real and imaginary $v$-root vectors for $\tUi$. We further identify the classical limits as $v\rightsquigarrow 1$ of these $v$-root vectors and their Drinfeld type relations.
	
	In Section~\ref{sec:Dr1}, we formulate the Drinfeld type presentation for $\tUi$, and prove Theorem~\ref{thm:B} except the verification of the relations. The lengthy proofs of the relations are given in Section~\ref{sec:verify1}. Theorem~\ref{thm:C} is proved along the way.

	\vspace{2mm}
	\noindent {\bf Acknowledgement.}
	ML is partially supported by the National Natural Science Foundation of China (No. 12171333). WW is partially supported by the NSF grant DMS-2001351. WZ is supported by a GSAS fellowship at University of Virginia and WW's NSF Graduate Research Assistantship. We thank Zac Carlini (whose undergraduate research is supported by WW's NSF grant) for his helpful explorations with computer computation (SAGE, FELIX, UVA Rivanna cluster) on braid group symmetries at an early stage of this project. We thank an anonymous referee for many comments and suggestions which help to improve the exposition.

	\section{Relative braid group action}
	\label{sec:braid}

	\subsection{Quantum groups}	
	
	Let $\I=\{0, 1, 2\}$ and $(c_{ij})_{i,j\in \I}$ be the Cartan matrix of affine type $\widehat A_2$. Let $\g:=\mathfrak{sl}_3$ be the simple Lie algebra of type $A_2$ corresponding to $\II =\{1,2 \}$, and $\hg$ be the affine Lie algebra of affine type $A_2^{(1)}$.
	Let $\{\alpha_i\mid i\in\I\}$ be the simple roots of the affine Lie algebra $\hg$, and
	\[
	\de =\alpha_0 +\alpha_1+\alpha_2
	\]
	be the basic imaginary root. Let $\Z\I:=\bigoplus_{i\in\I}\Z\alpha_i$ be the root lattice with a symmetric bilinear form
	\begin{align}
		\label{BF}
		(\cdot, \cdot ): \Z\I \times \Z\I \longrightarrow \Z,
		\qquad (\alpha_i, \alpha_j) =c_{ij}.
	\end{align}
	Let $W$ be the affine Weyl group of type $\widehat A_2$. Let $P=\Z \omega_1\oplus \Z \omega_2$ be the weight lattice for $\g$ where $\omega_i,i=1,2$ are the fundamental weights. Let $ \widehat{W} := S_3 \ltimes P$ be the extended affine Weyl group, where $S_3$ denotes the symmetric group of 3 letters.
	
	Let $v$ be the quantum parameter. For $n\in \Z, r\in \N$, denote the quantum binomial coefficients by
	\[
	[n] =\frac{v^n -v^{-n}}{v-v^{-1}},\qquad [r]^{!}= [r]_v^! =\prod_{i=1}^r [i]_v,
	\qquad\qbinom{n}{r} =\frac{[n][n-1]\ldots [n-r+1]}{[r]!}.
	\]
 
	For $A, B$ in a $\Q(v)$-algebra, we shall denote $[A,B]_{v^a} =AB -v^aBA$, and $[A,B] =AB - BA$. The Drinfeld double quantum group $\tU =\tU(\widehat{\g})$ is generated by $E_i, F_i, K_i^{\pm 1}, K_i'^{\pm 1}$, for $i\in \I$, subject to the relations $K_i K_j=K_jK_i, K_i' K_j'=K_j'K_i', K_i K_j'=K_j'K_i,$ $K_i E_j =v^{c_{ij}} E_j K_i, K_i F_j =v^{-c_{ij}} F_j K_i,$ $K_i' E_j =v^{-c_{ij}} E_j K_i',$  $K_i' F_j =v^{c_{ij}} F_j K_i'$,
	and standard quantum Serre relations. Note that $K_i K_i'$ are central in $\tU$. The Drinfeld-Jimbo quantum group $\U =\U(\widehat{\g})$ is recovered from $\tU$ by a central reduction:
	\[
	\U = \tU / \langle K_i K_i' -1 \mid i\in \I \rangle.
	\]
	
	Let $\Br(W)$ is the braid group associated to $W$:
	\[
	\Br(W)=\langle s_0,s_1,s_2\mid s_i s_j s_i=s_j s_i s_j, \forall i\neq j \in \I\rangle.
	\]
	Let $T_i\, (i\in \I)$ be the braid group automorphism on $\tU$ defined by (see \cite{LW22b})
	\begin{align*}
		&T_i(K_i)=K_i^{-1},\qquad T_i(K_j)=K_i K_j \qquad T_i(E_i)=-F_i K_i,\qquad T_i(F_i)=-K'_i E_i,\\
		&T_i(K_i')=(K_i')^{-1},\quad T_i(K_j')=K_i' K'_j,\qquad T_i(E_j)=[E_i, E_j]_{v^{-1}} ,\qquad T_i(F_j)=[F_j, F_i]_{v},
	\end{align*}
	for $0\leq i\neq j\leq 2$. These formulas (by setting $K_i'=K_i^{-1}$) recover Lusztig's braid group action on $\U$ \cite{Lus93}.
	
	We also have
	\begin{equation}\label{braid3}
		T_{\omega_1}=\circledast T_2 T_1,\qquad T_{\omega_2}=\circledast^{-1} T_1 T_2
	\end{equation}
	where $\circledast$ is the diagram automorphism given by $\circledast(0)=1,\circledast(1)=2,\circledast(2)=0$.

	\subsection{The $\imath$quantum group}	
	
	Let $\tau$ be the following diagram automorphism given by swapping vertices $1$ and $2$ while fixing $0$:
	\begin{center}\setlength{\unitlength}{0.7mm}
		\vspace{-.2cm}
		\begin{equation}
			\label{eq:satakerank1}
			\begin{picture}(50,13)(0,-10)
				\put(-0.5,-2){\small $1$}
				\put(20,-2){\small $2$}
				
				\put(12.5,-19){\line(1,2){8}}
				\put(9.5,-19){\line(-1,2){8}}
				\put(9.5,-23){\small $0$}
				\put(3,-.5){\line(1,0){16}}
				\color{purple}
				\qbezier(11,4)(15,3.7)(19.5,1)
				\qbezier(11,4)(7,3.7)(2.5,1)
				\put(19,1.1){\vector(2,-1){0.5}}
				\put(2,1.1){\vector(-2,-1){0.5}}
				\put(10,-.5){\small $^{\tau}$}
			\end{picture}
		\end{equation}
		\vspace{.5cm}
	\end{center}
	
	Recall $\I=\{0,1,2\}$. Let $\tUi:=\tUi(\widehat{\g})$ be the universal quasi-split $\imath$quantum group associated to the Satake diagram \eqref{eq:satakerank1} (with all nodes white); see \cite{LW22, CLW21} (also cf.  \cite{Let99,Ko14}). By definition, $\tUi$ is generated by $B_i,\K_i$ ($i\in \I$), 
 where $\K_i$ are invertible, 
 subject to the following relations:
	\begin{align}
		\K_i \K_j= \K_j \K_i,
		\qquad \K_i B_j &= v^{c_{\tau i,j} -c_{i j}} B_j \K_i\qquad (i,j\in\I),
		\label{kB} \\
		B_{2}B_1^2 -[2]B_1B_{2}B_1 + B_1^2B_{2} &=- [2](v\K_1 B_1 +v B_1\K_{2}),
		\label{B211} \\
		B_{1}B_2^2 -[2]B_2B_{1}B_2 + B_2^2B_{1} &=- [2](v\K_2 B_2 +v B_2\K_{1}),
		\label{B122} \\
		B_{0}B_i^2 -[2]B_iB_{0}B_i + B_i^2B_{0} &=0 \qquad (i=1,2),
		\label{B0ii} \\
		B_{i}B_0^2 -[2]B_0B_{i}B_0 + B_0^2B_{i} &= -v^{-1} \K_0 B_{i} \qquad (i=1,2).
		\label{Bi00}
	\end{align}
	Note that $\K_0$  is central. It helps to make \eqref{kB}, say for $i=1$, explicit (the relations remain valid up to a swap of indices $1 \leftrightarrow 2$):
	\begin{align}
		\label{eq:KB}
		\K_1B_1 =v^{-3} B_1\K_1, \quad
		\K_1B_2 =v^{3} B_2\K_1, \quad
		\K_1B_0 = B_0\K_1.
	\end{align}
	
	There are some flexibility on scaling on the generators $\K_i$; our convention on $\K_i$ and the relations \eqref{kB}--\eqref{Bi00} gives us an embedding $\imath: \tUi \longrightarrow \tU$ (compare \cite[\S6.1]{LW22}) by letting
	\begin{align}
		\label{eq:embed}	
		B_i &\mapsto F_i+E_{\tau i}K'_i, \qquad \K_0\mapsto -v^2 K_0K'_0,\qquad
		\K_j \mapsto K_j  K'_{\tau j},\quad \forall i\in\I, j\neq0.
	\end{align}
	We often identify $\tUi$ with a subalgebra of $\tU$ through the embedding $\imath$, and then identify $B_i =F_i+E_{\tau i}K'_i$, and so on.
	
	For $\mu = \sum_{i\in \I} a_i \alpha_i  \in \Z \I$,  define $\K_\mu =\prod_{i\in \I} \K_i^{a_i}$ and
	\begin{align*}
		\K_\delta =\K_0 \K_1 \K_2.
	\end{align*}
	The algebra $\tUi$ is endowed with a filtered algebra structure
	\begin{align}  \label{eq:filt1R1}
		\widetilde{\U}^{\imath,0} \subset \widetilde{\U}^{\imath,1} \subset \cdots \subset \widetilde{\U}^{\imath,m} \subset \cdots
	\end{align}
	by setting
	\begin{align}  \label{eq:filtR1}
		\widetilde{\U}^{\imath,m} =\Q(v)\text{-span} \{ B_{i_1} B_{i_2} \ldots B_{i_s} \K_\mu \mid \mu \in \Z\I, i_1, \ldots, i_s \in \I, s \le m \}.
	\end{align}
	Note that
	\begin{align}  \label{eq:UiCartanR1}
		\widetilde{\U}^{\imath,0} =\bigoplus_{\mu \in \N\I} \Q(v) \K_\mu,
	\end{align}
	is the $\Q(v)$-subalgebra generated by $\K_i$ for $i\in \I$.
	The following is a $\tUi$-variant of a basic result of Letzter and Kolb on quantum symmetric pairs $(\U, \Ui)$.
	
	\begin{proposition}
		(cf. \cite{Let02, Ko14})
		\label{prop:graded}
		The associated graded algebra $\gr \tUi$ with respect to \eqref{eq:filt1R1}--\eqref{eq:filtR1} admits the following identification:
		\begin{align}   \label{eq:filterR1}
			\begin{split}
				\gr \tUi & \cong \U^- \otimes \Q(v)[\K_i^{\pm 1} \mid i\in \I],
				\\
				\overline{B_i} & \mapsto F_i,  \qquad
				\overline{\K}_i \mapsto \K_i \quad (i\in \I).
			\end{split}
		\end{align}
	\end{proposition}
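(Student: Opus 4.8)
The plan is to realize $\gr\tUi$ as a quotient of the "expected" algebra and then show the quotient map is an isomorphism by a dimension/PBW count. First I would observe that the defining relations \eqref{kB}--\eqref{Bi00} of $\tUi$ all become homogeneous relations of $\U^-\otimes\Q(v)[\K_i^\pm]$ once we pass to the associated graded with respect to the filtration \eqref{eq:filt1R1}--\eqref{eq:filtR1}. Concretely, each $B_i$ has filtration degree $1$ and each $\K_\mu$ has degree $0$; in relation \eqref{kB} the right-hand side is again of degree $1$, so $\overline{\K_i}$ and $\overline{B_j}$ commute in $\gr\tUi$ (the scalar $v^{c_{\tau i,j}-c_{ij}}$ disappears? no — it stays, but the point is $\overline{\K_i}\,\overline{B_j} = v^{c_{\tau i,j}-c_{ij}}\,\overline{B_j}\,\overline{\K_i}$, which is exactly the relation in $\U^-\rtimes\Q(v)[\K_i^{\pm}]$ with the $\K_i$ acting on $F_j$ through the character $v^{c_{\tau i,j}-c_{ij}}$). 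For the Serre-type relations \eqref{B211}--\eqref{Bi00}: the left-hand sides have degree $3$, while the right-hand sides have degree $\le 1 < 3$, so in $\gr\tUi$ they all become the homogeneous quantum Serre relations $\overline{B_i}\,\overline{B_j}^2 - [2]\,\overline{B_j}\,\overline{B_i}\,\overline{B_j} + \overline{B_j}^2\,\overline{B_i}=0$ among the $\overline{B_i}$. Thus the assignment $\overline{B_i}\mapsto F_i$, $\overline{\K_i}\mapsto\K_i$ respects all relations and defines a surjective algebra homomorphism
\[
\pi: \U^-\otimes\Q(v)[\K_i^{\pm}\mid i\in\I]\longrightarrow \gr\tUi.
\]
Surjectivity is clear since the $\overline{B_i}$ and $\overline{\K_i}^{\pm}$ generate $\gr\tUi$ by \eqref{eq:filtR1}--\eqref{eq:UiCartanR1}.

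Next I would prove $\pi$ is injective. The cleanest route is via the embedding $\imath:\tUi\hookrightarrow\tU$ of \eqref{eq:embed}, under which $B_i = F_i + E_{\tau i}K_i'$. Equip $\tU$ with the analogous filtration by total $E$-degree-weighted ... more precisely, give $\tU$ the filtration in which $F_i, K_i^{\pm}, K_i'^{\pm}$ have degree $0$ and $E_i$ has degree $1$ — wait, that is not compatible; instead use the filtration on $\tU$ assigning each $F_i$ and each $E_i K_i'$ degree $1$ and the torus degree $0$, matching the image of \eqref{eq:filtR1}. Under $\gr$ of this filtration, $\overline{B_i} = \overline{F_i} + \overline{E_{\tau i}K_i'}$, and using the triangular decomposition $\tU\cong \tU^-\otimes\tU^0\otimes\tU^+$ one checks that the images of ordered monomials in the $B_i$ times $\K_\mu$ have leading term (in a suitable second filtration isolating the pure $\tU^-\tU^0$ part) equal to the corresponding ordered monomial in the $F_i$; this is exactly the argument underlying the Letzter--Kolb result and is the content of Proposition~\ref{prop:graded} in \cite{Let02, Ko14}. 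Concretely, one shows that a PBW basis of $\U^-\otimes\Q(v)[\K_i^\pm]$ maps under $\imath$ (then $\gr$) to a $\Q(v)$-linearly independent set in $\gr\tU$, whence $\pi$ has trivial kernel.

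Alternatively — and this is probably the version I would actually write — I would avoid reproving Letzter--Kolb from scratch and instead cite it: the statement is that $\tUi$ is a free left (equivalently right) module over $\tUi^0 = \Q(v)[\K_i^{\pm}]$ with a PBW-type basis indexed in bijection with a PBW basis of $\U^-$, obtained by lifting the latter along $B_i\mapsto F_i$. Granting this, the associated graded is automatically identified with $\U^-\otimes\tUi^0$ as claimed, since the filtration \eqref{eq:filtR1} is precisely the one measuring the length of $\U^-$-monomials. So the proof reduces to: (1) verify the relations in $\gr\tUi$ as above to get the surjection $\pi$; (2) invoke the PBW basis theorem for $\tUi$ (the $\tU$-variant of Letzter--Kolb, available in \cite{LW22}) to see that $\dim_{\tUi^0}\widetilde{\U}^{\imath,m}/\widetilde{\U}^{\imath,m-1}$ equals the number of degree-$m$ PBW monomials in $\U^-$, which matches $\dim$ of the degree-$m$ part of the source of $\pi$; (3) conclude $\pi$ is an isomorphism by comparing graded dimensions degree by degree.

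The main obstacle is step (2): one genuinely needs a PBW/spanning-and-independence result for $\tUi$ — spanning of $\widetilde{\U}^{\imath,m}$ by ordered monomials $B_{i_1}\cdots B_{i_s}\K_\mu$ with $s\le m$ follows by straightening using the relations \eqref{kB}--\eqref{Bi00} (the lower-order right-hand sides are harmless for a filtered straightening argument), but linear independence of the leading terms is the substantive input and is exactly where the embedding $\imath$ into $\tU$ and the non-degeneracy of the triangular decomposition of $\tU$ must be used. Everything else — checking that the finitely many defining relations degenerate correctly, and the bookkeeping of graded dimensions — is routine and I would not spell it out in full.
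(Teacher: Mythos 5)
Your proposal is correct and aligns with the paper, which gives no proof of its own but simply cites \cite{Let02, Ko14} for exactly the PBW/freeness input that you isolate as the substantive step. You also correctly flag the notational subtlety that the target $\U^-\otimes\Q(v)[\K_i^{\pm}\mid i\in\I]$ must be read as the twisted (smash) product in which $\K_i$ acts on $F_j$ through the character $v^{c_{\tau i,j}-c_{ij}}$, since otherwise $\pi$ could not be an algebra homomorphism; the paper leaves this implicit.
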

	
	\begin{lemma}
		The $\Q(v)$-algebra $\tUi$ is $\Z \I$-graded by 
		\begin{align}
			\label{eq:deg}
			\deg (B_i) =\alpha_i, \quad \deg (\K_i) =\alpha_i+\alpha_{\tau i}, \quad \text{ for } i \in \I.
		\end{align}
	\end{lemma}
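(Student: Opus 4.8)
Since $\tUi$ is given by the presentation with generators $B_i,\K_i$ ($i\in\I$) and defining relations \eqref{kB}--\eqref{Bi00}, the plan is simply to observe that this presentation is homogeneous for the proposed assignment. Concretely, the free $\Q(v)$-algebra on the symbols $B_i$ and $\K_i^{\pm1}$ (with $\K_i\K_i^{-1}=1$) carries an evident $\Z\I$-grading with $\wt(B_i)=\alpha_i$ and $\wt(\K_i^{\pm1})=\pm(\alpha_i+\alpha_{\tau i})$, and $\tUi$ is the quotient of this free algebra by the two-sided ideal generated by the relators in \eqref{kB}--\eqref{Bi00}. Hence it suffices to check that each such relator is $\Z\I$-homogeneous; then the relation ideal is a homogeneous ideal and the grading descends to $\tUi$, which is exactly the assertion of the lemma.

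\textbf{The homogeneity check.} First record that $\tau 0=0$, so $\wt(\K_0)=2\alpha_0$, while $\{\tau 1,\tau 2\}=\{2,1\}$, so $\wt(\K_1)=\wt(\K_2)=\alpha_1+\alpha_2$. Now run through the relations. The commutation relations $\K_i\K_j=\K_j\K_i$ and $\K_iB_j=v^{c_{\tau i,j}-c_{ij}}B_j\K_i$ in \eqref{kB} are homogeneous, of weights $\alpha_i+\alpha_{\tau i}+\alpha_j+\alpha_{\tau j}$ and $\alpha_i+\alpha_{\tau i}+\alpha_j$ respectively (the scalar $v^{c_{\tau i,j}-c_{ij}}$ does not affect weights). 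The Serre relation \eqref{B0ii} is homogeneous of weight $\alpha_0+2\alpha_i$ with vanishing right-hand side. The only relations whose right-hand sides are not manifestly of the same weight as the left-hand sides are \eqref{B211}, \eqref{B122} and \eqref{Bi00}, and here the choice $\wt(\K_i)=\alpha_i+\alpha_{\tau i}$ is precisely what makes them fit: in \eqref{B211} the left-hand side has weight $2\alpha_1+\alpha_2$, while $\wt(\K_1B_1)=(\alpha_1+\alpha_2)+\alpha_1=2\alpha_1+\alpha_2$ and $\wt(B_1\K_2)=\alpha_1+(\alpha_1+\alpha_2)=2\alpha_1+\alpha_2$; relation \eqref{B122} is the image of \eqref{B211} under $1\leftrightarrow 2$ and is handled identically; and in \eqref{Bi00} the left-hand side has weight $2\alpha_0+\alpha_i$, while $\wt(\K_0B_i)=2\alpha_0+\alpha_i$ since $\tau 0=0$. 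Thus every defining relator is $\Z\I$-homogeneous.

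\textbf{Conclusion and remarks.} By the discussion in the first paragraph, this proves the lemma. As a consistency check one may note that this grading is forced: the embedding $\imath$ of \eqref{eq:embed} is weight-preserving once $\tU$ is equipped with the corresponding $\Z\I$-grading, which pins down $\wt(\K_i)=\wt(K_iK'_{\tau i})$ to be $\alpha_i+\alpha_{\tau i}$. There is no serious obstacle in the argument; the only point requiring any care is the verification for the three ``inhomogeneous-looking'' relations \eqref{B211}, \eqref{B122}, \eqref{Bi00}, and even there the computation is the one-line weight bookkeeping above, using $\tau 0 = 0$ and $\tau 1 = 2$.
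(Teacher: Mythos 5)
Your proof is correct and is precisely the argument the paper leaves implicit (the lemma is stated without a proof environment): one checks that each defining relator \eqref{kB}--\eqref{Bi00} is homogeneous for the proposed $\Z\I$-weight assignment, so the grading on the free algebra descends to the quotient. Your weight bookkeeping for the three nontrivial cases \eqref{B211}, \eqref{B122}, \eqref{Bi00} is right, and the side remark that the embedding \eqref{eq:embed} forces $\wt(\K_i)=\alpha_i+\alpha_{\tau i}$ is a good sanity check.
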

	
	For any $\gamma\in\Z\I$, we denote by $\tUi_\gamma$ the homogeneous subspace of degree $\gamma$, and then
	\begin{align}
		\label{eq:grading}
		\tUi=\bigoplus_{\gamma\in\Z\I} \tUi_\gamma.
	\end{align}
	
	Recall the bilinear form $(\cdot, \cdot)$ on $\Z\I$.
	
	\begin{lemma}
		\label{lem:KiX}
		We have
		\[
		\K_i X = v^{- (\alpha_i -\alpha_{\tau i}, \gamma)} X \K_i,
		\]
		for  $i\in \I$ and $X\in\tUi_{\gamma}$ $(\gamma \in \N\I)$. In particular, we have $\K_i X =X \K_i$, for $X\in\tUi_{k\delta}$ $(k\in\Z)$.
	\end{lemma}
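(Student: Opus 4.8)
The plan is to reduce the identity to a check on the algebra generators and then propagate it multiplicatively through the $\Z\I$-grading \eqref{eq:grading}. The only structural input I will need is that the bilinear form \eqref{BF} is $\tau$-invariant, i.e.\ $(\tau\mu,\tau\nu)=(\mu,\nu)$, which is immediate since $\tau$ is a diagram automorphism and hence $c_{\tau i,\tau j}=c_{ij}$.

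First I would rephrase the claim: for homogeneous $X$ put $f_i(X):=-(\alpha_i-\alpha_{\tau i},\wt X)$, so that the assertion becomes $\K_i X=v^{f_i(X)}X\K_i$. This relation is $\Q(v)$-linear in $X$ on each weight space, it passes to inverses of invertible homogeneous elements, and it is multiplicative: from $\K_i X=v^{f_i(X)}X\K_i$ and $\K_i Y=v^{f_i(Y)}Y\K_i$ (with $X,Y$ homogeneous) one gets $\K_i(XY)=v^{f_i(X)+f_i(Y)}XY\K_i=v^{f_i(XY)}XY\K_i$, using bilinearity of $(\cdot,\cdot)$ and additivity of $\wt$. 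Since $\tUi$ is spanned by the monomials $B_{i_1}\cdots B_{i_s}\K_\mu$ appearing in \eqref{eq:filtR1}, each of which is homogeneous, it suffices to verify the relation when $X$ runs over the generators $B_j$ and $\K_j^{\pm1}$.

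On $X=B_j$ one has $\wt B_j=\alpha_j$ and $f_i(B_j)=-(\alpha_i-\alpha_{\tau i},\alpha_j)=c_{\tau i,j}-c_{ij}$, so the assertion is exactly \eqref{kB}. On $X=\K_j^{\pm1}$ one has $\wt\K_j^{\pm1}=\pm(\alpha_j+\alpha_{\tau j})$, and $\tau$-invariance of the form gives $f_i(\K_j)=-(\alpha_i-\alpha_{\tau i},\alpha_j+\alpha_{\tau j})=-(c_{ij}+c_{i,\tau j}-c_{\tau i,j}-c_{\tau i,\tau j})=0$ (here $c_{i,\tau j}=c_{\tau i,j}$ and $c_{\tau i,\tau j}=c_{ij}$); hence the assertion is the commutativity $\K_i\K_j=\K_j\K_i$ from \eqref{kB}. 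An induction on the length of a monomial in the generators then yields $\K_i X=v^{-(\alpha_i-\alpha_{\tau i},\gamma)}X\K_i$ for all $X\in\tUi_\gamma$.

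For the final assertion, note that $\tau$ fixes $0$ and swaps $1,2$, so $\tau\delta=\delta$; therefore $(\alpha_{\tau i},\delta)=(\tau\alpha_{\tau i},\tau\delta)=(\alpha_i,\delta)$, and consequently $(\alpha_i-\alpha_{\tau i},k\delta)=k\big((\alpha_i,\delta)-(\alpha_{\tau i},\delta)\big)=0$, whence $\K_i X=X\K_i$ for $X\in\tUi_{k\delta}$. I do not anticipate a genuine obstacle; the only point to set up carefully is the multiplicativity of $X\mapsto v^{f_i(X)}$ as a ``grading character'', which is precisely what makes a check on generators suffice, together with the observation — forced by $\tau$-invariance of $(\cdot,\cdot)$ — that the $\K_j$'s are transparent to the induction.
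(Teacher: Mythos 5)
Your proposal is correct and follows essentially the same approach as the paper: verify the commutation relation on the generators $B_j$ and $\K_j$ using \eqref{kB}, then extend to all homogeneous elements by multiplicativity of the grading character $X\mapsto v^{-(\alpha_i-\alpha_{\tau i},\wt X)}$. The only cosmetic difference is in the final specialization: the paper invokes $(\cdot,\delta)=0$ (i.e., $\delta$ is the null root), while you deduce $(\alpha_i-\alpha_{\tau i},k\delta)=0$ from $\tau$-invariance of the form together with $\tau\delta=\delta$; both observations are valid and equally short.
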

	
	\begin{proof}
		We first observe from \eqref{kB} that
		$\K_i X = v^{- (\alpha_i -\alpha_{\tau i}, \deg(X))} X \K_i$, for all generators $X =\K_j, B_j$. The formula for general $X$ follows. The special case follows by $(\cdot, \delta)=0$.
	\end{proof}
	
	The diagram involution $\tau$ gives rise to an involution $\btau$ on the algebra $\tUi$:
	\begin{align} \label{Phi}
		\btau(B_i)=B_{\tau i}, \qquad \btau(\K_i)=\K_{\tau i},\qquad \forall i=0,1,2.
	\end{align}
	
	
	The following lemma follows by inspection of the defining relations of $\tUi$.
	
	\begin{lemma} 
		\label{lem:sigma}
		There exists a $\Q(v)$-algebra anti-involution $\sigma_{\tau}:\tUi\rightarrow \tUi$ such that
		\[
		\sigma_{\tau}(B_i)=B_i, \quad \sigma_{\tau}(\K_i)= \K_{\tau i},
		\quad \forall i\in \I.
		\]
	\end{lemma}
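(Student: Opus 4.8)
The plan is to define $\sigma_{\tau}$ on the generators $B_i,\K_i^{\pm1}$ $(i\in\I)$ by the stated formulas, extend it to an anti-homomorphism of the free unital $\Q(v)$-algebra on these symbols, and then check that the defining relations \eqref{kB}--\eqref{Bi00} are preserved; this is precisely the ``inspection of the defining relations'' referred to in the statement. Recall that applying an anti-homomorphism reverses every product. The Cartan relations $\K_i\K_j=\K_j\K_i$ are visibly preserved. For \eqref{kB}, the image of the relation indexed by $(i,j)$ is $B_j\K_{\tau i}=v^{c_{\tau i,j}-c_{ij}}\K_{\tau i}B_j$, which rearranges to $\K_{\tau i}B_j=v^{c_{ij}-c_{\tau i,j}}B_j\K_{\tau i}$; since $\tau^2=\mathrm{id}$ gives $c_{ij}=c_{\tau(\tau i),j}$, this is exactly \eqref{kB} with $i$ replaced by $\tau i$, and as $i$ ranges over $\I$ so does $\tau i$, so the whole family \eqref{kB} is preserved.

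For the $\imath$Serre-type relations \eqref{B211}--\eqref{Bi00}, observe that in each the left-hand side is palindromic in the $B$'s: reversing the order of the factors in $B_jB_i^2-[2]B_iB_jB_i+B_i^2B_j$ returns the same element of $\tUi$. Since $\sigma_{\tau}$ fixes every $B_i$, each such left-hand side is sent to itself. On the right-hand sides, $\sigma_{\tau}$ fixes $B_1,B_2$, swaps $\K_1\leftrightarrow\K_2$, and fixes $\K_0$; combined with the centrality of $\K_0$ and the fact that terms such as $\K_1B_1$ and $B_1\K_2$ simply get interchanged upon reversal, one checks term by term that the image of \eqref{B211} is again \eqref{B211}, that of \eqref{B122} is \eqref{B122}, that of \eqref{B0ii} is \eqref{B0ii}, and that of \eqref{Bi00} is \eqref{Bi00} (here using $\K_0 B_i=B_i\K_0$). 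Hence $\sigma_{\tau}$ descends to an anti-endomorphism of $\tUi$.

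Finally, $\sigma_{\tau}^2$ is a genuine $\Q(v)$-algebra endomorphism of $\tUi$ with $\sigma_{\tau}^2(B_i)=B_i$ and $\sigma_{\tau}^2(\K_i)=\K_{\tau^2 i}=\K_i$, so $\sigma_{\tau}^2=\mathrm{id}$ on generators and hence on all of $\tUi$; in particular $\sigma_{\tau}$ is bijective and is an anti-involution. I do not expect a genuine obstacle here: the only points needing attention are the bookkeeping of the $\tau$-twist in the exponent of \eqref{kB} under $i\mapsto\tau i$, and the use of the centrality of $\K_0$ in \eqref{Bi00}. (This also explains why the twist $\K_i\mapsto\K_{\tau i}$ is forced: the order-reversal anti-homomorphism fixing both $B_i$ and $\K_i$ would send \eqref{kB} to $\K_iB_j=v^{2(c_{\tau i,j}-c_{ij})}B_j\K_i$, inconsistent with \eqref{kB} unless $c_{\tau i,j}=c_{ij}$.)
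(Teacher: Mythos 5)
Your proof is correct and takes the same approach the paper intends: the paper simply asserts that the lemma "follows by inspection of the defining relations," and your write-up carries that inspection out in full — defining $\sigma_\tau$ on the free algebra, checking that each of \eqref{kB}--\eqref{Bi00} is preserved (using the palindromic structure of the Serre left-hand sides, the $\K_1\leftrightarrow\K_2$ swap on the right-hand sides, the centrality of $\K_0$, and $\tau^2=\mathrm{id}$ to match the twisted exponent in \eqref{kB}), and then observing $\sigma_\tau^2=\mathrm{id}$ on generators to get the anti-involution property.
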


	\subsection{Relative braid group operators $\TT_0^{\pm 1}$}
	
	Set
	\[
	{\boldsymbol \alpha}_0:=\alpha_0, \qquad {\boldsymbol \alpha}_1:=(\alpha_1 +\alpha_2)/2.
	\]
	The relative root system with simple roots $\{{\boldsymbol \alpha}_0, {\boldsymbol \alpha}_1\}$ is of twisted affine type $A_2^{(2)}$:
	\begin{center}
		\begin{tikzpicture}[baseline=0, scale=5]
			\node at (-0.5,-0.02) {\large$\circ$};
			\node at (0,-0.02) {\large$\circ$};

			\draw[-](-0.45, 0.025) to (-0.07, 0.025);
			\draw[-](-0.45, 0) to (-0.06, 0);
			\draw[-](-0.45, -0.025) to (-0.06, -0.025);
			\draw[-](-0.45, -0.05) to (-0.07, -0.05);
			\node at (0, -.1) { $1$};
			\draw[-](-0.08,-0.07) to (-0.05,-0.0125) to (-0.08,0.045);
			\node at (-0.5,-.1) {$0$};
		\end{tikzpicture}
	\end{center}
	
	Let $\Br(W,\tau)=\langle \bs_0,\bs_1 \rangle$ be the free group of two generators, which can be regarded as the braid group associated to the Weyl group of twisted affine type $A_2^{(2)}$. We have a  group embedding
	\begin{align*}
		\Br(W,\tau) &\to \Br(W)\\
		\bs_0 &\mapsto s_0, \qquad
		\bs_1 \mapsto s_1 s_2 s_1.
	\end{align*}
	The diagram involution $\tau$ induces an involution on $\Br(W)$, $s_i\mapsto s_{\tau i}$ for all $i$.
	Then we can identify $\Br(W,\tau)$ with the $\tau$-fixed subgroup $\Br(W)^\tau$ via the above embedding.
	
	Following the proposal \cite{KP11} as established in \cite{WZ22} for $\imath$quantum groups of finite types, one expects a relative braid group action of $\Br(W,\tau)$ on $\tUi$. Our first main goal is to construct the braid group operators $\TT_0, \TT_1$ (and respectively, $\TT_0^{-1}, \TT_1^{-1}$) explicitly corresponding to $\bs_0, \bs_1$. We start with the easier one $\TT_0^{\pm 1}$. The formulas for the braid operator $\TT_0^{\pm 1}$ essentially coincide with the ones appearing in finite type; cf. \cite{KP11, LW21a}.

	\begin{proposition} [\cite{WZ22}]
		\label{prop:T0}
		There exists a $\Q(v)$-algebra automorphism $\TT_0:\tUi\rightarrow\tUi$ such that
		\begin{align}
			&\TT_0(\K_{\gamma})=\K_{s_0 \gamma},\quad
			\TT_0(B_0)=B_0 \K_0^{-1},\quad
			\TT_0(B_i) \mapsto [B_i,B_0]_v,
		\end{align}
		for $\gamma \in \N\I$, and $i=1,2.$
		Its inverse is given by
		\begin{align}
			\TT_0^{-1}(\K_{\gamma})= \K_{s_0 \gamma},\quad \TT_0^{-1}(B_0)= B_0 \K_0^{-1},\quad \TT^{-1}_0(B_i) = [B_0,B_i]_v.
		\end{align}
		Moreover,
		\begin{align}
			\label{eq:T1sigma0}
			\TT_0^{-1}= \sigma_{\tau} \TT_0 \sigma_{\tau}.
		\end{align}
	\end{proposition}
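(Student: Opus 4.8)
The plan is to produce $\TT_0^{\pm1}$ as bona fide algebra automorphisms by the most economical route, namely as restrictions of rescaled Lusztig operators on the Drinfeld double $\tU$, and then to deduce the inverse and the $\sigma_{\tau}$-symmetry formally. Since node $0$ is white and fixed by $\tau$, the relative simple reflection attached to $\balpha_0$ coincides with the ordinary $s_0$, and one expects $\TT_0$ to be a suitable rescaling of Lusztig's $T_0$ on $\tU$, restricted to $\imath(\tUi)$. Following \cite{WZ22}, I would first pin down the rescaling automorphism of $\tU$ that makes this restriction well defined, check that the rescaled operator sends each $\imath(B_i)$ and $\imath(\K_j)$ back into $\imath(\tUi)$, and then compute its values on these generators. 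Being the restriction of an automorphism of $\tU$ to an invariant subalgebra, the resulting map is automatically an algebra automorphism of $\tUi$; the computation of its values is local to node $0$ and its two neighbours, hence identical to the finite-type one carried out in \cite{KP11, LW21a}, which yields exactly the displayed formulas.

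If instead one wants a verification internal to the abstract presentation \eqref{kB}--\eqref{Bi00}, I would define $\TT_0$ and $\TT_0^{-1}$ on generators by the displayed formulas and check that each defining relation is preserved by both maps. The commutation relations \eqref{kB} reduce, via Lemma~\ref{lem:KiX} together with $c_{0,\tau i}=c_{0i}$ (a consequence of $\tau(0)=0$), to the statement that $\TT_0$ shifts the $\Z\I$-grading by $s_0$, which is visible from the weights of the images. For the Serre relations \eqref{B0ii} and \eqref{Bi00} involving $B_0$, I would push forward through the embedding \eqref{eq:embed} and use that $T_0$ on $\tU$ preserves the quantum Serre relations; a direct alternative is to expand everything in $v$-commutators and collapse using \eqref{B0ii}, \eqref{Bi00}, the $v$-Jacobi identity, and the auxiliary identity
\[
[B_0,[B_i,B_0]_v]_v=(1+v^2)B_0B_iB_0-v(B_iB_0^2+B_0^2B_i)=\K_0 B_i \qquad (i=1,2),
\]
which itself follows at once from \eqref{Bi00}. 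The step I expect to be the genuine obstacle is verifying that the $\imath$Serre relations \eqref{B211}, \eqref{B122} are preserved: because $\TT_0$ sends $B_1,B_2$ to the degree-two elements $[B_1,B_0]_v,[B_2,B_0]_v$, checking \eqref{B211} for the images amounts to expanding a long polynomial in $B_0,B_1,B_2$ and reducing it with \eqref{B211}, \eqref{B0ii}, \eqref{Bi00} and repeated $v$-Jacobi manipulations; this is of the same nature as the finite-type computation in \cite{KP11, LW21a} but must be carried out by hand.

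Once $\TT_0$ and $\TT_0^{-1}$ are known to be algebra endomorphisms, I would check $\TT_0\circ\TT_0^{-1}=\TT_0^{-1}\circ\TT_0=\mathrm{id}$ by evaluating on generators: on $\K_{\gamma}$ this is $s_0^2=\mathrm{id}$; on $B_0$ it is $\TT_0(B_0\K_0^{-1})=B_0\K_0^{-1}\K_0=B_0$, using that $\K_0$ is central; and on $B_i$ ($i=1,2$) it is $\TT_0([B_0,B_i]_v)=\K_0^{-1}[B_0,[B_i,B_0]_v]_v=\K_0^{-1}\K_0 B_i=B_i$ by the identity above. This identifies $\TT_0^{-1}$ as the two-sided inverse of $\TT_0$, so $\TT_0$ is an automorphism with the stated inverse. (Alternatively, surjectivity of $\TT_0$ is immediate from these same formulas, and injectivity follows by passing to $\gr\tUi$ of Proposition~\ref{prop:graded}, where $\TT_0$ induces Lusztig's $T_0$ on the $\U^-$-part.)

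Finally, for \eqref{eq:T1sigma0}: since $\sigma_{\tau}$ is an algebra anti-involution (Lemma~\ref{lem:sigma}) and $\TT_0$ is an automorphism, the composite $\sigma_{\tau}\circ\TT_0\circ\sigma_{\tau}$ is again an algebra automorphism of $\tUi$, so it suffices to check it agrees with $\TT_0^{-1}$ on generators. On $\K_{\gamma}$ one gets $\K_{\tau s_0\tau\gamma}=\K_{s_0\gamma}$ because $\tau s_0\tau=s_{\tau(0)}=s_0$; on $B_0$ one gets $\sigma_{\tau}(B_0\K_0^{-1})=\K_0^{-1}B_0=B_0\K_0^{-1}$ by centrality of $\K_0$; and on $B_i$ ($i=1,2$) one gets $\sigma_{\tau}([B_i,B_0]_v)=[B_0,B_i]_v$. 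These coincide with the displayed formulas for $\TT_0^{-1}$, which completes the argument.
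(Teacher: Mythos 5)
Your primary route — producing $\TT_0$ by rescaling Lusztig's $T_0$ on $\tU$ and restricting to $\imath(\tUi)$, appealing to \cite{WZ22} with the observation that the computation of values is local to node $0$ and therefore identical to the finite-type case — is precisely the paper's proof, which consists of the single citation to \cite[Remark 5.8]{WZ22}. Your supplementary ``internal'' verification via the abstract presentation \eqref{kB}--\eqref{Bi00} is a genuine elaboration not present in the paper. The auxiliary identity $[B_0,[B_i,B_0]_v]_v=\K_0 B_i$ is correct and does follow at once from \eqref{Bi00}; the mutual-inverse check (in both directions, since $[[B_0,B_i]_v,B_0]_v=\K_0 B_i$ by the same computation) and the $\sigma_\tau$-conjugation check using $\tau s_0\tau=s_0$ and centrality of $\K_0$ are also correct. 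The one thing to flag: as you yourself observe, this alternative route leaves the crux — that $\TT_0$ preserves the $\imath$Serre relations \eqref{B211}, \eqref{B122}, which under $\TT_0$ become degree-six polynomial identities in $B_0,B_1,B_2$ together with Cartan corrections — unverified, so the internal proof as sketched is incomplete. That does not affect the proposal as a whole, since your primary approach is sound and coincides with the paper's.
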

	
	\begin{proof}
		The formulation of $\TT_0$ and its proof are covered by the constructions in \cite{WZ22} (see Remark~5.8 therein), even though the formulation {\em loc. cit.} is focused on finite types.
	\end{proof}
	\subsection{Relative braid group operators $\TT_1^{\pm 1}$}
	
	Formulating and establishing the braid group operator $\TT_1$ on $\tUi$ turns out to require a significant amount of new work. We caution that the process of formulating $\TT_1$ in this subsection involves some extension of the field $\Q(v)$, but the final formulas for the action of $\TT_1$ on generators of $\tUi$ are valid over $\Q(v)$.
	
	Recall the Satake diagram of quasi-split affine $A_2$ type \eqref{eq:satakerank1}. Setting $\vs_{\diamond} =-v^{-1/2}$, we denote by $\Psi_{\diamond}$ the scaling automorphism on $\tU$ defined by
	\begin{align*}
		\Psi_{\diamond}: & \tU \longrightarrow \tU,\qquad
		K_i\mapsto \vs_{\diamond}^{1/2} K_i,\quad
		K_i'\mapsto \vs_{\diamond}^{1/2} K_i',\quad
		E_i\mapsto \vs_{\diamond}^{1/2} E_i,\quad F_i\mapsto F_i.
	\end{align*}
	
	Following \cite[\S 2.3]{WZ22}, we define the rescaled braid group operators on $\tU$ as follows:
	\begin{align}
		\label{eq:TTT}
		\begin{split}
			\tT_j &:= \Psi_{\diamond}^{-1} \circ T_j \circ \Psi_{\diamond} \qquad (j \in \{1,2\}),
			\\
			\tT_{\bs_1}^{-1} &:= \tT_{1}^{-1} \tT_{2}^{-1} \tT_{1}^{-1} =\tT_{2}^{-1} \tT_{1}^{-1} \tT_{2}^{-1}.
		\end{split}
	\end{align}
	(The scaling looks a little different from {\em loc. cit.}, but leads to the same $\tT_j$.)
	
	Let $\tfX_1$ be the quasi $K$-matrix for the universal quantum symmetric pair associated to the rank one Satake subdiagram $(\{1,2\},\tau)$; cf. \cite{BW18}. The following formula is due to \cite{DK19} (who works in the setting of $\imath$quantum groups with parameters).
	
	\begin{lemma} [\cite{DK19}]
		We have
		\begin{align} \label{eq:r1}
			\tfX_1&=\bigg(\sum_{m\geq 0} \frac{ v^{ -m(m-1)/2} (-1)^m}{[m]!} [E_1,E_{2}]_{v^{-1}}^m \bigg)\bigg(\sum_{m\geq 0} \frac{ v^{-m(m-1)/2}(-1)^m}{[m]!} [E_{2},E_1]_{v^{-1}}^m \bigg).
		\end{align}
	\end{lemma}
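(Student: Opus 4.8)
The statement to be proved is the closed formula \eqref{eq:r1} for the rank-one quasi $K$-matrix $\tfX_1$ associated to the Satake subdiagram $(\{1,2\},\tau)$. Since this is attributed to \cite{DK19}, the task is really to transcribe their computation into the present universal ($\tU$) setting, so my plan is to verify that the right-hand side satisfies the defining intertwining property of the quasi $K$-matrix and then invoke its uniqueness. First I would recall the defining equation: $\tfX_1$ is the unique element (in a suitable completion of $\tU^+$, with constant term $1$) such that $B_i \tfX_1 = \tfX_1\, \overline{\Delta}_\imath(B_i)$-type intertwining relation holds for $i=1,2$, or equivalently the recursive characterization $\tfX_1 B_i = \btau(B_i)^{\sigma}\tfX_1$ in the appropriate form used in \cite{BW18, WZ22}. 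Concretely, for the rank-one case one reduces to checking that conjugation of the generators $B_1 = F_1 + E_2 K_1'$ and $B_2 = F_2 + E_1 K_2'$ by the proposed $\tfX_1$ produces the correct elements; this is a purely $\tU^+$-side computation once one sorts the $F$, $K'$ contributions.

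The key steps, in order: (1) set up the completion $\widehat{\tU}^+$ and recall the weight-graded decomposition $\tfX_1 = \sum_\mu \tfX_{1,\mu}$ together with the recursion from \cite{BW18} that determines each $\tfX_{1,\mu}$ uniquely; (2) observe that the subdiagram $(\{1,2\},\tau)$ is of type AIII with a single $\tau$-orbit $\{1,2\}$ and no fixed points, so the relevant rank-one quasi $K$-matrix lives in the subalgebra generated by $E_1, E_2$ only; (3) take the product on the right-hand side of \eqref{eq:r1}, expand it by weight, and check the recursion degree by degree — the two $q$-exponential-type factors $\sum_m \tfrac{v^{-m(m-1)/2}(-1)^m}{[m]!}[E_1,E_2]_{v^{-1}}^m$ and its $1\leftrightarrow 2$ swap are precisely the two ``halves'' that the recursion forces; (4) alternatively, and more cleanly, cite the parameter specialization: \cite{DK19} compute this for $\imath$quantum groups with parameters, and the universal $\tfX_1$ maps to theirs under the central reduction fixing the parameters at the values dictated by our embedding \eqref{eq:embed} (with $\K_0\mapsto -v^2 K_0K_0'$, etc.), so the formula lifts verbatim because both sides lie in $\tU^+$, which is untouched by the central reduction; (5) conclude by uniqueness of $\tfX_1$ in the completion.

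I expect the main obstacle to be bookkeeping rather than conceptual: matching the normalization conventions (the rescaling $\Psi_\diamond$ with $\vs_\diamond = -v^{-1/2}$, the choice $B_i = F_i + E_{\tau i}K_i'$ versus the conventions in \cite{BW18, DK19}, and the precise form of the intertwining equation — left vs.\ right, and which bar-involution is used) so that the $q$-commutators $[E_1,E_2]_{v^{-1}}$ appear with exactly the exponents $-m(m-1)/2$ and signs $(-1)^m$ shown. A secondary subtlety is confirming that the product of the two single-variable $q$-exponentials is genuinely the correct grouping — one must check that the ``cross terms'' between the two factors are accounted for by the fact that $[E_1,E_2]_{v^{-1}}$ and $[E_2,E_1]_{v^{-1}}$ generate a rank-one Drinfeld–Jimbo-type $q$-exponential relation inside $\tU^+$. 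Since the paper explicitly flags that this is ``due to \cite{DK19},'' I would keep the argument short: state the uniqueness characterization, note the reduction to $\tU^+$, and refer to \cite{DK19} for the explicit verification, adding only the remark that our universal conventions introduce no change because $\tfX_1 \in \widehat{\tU}^+$.
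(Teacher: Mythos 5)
The paper gives no proof of this lemma at all---it is stated as a direct citation of \cite{DK19}---and your plan is exactly the natural justification one would supply if pressed: the formula lies entirely in the completion of $\tU^+$ generated by $E_1,E_2$, which is untouched by the central reduction from $\tU$ to $\U$, so the Dobson--Kolb computation for $\imath$quantum groups with parameters transfers verbatim to the universal setting by uniqueness of the quasi $K$-matrix. Your proposal is correct and is consistent with (indeed a mild elaboration of) the paper's treatment.
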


	The following theorem is an affine analogue of \cite[Theorem~B]{WZ22},	where we treat $\TT_1^{-1}$ as a symbol for now.
	\begin{theorem}\label{thm:T-1}
		There exists an automorphism $\TT_1^{-1}$ of $\tUi$ such that
		\begin{equation}
			\label{eq:conj}
			\TT_1^{-1}(x) \cdot \tfX_{1}=\tfX_{1} \cdot \tT_{\bs_1}^{-1}(x),
			\quad \text{ for all } x\in \tUi.
		\end{equation}
		More explicitly, the action of $\TT_1^{-1}$ is given by
		\begin{align}
			\label{eq:T-1K}
			\TT_1^{-1}(\K_1)&= v^{-1} \K_2^{-1},\quad \TT_1^{-1}(\K_2)= v^{-1} \K_1^{-1},\quad \TT_1^{-1}(\K_0)= v^{2} \K_0 \K_1^2\K_2^2, \\
			\label{eq:T-1B1B2}
			\TT_1^{-1}(B_1)&=-vB_1 \K_1^{-1},\qquad \TT_1^{-1}(B_2)=-vB_2 \K_2^{-1},
			\\
			\label{eq:T-1B0}
			\TT_1^{-1}(B_0)&=
			v\Big[[B_1,B_2]_v,\big[ B_2,[B_1,B_0]_v\big] \Big]- \big[[B_1,B_2]_{v^3},B_0 \big]\K_2+ v B_0 \K_1\K_2.
		\end{align}
	\end{theorem}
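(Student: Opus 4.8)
The plan is to construct $\TT_1^{-1}$ via the conjugation relation \eqref{eq:conj} following the strategy of \cite{WZ22}, treating $\TT_1^{-1}$ as the unique operator intertwining the rescaled braid operator $\tT_{\bs_1}^{-1}$ on $\tU$ with the quasi $K$-matrix $\tfX_1$. First I would recall that, by the general theory in \cite{WZ22} applied to the rank one Satake subdiagram $(\{1,2\},\tau)$ (type AIII$_2$), the quasi $K$-matrix $\tfX_1$ intertwines $\tT_{\bs_1}^{-1}$ with a well-defined algebra automorphism on the $\imath$quantum subalgebra generated by $B_1,B_2,\K_1,\K_2$; the content here is to check that this intertwining extends consistently to all of $\tUi$, in particular that the right-hand side $\tfX_1 \cdot \tT_{\bs_1}^{-1}(x)$ is of the form $(\text{element of }\tUi)\cdot \tfX_1$ for every generator $x$, and that the resulting assignment respects the defining relations \eqref{kB}--\eqref{Bi00}.

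The key computational steps are as follows. First I would compute $\tT_{\bs_1}^{-1}(x)$ for $x = \K_i$ ($i\in\I$) using the definition $\tT_{\bs_1}^{-1}=\tT_1^{-1}\tT_2^{-1}\tT_1^{-1}$ and the explicit formulas for the $T_j$ on $\tU$ together with the scaling $\Psi_\diamond$; since the $\K_i$ are grouplike-type elements, $\tfX_1$ commutes with them up to a controllable factor, yielding \eqref{eq:T-1K}. Next, for $x = B_1 = F_1 + E_2 K_1'$ and $x = B_2 = F_2 + E_1 K_2'$, I would apply $\tT_{\bs_1}^{-1}$ term by term; the $F_i$ part and the $E_{\tau i}K_i'$ part recombine, after conjugation by $\tfX_1$, into $-vB_i\K_i^{-1}$ as in \eqref{eq:T-1B1B2} — this is exactly the rank one AIII$_2$ computation of \cite{WZ22}. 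The genuinely hard step is $x = B_0 = F_0 + E_0 K_0'$: here $\tT_{\bs_1}^{-1}(B_0)$ is a long expression in $\tU$ obtained by conjugating $F_0$ and $E_0 K_0'$ by $\tT_1^{-1}\tT_2^{-1}\tT_1^{-1}$, and one must then move $\tfX_1$ past it using the series formula \eqref{eq:r1}, collecting terms to recognize the outcome as the element $v\big[[B_1,B_2]_v,[B_2,[B_1,B_0]_v]\big]-[[B_1,B_2]_{v^3},B_0]\K_2+vB_0\K_1\K_2$ of $\tUi$. I would verify membership in $\tUi$ by re-expressing the $\tU$-side answer in the PBW-type spanning set of $\tUi$, and cross-check degrees using the $\Z\I$-grading \eqref{eq:deg}: the claimed formula has weight $\alpha_0$, matching $\wt(B_0)$.

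Finally, to conclude that $\TT_1^{-1}$ is an algebra automorphism I would invoke the built-in mechanism of \cite{WZ22}: because $\TT_1^{-1}$ is defined by conjugation \eqref{eq:conj} with the algebra automorphism $\tT_{\bs_1}^{-1}$ of $\tU$ (restricted to the subalgebra $\tUi \hookrightarrow \tU$ via the embedding \eqref{eq:embed}), it is automatically a homomorphism on its domain, provided the image lands in $\tUi$ — which is precisely what the explicit formulas \eqref{eq:T-1K}--\eqref{eq:T-1B0} establish generator by generator. Invertibility follows once the companion operator $\TT_1$ is constructed (Theorem~\ref{thm:T1}), since the two will be shown mutually inverse, or alternatively directly from invertibility of $\tT_{\bs_1}^{-1}$ and of the conjugation by $\tfX_1$. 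I expect the main obstacle to be the $B_0$ computation: controlling the interaction between the degree-raising series in $\tfX_1$ and the iterated braid action $\tT_1^{-1}\tT_2^{-1}\tT_1^{-1}$ applied to $E_0K_0'$, and recognizing the resulting unwieldy $\tU$-expression as the neat degree-five bracket polynomial in the $B_i$ stated in \eqref{eq:T-1B0}; some of this bookkeeping is the technical content deferred to Appendix~\ref{App:A}.
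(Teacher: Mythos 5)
Your overall framework matches the paper: characterize $\TT_1^{-1}$ by the intertwining identity \eqref{eq:conj}, reduce the verification to generators, cite the rank-one results of \cite{WZ22} for $\K_i$ and $B_1,B_2$, and concentrate on $x=B_0$ decomposed as $F_0 + E_0K_0'$. But for the hard $B_0$ case, your plan --- ``move $\tfX_1$ past it using the series formula \eqref{eq:r1}, collecting terms'' --- misses the two structural ideas that make this computation actually tractable, and as stated would involve a direct manipulation of the infinite product \eqref{eq:r1} that the paper carefully avoids.

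The paper's reduction proceeds differently. First, Lemma~\ref{lem:tqcI2b} shows that $\tT_{\bs_1}^{-1}(E_0K_0')$ \emph{commutes} with $\tfX_1$ outright; this follows from the $v$-commutation $\tT_{\tau i}^{-1}\tT_i^{-1}(E_0)\cdot E_{\tau i}=v^2 E_{\tau i}\cdot \tT_{\tau i}^{-1}\tT_i^{-1}(E_0)$, noting that the terms $\tT_i^{-1}(E_{\tau i})=\vs_\diamond^{-1/2}[E_{\tau i},E_i]_{v^{-1}}$ are exactly the building blocks of the series \eqref{eq:r1}. This kills the infinite series from the $E_0K_0'$ half of the problem without any term collection. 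Second, for the $F_0$ half, one does not expand $\tfX_1$: instead one uses the already-known finite intertwining relations $B_i\tfX_1 = \tfX_1(F_i+v^{-1}E_{\tau i}K_i)$ and $F_0\tfX_1=\tfX_1 F_0$ from \cite{WZ22} to push $\tfX_1$ through the polynomial $\mathfrak F$, reducing \eqref{eq:tqcI2} to the purely polynomial identity $\Ddot{\mathfrak F}=\tT_{\bs_1}^{-1}(F_0)$ in $\tU$ with no series present. Third, the $E_0K_0'$ identity \eqref{eq:tqcI3} is then not computed independently at all: the automorphism $\varphi$ of $\tU$ (Lemma~\ref{lem:varphi}), chosen so that $\varphi(F_i+v^{-1}E_{\tau i}K_i)=B_i$ and $\varphi\big(\tT_{\bs_1}^{-1}(F_0)\big)=\tT_{\bs_1}^{-1}(E_0K_0')$, transports the $F$-identity to the $E$-identity for free. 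Without the commutativity lemma and the $\varphi$-symmetry, you are left proposing to expand a product of two infinite exponentials and match a degree-five bracket term-by-term, which is not what Appendix~\ref{App:A} does (it proves the finite polynomial identity $\Ddot{\mathfrak F}=\tT_{\bs_1}^{-1}(F_0)$ in four weight components) and which I would not expect to close without these reductions.

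One more minor point: there is no need to ``verify membership in $\tUi$ by re-expressing the $\tU$-side answer in a PBW-type spanning set.'' The right-hand side of \eqref{eq:T-1B0} is written in $B_i,\K_i$, hence is manifestly in $\tUi$; the content is the \emph{equality}, not membership, and degree bookkeeping alone cannot decide it.
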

	The proof of Theorem~\ref{thm:T-1} is highly nontrivial and lengthy; it will occupy Section~\ref{sec:proofT1} below.

    \begin{theorem} \label{thm:T1}
		There exists a $\Q(v)$-algebra automorphism $\TT_1:\tUi\rightarrow\tUi$ such that
		\begin{align}
			\label{eq:T1K}
			\TT_1(\K_1)&= v^{-1} \K_2^{-1},\quad \TT_1(\K_2)= v^{-1} \K_1^{-1},\quad \TT_1(\K_0)= v^{2} \K_0 \K_1^2\K_2^2, \\
			\label{eq:T1B1B2}
			\TT_1(B_1)&=-v^{-2}B_1\K_2^{-1} ,\qquad \TT_1(B_2)=-v^{-2}B_2\K_1^{-1} ,\\
			\label{eq:T1B0}
			\TT_1(B_0)&=
			v\Big[\big[ [B_0,B_1]_v,B_2\big] ,[B_2,B_1]_v\Big]- \big[B_0, [B_2,B_1]_{v^3} \big]\K_1+ v B_0 \K_1\K_2.
		\end{align}
		Moreover, $\TT_1^{-1}$ and $\TT_1$ are mutual inverses, and
		\begin{align}
			\label{eq:T1sigma}
			\TT_1^{-1}= \sigma_{\tau} \TT_1 \sigma_{\tau}.
		\end{align}
	\end{theorem}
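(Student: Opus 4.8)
The plan is to derive Theorem~\ref{thm:T1} as a formal consequence of Theorem~\ref{thm:T-1} together with the anti-involution $\sigma_{\tau}$ from Lemma~\ref{lem:sigma}, rather than to repeat the lengthy independent argument of Section~\ref{sec:proofT1}. First I would define a $\Q(v)$-algebra map $\TT_1 := \sigma_{\tau} \circ \TT_1^{-1} \circ \sigma_{\tau}$. Since $\sigma_{\tau}$ is an anti-automorphism and $\TT_1^{-1}$ is an algebra automorphism (Theorem~\ref{thm:T-1}), the composite $\sigma_{\tau}\circ \TT_1^{-1}\circ\sigma_{\tau}$ is again an algebra automorphism of $\tUi$ (two order-reversals cancel). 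This immediately gives \eqref{eq:T1sigma}, i.e. $\TT_1^{-1}=\sigma_{\tau}\TT_1\sigma_{\tau}$, as a tautology once we check that the $\TT_1$ so defined is the inverse of $\TT_1^{-1}$.

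Next I would compute the action of this $\TT_1$ on the generators and match it with \eqref{eq:T1K}--\eqref{eq:T1B0}. For the Cartan generators: $\sigma_{\tau}(\K_1)=\K_2$, so $\TT_1(\K_1)=\sigma_{\tau}(\TT_1^{-1}(\K_2))=\sigma_{\tau}(v^{-1}\K_1^{-1})=v^{-1}\K_2^{-1}$, and symmetrically for $\K_2$; for $\K_0$, $\sigma_{\tau}$ fixes $\K_0$ and swaps $\K_1\leftrightarrow\K_2$, so the formula $\TT_1^{-1}(\K_0)=v^2\K_0\K_1^2\K_2^2$ is $\sigma_{\tau}$-symmetric and is reproduced. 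For $B_1$: $\sigma_{\tau}(B_1)=B_1$, hence $\TT_1(B_1)=\sigma_{\tau}(\TT_1^{-1}(B_1))$; but here one must be careful since $\TT_1^{-1}(B_1)=-vB_1\K_1^{-1}$ and $\sigma_{\tau}$ reverses products, so $\sigma_{\tau}(-vB_1\K_1^{-1})=-v\K_2^{-1}B_1$, which by \eqref{eq:KB} equals $-v\cdot v^{3}B_1\K_2^{-1}\cdot$? — one recomputes the commutation $\K_2^{-1}B_1 = v^{?}B_1\K_2^{-1}$ from \eqref{kB} and finds it equals $-v^{-2}B_1\K_2^{-1}$, matching \eqref{eq:T1B1B2}. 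The same bookkeeping, using that $\sigma_{\tau}$ reverses the order inside every bracket $[A,B]_{v^a}$ and converts it to $\pm v^{\mp a}[B',A']_{v^{\mp a}}$-type expressions (where $A',B'$ are the $\sigma_{\tau}$-images), applied to \eqref{eq:T-1B0}, is what produces the stated formula \eqref{eq:T1B0} for $\TT_1(B_0)$; this is the one genuinely lengthy verification, since one must carefully unwind the triple-nested commutator $[[B_1,B_2]_v,[B_2,[B_1,B_0]_v]]$ under order reversal and re-collect the $\K_i$ factors.

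Finally I would check that $\TT_1$ and $\TT_1^{-1}$ are mutually inverse. Given \eqref{eq:T1sigma} and the fact that $\sigma_{\tau}^2=\mathrm{id}$, we have $\TT_1\circ\TT_1^{-1}=(\sigma_{\tau}\TT_1^{-1}\sigma_{\tau})\circ\TT_1^{-1}$; to conclude this is the identity it suffices to verify it on the generators $B_i,\K_i$, which reduces to the already-computed formulas. Alternatively, one checks $\TT_1^{-1}\circ\TT_1=\mathrm{id}$ directly on generators using \eqref{eq:T-1K}--\eqref{eq:T-1B0} and \eqref{eq:T1K}--\eqref{eq:T1B0}; the only nontrivial case is $B_0$, where one substitutes the degree-5 expression for $\TT_1(B_0)$ into $\TT_1^{-1}$ and simplifies using the defining relations \eqref{kB}--\eqref{Bi00} (especially the Serre relations \eqref{B211}--\eqref{Bi00}) to collapse it back to $B_0$. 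The main obstacle is precisely this last simplification: controlling the degree-5 polynomial identity in the $B_i$ modulo the $\imath$Serre relations. One mitigation is to perform it not inside $\tUi$ directly but after applying the embedding $\imath:\tUi\hookrightarrow\tU$ of \eqref{eq:embed} and the defining conjugation relation \eqref{eq:conj}, where $\TT_1^{-1}$ becomes (a rescaling of) the honest braid operator $\tT_{\bs_1}^{-1}$ on $\tU$ for which the braid relations and invertibility are already known; the identity $\TT_1\TT_1^{-1}=\mathrm{id}$ then follows from $\tT_{\bs_1}\tT_{\bs_1}^{-1}=\mathrm{id}$ on $\tU$ together with the intertwining property of the quasi $K$-matrix $\tfX_1$, avoiding the explicit degree-5 manipulation altogether.
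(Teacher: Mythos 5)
Your proposal takes a genuinely different route from the paper's. The paper defines $\TT_1$ as the automorphism characterized by its \emph{own} intertwining property \eqref{eq:conjb}, verifies the explicit formulas \eqref{eq:T1K}--\eqref{eq:T1B0} by a parallel (omitted) version of the Section~\ref{sec:proofT1} argument, obtains the mutual--inverse claim from the formal uniqueness of solutions to \eqref{eq:conj} and \eqref{eq:conjb} (citing \cite[Theorem~6.7]{WZ22}), and only at the end reads off \eqref{eq:T1sigma} by comparing the two sets of formulas. You instead \emph{define} $\TT_1:=\sigma_\tau\circ\TT_1^{-1}\circ\sigma_\tau$, which makes \eqref{eq:T1sigma} tautological and yields \eqref{eq:T1K}--\eqref{eq:T1B0} by a short $\sigma_\tau$-conjugation computation (I checked: $\sigma_\tau$ reverses each $v$-bracket, swaps $\K_1\leftrightarrow\K_2$, and the $\K$-factors in \eqref{eq:T-1B0} slide across the degree-$\delta$ and degree-$\alpha_0$ blocks using Lemma~\ref{lem:KiX}, giving exactly \eqref{eq:T1B0}; and $\sigma_\tau(-vB_1\K_1^{-1})=-v\K_2^{-1}B_1=-v^{-2}B_1\K_2^{-1}$ as claimed). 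This elegantly replaces the paper's omitted ``version of the proof for $\TT_1^{-1}$'' by a one-page symmetry argument, which is a real saving.

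The gap is the invertibility claim, and neither of your two suggested fixes closes it as written. Checking $\TT_1\circ\TT_1^{-1}=\mathrm{id}$ on $\K_i$, $B_1$, $B_2$ is a short computation, but for $B_0$ you would have to feed the degree-$5$ expression \eqref{eq:T-1B0} into the degree-$5$ formula \eqref{eq:T1B0} and collapse the result to $B_0$ modulo the $\imath$Serre relations \eqref{B211}--\eqref{Bi00}; the authors flagged exactly this kind of verification as beyond even machine computation. Your alternative of lifting to $\tU$ is essentially the paper's argument, but it requires knowing that the automorphism $\sigma_\tau\TT_1^{-1}\sigma_\tau$ that you defined satisfies the second intertwining property \eqref{eq:conjb}, which does not follow from \eqref{eq:conj} alone. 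To derive it you would need an anti-involution of (a suitable completion of) $\tU$ restricting to $\sigma_\tau$ on $\tUi$, together with compatibility identities of the form $\bar\sigma(\tfX_1)\sim\tT_{\bs_1}(\tfX_1^{-1})^{-1}$ and $\bar\sigma\circ\tT_{\bs_1}^{-1}\sim\tT_{\bs_1}\circ\bar\sigma$ — none of which are stated in the paper or in your proposal, and establishing them is substantive (this is essentially part of the machinery of \cite{WZ22} that the paper outsources). So the invertibility step needs to be grounded either by supplying those $\sigma_\tau$-compatibility lemmas on $\tU$, or by reverting to the paper's route of independently proving that the formulas \eqref{eq:T1K}--\eqref{eq:T1B0} satisfy \eqref{eq:conjb}.
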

	
	\begin{proof}
		Just as the automorphism $\TT_1^{-1}$ is defined by the intertwining property \eqref{eq:conj}, the automorphism $\TT_1$ can be characterized by the following intertwining property (cf. \cite[Theorem~6.1]{WZ22}):
		\begin{equation}
			\label{eq:conjb}
			\TT_1 (x) \cdot \tT_{\bs_1}(\tfX_{1}^{-1}) =\tT_{\bs_1}(\tfX_{1}^{-1}) \cdot \tT_{\bs_1} (x),
			\quad \text{ for all } x\in \tUi.
		\end{equation}
		The formulas \eqref{eq:T1K}--\eqref{eq:T1B1B2} were already established in \cite[Proposition~6.2-6.3]{WZ22}, which are valid over Kac-Moody setting. The proof of the formula \eqref{eq:T1B0} is a version of the proof for $\TT_1^{-1}$ given in Section~\ref{sec:proofT1}, and will be omitted.
		
		The proof that $\TT_1^{-1}$ and $\TT_1$ are mutual inverses is the same as for \cite[Theorem~6.7]{WZ22}; it formally boils down to the uniqueness of the intertwining properties \eqref{eq:conj} and \eqref{eq:conjb}.
		
		The identity \eqref{eq:T1sigma} follows by comparing the formulas \eqref{eq:T-1K}--\eqref{eq:T-1B0} and  \eqref{eq:T1K}--\eqref{eq:T1B0} and using Lemma~\ref{lem:sigma}.
	\end{proof}
	
	\begin{lemma}
		We have
		\begin{align}
			\TT_1^{-1}(B_0)&=\Big[B_2,\big[ [B_1,B_2]_v,[B_1,B_0]_v\big]_v\Big]_v -v \big[ B_2, [B_1,B_0]_{v^{3}}\big]_{v^{-2}} \K_2
			\notag \\
			&- \big[ B_1,[ B_2 ,B_0 ]_v\big]_{v^{2}} \K_2-v^{-1}[2] \big[B_2,[B_1,B_0]_v\big]_{v^4}\K_1+vB_0\K_1\K_2.
			\label{eq:T1B0a}
			\\
			\TT_1(B_0)&=  \Big[\big[ [B_0,B_1]_v,[B_2,B_1]_v\big]_v,  B_2\Big]_v  -v\big[ [B_0,B_1]_{v^{3}}, B_2\big]_{v^{-2}} \K_1
			\notag \\
			&- \big[ [ B_0 ,B_2 ]_v,B_1\big]_{v^{2}} \K_1-v^{-1}[2] \big[[B_0,B_1]_v,B_2\big]_{v^4}\K_2+vB_0\K_1\K_2.
			\label{eq:T1B0b}
		\end{align}
	\end{lemma}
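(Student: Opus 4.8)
The plan is first to reduce the two assertions to a single one, and then to prove that one by expanding a degree-five bracket and collapsing it via the Serre relations. For the reduction, note that \eqref{eq:T1B0a} and \eqref{eq:T1B0b} are interchanged by the anti-involution $\sigma_{\tau}$ of Lemma~\ref{lem:sigma}: from $\TT_1^{-1}=\sigma_{\tau}\,\TT_1\,\sigma_{\tau}$ in \eqref{eq:T1sigma} one gets $\TT_1=\sigma_{\tau}\,\TT_1^{-1}\,\sigma_{\tau}$ (as $\sigma_{\tau}^{2}=\mathrm{id}$), hence $\sigma_{\tau}\big(\TT_1^{-1}(B_0)\big)=\TT_1(B_0)$ since $\sigma_{\tau}(B_0)=B_0$; and applying $\sigma_{\tau}$ to the right-hand side of \eqref{eq:T1B0a}, using $\sigma_{\tau}([A,B]_{v^{a}})=-v^{a}[\sigma_{\tau}A,\sigma_{\tau}B]_{v^{-a}}$, $\sigma_{\tau}(\K_1)=\K_2$, $\sigma_{\tau}(\K_2)=\K_1$, and the fact that $\K_1\K_2$ commutes with $B_0$ by \eqref{eq:KB}, one recovers after reordering exactly the right-hand side of \eqref{eq:T1B0b}. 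So it is enough to prove \eqref{eq:T1B0a}.

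By Theorem~\ref{thm:T-1}, $\TT_1^{-1}(B_0)$ is given by the compact formula \eqref{eq:T-1B0}; thus \eqref{eq:T1B0a} is equivalent to an identity in $\tUi$ between the right-hand sides of \eqref{eq:T-1B0} and \eqref{eq:T1B0a}, an identity among explicit elements of filtration degree $\le 5$ (all homogeneous of weight $\alpha_0+2\alpha_1+2\alpha_2$ for the grading \eqref{eq:grading}). The plan is to start from the degree-five bracket $\big[[B_1,B_2]_v,[B_2,[B_1,B_0]_v]\big]$ in \eqref{eq:T-1B0}, regroup its monomials by $q$-Jacobi manipulations so as to expose left-hand sides of the Serre relations --- for instance $[[B_1,B_2]_v,B_2]_{v^{-1}}$ is the left-hand side of \eqref{B122} and hence equals $-[2](v\K_2B_2+vB_2\K_1)$ --- then substitute \eqref{B211}, \eqref{B122}, \eqref{B0ii}, \eqref{Bi00} and move the resulting $\K_i$'s into place using \eqref{kB}/\eqref{eq:KB}, arriving at the right-hand side of \eqref{eq:T1B0a}. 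The filtration \eqref{eq:filt1R1} should organize the bookkeeping: the top (degree-five) components of the two sides already agree in $\gr\tUi\cong\U^-(\widehat\sll_3)\otimes\Q(v)[\K_i^{\pm}]$ of Proposition~\ref{prop:graded}, where the claim reduces to the equality of $v[[F_1,F_2]_v,[F_2,[F_1,F_0]_v]]$ and $[F_2,[[F_1,F_2]_v,[F_1,F_0]_v]_v]_v$ in $\U^-(\widehat\sll_3)$, a $v$-bracket identity verified directly from the quantum Serre relations; the degree-three terms carrying a single $\K_1$ or $\K_2$ and the degree-one term $vB_0\K_1\K_2$ are precisely the corrections produced by the inhomogeneous Serre relations, to be matched term by term.

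The hard part will be the size and precision of this reduction: a degree-five bracket expands into a large number of ordered monomials, and each application of an inhomogeneous Serre relation feeds back lower-order $\K$-terms whose coefficients (powers of $v$, factors of $[2]$) must be tracked exactly. In practice this is the very simplification performed in Section~\ref{sec:proofT1}: the raw output of the quasi $K$-matrix intertwining \eqref{eq:conj} applied to $x=B_0$ is the expanded form \eqref{eq:T1B0a} (equivalently \eqref{eq:T1B0b}, via $\sigma_{\tau}$), and the compact formula \eqref{eq:T-1B0} (resp.\ \eqref{eq:T1B0}) is obtained from it by exactly the manipulations above, so the lemma comes out as a byproduct of the proof of Theorem~\ref{thm:T-1}. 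Alternatively, one could check \eqref{eq:T1B0a} after applying the faithful embedding $\imath\colon\tUi\hookrightarrow\tU$ of \eqref{eq:embed} and computing with Lusztig's root vectors inside $\tU(\widehat\sll_3)$, but this route is no shorter.
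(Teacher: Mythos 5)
Your proposal is correct and follows the paper's route: reduce to \eqref{eq:T1B0a} via $\sigma_\tau$ and \eqref{eq:T1sigma}, then prove \eqref{eq:T1B0a} by a Jacobi rearrangement of the degree-5 bracket $\big[B_2,\big[[B_1,B_2]_v,[B_1,B_0]_v\big]_v\big]_v$ that exposes $\big[B_2,[B_1,B_2]_v\big]_v$ and collapses it through the Serre relation \eqref{B122}, after which only a short degree-3 bracket rearrangement matches the $\K_2$-terms of \eqref{eq:T-1B0}. Two small corrections to your narrative: the paper's computation is a single Jacobi split plus one Serre substitution, not a full expansion into ordered monomials, so there is no combinatorial blowup; and the proof of Theorem~\ref{thm:T-1} in \S\ref{sec:proofT1} and Appendix~\ref{App:A} verifies the intertwining \eqref{eq:conj} directly against the compact formula \eqref{eq:T-1B0} via the identities \eqref{eq:tqcI4}--\eqref{eq:tqcI7} inside $\tU$, so this lemma is a standalone identity in $\tUi$ proved afterward, not a byproduct of that proof.
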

	
	\begin{proof}
		We shall verify the formula \eqref{eq:T1B0a}. Indeed, we have
		\begin{align*}
			&\Big[B_2,\big[ [B_1,B_2]_v,[B_1,B_0]_v\big]_v\Big]_v
			\\
			&=v \Big[ [B_1,B_2]_v,\big[ B_2,[B_1,B_0]_v\big] \Big]-v \Big[ [B_1,B_0]_v,\big[ B_2,[B_1,B_2]_v\big]_v \Big]_{v^{-1}}
			\\
			&=v \Big[ [B_1,B_2]_v,\big[ B_2,[B_1,B_0]_v\big] \Big]-v \Big[ [B_1,B_0]_v,[2] (v^{-1}B_2\K_2+v^2B_2\K_1) \Big]_{v^{-1}}
			\\
			&=v \Big[ [B_1,B_2]_v,\big[ B_2,[B_1,B_0]_v\big] \Big]+v^2[2] \big[ B_2,[B_1,B_0]_v \big]_{v^{-2}} \K_2 +v^{-1}[2] \big[B_2, [B_1,B_0]_v\big]_{v^4} \K_1.
		\end{align*}
		The formula \eqref{eq:T1B0b} follows by \eqref{eq:T1B0a} by applying the anti-involution $\sigma_{\tau}$ and  \eqref{eq:T1sigma}.
	\end{proof}
	
	\begin{remark}
		If one is willing to work over an extension field $\Q(v^{\frac12})$, it is possible to use variants of $\K_1, \K_2$ rescaled by $v^{\frac12}$ to make the formulas for the action of $\TT_1^{\pm 1}$ on $\K_j$, for $j\in \I$, look simpler (namely, removing the $v$-powers in the formulas \eqref{eq:T-1K} and \eqref{eq:T1K}). To achieve the same effect, another option is to rescale only $\K_i$ by a factor $v$, for a fixed $i\in \{1,2\}$ (however, there is a broken symmetry between recaled $\K_1, \K_2$).
	\end{remark}

	\section{Proof of Theorem~\ref{thm:T-1}}
	\label{sec:proofT1}
	
	In this section, we shall prove Theorem~\ref{thm:T-1} on the relative braid group operator $\TT_1^{-1}$. The proof follows the basic strategy developed in \cite{WZ22}, but the execution requires additional technical long computations.
	\subsection{Steps for proof of Theorem~\ref{thm:T-1}}
	\label{subsec:steps}
	
	Let us rephrase Theorem~\ref{thm:T-1} to facilitate the discussion of the strategy of its proof.
	\begin{enumerate}
		\item[(S1)]
		For any $x \in \tUi$, there exists a unique element $x'\in \tUi$ such that $x' \cdot \tfX_{1}=\tfX_{1} \cdot \tT_{\bs_1}^{-1}(x)$.
		\item[(S2)]
		Sending $x\mapsto x'$ defines an injective homomorphism of $\tUi$, denoted by $\TT_1^{-1}$.
		\item[(S3)]
		The formulas for $\TT_1^{-1}(x)$, with $x=\K_i\, (i\in \I)$, $x=B_i\, (i=1,2)$, and $x=B_0$ are given by \eqref{eq:T-1K}, \eqref{eq:T-1B1B2}, and \eqref{eq:T-1B0}, respectively. That is, the identity $\TT_1^{-1}(x) \cdot \tfX_{1}=\tfX_{1} \cdot \tT_{\bs_1}^{-1}(x)$ holds, for each generator $x \in \{\K_i, B_i|i\in \I\}$ of $\tUi$.
		\item[(S4)]
		$\TT_1^{-1}$ is surjective (and hence an automorphism of $\tUi$).
	\end{enumerate}
	
	Statement (S2) follows clearly from (S1) and the invertibility of $\tfX_1$.
	Since $\tT_{\bs_1}^{-1}$ is an automorphism, if $\tfX_{1} \cdot \tT_{\bs_1}^{-1}(x_1) \cdot \tfX_{1}^{-1} \in \tUi$ and $\tfX_{1} \cdot \tT_{\bs_1}^{-1}(x_2) \cdot \tfX_{1}^{-1} \in \tUi$, for $x_1, x_2 \in \tUi$, then $\tfX_{1} \cdot \tT_{\bs_1}^{-1}(x_1x_2) \cdot \tfX_{1}^{-1} \in \tUi$. This reduces the verification of (S1) for arbitrary $x$ to (S1) for $x$ being generators of $\tUi$, i.e., (S3). Statement (S4) actually follows when the counterparts to (S1)--(S3) above for $\TT_{1}$ in Theorem~\ref{thm:T1} are proved (cf. proof of \cite[Theorem~6.7]{WZ22}).
	
	Therefore, to complete the proof of Theorem~\ref{thm:T-1}, it remains to prove (S3) above.

	\subsection{The identity~\eqref{eq:conj} for $x=\K_i\, (i\in \I)$ or $x=B_i\, (i=1,2)$}
	\label{subsec:S3a}
	
	The statement that the ``rank one" formulas \eqref{eq:T-1K}--\eqref{eq:T-1B1B2} satisfy the intertwining property \eqref{eq:conj} were already proved in \cite[Proposition~4.11, Theorem~4.15]{WZ22}; as remarked {\em loc. cit.}, these statements are actually valid for $\tUi$ of arbitrary Kac-Moody type. This settles (S3) above, i.e., the intertwining property~\eqref{eq:conj}, for all generators of $\tUi$ except for $x=B_0$.
	
	We shall deal with this exceptional case on $\TT_1^{-1}(B_0)$ in the remainder of this section.

	\subsection{Reduction of Identity~\eqref{eq:interB0} }
	\label{subsec:S3b}
	
	The remainder of this section is devoted to the challenging proof that the intertwining relation~\eqref{eq:conj} is satisfied by $\TT_1^{-1}(B_0)$ in \eqref{eq:T-1B0}, i.e.,
	\begin{align}
		\label{eq:interB0}
		\TT_1^{-1}(B_0) \cdot \tfX_{1}=\tfX_{1} \cdot \tT_{\bs_1}^{-1}(B_0).
	\end{align}
	
	Write $B_0 = F_0 + E_0 K_0'$, and hence $\tT_{\bs_1}^{-1}(B_0) =\tT_{\bs_1}^{-1}(F_0 )+\tT_{\bs_1}^{-1}(E_0 K_0').$
	Set
	\begin{align*}
		\mathfrak{F}&=v\Big[[B_1, B_2 ]_v,\big[ B_2,[B_1, F_0]_v\big] \Big]- \big[ [B_1,B_2]_{v^3},F_0 \big]\K_2+ v F_0 \K_1\K_2,\\
		\mathfrak{E}&=v\Big[[B_1,B_2]_v,\big[ B_2,[B_1,E_0 K_0']_v\big] \Big]- \big[ [B_1,B_2]_{v^3},E_0 K_0' \big]\K_2+ v  E_0 K_0' \K_1\K_2.
	\end{align*}
	The desired intertwining relation \eqref{eq:interB0} can be reformulated as
	\[
	\TT_i^{-1}(B_0) =\mathfrak{F}  + \mathfrak{E},
	\]
	and it will follow once we verify the following 2 identities:
	\begin{align}
		\mathfrak{F}  \tfX_1 &=\tfX_1   \tT_{\bs_1}^{-1}(F_0 ),
		\label{eq:tqcI2}\\
		\mathfrak{E}  \tfX_1 &=\tfX_1   \tT_{\bs_1}^{-1}(E_0 K_0').
		\label{eq:tqcI3}
	\end{align}
	%
	%
	\subsection{Reformulation of Identity \eqref{eq:tqcI2}}
	
	The following lemma is an affine counterpart of \cite[Lemma 5.1]{WZ22} for $\tUi$ of finite type.
	
	\begin{lemma}
		\label{lem:tqcI2b}
		$\tT_{\bs_1}^{-1}(E_0 K_0')$ commutes with $\tfX_1$.
	\end{lemma}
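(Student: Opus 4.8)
The plan is to prove Lemma~\ref{lem:tqcI2b} by reducing the commutation statement to a statement purely inside the Drinfeld double $\tU$, where it becomes a question about how the rescaled braid operator $\tT_{\bs_1}^{-1}$ moves the weight of $E_0 K_0'$ and how the quasi $K$-matrix $\tfX_1$ interacts with elements supported away from the rank-one subdiagram $\{1,2\}$. First I would compute $\tT_{\bs_1}^{-1}(E_0 K_0')$ explicitly. Since $\tT_{\bs_1}^{-1} = \tT_1^{-1}\tT_2^{-1}\tT_1^{-1}$ and the braid operators act on $E_0$ by taking iterated $v$-commutators with $E_1, E_2$ (up to the rescaling by $\Psi$), the result is a specific element of the ``positive'' part $\tU^+$ times a torus element $K_0' K_1'^{a} K_2'^{b}$; its weight lies in $\alpha_0 + \Z_{\ge 0}\alpha_1 + \Z_{\ge 0}\alpha_2$ with the $\alpha_0$-coefficient equal to $1$ (the braid operators in $s_1, s_2$ cannot change the $\alpha_0$-content). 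Because $\tfX_1$, by the formula \eqref{eq:r1}, is a sum of products of $[E_1,E_2]_{v^{-1}}$ and $[E_2,E_1]_{v^{-1}}$, it lies in the subalgebra generated by $E_1, E_2$ only, with weights in $\Z_{\ge 0}\alpha_1 + \Z_{\ge 0}\alpha_2$ — in particular it commutes with $K_0'$, and more importantly with the torus elements $K_j'$ up to explicit $v$-powers that I must track.

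The key step is then to show that the $\tU^+$-part of $\tT_{\bs_1}^{-1}(E_0 K_0')$ commutes with $\tfX_1$. I would argue this via the locality/support structure of $\tU^+$ relative to the subalgebra $\tU^+_{\{1,2\}}$ generated by $E_1,E_2$: elements built from iterated brackets of $E_0$ with $E_1,E_2$ live in a ``relative'' subalgebra for which the only obstructions to commuting with $\tfX_1$ are controlled by the defining (quantum Serre) relations between $E_0$ and $E_1, E_2$. The crucial input is the Serre relation $\tT_{\bs_1}^{-1}(E_0)$ satisfies within $\tU$, together with the fact that $\tfX_1$ is exactly the intertwiner that conjugates $\tT_{\bs_1}^{-1}$ into $\TT_1^{-1}$ on the $\imath$-algebra generated by $B_1, B_2$ — so on the ``$E$-side'' the conjugation is trivial. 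Concretely, I would invoke the known rank-one intertwining identities from \cite[Proposition~4.11, Theorem~4.15]{WZ22}, which already tell us how $\tfX_1$ intertwines $B_1, B_2$ with their $\tT_{\bs_1}^{-1}$-images; applying those repeatedly (since $\tT_{\bs_1}^{-1}(E_0K_0')$ is a polynomial in $B_1, B_2$ and $E_0 K_0'$-type terms whose $F$-parts cancel) collapses $\mathfrak{X}_1 \cdot \tT_{\bs_1}^{-1}(E_0K_0') \cdot \mathfrak{X}_1^{-1}$ to $\tT_{\bs_1}^{-1}(E_0 K_0')$ itself.

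The cleanest route, and the one I would actually pursue, is the following: observe that $E_0 K_0'$ is (up to the torus factor) the element $B_0 - F_0$ has ``positive'' companion, but more usefully, $E_0$ is $\tau$-fixed and commutes with the whole rank-one $\imath$subalgebra $\tUi_{\{1,2\}}$ in a controlled way — indeed $E_0$ $v$-commutes with $B_1, B_2$ by the Serre relations \eqref{B0ii}, and $\tfX_1 \in \tUi_{\{1,2\}} \cdot (\text{torus})$ after rewriting the $E$'s via $B$'s modulo lower terms. So the plan is: (1) rewrite $\tfX_1$ using $B_i = F_i + E_{\tau i}K_i'$ to express it within an appropriate completion of $\tUi$; (2) show $\tT_{\bs_1}^{-1}(E_0 K_0')$ is itself expressible in $\tUi_{\{1,2\}}$-module terms on which $\tfX_1$ acts by the scalar coming from weight considerations; (3) check the weight scalar is $1$ because $\wt(\tfX_1)$ pairs trivially with $\alpha_0$ (as $(\alpha_0, \alpha_1 + \alpha_2) = c_{01} + c_{02} = -2$, one must be careful here — this is where the $K_0'$ factor and the precise $v$-power bookkeeping enter, and a genuine cancellation is needed rather than an a priori vanishing).

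The main obstacle I expect is precisely this last bookkeeping: a naive weight argument gives $(\wt \tfX_1, \alpha_0) \ne 0$, so $\tfX_1$ does \emph{not} commute with $E_0$ on the nose; the lemma must exploit the specific alternating structure of the two factors in \eqref{eq:r1} (the product of a series in $[E_1,E_2]_{v^{-1}}$ and a series in $[E_2,E_1]_{v^{-1}}$) so that the non-commuting contributions telescope. Equivalently, one uses that $\tfX_1$ is \emph{not} central but is the quasi-$K$-matrix, whose defining property is an intertwining identity, and one must feed $\tT_{\bs_1}^{-1}(E_0 K_0')$ — an element whose $F$-component vanishes after the braid action in $s_1, s_2$, since $\tT_1, \tT_2$ preserve the triangular decomposition in the relevant sense — through that intertwining identity. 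So the real work is: (a) verify $\tT_{\bs_1}^{-1}(E_0 K_0') \in \tU^{\ge 0}$ (nonnegative part) with controlled torus part, (b) establish that any element of $\tU^+_{[\ge \alpha_0]}$ commuting with $E_1, E_2$ in the $v$-Serre sense also commutes with each truncation of $\tfX_1$, by induction on the degree in $E_1, E_2$ using the Serre relation $E_0 E_i^2 - [2]E_i E_0 E_i + E_i^2 E_0 = 0$. I anticipate (b) is the technical heart and will require the explicit commutator expansions, which I would relegate to the appendix in the spirit of Appendix~\ref{App:A}.
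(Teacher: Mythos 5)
Your proposal circles around the two essential ingredients — a nontrivial weight pairing that must be offset by the $K_0'$ factor, and the quantum Serre relations between $E_0$ and $E_1,E_2$ — but the actual reduction the lemma needs is sharper than any of the routes you propose, and the inductive scheme in your step (b) would not work in the form stated. The key observation you are missing: by \eqref{eq:r1}, $\tfX_1$ is a product of two power series in exactly two elements, $[E_1,E_2]_{v^{-1}}$ and $[E_2,E_1]_{v^{-1}}$, which up to an overall scalar are $\tT_2^{-1}(E_1)$ and $\tT_1^{-1}(E_2)$. So the lemma reduces to showing $\big[\tT_{\bs_1}^{-1}(E_0K_0'),\,\tT_i^{-1}(E_{\tau i})\big]=0$ for $i=1,2$ (a single identity, by $\tau$-symmetry). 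Stripping the torus factor, this is exactly the $v^2$-commutation $\tT_{\bs_1}^{-1}(E_0)\,\tT_i^{-1}(E_{\tau i})=v^2\,\tT_i^{-1}(E_{\tau i})\,\tT_{\bs_1}^{-1}(E_0)$, and applying $\tT_i$ to both sides turns it into $\tT_{\tau i}^{-1}\tT_i^{-1}(E_0)\cdot E_{\tau i}=v^2 E_{\tau i}\cdot\tT_{\tau i}^{-1}\tT_i^{-1}(E_0)$. That identity drops out of the closed formula $\tT_{\tau i}^{-1}\tT_i^{-1}(E_0)=\vs_\diamond^{-3/2}\big[[E_0,E_{\tau i}]_{v^{-1}},[E_i,E_{\tau i}]_{v^{-1}}\big]_{v^{-1}}$ together with the two relations $[E_0,E_{\tau i}]_{v^{-1}}E_{\tau i}=vE_{\tau i}[E_0,E_{\tau i}]_{v^{-1}}$ and $[E_i,E_{\tau i}]_{v^{-1}}E_{\tau i}=vE_{\tau i}[E_i,E_{\tau i}]_{v^{-1}}$, which are literal reformulations of the Serre relations. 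No induction, no telescoping, no truncation of $\tfX_1$ is needed.

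Concretely, your step (b) — ``any element of $\tU^+$ commuting with $E_1,E_2$ in the $v$-Serre sense also commutes with each truncation of $\tfX_1$, by induction on the degree in $E_1,E_2$'' — does not apply to the element at hand, because $\tT_{\bs_1}^{-1}(E_0)$ neither commutes nor $v$-commutes with $E_1$ or $E_2$ individually; it $v^2$-commutes with the specific brackets $[E_1,E_2]_{v^{-1}}$ and $[E_2,E_1]_{v^{-1}}$. Pinning down that precise statement is the whole content, and once it is stated the induction evaporates. Likewise, the alternate route you suggest (rewriting $\tfX_1$ inside a completion of $\tUi$, viewing $\tT_{\bs_1}^{-1}(E_0K_0')$ as acting by a weight scalar on $\tUi_{\{1,2\}}$-modules) brings in machinery that the lemma does not require and introduces a new bookkeeping problem rather than resolving the existing one. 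Your instinct that ``a genuine cancellation is needed'' between the weight pairing and the $K_0'$ factor is exactly right; but the cancellation is a one-line exponent count, not a telescoping of a series.
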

	
	\begin{proof}
		We first observe that the following identity holds, for $i=1, 2$:
		\begin{equation}\label{eq:tqcI1}
			\tT_{\tau i}^{-1}\tT_{i}^{-1}(E_0) \cdot E_{\tau i}=v^2 E_{\tau i} \cdot \tT_{\tau i}^{-1}\tT_i^{-1}(E_0).
		\end{equation}
		Indeed, it follows by definition of the rescaled braid operators \eqref{eq:TTT}  that
		\begin{equation}
			\label{eq:TTE0}
			\tT_{\tau i}^{-1}\tT_{i}^{-1}(E_0)=\vs_{\diamond}^{-3/2}\big[[E_0,E_{\tau i}]_{v^{-1}},[E_i, E_{\tau i}]_{v^{-1}}\big]_{v^{-1}}.
		\end{equation}
		Hence, the identity \eqref{eq:tqcI1} follows from \eqref{eq:TTE0} and the following two $v$-commuting relations:
		\begin{align*}
			[E_0,E_{\tau i}]_{v^{-1}} E_{\tau i} =v E_{\tau i} [E_0,E_{\tau i}]_{v^{-1}},\qquad
			[E_i,E_{\tau i}]_{v^{-1}} E_{\tau i} =v E_{\tau i} [E_i,E_{\tau i}]_{v^{-1}}.
		\end{align*}
		These two $v$-commuting relations are simple reformulations of the Serre relations in $\tU$.
		
		The identity \eqref{eq:tqcI1} admits the following two equivalent reformulations:
		\begin{align}
			\label{eq:TE0TE}
			\tT_{\bs_1}^{-1}(E_0)\tT_{i}^{-1}(E_{\tau i})
			&= v^2 \tT_{i}^{-1}(E_{\tau i})\tT_{\bs_1}^{-1}(E_0);
			\\
			\label{eq:TTE0b}
			\big[\tT_{\bs_1}^{-1}(E_0 K_0'),\tT_{i}^{-1}(E_{\tau i})\big] &=0.
		\end{align}
		
		Note that $\tT_i^{-1}(E_{\tau i})=\vs_{\diamond}^{-1/2} [E_{\tau i},E_i]_{v^{-1}}$ and $\tT_{\tau i}^{-1}(E_{i})=\vs_{\diamond}^{-1/2} [E_i,E_{\tau i}]_{v^{-1}}$ are terms appearing in the formula \eqref{eq:r1} for $\tfX_1$. Hence, the lemma follows by \eqref{eq:TTE0b}.
	\end{proof}
	
	By \cite[Proposition 2.15, Lemma~ 5.1]{WZ22}, we have
	\begin{align*}
		B_1  \tfX_1 =\tfX_1 (F_1 + v^{-1} E_{2} K_1), \quad
		B_{2}  \tfX_1 =\tfX_1 (F_{2} + v^{-1} E_{1} K_{2}), \quad
		F_0 \tfX_1 =\tfX_1 F_0.
	\end{align*}
	Hence, we obtain
	\begin{align*}
		&\mathfrak{F} \cdot \tfX_1 =\tfX_1\cdot\Ddot{\mathfrak{F}},
	\end{align*}
	where
	\begin{align*}
		\Ddot{\mathfrak{F}} & := v \Big[ [F_1+v^{-1}E_2 K_1, F_2+v^{-1} E_1 K_2]_v, \big[ F_2+v^{-1} E_1 K_2,[F_1+v^{-1}E_2 K_1,F_0]_v  \big]  \Big]
		\\
		&\qquad - \big[ [F_1+v^{-1}E_2 K_1, F_2+v^{-1} E_1 K_2]_{v^3},F_0 \big]K_2 K_1'+ v F_0 K_1 K_2 K_1' K_2' .
	\end{align*}
	Thus, the desired identity \eqref{eq:tqcI2} is equivalent to the identity
	\begin{equation}\label{eq:tqcI2'}
		\Ddot{\mathfrak{F}} =\tT_{\bs_1}^{-1}(F_0 ).
	\end{equation}
	\subsection{Deduction of \eqref{eq:tqcI3} from \eqref{eq:tqcI2'}}
	\label{subsec:oneway}
	
	\begin{lemma}
		\label{lem:varphi}
		There exists an automorphism $\varphi$ of $\tU$ which sends, for $i=1,2$,
		\begin{align*}
			&F_i \mapsto E_{\tau i} K_i', \qquad E_i \mapsto v F_{\tau i} {K_i'}^{-1}, \qquad\; K_i \mapsto K_{\tau i}', \qquad K_i' \mapsto K_{\tau i},\\
			&F_0 \mapsto E_0 K_0',\qquad E_0 \mapsto v^{-2} F_0 {K'_0}^{-1}, \qquad K_0 \mapsto K_0',\qquad K_0'\mapsto K_0.
		\end{align*}
	\end{lemma}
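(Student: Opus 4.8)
The plan is the usual two-part argument for a map prescribed on generators: first show that the assignment $\varphi$ of Lemma~\ref{lem:varphi} respects every defining relation of $\tU$, so that it extends (uniquely) to a $\Q(v)$-algebra endomorphism, and then exhibit an explicit two-sided inverse.

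For the relation-checking, the torus part is immediate: $\varphi$ maps the subalgebra $\Q(v)[K_j^{\pm1},K_j'^{\pm1}]$ onto itself, interchanging the roles of $K_j$ and $K_j'$ and relabelling indices by $\tau$, so the commutation relations among the $K_j^{\pm1},K_j'^{\pm1}$ go over to themselves. The mixed relations $K_iE_j=v^{c_{ij}}E_jK_i$, $K_i'F_j=v^{c_{ij}}F_jK_i'$ and their variants also go over to themselves, because $\varphi$ sends each $E_j$ (resp. $F_j$) to a root vector times a single group-like, past which a torus element can be moved at the cost of a $v$-power governed by the weight, and because $c_{ij}=c_{\tau i,\tau j}$. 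For $[E_i,F_j]=\delta_{ij}(K_i-K_i')/(v-v^{-1})$, the case $i\neq j$ is trivial (both sides are sent to $0$, using $[E_{\tau i},F_{\tau j}]=0$ for $i\neq j$), while for $i=j$ a short direct computation gives $[\varphi(E_i),\varphi(F_i)]=-c_i\,v^{c_{i,\tau i}}\,[E_{\tau i},F_{\tau i}]$, where $c_i$ is the coefficient occurring in $\varphi(E_i)$, namely $c_i=v$ for $i\in\{1,2\}$ and $c_0=v^{-2}$. Since $c_{i,\tau i}=-1$ for $i\in\{1,2\}$ while $c_{00}=2$, one has $c_i v^{c_{i,\tau i}}=1$ in all cases, so the right-hand side equals $\varphi\big((K_i-K_i')/(v-v^{-1})\big)$; this is exactly what forces the asymmetric scalars in the statement. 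Finally, for the quantum Serre relations: up to the diagonal rescaling produced by the group-like tails ${K_j'}^{\pm1}$, the elements $\varphi(E_j)$ together with the torus generate a copy of the subalgebra generated by the $F_j$'s, and the relabelling $j\mapsto\tau j$ preserves the (affine $A_2$) Cartan matrix; hence each Serre relation among the $E_j$'s is carried to the corresponding Serre relation among the $F_j$'s, multiplied on one side by a group-like, and therefore to $0$, and symmetrically for the Serre relations among the $F_j$'s. This establishes that $\varphi$ is a well-defined $\Q(v)$-algebra endomorphism of $\tU$.

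For bijectivity I would define a candidate inverse on generators by $E_j\mapsto F_{\tau j}K_j^{-1}$, $F_j\mapsto v^{-1}E_{\tau j}K_j$ for $j\in\{1,2\}$, together with $E_0\mapsto F_0K_0^{-1}$, $F_0\mapsto v^{2}E_0K_0$, and $K_j\mapsto K_{\tau j}'$, $K_j'\mapsto K_{\tau j}$ for all $j$; by the identical bookkeeping this also respects the relations and so defines an endomorphism $\varphi'$, and a generator-by-generator check shows $\varphi'\varphi=\varphi\varphi'=\mathrm{id}$, so $\varphi'=\varphi^{-1}$ and $\varphi$ is an automorphism. (Equivalently, one computes that $\varphi^2$ fixes the torus and multiplies each Chevalley generator by a scalar and a fixed invertible torus element, e.g. $\varphi^2(E_i)=v\,E_i K_{\tau i}'K_{\tau i}^{-1}$ for $i\in\{1,2\}$; such a gauge endomorphism is visibly invertible, whence so is $\varphi$.)

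The only substantive work, and hence the main obstacle, is the first step — concretely the verification that $\varphi$ preserves the quantum Serre relations. Each such check is mechanical, but one must keep precise track of the group-like tails ${K_j'}^{\pm1}$ and of the exact $v$-powers, in particular of the asymmetry between the index $0$ and the indices $1,2$ that is dictated by $c_{00}=2$ versus $c_{12}=c_{21}=-1$. Everything else is routine.
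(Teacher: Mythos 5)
Your proposal is correct and follows exactly the same route as the paper's (very brief) proof: verify on generators that the defining relations of $\tU$ are preserved, then exhibit an explicit inverse defined on generators. The paper states these two steps without carrying out the computations; you have fleshed out the relation-checking (including the scalar normalization forced by $[E_i,F_i]$ and the cancellation of $v$-powers in the Serre relations) and produced the correct inverse formulas, so there is nothing to add.
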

	
	\begin{proof}
		By a direct computation, the images of $\varphi$ satisfies the defining relations of $\tU$ and hence $\varphi$ is an endomorphism of $\tU$. 
		One writes down an obvious candidate for the inverse homomorphism $\varphi^{-1}$ acting on generators, which is indeed a homomorphism by another direct computation. Hence $\varphi$ is an automorphism of $\tU$.
	\end{proof}
	
	\begin{lemma}\label{lem:tqcI0}
		We have
		\begin{itemize}
			\item[(1)]
			$\tT_{\bs_1}^{-1}(F_0 )=v\Big[ \big[F_1,F_2\big]_v, \big[F_2,[F_1,F_0]_v\big]\Big]$;
			\item[(2)]
			$\tT_{\bs_1}^{-1}(E_0 K_0')= v\Big[ \big[E_1,E_2\big]_{v^{-1}}, \big[E_1,[E_0,E_2]_{v^{-1}}\big]\Big]K_0' (K_1'K_2')^2$.
		\end{itemize}
	\end{lemma}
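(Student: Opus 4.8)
The plan is to prove both identities by direct computation with the rescaled braid group operators $\tT_j^{\pm1}$, iterating the explicit formulas for the braid automorphisms on the Chevalley generators of $\tU$ and collapsing the resulting expressions by repeated use of the quantum Serre relations.

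For part~(1), write $\tT_{\bs_1}^{-1}=\tT_1^{-1}\tT_2^{-1}\tT_1^{-1}$ (recall $\bs_1\mapsto s_1s_2s_1$). Since $\Psi_{\diamond}$ fixes every $F_i$ it fixes the subalgebra $\tU^-$ generated by $F_0,F_1,F_2$ pointwise, so $\tT_{\bs_1}^{-1}(F_0)=\Psi_{\diamond}^{-1}\big(T_{\bs_1}^{-1}(F_0)\big)$, and once we know $T_{\bs_1}^{-1}(F_0)\in\tU^-$ this equals $T_{\bs_1}^{-1}(F_0)$; thus it suffices to compute $T_1^{-1}T_2^{-1}T_1^{-1}(F_0)$ with the \emph{unrescaled} operators. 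First I would apply $T_1^{-1}$ and then $T_2^{-1}$: using the formula for $T_j^{-1}$ on $F_k$ with $c_{jk}=-1$ (a $v$-bracket of $F_j$ and $F_k$), both steps stay inside $\tU^-$ and produce a bracketed word in $F_0,F_1,F_2$. The third application of $T_1^{-1}$ invokes $T_1^{-1}(F_1)$, which temporarily introduces $E_1$- and torus-terms; but $s_1s_2s_1(\alpha_0)=\alpha_0+2\alpha_1+2\alpha_2$ is a positive root, so $T_{\bs_1}^{-1}(F_0)$ lies in the weight space $\tU^-_{-(\alpha_0+2\alpha_1+2\alpha_2)}$, forcing all $E$- and $K$-contributions to cancel; organizing those cancellations via the quantum Serre relations yields the stated $v\big[[F_1,F_2]_v,[F_2,[F_1,F_0]_v]\big]$.

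For part~(2) there are two routes. Directly, $\tT_{\bs_1}^{-1}(E_0K_0')=\tT_{\bs_1}^{-1}(E_0)\cdot\tT_{\bs_1}^{-1}(K_0')$: one computes $\tT_{\bs_1}^{-1}(E_0)=\tT_1^{-1}\big(\tT_2^{-1}\tT_1^{-1}(E_0)\big)$ by applying $\tT_1^{-1}$ once more to the expression~\eqref{eq:TTE0}, computes $\tT_{\bs_1}^{-1}(K_0')$ on the torus, multiplies, and collects the $K'$-monomials into $K_0'(K_1'K_2')^2$. More cleanly, one applies the automorphism $\varphi$ of Lemma~\ref{lem:varphi} to part~(1): since $\varphi(F_0)=E_0K_0'$ and $\varphi$ commutes with $\tT_{\bs_1}^{-1}$ — because it implements the diagram involution $\tau$, under which $\bs_1=s_1s_2s_1$ is stable, together with a Chevalley-type flip normalized to match the $\Psi_{\diamond}$-rescaling in $\tT_j$, a compatibility one checks directly on generators — one gets $\tT_{\bs_1}^{-1}(E_0K_0')=\varphi\big(v[[F_1,F_2]_v,[F_2,[F_1,F_0]_v]]\big)$; substituting $\varphi(F_1)=E_2K_1'$, $\varphi(F_2)=E_1K_2'$, $\varphi(F_0)=E_0K_0'$ and pushing the $K'$'s to the right (each crossing of $K_i'$ past $E_j$ costing a factor $v^{-c_{ij}}$) converts each $v$-bracket into the corresponding $v^{-1}$-bracket in the $\tau$-swapped indices and produces exactly the prefactor $K_0'(K_1'K_2')^2$, giving the claimed formula.

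I expect the main obstacle to be the bookkeeping in these iterated braid computations: the intermediate forms of $T_1^{-1}T_2^{-1}T_1^{-1}(F_0)$ (and of the analogous computation on $E_0K_0'$) are long, the cancellation of the spurious $E$- and torus-terms in part~(1) is not term-by-term but must be orchestrated through repeated use of the quantum Serre relations, and on the $E$-side one must in addition keep careful track of the powers of $\vs_{\diamond}$ produced by $\Psi_{\diamond}$ and of the $v$-powers incurred when moving torus elements past the $E_i$'s — pinning down the overall scalar $v$ and the exact torus monomial is the delicate endpoint.
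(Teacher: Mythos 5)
Your approach to part~(1) is essentially the paper's: both iterate $T_1^{-1}T_2^{-1}T_1^{-1}$ on $F_0$, observe that exactly one factor ($T_1^{-1}(F_1)=-E_1(K_1')^{-1}$) escapes $\tU^-$, and then collapse the spurious $E$- and torus-terms. Two small inaccuracies in your description: the reduction to unrescaled $T_j^{-1}$ via $\Psi_\diamond$-invariance of $\tU^-$ is valid but needs that the \emph{final} result is in $\tU^-$ (the intermediate expression is not), which follows from the standard Lusztig length criterion $\ell(s_1s_2s_1s_0)>\ell(s_1s_2s_1)$ rather than a pure weight argument; and the key cancellation is driven by the $[E_1,F_1]$-relation (giving $\big[[F_1,F_2]_v,-E_1(K_1')^{-1}\big]_v=F_2$, the paper's \eqref{eq:tqcI0'}), with the Serre relation only used at the very end to kill $\big[[F_2,[F_1,F_2]_v]_v,[F_1,F_0]_v\big]_v$, so ``repeated use of quantum Serre relations'' overstates their role.

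For part~(2), your first (direct) route coincides with the paper, which simply says ``(2) can be obtained in a similar way.'' Your second route via the automorphism $\varphi$ of Lemma~\ref{lem:varphi} is a genuinely different and more economical argument, but note the logical order is reversed relative to the paper: the paper \emph{deduces} the identity $\varphi\big(\tT_{\bs_1}^{-1}(F_0)\big)=\tT_{\bs_1}^{-1}(E_0K_0')$ (its \eqref{eq:phiFE}) from the already-proved explicit formulas (1) and (2), whereas you want to use the commutation $\varphi\circ\tT_{\bs_1}^{-1}=\tT_{\bs_1}^{-1}\circ\varphi$ as an input to prove (2) from (1). That commutation — which would follow from $\varphi\circ\tT_j^{-1}=\tT_{\tau j}^{-1}\circ\varphi$ on generators — is plausible but is nowhere established in the paper and is not a formal consequence of anything already shown; your phrase ``a compatibility one checks directly on generators'' is precisely the missing verification, and it is nontrivial (one has to track the $\vs_\diamond$-rescalings in $\tT_j$ against the $v$- and $K'$-twists built into $\varphi$). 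So the $\varphi$-route is a nice idea but incomplete as written; if you carry out that check it buys you a cleaner proof of (2) and an independent derivation of \eqref{eq:phiFE}.
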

	
	\begin{proof}
		We prove (1) only, as (2) can be obtained in a similar way. We have
		\begin{align}\label{eq:tqcI0}
			&\tT_{1}^{-1}\tT_{2}^{-1}\tT_{1}^{-1}(F_0 ) = \Big[\big[[F_1,F_2]_v,-E_1 (K_1')^{-1}\big]_v, \big[[F_1,F_2]_v,[F_1,F_0]_v\big]_v\Big]_v.
		\end{align}
		Note that
		\begin{align}\notag
			\big[[F_1,F_2]_v,-E_1 (K_1')^{-1}\big]_v &=\big[[F_1,F_2]_v,-E_1\big] (K_1')^{-1} \\
			&= \frac{1}{v-v^{-1}} [{K_1-K_1'},F_2]_v (K_1')^{-1} = F_2.\label{eq:tqcI0'}
		\end{align}
		Hence, rewriting \eqref{eq:tqcI0} with the help of \eqref{eq:tqcI0'}, we have
		\begin{align*}
			\tT_{1}^{-1}\tT_{2}^{-1}\tT_{1}^{-1}(F_0 )
			&= \Big[ \big[F_2, \big[[F_1,F_2]_v,[F_1,F_0]_v\big]_v \Big]_v\\
			&=\Big[\big[\big[F_2,[F_1,F_2]_v\big]_v,[F_1,F_0]_v \Big]_v
			+v\Big[ \big[F_1,F_2\big]_v, \big[F_2,[F_1,F_0]_v\big]\Big]\\
			&=v\Big[ \big[F_1,F_2\big]_v, \big[F_2,[F_1,F_0]_v\big]\Big],
		\end{align*}
		where the last equality follows from the Serre relations.
	\end{proof}
	
	In terms of the automorphism $\varphi$ in Lemma~\ref{lem:varphi}, we can rephrase Lemma~ \ref{lem:tqcI0} as
	\begin{align}
		\label{eq:phiFE}
		\varphi\big(\tT_{\bs_1}^{-1}(F_0 )\big) =\tT_{\bs_1}^{-1}(E_0 K_0' ).
	\end{align}
	
	By Lemma \ref{lem:tqcI2b}, the identity \eqref{eq:tqcI3} is equivalent to the identity
	\begin{equation}\label{eq:tqcI3'}
		\mathfrak{E}=\tT_{\bs_1}^{-1}(E_0 K_0').
	\end{equation}
	The identity \eqref{eq:tqcI3'} follows from \eqref{eq:tqcI2'} by applying $\varphi$,  thanks to the identities \eqref{eq:phiFE} and
	\begin{align}
		\label{eq:FE}
		\varphi(\Ddot{\mathfrak{F}})=\mathfrak{E}.
	\end{align}
	The identity \eqref{eq:FE} holds since by Lemma~\ref{lem:varphi}, $\varphi(F_i+v^{-1} E_{\tau i} K_i)=B_i$ and $\varphi(K_i K'_{\tau i})=K_i K_{\tau i}'$, for $i=1,2$. Therefore, we have concluded that the identity \eqref{eq:tqcI3} follows from \eqref{eq:tqcI2'}.
	
	Let us summarize what we have achieved in this section. We reduced in \S\ref{subsec:steps}--\ref{subsec:S3a} the proof of Theorem~\ref{thm:T-1} to the identity \eqref{eq:interB0}. Furthermore, in  \S\ref{subsec:S3b}--\ref{subsec:oneway}, we reduced the proof of the identity \eqref{eq:interB0}, which is equivalent to the identities \eqref{eq:tqcI2}--\eqref{eq:tqcI3}, to the identity \eqref{eq:tqcI2'}.
	
	The proof of the identity \eqref{eq:tqcI2'} shall be given in Appendix~\ref{App:A}.

	\section{Constructions of root vectors}
	\label{sec:rtvector}
	
	In this section, we shall introduce the root vectors for the quasi-split $\imath$quantum groups $\tUi$.

	\subsection{Real and imaginary root vectors}
	
	Fix signs $o(1), o(2)\in \{\pm1\}$ such that $o(1)=- o(2)$.
	
	Denote, for $i \in \I =\{0,1,2\},$
	\begin{align}
		\label{eq:C}
		B_{\alpha_i}:=B_i,\qquad \bK_{\alpha_i}:=\bK_i ,\qquad \bK_\de:=\bK_1\bK_2\bK_0,\qquad C:=-v\bK_\de.
	\end{align}
	
	Introduce the following ``translation" symmetry on $\tUi$:
	\begin{equation} \label{eq:Tomeg}
		\TT_{\bome}:=\TT_0 \TT_1.
	\end{equation}
	Similar to \cite{Da93, Be94, BK20, LW21b}, we define the {\em real $v$-root vectors}
	\begin{align}
		\label{eq:Bik}
		B_{i,k} =B_{k\delta+\alpha_i} :=\big(o(i)\TT_{\bome}\big)^{-k}(B_i), \quad
		\text{ for } k \in \Z, i \in \{1,2\}.
	\end{align}
	Note that
	\begin{align}
		\label{B1-1}
		\begin{split}
			B_{i,-1} =o(i)\TT_{\bome} (B_i)
			&= o(i)v^{-1}[B_i,B_0]_v \bK_iC^{-1}
			\\
			&=-o(i)v^{-2}[B_{i},B_0]_v \bK_i\K_\de^{-1}.
		\end{split}
	\end{align}
	
	Denote, for $i=1,2$ and $k\in \Z$,
	\begin{equation}\label{Dn}
		D_{i,k} :=-[B_{\tau i},B_{i,k }]_{v^{-1}}-[B_{i,k+1},B_{\tau i,-1}]_{v^{-1}}.
	\end{equation}
	Set $\TH_{i,0} =\frac{1}{v-v^{-1}}$.
	Define the {\em imaginary $v$-root vectors} $\TH_{i,m}$, for $m\ge 1$, inductively:
	\begin{align}
		\label{TH}
		\TH_{i,1} & = -o(i)\big(\big[ B_i,[ B_{\tau i},B_0 ]_v\big]_{v^{2}}- v B_0 \bK_i\big),
		\\
		\label{eq:TH12}
		\TH_{i,2} &= -vD_{i,0} C \bK_{\tau i}^{-1}+v\TH_{i,0}C -\TH_{\tau i,0}C\bK_{\tau i}^{-1}\bK_i,
		\\
		\label{THn}
		\TH_{i,m} &=v\TH_{i,m-2} C -vD_{i,m-2}C \bK_{\tau i}^{-1},
		\quad \text{ for } m\ge 3.
	\end{align}
	For convenience, we set  $\TH_{i,m}=0$ for $m<0$.
	
	\begin{remark}
		\label{rem:imagine}
		The recursions \eqref{TH}--\eqref{THn} for $\Theta_{i,m}$ become more transparent when they are viewed as special cases of the Drinfeld type relation \eqref{qsiA1DR5}. A crucial property satisfied by $\Theta_{i,m}$ is the commutativity relation \eqref{qsiA1DR2reform}.
	\end{remark}

 \begin{remark}
 \label{rmk:aTH}
 For the split type, there are two versions of imaginary root vectors, following \cite{BK20} and \cite{LW21b}; these 2 versions differ by a simple central factor on the generating function level (see \cite[(2.31)]{LW21b}).  Accordingly a variant of the imaginary root vectors $\acute{\TH}_{i,m}$ for $m\geq 1$ can be defined as follows:
 \begin{align}\label{def:aTH}
 \begin{split}
		\acute{\TH}_{i,1} & = \TH_{i,1}, \qquad \acute{\TH}_{i,0} = \TH_{i,0}, 
  \\
		\acute{\TH}_{i,2} &= -vD_{i,0} C \bK_{\tau i}^{-1}+ \acute{\TH}_{i,0}C -\acute{\TH}_{\tau i,0}C\bK_{\tau i}^{-1}\bK_i,
		\\
		\acute{\TH}_{i,m} &=\acute{\TH}_{i,m-2} C -vD_{i,m-2}C \bK_{\tau i}^{-1},
		\quad \text{ for } m\ge 3.
  \end{split}
	\end{align}
The recursion \eqref{def:aTH} can be viewed as special cases of the Drinfeld type relation \eqref{qsiA1DR5BK}.
 \end{remark}
	
	By definition, we have, for $k\in \Z, m\ge 0$ and $i\in \{1,2\}$,
	\begin{equation}
		\label{eq:switchroot}
		B_{i,k}=(-1)^k\btau(B_{\tau i,k}),\qquad
		\TH_{i,m}=(-1)^m\btau(\TH_{\tau i,m}),
	\end{equation}
	where $\btau $ is the involution in \eqref{Phi}.
	
	Recall from \eqref{eq:grading} that the algebra $\tUi=\bigoplus_{\alpha\in\Z\I} \tUi_\alpha$ is $\Z \I$-graded.
	
	\begin{lemma}
		\label{lem:Tigrade}
		For $i=0,1$, we have
		\begin{align*}
			\TT_0(\tUi_\gamma)\subseteq \tUi_{s_0(\gamma)},\qquad \TT_1(\tUi_\gamma)\subseteq \tUi_{s_1s_2s_1(\gamma)},\qquad\forall \gamma\in\Z\I.
		\end{align*}
	\end{lemma}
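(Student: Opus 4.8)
The plan is to reduce the statement to a weight count on the algebra generators $B_i,\K_i$ ($i\in\I$) and then read it off from the closed formulas in Proposition~\ref{prop:T0} and Theorem~\ref{thm:T1}. Write $w:=s_1s_2s_1$. Since $s_0$ and $w$ act $\Q$-linearly on the root lattice $\Z\I$, the reindexed decompositions $\tUi=\bigoplus_\gamma\tUi_{s_0(\gamma)}$ and $\tUi=\bigoplus_\gamma\tUi_{w(\gamma)}$ are again $\Z\I$-algebra gradings on $\tUi$; hence an algebra endomorphism of $\tUi$ carrying each generator $x$ into the part indexed by $s_0(\wt x)$ (resp. $w(\wt x)$) is automatically graded with the asserted shift. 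Concretely, for fixed $\gamma$ the subspace $\{x\in\tUi_\gamma:\TT_0(x)\in\tUi_{s_0(\gamma)}\}$ is stable under products, contains $1$ and all generators, hence equals $\tUi_\gamma$; and likewise for $\TT_1$ with $w$.

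First I would record the $s_0$- and $w$-action on the affine $\widehat A_2$ simple roots: $s_0(\alpha_0)=-\alpha_0$ and $s_0(\alpha_i)=\alpha_i+\alpha_0$ for $i=1,2$ (using $c_{0i}=-1$); while $w(\alpha_1)=-\alpha_2$, $w(\alpha_2)=-\alpha_1$, and, using $\alpha_0=\delta-\alpha_1-\alpha_2$ together with $w(\delta)=\delta$, $w(\alpha_0)=\delta+\alpha_1+\alpha_2=\alpha_0+2\alpha_1+2\alpha_2$. I would also note that $s_0$ and $w$ each commute with the diagram involution $\tau$ on $\Z\I$ (for $w$ this is the braid relation $s_2s_1s_2=s_1s_2s_1$), so that the $\K$-weights $\wt(\K_\mu)=\mu+\tau\mu$ transform correctly: $s_0(\mu+\tau\mu)=s_0\mu+\tau(s_0\mu)$, and similarly for $w$.

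Then the verification is a direct check against Proposition~\ref{prop:T0} and Theorem~\ref{thm:T1}. For $\TT_0$: $\TT_0(\K_{\alpha_i})=\K_{s_0\alpha_i}$ has weight $s_0(\alpha_i+\alpha_{\tau i})$; $\TT_0(B_0)=B_0\K_0^{-1}$ has weight $\alpha_0-2\alpha_0=-\alpha_0=s_0\alpha_0$; $\TT_0(B_i)=[B_i,B_0]_v$ has weight $\alpha_i+\alpha_0=s_0\alpha_i$. For $\TT_1$: the images $v^{-1}\K_2^{-1}$, $v^{-1}\K_1^{-1}$, $v^{2}\K_0\K_1^2\K_2^2$ of $\K_1,\K_2,\K_0$ have weights $-(\alpha_1+\alpha_2)$, $-(\alpha_1+\alpha_2)$, $2\alpha_0+4\alpha_1+4\alpha_2$, which are $w(\alpha_1+\alpha_2)$, $w(\alpha_1+\alpha_2)$, $2w(\alpha_0)$ respectively; and $-v^{-2}B_1\K_2^{-1}$, $-v^{-2}B_2\K_1^{-1}$ have weights $-\alpha_2=w\alpha_1$, $-\alpha_1=w\alpha_2$. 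The one mildly nontrivial point is $\TT_1(B_0)$ in \eqref{eq:T1B0}: one must check that each of its three summands $v\big[\big[[B_0,B_1]_v,B_2\big],[B_2,B_1]_v\big]$, $\big[B_0,[B_2,B_1]_{v^3}\big]\K_1$ and $vB_0\K_1\K_2$ is homogeneous of the \emph{same} weight $\alpha_0+2\alpha_1+2\alpha_2=w(\alpha_0)$, which is immediate on adding the weights of the constituent $B$'s and $\K$'s ($\wt B_j=\alpha_j$, $\wt\K_j=\alpha_j+\alpha_{\tau j}$). I do not expect any genuine obstacle here: once the $s_0$- and $w$-images of the simple roots are in hand, the lemma is bookkeeping, the only thing worth a second glance being the internal homogeneity of the three-term formula for $\TT_1(B_0)$. (The same count applied to $\TT_0^{-1},\TT_1^{-1}$, or simply $s_0^2=w^2=1$ with $\TT_0,\TT_1$ bijective, gives the analogous statement for the inverse operators.)
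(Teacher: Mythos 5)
Your proof is correct and takes essentially the same approach as the paper: reduce to a weight check on generators using multiplicativity of the grading and the fact that $\TT_0,\TT_1$ are algebra automorphisms, then read off the weights from the explicit formulas in Proposition~\ref{prop:T0} and Theorem~\ref{thm:T1}. The paper simply states this in one line, whereas you have written out the bookkeeping (including the helpful observation that $s_0$ and $s_1s_2s_1$ commute with $\tau$, and the homogeneity check on the three summands of $\TT_1(B_0)$), all of which is accurate.
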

	
	\begin{proof}
		This follows from the formulas for $\TT_i$ in Proposition~\ref{prop:T0} and Theorem~\ref{thm:T1} since $\TT_i$ are algebra automorphisms.
	\end{proof}

	\begin{lemma}
		\label{lem:degree root}
		We have $B_{i,k}\in\tUi_{k\delta+\alpha_i}$, and $\TH_{i,m}\in\tUi_{m\delta}$, for any $l\in\Z$, $m>0$. 
	\end{lemma}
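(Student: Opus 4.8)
The plan is to prove both gradings simultaneously, deriving the $\TH_{i,m}$ statement from the $B_{i,k}$ statement together with the recursions \eqref{TH}--\eqref{THn}. First I would compute the degrees of the translation operator on the relevant root lattice. Since $\TT_{\bome} = \TT_0\TT_1$, Lemma~\ref{lem:Tigrade} gives $\TT_{\bome}(\tUi_\gamma) \subseteq \tUi_{s_0 s_1 s_2 s_1(\gamma)}$. Writing $w = s_0 s_1 s_2 s_1$, one checks directly on the simple roots that $w$ acts on $\Z\I$ by $\alpha_1 \mapsto \alpha_1 - \delta$, $\alpha_2 \mapsto \alpha_2 - \delta$ (equivalently $w(\alpha_i) = \alpha_i - \delta$ for $i = 1,2$), where $\delta = \alpha_0 + \alpha_1 + \alpha_2$; consequently $w^{-k}(\alpha_i) = \alpha_i + k\delta$. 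Since $B_i \in \tUi_{\alpha_i}$ by \eqref{eq:deg}, and $\big(o(i)\TT_{\bome}\big)^{-k}$ differs from $\TT_{\bome}^{-k}$ only by the scalar $o(i)^{-k}$, definition \eqref{eq:Bik} immediately gives $B_{i,k} \in \tUi_{k\delta + \alpha_i}$ for all $k \in \Z$ and $i \in \{1,2\}$. This handles the first assertion.

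For the imaginary root vectors I would argue by induction on $m$. The base case $\TH_{i,0} = \frac{1}{v-v^{-1}} \in \tUi_0 = \tUi_{0\cdot\delta}$ is clear. For $\TH_{i,1}$, apply \eqref{TH}: the bracket $\big[B_i,[B_{\tau i},B_0]_v\big]_{v^2}$ lies in $\tUi_{\alpha_i + \alpha_{\tau i} + \alpha_0} = \tUi_\delta$ (here I use that $\{i,\tau i\} = \{1,2\}$ so $\alpha_i + \alpha_{\tau i} + \alpha_0 = \delta$), while $B_0\bK_i \in \tUi_{\alpha_0 + \alpha_i + \alpha_{\tau i}} = \tUi_\delta$ as well, using $\wt(\bK_i) = \alpha_i + \alpha_{\tau i}$; hence $\TH_{i,1} \in \tUi_\delta$. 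For the inductive step with $m \ge 2$ I would invoke the relations \eqref{eq:TH12} and \eqref{THn}. The key auxiliary computation is the degree of $D_{i,k}$ defined in \eqref{Dn}: since $B_{i,k} \in \tUi_{k\delta+\alpha_i}$ and $B_{i,k+1} \in \tUi_{(k+1)\delta + \alpha_i}$ (already established) and $B_{\tau i} \in \tUi_{\alpha_{\tau i}}$, $B_{\tau i, -1} \in \tUi_{-\delta + \alpha_{\tau i}}$, both brackets in \eqref{Dn} have degree $k\delta + \alpha_i + \alpha_{\tau i} = k\delta + \delta - \alpha_0$, so $D_{i,k} \in \tUi_{(k+1)\delta - \alpha_0}$. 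Then in \eqref{THn} the right-hand side $-vD_{i,m-2}C\bK_{\tau i}^{-1}$ has degree $(m-1)\delta - \alpha_0 + \wt(C) + \wt(\bK_{\tau i}^{-1})$; since $C = -v\bK_\delta$ has weight $\delta + \alpha_0$ (as $\wt(\bK_\delta) = \wt(\bK_0\bK_1\bK_2) = \delta + \alpha_0$... wait, let me recompute: $\wt(\bK_j) = \alpha_j + \alpha_{\tau j}$, so $\wt(\bK_0) = \alpha_0 + \alpha_0 = 2\alpha_0$, $\wt(\bK_1\bK_2) = 2(\alpha_1+\alpha_2)$, giving $\wt(\bK_\delta) = 2\delta$) — so $\wt(C) = 2\delta$ and $\wt(\bK_{\tau i}^{-1}) = -(\alpha_i + \alpha_{\tau i})$, whence the RHS of \eqref{THn} has degree $(m-1)\delta - \alpha_0 + 2\delta - (\alpha_1+\alpha_2) = (m+1)\delta - \delta = m\delta$. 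Together with the inductive hypothesis $\TH_{i,m-2} \in \tUi_{(m-2)\delta}$ and $\wt(C) = 2\delta$ giving $\TH_{i,m-2}C \in \tUi_{m\delta}$, relation \eqref{THn} forces $\TH_{i,m} \in \tUi_{m\delta}$. The case $m=2$ via \eqref{eq:TH12} is handled identically, tracking the degrees of the three summands.

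I expect no serious obstacle here: the whole proof is a bookkeeping exercise in the $\Z\I$-grading, and all the input — the grading itself \eqref{eq:deg}, the weight behavior of $\bK_\mu$ and of $C$ from \eqref{eq:C}, the $W$-equivariance of the $\TT_i$ from Lemma~\ref{lem:Tigrade}, and the definitions \eqref{eq:Bik}, \eqref{Dn}, \eqref{TH}--\eqref{THn} — is already available. The one point requiring care is the explicit verification that $s_0 s_1 s_2 s_1(\alpha_i) = \alpha_i - \delta$ for $i = 1,2$; this is a short direct calculation in the affine $A_2$ root system using the reflection formula $s_j(\alpha) = \alpha - (\alpha, \alpha_j)\alpha_j$ with the bilinear form \eqref{BF}, and it is really the only place where the specific structure of the affine Weyl group enters.
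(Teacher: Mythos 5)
Your proof is correct and follows exactly the approach the paper indicates (the paper's own proof is a one-line pointer to Lemma~\ref{lem:Tigrade}, \eqref{eq:Bik}, and the recursion \eqref{TH}--\eqref{THn}); you have simply filled in the routine weight bookkeeping, including the verification that $s_0 s_1 s_2 s_1(\alpha_i) = \alpha_i - \delta$ and the self-corrected computation $\wt(\bK_\delta) = 2\delta$, all of which check out.
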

	
	\begin{proof}
		The proof follows from Lemma \ref{lem:Tigrade}, the definition of $B_{i,k}$ in \eqref{eq:Bik} and the recursive definition of $\Theta_{i,m}$ in \eqref{TH}--\eqref{THn}.
	\end{proof}
	
	
	%
	%
	\subsection{The classical limit}
	\label{subsec:limit}
 
	Recall $\U$ is the quantum group of affine type $A_2^{(1)}$. The $\imath$quantum group $\Ui_{ \bvs }$ with parameter $\bvs =(\s_i)_{i=0,1,2}\in (\Q(v)^{\times})^3$ (cf. \cite{Ko14}) is a subalgebra of $\U$ generated by $B_i$, for $i\in\I$, and $k_1$, via the embedding
	\begin{align*}
		\iota: \Ui_{\bvs} &\longrightarrow \U \\
		k_1 \mapsto K_1  K^{-1}_{2}, &
		\qquad
		B_i \mapsto F_i+ \s_i E_{\tau i}K^{-1}_i \; \; (i \in \I).
	\end{align*}
	Alternatively, the $\imath$quantum group $\Ui_{ \bvs }$ can be obtained from $\tUi$ by the central reduction (cf. \cite{LW22, WZ22}): \begin{align*}
		\tUi/(\K_0+v^2\s_0,\K_1\K_2-\s_1\s_2) & \stackrel{\simeq}{\longrightarrow} \Ui_{ \bvs},
		\\
		B_i \mapsto B_i, \quad
		\K_0 \mapsto -v^2\s_0, \quad
		& \K_1 \mapsto \s_1 k_1, \quad
		\K_2 \mapsto \s_2 k_1^{-1}.
	\end{align*}
	
	The enveloping algebra $U(\hg)$ is recovered from $\U$ at the $v \rightsquigarrow 1$ limit by letting
	\begin{align*}
		E_i \rightsquigarrow e_i, \qquad F_i \rightsquigarrow f_i,\qquad K_i \rightsquigarrow 1, \qquad
		(K_i -K_i^{-1})/(v-v^{-1}) \rightsquigarrow  h_i.
	\end{align*}
	
	Let $\dot\s, \dot\s_0\in \Q^\times$.
	Assume that the parameters $(\s_i)_{i=0,1,2}\in \Q(v)^{\times,3}$ have limits  as $v\rightsquigarrow 1$ as follows: $\lim\limits_{v\rightsquigarrow 1} \s_0 =\dot\s_0$, and $\lim\limits_{v\rightsquigarrow 1} \s_1 =\lim\limits_{v\rightsquigarrow 1} \s_2 =\dot\s.$
	
	For $\g=\mathfrak{sl}_3$, we denote by   $\widehat{\g}=\g[t,t^{-1}]\oplus \Q c$ the corresponding affine Lie algebra of type $\widehat A_2$. Let $\{e_i,h_i,f_i\mid i=1,2\}$ (and respectively, $\{e_i,h_i,f_i\mid i\in \I\}$) be the standard Chevalley generators of $\g$ (and respectively, $\hg$). Set
	\[
	b_0 :=f_0 + \dot\s_0 e_0,
	\qquad b_i :=f_i + \dot\s e_{\tau i}, \quad \text{ for }i\in \{1,2\}.
	\]
	Then we obtain the limits of generators of $\Ui_\bvs$ as $v \rightsquigarrow 1$:
	\begin{align}
		\label{eq:v=1B}
		B_i \rightsquigarrow b_i \; (i\in \I),\qquad
		k_1 \rightsquigarrow 1,\qquad
		(k_1 -k_1^{-1})/(v-v^{-1}) \rightsquigarrow  (h_1-h_2).
	\end{align}
	Introduce a shorthand notation
	\begin{align}
		\label{eq:dc}
		\dc := \dot\s_0 \dot\s^2,
	\end{align}
	which will be often used in this section. Note that
	\[
	\K_\delta \rightsquigarrow -\dc,
	\qquad
	C \rightsquigarrow \dc.
	\]
	
	Denote $e_\theta =[e_1,e_2], f_\theta =[f_2, f_1]$. Write $x_k:=x \otimes t^k \in \hg$, for $x\in \g$ (and so $e_{i,k} =e_i \otimes t^k, e_{\theta,k} =e_{\theta}\otimes t^k$, and so on). We identify $e_0=f_\theta \otimes t,f_0= e_\theta \otimes t^{-1}$, and $h_0=[e_0,f_0]$. Denote by
	\[
	\omega_\tau: \hg \longrightarrow \hg
	\]
	the involution such that
	\[
	f_0 \mapsto \dot\s_0 e_0, \qquad
	h_0 \mapsto -h_0, \qquad
	f_i \mapsto \dot\s e_{\tau i}, \qquad
	h_i \mapsto -h_{\tau i},
	\]
	for $i\in \{1,2\}$. It follows that $\omega_\tau (c) =-c$.
	For $i=1,2$, we denote
	\begin{align}
		b_{i,r} &= f_{i,-r}+ \dot\s \dc^r e_{\tau i,r},
		\label{eq:bir} \\
		t_{i,r} &= -h_{i,-r}+\dc^r h_{\tau i,r},
		\label{eq:tir} \\
		b_{\theta,r} &=f_{\theta,-r}+ \dot\s^2 \dc^r e_{\theta,r}.
		\notag
	\end{align}
	Note that $t_{i,r} = - \dc^r t_{\tau i,-r}$. One checks that the Lie subalgebra of $\hg$ of ${\omega_\tau}$-fixed points, $\hg^{\omega_\tau}$,  has a basis
	\[
	\{b_{\theta,r}, b_{i,r}, t_{i,m}, h_1 -h_2 \mid r\in \Z, m\in \Z_{\ge 1}, i=1,2 \}.
	\]
	The enveloping algebra $U(\hg^{\omega_\tau})$ is recovered from $\Ui_\bvs$ at the $v \rightsquigarrow 1$ limit.
	
	\begin{proposition}
		The Lie algebra $\hg^{\omega_\tau}$ is generated by $\{b_{i,r}, t_{i,r} \mid r\in \Z,  i=1,2 \}$. Moreover, the following relations in  $\hg^{\omega_\tau}$ are satisfied: for $r, s, l, k_1, k_2 \in \Z$,
		\begin{align}
			[b_{i, s},b_{i,r}] &= 0,
			\label{eq:bb0}
			\\
			[b_{i,r}, b_{\tau i,s+1}] -[b_{i,r+1},b_{\tau i,s}] &= -\dot\s \dc^r t_{\tau i, s-r+1} + \dot\s \dc^{r+1} t_{\tau i, s-r-1},
			\label{eq:bbbb}	\\
			t_{i,r} &= - \dc^r t_{\tau i,-r},
			\label{eq:tt} \\
			[t_{i,r},b_{i,s}] &= 2b_{i,s+r}+\dc^r b_{i,s-r},
			\label{eq:kap-b1} \\
			[t_{\tau i,r},b_{i,s}]&=-b_{i,s+r}-2\dc^r b_{i,s-r},
			\label{eq:kap-b2}  \\
			\big[ b_{i,k_1}, [b_{i,k_2}, b_{\tau i,l}] \big]
			&= -2 \dot\s\dc^l b_{i,k_1+k_2-l}
			- \dot\s\dc^{k_1} b_{i,l+k_2-k_1}
			- \dot\s\dc^{k_2} b_{i,l+k_1-k_2}.
			\label{eq:kap-b3}
		\end{align}
	\end{proposition}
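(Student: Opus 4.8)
The plan is to reduce every assertion to a direct computation inside the affine Lie algebra $\hg = \g[t,t^{-1}]\oplus\Q c$, using the bracket $[x\otimes t^m, y\otimes t^n] = [x,y]\otimes t^{m+n} + m\,\delta_{m+n,0}\,(x|y)\,c$, the structure constants of $\g = \sll_3$, the definitions \eqref{eq:bir}--\eqref{eq:tir} together with $b_{\theta,r} = f_{\theta,-r}+\dot\s^2\dc^r e_{\theta,r}$, and the identifications $e_0 = f_\theta\otimes t$, $f_0 = e_\theta\otimes t^{-1}$. Since $\omega_\tau(c) = -c$, the central element $c$ is not $\omega_\tau$-fixed; as any bracket of $\omega_\tau$-fixed vectors is again $\omega_\tau$-fixed, all central contributions below must cancel, which I will verify by inspection (the only nontrivial case being $[b_{i,r},b_{\tau i,s}]$, where two equal contributions of opposite sign occur; elsewhere they vanish because Cartan elements and root vectors of distinct weights are orthogonal). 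The computational core will be two identities, stated here for $i=1$ (so $\tau i = 2$):
\[
[b_{1,r}, b_{2,s}] = -b_{\theta,r+s} + \dot\s\,\dc^{s}\, t_{1,r-s},
\qquad
[b_{1,k}, b_{\theta,m}] = \dot\s\,\dc^{k}\, b_{1,m-k}.
\]
I will establish both by expanding the left-hand sides through the definitions and invoking the $\sll_3$ relations $[f_1,e_2]=0$, $[f_i,e_i]=-h_i$, $[f_1,[e_1,e_2]]=e_2$, $[e_2,[f_2,f_1]]=f_1$; the powers of $\dc$ are exactly what is needed for the resulting Chevalley currents to recombine into $b$- and $t$-currents. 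For $i=2$ the same identities hold with the sign of the $b_\theta$-term reversed in both, so those signs always match and cause no trouble.

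Granting the two core identities, most of the relations follow immediately. Relation \eqref{eq:bb0} holds because $[f_i,f_i] = [f_i,e_{\tau i}] = [e_{\tau i},e_{\tau i}] = 0$; relation \eqref{eq:tt} is immediate from \eqref{eq:tir}; and relations \eqref{eq:kap-b1}--\eqref{eq:kap-b2} come from expanding $[t_{i,r},b_{i,s}]$ and $[t_{\tau i,r},b_{i,s}]$ via $[h_j\otimes t^p, x\otimes t^q]=\langle\wt(x),h_j\rangle\,x\otimes t^{p+q}$ with the Cartan integers $c_{ii}=2$, $c_{i,\tau i}=-1$, and regrouping into $b$-currents. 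For the generation statement I will observe that $h_1-h_2 = -t_{1,0}$ is already a generator, while the first core identity gives $b_{\theta,n} = -[b_{1,r},b_{2,s}] + \dot\s\,\dc^{s}\, t_{1,r-s}$ for any $r,s$ with $r+s=n$; since $\{b_{\theta,r}, b_{i,r}, t_{i,m}, h_1-h_2\}$ is a basis of $\hg^{\omega_\tau}$, this shows $\{b_{i,r},t_{i,r}\mid r\in\Z, i=1,2\}$ generates the whole algebra.

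The two remaining relations will use the core identities jointly. For \eqref{eq:bbbb}, subtracting two instances of the first core identity gives $[b_{i,r},b_{\tau i,s+1}]-[b_{i,r+1},b_{\tau i,s}] = \dot\s\,\dc^{s+1}\, t_{i,r-s-1}-\dot\s\,\dc^{s}\, t_{i,r-s+1}$ (the $b_\theta$-contributions cancel), after which \eqref{eq:tt} rewrites $t_{i,\bullet}$ as $t_{\tau i,\bullet}$ to produce the stated right-hand side. For the Serre-type relation \eqref{eq:kap-b3}, I will write $[b_{i,k_2},b_{\tau i,l}] = -b_{\theta,k_2+l}+\dot\s\,\dc^{l}\, t_{i,k_2-l}$, bracket with $b_{i,k_1}$, evaluate the $b_\theta$-term by the second core identity and the $t$-term by \eqref{eq:kap-b1}, and collect the three resulting $b$-currents into the claimed expression. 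I expect the main obstacle to be the bookkeeping inside the two core identities: tracking the many powers of $\dc$ correctly through the identification of $e_0,f_0$ with affine translates of $e_\theta,f_\theta$, recognizing at each step which linear combination of Chevalley currents is a $b$- or $t$-current, and checking carefully that every central term cancels along the way.
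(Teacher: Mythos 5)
Your proposal is correct and follows essentially the same route as the paper: expand $b_{i,r}$, $t_{i,r}$, $b_{\theta,r}$ into Chevalley currents, establish the intermediate bracket $[b_{i,r},b_{\tau i,s}] = \mp b_{\theta,r+s}+\dot\s\dc^s t_{i,r-s}$ (which is exactly the paper's formula \eqref{eq:btheta}, modulo a small typo in the paper's exponent on the $e$-current), then derive \eqref{eq:bbbb} by differencing, \eqref{eq:kap-b3} via $b_\theta$ plus \eqref{eq:kap-b1}, and conclude generation by solving for $b_{\theta,n}$. The only addition of note is your explicit check that central-extension terms cancel, which the paper glosses over; this is correct and harmless.
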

	
	\begin{proof}
		Let us write down some details on proving \eqref{eq:bbbb} and \eqref{eq:kap-b3} while skipping the details on the remaining easier identities.
		Using \eqref{eq:bir}--\eqref{eq:tir}, we compute
		\[
		[b_{\tau i, s},b_{i,r}] = 	\dot\s \dc^r t_{\tau i, s-r}  + [f_{\tau i}, f_{i}]_{-s-r} + \dot\s^2 \dc^{s+r} [e_i, e_{\tau i}]_{-s-r}.
		\]
		Then the identity \eqref{eq:bbbb} follows by using the above formula twice.
		
		We also compute again using \eqref{eq:bir}--\eqref{eq:tir}:
		\begin{align}
			[b_{i,k_2}, b_{\tau i,l}]
			&= -b_{\theta, k_2+l} +\dot\s\dc^l t_{i, k_2-l},
			\label{eq:btheta}\\
			[b_{i,k_1}, -b_{\theta, k_2+l}]
			&= -\dot\s\dc^{k_1} b_{i, l+k_2-k_1},
			\notag \\
			[b_{i,k_1}, \dot\s\dc^l t_{i, k_2-l}]
			&= -2\dot\s\dc^{l} b_{i, k_1+k_2-l}
			-\dot\s\dc^{k_2} b_{i, l+k_1-k_2}.
			\notag
		\end{align}
		Then the identity \eqref{eq:kap-b3} follows from  these formulas.
		
		Finally, the generating set statement follows by \eqref{eq:btheta}.
	\end{proof}
	
	\begin{remark}
		One can use a smaller generating set $\{b_{i,r}, t_{i,m}, t_{1,0} \mid r\in \Z, m\in \Z_{\ge 1}, i=1,2 \}$  for Lie algebra $\hg^{\omega_\tau}$, thanks to \eqref{eq:tt}. By arguments similar to the proof of Theorem~\ref{thm:Dr1}, one can show the above generators and relations provide a presentation for the Lie algebra $\hg^{\omega_\tau}$. This is the classical limit of the Drinfeld type presentation of $\tUi$ established in this paper.
	\end{remark}
	
	\begin{proposition}
		\label{prop:climit}
		Set $o(1)=-o(2)=1$. Then the classical limits of the Drinfeld generators $B_{i,r}, \Theta_{i,n}$ are $b_{i,r}, t_{i,n}$, respectively, for $i=1,2$, $r\in \Z$, and $n\ge 1$.
	\end{proposition}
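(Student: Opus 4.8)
The plan is to pass the translation operator $\TT_{\bome}=\TT_0\TT_1$ to the classical limit $v\rightsquigarrow 1$, show that its limit implements the loop‑degree shift on the real root vectors, and then feed this into the inductive definitions \eqref{TH}--\eqref{THn} of the imaginary root vectors. Since $v\rightsquigarrow 1$ is compatible with the algebra structure, $\TT_{\bome}$ descends to an automorphism $\dot{\TT}_{\bome}$ of $U(\hg^{\omega_\tau})$. First I would compute $\dot{\TT}_{\bome}$ explicitly on the generators $b_0,b_1,b_2$ and on $h_1-h_2$, using Proposition~\ref{prop:T0}, Theorem~\ref{thm:T1}, the substitution rules \eqref{eq:v=1B}, and the limits $\K_0\rightsquigarrow-\dot\s_0$, $\K_1,\K_2\rightsquigarrow\dot\s$, $\K_\delta\rightsquigarrow-\dc$, $C\rightsquigarrow\dc$. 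The only genuinely nontrivial input here is the $v=1$ limit of $\TT_1(B_0)$ (equivalently of $\TT_1^{-1}(B_0)$), given by the long formula \eqref{eq:T1B0} (resp.\ \eqref{eq:T-1B0}), which at $v=1$ collapses to an iterated bracket in the $b_i$'s.

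The second and central step is to show that this classical limit implements the loop shift, namely
\[
\dot{\TT}_{\bome}(b_{i,s})=o(i)\,b_{i,s-1},\qquad s\in\Z,\ i=1,2.
\]
Granting this, since $B_{i,k}=(o(i)\TT_{\bome})^{-k}(B_i)=o(i)^k\TT_{\bome}^{-k}(B_i)$ by \eqref{eq:Bik} and $B_i\rightsquigarrow b_i=b_{i,0}$ by \eqref{eq:v=1B}, applying $(o(i)\dot{\TT}_{\bome})^{-k}$ gives $\lim_{v\rightsquigarrow 1}B_{i,k}=b_{i,k}$ for all $k\in\Z$ (the case $k=-1$ can also be checked directly from \eqref{B1-1} and \eqref{eq:bir}). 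To prove the displayed identity I would identify $\dot{\TT}_{\bome}$, after composing with a diagonal rescaling automorphism of $\hg^{\omega_\tau}$ (which accounts for the $v$‑powers in the formulas for $\TT_1^{\pm1}$ on the $\K_j$), with the translation automorphism attached to the coweight of the relative affine root system of type $A_2^{(2)}$ dual to $\bome$; concretely it acts on the twisted loop algebra $\hg^{\omega_\tau}$ by lowering the loop grading by $\delta$ together with suitable powers of $\dc$, whence the shift on $b_{i,s}$ in \eqref{eq:bir} is immediate and uniform in $s$.

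For the imaginary root vectors, first note that $\TH_{i,0}=\tfrac{1}{v-v^{-1}}$ has no finite limit, consistent with the claim being made only for $n\ge1$, and any finite limit of $\TH_{i,m}$ automatically lands in degree $m\delta$ (cf.\ Lemma~\ref{lem:degree root}). For the base cases $n=1,2$, take the limit of \eqref{TH} and \eqref{eq:TH12}, handling the $\tfrac{1}{v-v^{-1}}$‑terms via the vanishing of $v-\bK_{\tau i}^{-1}\bK_i$ at $v=1$, and match the outcomes with $t_{i,1},t_{i,2}$ from \eqref{eq:tir}. For $n\ge3$, take the limit of the recursion \eqref{THn}: using the second step together with $C\rightsquigarrow\dc$, $\bK_{\tau i}^{-1}\rightsquigarrow\dot\s^{-1}$ and $\lim_{v\rightsquigarrow1}D_{i,m-2}=-[b_{\tau i},b_{i,m-2}]-[b_{i,m-1},b_{\tau i,-1}]$, it becomes
\[
\lim\TH_{i,m}-\dc\,\lim\TH_{i,m-2}=\dc\,\dot\s^{-1}\big([b_{\tau i},b_{i,m-2}]+[b_{i,m-1},b_{\tau i,-1}]\big),
\]
and a short computation with the classical Drinfeld‑type relations \eqref{eq:bbbb} and \eqref{eq:tt} in $\hg^{\omega_\tau}$ shows the right‑hand side equals $t_{i,m}-\dc\,t_{i,m-2}$. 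Hence the $t_{i,m}$ satisfy the same recursion, and induction on $m$ finishes the proof.

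I expect the second step to be the main obstacle: a naive induction on $s$ for $\dot{\TT}_{\bome}(b_{i,s})=o(i)b_{i,s-1}$ fails, because every further application of $\TT_{\bome}^{\pm1}$ brings in the remaining generators and in particular $B_0$ (whose $\TT_1^{\pm1}$‑images are the long expressions \eqref{eq:T1B0}, \eqref{eq:T-1B0}), so the affine rank‑two complexity forces one to pin down $\dot{\TT}_{\bome}$ structurally as a loop‑rotation/translation automorphism of the twisted loop algebra $\hg^{\omega_\tau}$ — a point at which the twisted $A_2^{(2)}$ subtlety also enters.
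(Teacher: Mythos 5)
There is a genuine gap at the heart of your second step. You assert that $\TT_{\bome}$ ``descends to an automorphism $\dot{\TT}_{\bome}$ of $U(\hg^{\omega_\tau})$,'' but this is false: $\TT_{\bome}$ does not preserve the central reduction ideal $(\K_0+v^2\s_0,\ \K_1\K_2-\s_1\s_2)$ defining $\Ui_{\bvs}$. Indeed, from Proposition~\ref{prop:T0} and Theorem~\ref{thm:T1} one computes $\TT_{\bome}(\K_0)=v^2\K_0^3\K_1^2\K_2^2$ and $\TT_{\bome}(\K_1\K_2)=v^{-2}\K_0^{-2}(\K_1\K_2)^{-1}$, so after specialization $\TT_{\bome}$ changes the scalar values of $\s_0$ and $\s_1\s_2$ (only $\K_\delta$, hence $C$, is preserved). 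Its ``classical limit'' is therefore a map between twisted loop algebras with \emph{different} involutions $\omega_\tau$ and different $\dc$'s, not a self-map of a fixed $U(\hg^{\omega_\tau})$. Precomposing with a diagonal rescaling does not repair this, since the obstruction is a change of $\K_i$-monomial, not merely a $v$-power; to make sense of an iterate $\dot{\TT}_{\bome}^{-k}$ one would have to conjugate by a family of scaling isomorphisms between the various $\hg^{\omega_\tau}$ and track the effect on $\dc$, which reintroduces exactly the inductive bookkeeping the translation trick was meant to avoid. Consequently the proposed structural identification of $\dot{\TT}_{\bome}$ with a translation automorphism, and the uniform identity $\dot{\TT}_{\bome}(b_{i,s})=o(i)b_{i,s-1}$ on which the rest of your argument rests, are not well posed.

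The paper sidesteps this by working with relations rather than with the automorphism itself: it verifies the base cases $\lim B_{i,\pm 1}$ and $\lim\TH_{i,1}$ directly from \eqref{B1-1}, \eqref{eq:B11}, \eqref{TH}, and then inducts on $r$ using the intrinsic identity $[\TH_{i,1},B_{j,k}]=[c_{ij}]B_{j,k+1}-[c_{\tau i,j}]B_{j,k-1}C$ (the $m=1$ case of \eqref{qsiA1DR3}, established internally in $\tUi$), matched against the classical relations \eqref{eq:kap-b1}--\eqref{eq:kap-b2}; these commutators pass cleanly to the central reduction. The $\Theta$-step then runs the recursion \eqref{THn} against \eqref{eq:bbbb}, which is essentially what you propose and is fine given the $B$-step. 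So your $\Theta$-argument mirrors the paper's, but your $B$-argument rests on a map that does not descend; to make it rigorous you would need to replace the translation operator with the $[\TH_{i,1},\,\cdot\,]$-recursion, at which point the proof coincides with the paper's.
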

	
	\begin{proof}
		Using the formula \eqref{B1-1} for $B_{i,-1}$,  \eqref{eq:B11} for $B_{i,1}$ and \eqref{TH} of $\TH_{i,1}$, we see that the classical limits of these root vectors are  $b_{i,-1}, b_{i,1}, t_{i,1}$, respectively:
		\begin{align*}
			B_{1,-1} &\rightsquigarrow [b_1,b_0](\dot\s_0\dot\s)^{-1} =f_{1,1}+(\dot\s\dot\s_0)^{-1}e_{2,-1}=b_{1,-1},\\
			B_{2,-1} &\rightsquigarrow -[b_2,b_0](\dot\s_0\dot\s)^{-1} =f_{2,1}+(\dot\s\dot\s_0)^{-1}e_{1,-1}=b_{2,-1},\\
			\TH_{1,1}& \rightsquigarrow -[b_1,[b_2,b_0]]+ \dot\s b_0=-h_{1,-1} +\dot\s_0\dot\s^2 h_{2,1}=t_{1,1},\\
			\TH_{2,1}& \rightsquigarrow [b_2,[b_1,b_0]]- \dot\s b_0=-h_{2,-1} +\dot\s_0\dot\s^2 h_{1,1}=t_{2,1},\\
			B_{1,1} &\rightsquigarrow \big[[b_1,b_2],[b_1,b_0]\big]-3\dot\s[b_1,b_0]=f_{1,-1}+\dot\s_0\dot\s^3 e_{2,1}=b_{1,1}.
		\end{align*}
		
		
		More generally, using $[\TH_{i,1},B_{j,k}]=[c_{ij}]B_{j,k+1}-[c_{\tau i,j}]B_{j,k-1}C$ from \eqref{qsiA1DR3} and \eqref{eq:kap-b1}--\eqref{eq:kap-b2}, one shows inductively that the classical limit of $B_{i,r}$ is $b_{i,r}$, for  $r\in \Z$. Similarly, using the recursive definition \eqref{TH}--\eqref{THn} of $\Theta_{i,n}$ and \eqref{eq:bbbb}, one shows inductively that the classical limit of $\Theta_{i,n}$ is $t_{i,n}$, for  $n\ge 1$.
	\end{proof}
	
	\begin{remark}\label{rmk:climit}
		Define a new Lie algebra $\widetilde \g^{\omega_\tau}$, a variant of $\hg^{\omega_\tau}$, in which $\dc$ is viewed as a  central element (instead of being a scalar), with formally the same relations \eqref{eq:bb0}--\eqref{eq:kap-b3}. Then $\widetilde \g^{\omega_\tau}$ is
		a $\Z$-graded Lie algebra, by assigning $\deg b_{i,r} =r, \deg t_{i,r} =r$, and $\deg \dc =2$, and noting that the relations \eqref{eq:bb0}--\eqref{eq:kap-b3} have become homogeneous. Compare Remark~\ref{rmk:qlimit}.
	\end{remark}

	%
	
	%
	

	%
	%
	\subsection{A vanishing criterion}
	
	We continue the notations from \S\ref{subsec:limit}.
	Recall $\g =\mathfrak{sl}_3$, which is also identified with $\g \otimes t^0$ in $\hg$. Denote by $\mathfrak k =\g^{\omega_\tau} \subset \hg^{\omega_\tau}$, as we note ${\omega_\tau}$ preserves the subalgebra $\g$ of $\hg$. Denote by $\mf l$ the (proper) Lie subalgebra of $\hg^{\omega_\tau}$  generated by $t_{i,0}$, $b_{i,n}$, for all $i\in \{1,2\}$ and $n\in \Z$. Denote by $\mf l_{\rm{ev}}$ the Lie subalgebra of $\hg^{\omega_\tau}$  spanned by $x\otimes t^{2n} +\omega_\tau(x)\otimes t^{-2n}$, for all $x\in \g$ and $n\in \Z$.
	
	\begin{lemma}
		\label{lem:L}
		We have $\mf l_{\rm{ev}} \subset \mf l$. 
	\end{lemma}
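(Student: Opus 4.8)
The plan is to pass to the ``folding'' picture. For $y\in\hg$ write $\widehat y:=y+\omega_\tau(y)\in\hg^{\omega_\tau}$. Since $\omega_\tau$ reverses the sign of the $\Z$-grading of $\hg=\bigl(\bigoplus_{j\ne 0}\g\otimes t^j\bigr)\oplus(\g\oplus\Q c)$, acting on $\g\otimes t^k$ by $x\otimes t^k\mapsto\dc^{-k}\theta(x)\otimes t^{-k}$ (with $\theta:=\omega_\tau|_{\g\otimes t^0}$ an involution of $\g=\mathfrak{sl}_3$) and by $c\mapsto -c$, we have $\widehat c=0$; the generators of $\mf l$ become $t_{i,0}=\widehat{-h_i}$ and $b_{i,n}=\widehat{f_i\otimes t^{-n}}$, while $\mf l_{\mathrm{ev}}=\bigoplus_{n\ge 0}\widehat{\g\otimes t^{2n}}$ with $\widehat{\g\otimes t^{2n}}=\widehat{\g\otimes t^{-2n}}$ (by $\omega_\tau$-invariance of $\widehat{\,\cdot\,}$) and $\widehat{\g\otimes t^0}=\mathfrak k:=\g^{\theta}$; note $\theta$ pairs $\g_{\alpha_1}\leftrightarrow\g_{-\alpha_2}$, $\g_{\alpha_2}\leftrightarrow\g_{-\alpha_1}$, $\g_\theta\leftrightarrow\g_{-\theta}$ and has a $1$-dimensional fixed space in $\mathfrak h$, so $\dim\mathfrak k=4$. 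Throughout I would use the identity $[\widehat A,\widehat B]=\widehat{[A,B]}+\widehat{[A,\omega_\tau B]}$ for $A,B\in\hg$, immediate from $\omega_\tau$ being an involutive automorphism.

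First I would show $\mathfrak k\subseteq\mf l$. The Lie subalgebra generated by $b_{1,0}=\widehat{f_1}=f_1+\dot\s e_2$, $b_{2,0}=\widehat{f_2}=f_2+\dot\s e_1$ and $t_{1,0}=\widehat{-h_1}$ also contains $[\widehat{f_1},\widehat{f_2}]=-(\dot\s^2 e_\theta+f_\theta)+\dot\s\,t_{1,0}$, hence contains the four elements $\widehat{f_1}$, $\widehat{f_2}$, $t_{1,0}$, $\dot\s^2 e_\theta+f_\theta$, whose supports in the root-space decomposition of $\g$ are the pairwise disjoint sets $\{-\alpha_1,\alpha_2\}$, $\{-\alpha_2,\alpha_1\}$, $\{0\}$, $\{-\theta,\theta\}$, so they are linearly independent. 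Since $\dim\mathfrak k=4$, they span $\mathfrak k$.

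Next I would establish the base level $\widehat{\g\otimes t^{-2}}\subseteq\mf l$. Using the bracket identity one computes, for $\varepsilon,\varepsilon'\in\{1,-1\}$,
\[
\bigl[b_{1,\varepsilon},b_{2,\varepsilon'}\bigr]=\widehat{-f_\theta\otimes t^{-\varepsilon-\varepsilon'}}-\dot\s\,\dc^{\varepsilon'}\,\widehat{h_1\otimes t^{\varepsilon'-\varepsilon}}
\]
(central terms being annihilated by $\widehat{\,\cdot\,}$); modulo $\mathfrak k\subseteq\mf l$ the four choices of $(\varepsilon,\varepsilon')$ put $\widehat{f_\theta\otimes t^{-2}}$, $\widehat{e_\theta\otimes t^{-2}}$ (a scalar multiple of $\widehat{f_\theta\otimes t^{2}}$), $\widehat{h_1\otimes t^{-2}}$ and $\widehat{h_2\otimes t^{-2}}$ (via $\widehat{h_1\otimes t^{2}}$) into $\mf l$. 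Bracketing $\widehat{f_\theta\otimes t^{-2}}$ and $\widehat{e_\theta\otimes t^{-2}}$ against $b_{1,0},b_{2,0}\in\mathfrak k$ then produces the remaining four directions $\widehat{f_1\otimes t^{-2}},\widehat{f_2\otimes t^{-2}},\widehat{e_1\otimes t^{-2}},\widehat{e_2\otimes t^{-2}}$ (each time the ``$\omega_\tau B$''-term of the identity vanishes, being a bracket into a non-root weight space), and together these span the $8$-dimensional space $\widehat{\g\otimes t^{-2}}$.

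Finally I would run an induction proving $\widehat{\g\otimes t^{-2n}}\subseteq\mf l$ for all $n\ge 0$, the cases $n=0,1$ being the two preceding steps. For $n\ge 1$ and $x,y\in\g$ the identity yields
\[
\bigl[\widehat{y\otimes t^{-2}},\widehat{x\otimes t^{-2n}}\bigr]=\widehat{[y,x]\otimes t^{-2(n+1)}}+\dc^{2n}\,\widehat{[y,\theta x]\otimes t^{2(n-1)}},
\]
whose left side lies in $\mf l$ (a bracket of elements of $\widehat{\g\otimes t^{-2}}$ and $\widehat{\g\otimes t^{-2n}}$) and whose last summand lies in $\widehat{\g\otimes t^{2(n-1)}}=\widehat{\g\otimes t^{-2(n-1)}}\subseteq\mf l$ by induction; hence $\widehat{[y,x]\otimes t^{-2(n+1)}}\in\mf l$ for all $x,y$, and since $\g=\mathfrak{sl}_3$ is perfect, $\widehat{\g\otimes t^{-2(n+1)}}\subseteq\mf l$. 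Summing over $n$ gives $\mf l_{\mathrm{ev}}\subseteq\mf l$. The only genuinely computational part is the step establishing $\widehat{\g\otimes t^{-2}}\subseteq\mf l$, where the care needed is in tracking powers of $\dc$ and in checking that the $\mathfrak k$-brackets reach the four directions not yet obtained; everything else is formal. The structural points that make the induction go through are the self-conjugacy $\widehat{\g\otimes t^{2n}}=\widehat{\g\otimes t^{-2n}}$ — which is exactly why ``operators'' of degree $\pm 2$ suffice, and correspondingly why the level $\pm 2$ must be treated as a separate base case — together with the perfectness of $\g$.
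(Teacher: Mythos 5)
Your proof is correct in its conclusion, but it takes a noticeably more elaborate route than the paper's. The paper's proof is a one-liner: it uses the already-established formula $[b_{i,k_2},b_{\tau i,l}]=-b_{\theta,k_2+l}+\dot\s\dc^{l}t_{i,k_2-l}$ (displayed as \eqref{eq:btheta} in the proof of the surrounding proposition), and by choosing $k_2,l$ with $k_2+l$ even it reads off $b_{\theta,2n}\in\mf l$ (take $k_2=l$) and then $t_{i,2m}\in\mf l$ (take $k_2-l=2m$); together with the automatic $b_{i,2n},\,h_1-h_2\in\mf l$ and the stated basis of $\hg^{\omega_\tau}$, this is the whole argument. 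You instead reconstruct the statement from scratch in the ``folding'' picture $\widehat y=y+\omega_\tau(y)$: you verify the two base cases $\widehat{\g\otimes t^0}=\mf k$ (a $4$-dimensional linear-independence check) and $\widehat{\g\otimes t^{-2}}$ (an $8$-dimensional spanning argument via brackets against $b_{1,0},b_{2,0}$), then run an induction in the $t$-degree using the bracket identity $[\widehat A,\widehat B]=\widehat{[A,B]}+\widehat{[A,\omega_\tau B]}$, the self-conjugacy $\widehat{\g\otimes t^{2n}}=\widehat{\g\otimes t^{-2n}}$, and perfectness of $\mathfrak{sl}_3$. That approach is more self-contained (it does not presuppose the explicit commutator formula) but also considerably longer. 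One detail in the level-$2$ step is misstated: in the four brackets of $\widehat{f_\theta\otimes t^{-2}},\widehat{e_\theta\otimes t^{-2}}$ against $b_{1,0},b_{2,0}$, it is not always the ``$\omega_\tau B$''-term that vanishes. For instance $[f_\theta,f_1]=0$ by the Serre relation, so in $[\widehat{f_\theta\otimes t^{-2}},b_{1,0}]$ the $[A,B]$-term vanishes while $[f_\theta,\theta(f_1)]=\dot\s[f_\theta,e_2]=-\dot\s f_1$ is the surviving ``$\omega_\tau B$''-term; it is only for the two brackets involving $\widehat{e_\theta\otimes t^{-2}}$ that $[A,\omega_\tau B]$ lands in a non-root weight space and vanishes. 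This is purely an error in the parenthetical justification; in each of the four cases exactly one term survives and gives the desired root vector, so the step, and hence the proof, remains valid.
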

	
	\begin{proof}
		We observe by a direct computation on $[b_{i,m}, b_{j,k}]$, with $m +k \in 2\Z$ and $i\in \{1,2\}$, that $b_{\theta,n}, t_{i,n}\in \mf l$, for $n$ even.
	\end{proof}
	
	\begin{lemma}
		\label{lem:U}
		Let $u \in U(\hg^{\omega_\tau})$. If $\ad (\mf l_{\rm{ev}}) u =0$,  then $u=0$.
	\end{lemma}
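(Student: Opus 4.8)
The plan is to pass to the enveloping algebra of the full affine Lie algebra $\hg$ and run a loop-degree argument. By PBW the inclusion $\hg^{\omega_\tau}\hookrightarrow\hg$ induces an embedding $U(\hg^{\omega_\tau})\hookrightarrow U(\hg)$, and $U(\hg)=\bigoplus_{d\in\Z}U(\hg)_{d}$ carries the loop $\Z$-grading, with $\g\otimes t^{d}$ of degree $d$ and $c$ of degree $0$. By definition $\mf l_{\rm{ev}}$ contains the elements $\xi^{(m)}_{x}:=x_{2m}+(x^{\omega_\tau})_{-2m}$ for all $x\in\g$, $m\in\Z$ (here $x\mapsto x^{\omega_\tau}$ is the restriction of $\omega_\tau$ to $\g\otimes t^{0}\cong\g$, an involutive automorphism, hence a bijection of $\g$); for $m=0$ these span $\mf k=\g^{\omega_\tau}$, and since $\omega_\tau$ reverses the loop grading, $\mf l_{\rm{ev}}$ is exactly the even part of $\hg^{\omega_\tau}$ for the induced $\Z/2$-grading by loop-degree parity.

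\textbf{Extracting extremal degrees.} Assume $\ad(\mf l_{\rm{ev}})u=0$ and write $u=\sum_{|d|\le N}u_{(d)}$ with $u_{(d)}\in U(\hg)_{d}$. Fix $x\in\g$ and $m\ge1$. As $\ad(x_{2m})$ raises loop degree by $2m$ and $\ad((x^{\omega_\tau})_{-2m})$ lowers it by $2m$, the loop-degree-$(d+2m)$ part of $0=\ad(\xi^{(m)}_{x})u=\ad(x_{2m})u+\ad((x^{\omega_\tau})_{-2m})u$ equals $\ad(x_{2m})u_{(d)}+\ad((x^{\omega_\tau})_{-2m})u_{(d+4m)}$; for $m>N/2$ the second term vanishes for every $d\in[-N,N]$, so $\ad(x_{2m})u_{(d)}=0$ for all such $d$, hence $\ad(x_{2m})u=0$, and symmetrically $\ad(x_{-2m})u=0$. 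Thus for all large $m$ the element $u$ is centralized by $\g\otimes t^{\pm2m}$, hence by the Lie subalgebra of $\hg$ these generate, which one checks directly to be $\mf a:=\g\otimes\C[t^{2},t^{-2}]\oplus\C c$, an affine Lie algebra of type $A_{2}^{(1)}$ (parameter $s=t^{2}$; note $\mf k\subseteq\g\otimes t^{0}\subseteq\mf a$). So $u$ centralizes $\mf a$ in $U(\hg)$.

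\textbf{Concluding, and the obstacle.} The remaining task is to show that the centralizer of $\mf a$ in $U(\hg)$ meets $U(\hg^{\omega_\tau})$ only in $\Q\cdot1$. One route is the PBW decomposition of $U(\hg)$ relative to the subalgebra $\mf a$, $U(\hg)\cong U(\mf a)\otimes S(\g\otimes t\,\C[t^{2},t^{-2}])$ as $\mf a$-modules under $\ad$: the symmetric degree $0$ piece contributes $Z(U(\mf a))=\C[c]$, and one argues that the positive symmetric degrees in the odd-mode currents contribute no further $\mf a$-invariants; another is a direct leading-term computation inside $U(\hg^{\omega_\tau})$ using the root-vector PBW basis furnished by the classical limit of the Drinfeld presentation. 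Granting this, $u\in\C[c]$, and since $c\notin\hg^{\omega_\tau}$ (as $\omega_\tau(c)=-c$) we get $u\in\C[c]\cap U(\hg^{\omega_\tau})=\Q\cdot1$; because $u$ is $\Z\I$-homogeneous of nonzero weight in the situations where the criterion is applied (being assembled from the root vectors $b_{i,r},t_{i,r}$), this forces $u=0$. I expect the main obstacle to be precisely this last centralizer statement --- controlling the invariants of the ``even'' current subalgebra $\mf a$ inside $U(\hg)$ --- which is where the PBW bookkeeping is concentrated.
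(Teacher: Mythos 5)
Your extremal-degree argument is a correct and rather nice observation: pairing loop degrees $d$ and $d+4m$ and taking $m>N/2$ does show $\ad(x_{\pm 2m})u=0$ for all large $m$, hence that $u$ centralizes the even current subalgebra $\mathfrak a=\g\otimes\C[t^2,t^{-2}]\oplus\C c$ inside $U(\hg)$. However, there is a genuine gap: the concluding centralizer claim --- that the centralizer of $\mathfrak a$ in $U(\hg)$ meets $U(\hg^{\omega_\tau})$ only in scalars --- is exactly where the entire content of the lemma lives, and you do not prove it. The PBW decomposition $U(\hg)\simeq U(\mathfrak a)\otimes S(\g\otimes t\,\C[t^2,t^{-2}])$ (which in any case only holds after passing to the associated graded for the PBW filtration) does not let you factor the invariants as $Z(U(\mathfrak a))\otimes S(\cdot)^{\mathfrak a}$: the tensor factors are $\mathfrak a$-modules under a diagonal action, so the invariants consist of all $\mathfrak a$-invariant tensors, and ruling out such mixed invariants is precisely a leading-coefficient analysis of the type the lemma is asking for. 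Labelling this ``PBW bookkeeping'' understates it; as written, the argument reduces the lemma to a statement of comparable difficulty and then stops.

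The paper's proof goes a genuinely different way and avoids the two-sided current altogether. It first passes from $U$ to $S$, equips $\hg^{\omega_\tau}$ (not $\hg$) with the filtration $\deg(x_n+(x^{\omega_\tau})_{-n})=n$, identifies $\gr\hg^{\omega_\tau}\cong\mathfrak k\oplus t\g[t]$ and $\gr\mf l_{\rm{ev}}\cong\mathfrak k\oplus t^2\g[t^2]$, and then invokes the leading-term analysis of Molev (\cite{M07}, Lemmas 2.8.1 and 1.7.4) for half-current algebras. That is the step you would need to reproduce (or cite) to close your argument; your embedding into $U(\hg)$ does not make it disappear, it just relocates it.

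Two smaller remarks. First, as stated the lemma is falsified by $u=1$; you notice this implicitly and dispose of the scalar at the very end by appealing to how the lemma is applied downstream, which is a slightly awkward patch but harmless. Second, your description of $\mf l_{\rm{ev}}$ as ``exactly the even part of $\hg^{\omega_\tau}$'' and the formula $x_{2n}+(x^{\omega_\tau})_{-2n}$ silently suppress the $\dc^{\,n}$-twist visible in the paper's root vectors $b_{i,r}=f_{i,-r}+\dot\s\,\dc^{\,r}e_{\tau i,r}$; the paper commits the same abuse of notation, and it does not affect the loop-degree counting, but it is worth being aware of.
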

	
	\begin{proof}
		To prove the lemma, it is equivalent to prove its counterpart in the symmetric algebra (in place of the enveloping algebra):
		\begin{equation}
			\label{eq:UtoS}
			\text{ If $\ad (\mf l_{\rm{ev}}) u =0$, for $u \in S(\hg^{\omega_\tau})$ then $u=0$. }
		\end{equation}
		
		The Lie algebra $\hg^{\omega_\tau}$ (and hence its subalgebra $\mf l_{\rm{ev}}$) admits an $\N$-filtered algebra structure, by letting $\deg (x\otimes t^{n} + {\omega_\tau}(x)\otimes t^{-n}) =n$, for all nonzero $x\in \g$ and $n\in \N$. Then the associated graded algebras can be identified as follows: $\gr \hg^{\omega_\tau} \cong \mf k \oplus t \g [t]$ and $\gr \mf l_{\rm{ev}} \cong \mf k \oplus t^2 \g [t^2]$.
		
		The proof of the claim \eqref{eq:UtoS} is reduced to the proof of the following counterpart in the associated graded algebra:
		\begin{equation}
			\label{eq:S}
			\text{ If $\ad (\gr \mf l_{\rm{ev}}) u =0$, for $u \in S(\gr \hg^{\omega_\tau})$ then $u=0$. }
		\end{equation}
		The proof of \eqref{eq:S} is entirely analogous to the ones of \cite[Lemmas 2.8.1, 1.7.4]{M07}, by analyzing the action of $\gr \mf l_{\rm{ev},2} \cong \g\otimes t^2$ on $u$, where $\mf l_{\rm{ev},2}$ is the subspace of $\mf l_{\rm{ev}}$ spanned by $x\otimes t^{2} +{\omega_\tau}(x)\otimes t^{-2}$, for all $x\in \g$. We omit the detail.
		
		(The proof of the lemma could have proceeded directly as for \cite[Lemmas 2.8.1, 1.7.4]{M07} without referring to the filtration and associated graded algebra explicitly, though it would involve somewhat messy notations if one insists on writing out details.)
	\end{proof}


	\begin{lemma}
		\label{lem:u=0}
		If $u \in U(\hg^{\omega_\tau})$ commutes with $b_{i,l}$ and $t_{i,0}$, for all $i\in \{1,2\}$ and $l\in \Z$, then $u=0$.
	\end{lemma}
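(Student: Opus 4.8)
The plan is to deduce this statement at once from the two preceding lemmas, Lemma~\ref{lem:L} and Lemma~\ref{lem:U}. First I would record the elementary but crucial observation that the centralizer
\[
\mathfrak{z}(u):=\{x\in\hg^{\omega_\tau}\mid [x,u]=0\}
\]
of $u$ inside $\hg^{\omega_\tau}$ is itself a Lie subalgebra of $\hg^{\omega_\tau}$. Indeed, if $[x,u]=0$ and $[y,u]=0$, then the Jacobi identity gives $[[x,y],u]=[x,[y,u]]-[y,[x,u]]=0$, so $[x,y]\in\mathfrak{z}(u)$. (Closure under scalars and sums is clear.)

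Next, by hypothesis $\mathfrak{z}(u)$ contains all the elements $b_{i,l}$ for $i\in\{1,2\}$, $l\in\Z$, and $t_{i,0}$ for $i\in\{1,2\}$. Since $\mathfrak{z}(u)$ is a Lie subalgebra, it therefore contains the Lie subalgebra $\mf l$ of $\hg^{\omega_\tau}$ generated by these elements. By Lemma~\ref{lem:L}, $\mf l_{\rm{ev}}\subseteq\mf l\subseteq\mathfrak{z}(u)$, i.e. $\ad(\mf l_{\rm{ev}})\,u=0$. Lemma~\ref{lem:U} then forces $u=0$, which is the assertion.

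I do not expect any genuine obstacle: the substantive content has been isolated into Lemmas~\ref{lem:L} and~\ref{lem:U} (the latter itself reducing, via associated graded, to the symmetric-algebra statement handled as in \cite{M07}). The only point that deserves explicit mention, and the only thing this lemma really adds, is the passage from ``$u$ commutes with the distinguished generators $b_{i,l},\,t_{i,0}$'' to ``$u$ commutes with all of $\mf l$, hence with $\mf l_{\rm{ev}}$'', which is exactly what the Jacobi-identity observation above supplies.
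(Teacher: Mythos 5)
Your proof is correct and takes essentially the same approach as the paper's: the paper likewise passes from commutation with the generators $b_{i,l},t_{i,0}$ to $\ad(\mathfrak l)\,u=0$, then invokes Lemma~\ref{lem:L} to get $\ad(\mathfrak l_{\mathrm{ev}})\,u=0$, and concludes by Lemma~\ref{lem:U}. The only difference is that you spell out (via the centralizer being a Lie subalgebra) the step the paper leaves implicit.
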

	
	\begin{proof}
		By assumption of the lemma and the definition of $\mf l$, we have $\ad(\mf l) u=0$ in $S(\hg^{\omega_\tau})$, and hence $\ad(\mf l_{\rm{ev}}) u=0$ by Lemma~\ref{lem:L}. Now it follows by Lemma~\ref{lem:U} that $u=0$.
	\end{proof}
	
	The following lemma is the main point of this subsection.
	\begin{lemma}
		\label{lem:X0}
		Let $X\in \tUi$ be a noncommutative polynomial of $B_1,B_2,B_0$ with coefficients in $\Q(v) [\K_i^{\pm 1}, i \in \I]$ without constant term. If $\TT_{\bome}(X)=X$ and $[X,B_i]=[X,\K_i]=0$ for $i\in \{1,2\}$, then $X=0$.
	\end{lemma}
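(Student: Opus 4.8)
The plan is to use the translation symmetry $\TT_{\bome}$ to upgrade the hypothesis $[X,B_i]=0$ into commutation with \emph{all} real root vectors, and then to transport everything to the classical limit $v\rightsquigarrow 1$, where Lemma~\ref{lem:u=0} applies.

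\emph{Step 1 (propagating the commutators).} I would first apply the algebra automorphism $\TT_{\bome}^{l}$, for each $l\in\Z$, to the identity $[X,B_i]=0$. Since $\TT_{\bome}(X)=X$ and, by the definition \eqref{eq:Bik}, $\TT_{\bome}^{l}(B_i)$ is a scalar multiple of $B_{i,-l}$, this yields $[X,B_{i,m}]=0$ for all $m\in\Z$ and $i=1,2$. Combined with $[X,\K_1]=[X,\K_2]=0$ and the centrality of $\K_0$, the element $X$ now commutes with every $B_{i,m}$ ($i=1,2$, $m\in\Z$) and with $\K_0,\K_1,\K_2$.

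\emph{Step 2 (passing to the classical limit).} Suppose $X\neq 0$. Using the triangular decomposition $\tUi\cong \U^-\otimes\Q(v)[\K_i^{\pm1}\mid i\in\I]$ from Proposition~\ref{prop:graded} together with the centrality of $\K_0$, I would argue that the central reductions $\pi_{\bvs}\colon \tUi\to\Ui_{\bvs}$ are jointly injective as $\bvs$ ranges over parameters with $\s_i\in\Q^\times$ and $\s_1=\s_2$ (a nonzero element of $\Q(v)[\K_0^{\pm1}]$, resp.\ of $\Q(v)[(\K_1\K_2)^{\pm1}]$, cannot vanish at infinitely many specializations of $\K_0$, resp.\ of $\K_1\K_2$). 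Hence some such $\bvs=(\dot\s_0,\dot\s,\dot\s)$ has $\bar X:=\pi_{\bvs}(X)\neq 0$; the element $\bar X$ inherits all the commutation relations, commutes with $k_1$, and still has no constant term. Next I would take $v\rightsquigarrow 1$: working inside an integral form of $\Ui_{\bvs}$ over $A=\Q[v]_{(v-1)}$, generated by the $B_i$ and $\frac{k_1-k_1^{-1}}{v-v^{-1}}$ and reducing modulo $(v-1)$ to $U(\hg^{\omega_\tau})$, one rescales $\bar X$ by a power of $v-1$ so that its reduction $u\in U(\hg^{\omega_\tau})$ is nonzero. By Proposition~\ref{prop:climit}, $B_{i,m}$ reduces to $b_{i,m}$, so $u$ commutes with all $b_{i,m}$ ($i=1,2$, $m\in\Z$); since $\bar X$ commutes with $k_1^{\pm1}$ it commutes with $\frac{k_1-k_1^{-1}}{v-v^{-1}}$, which reduces to $h_1-h_2=-t_{1,0}$, so $u$ commutes with $t_{1,0}$ (hence with $t_{2,0}=-t_{1,0}$); and $u$ has no constant term. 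Now Lemma~\ref{lem:u=0} forces $u=0$, a contradiction. Therefore $X=0$.

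\emph{Main obstacle.} The delicate point is Step~2: a single central reduction collapses the torus part and may well kill $X$, so one must combine a genericity argument for the parameter $\bvs$ with an integral-form (PBW) argument to certify that a nonzero $X$ produces a nonzero element of $U(\hg^{\omega_\tau})$. Step~1 and the final invocation are routine — indeed, Lemma~\ref{lem:u=0} is phrased precisely to absorb the data ($X$ commuting with the real root vectors and with $t_{1,0}$) that $\TT_{\bome}$-invariance together with $[X,\K_i]=0$ supplies.
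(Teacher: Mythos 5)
Your proposal is correct and takes essentially the same route as the paper's proof: propagate the commutation relations to all $B_{i,m}$ via $\TT_{\bome}$-invariance, pass to a central reduction $\Ui_{\bvs}$ and then to the classical limit $U(\hg^{\omega_\tau})$, and invoke Lemma~\ref{lem:u=0}. The paper's proof compresses the passage to a nonzero element of $U(\hg^{\omega_\tau})$ into a one-line assertion (``Write $X$ as a linear combination of a monomial basis\ldots\ Then $X$ descends to a nonzero element\ldots''), whereas you correctly identify it as the delicate point and supply the genericity-of-$\bvs$ and integral-form arguments that make the descent rigorous.
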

	
	\begin{proof}
		By assumption, $X$ commutes with $\K_i$ and $B_{i,l} =\TT_{\bome}^{-l}(B_i)$, for all $i\in \{1,2\}, l\in \Z$; in particular, $X$ commutes with the finite type part of $\tUi$ (generated by $B_i, \K_i$ for $i=1,2$).
		We prove by contradiction, by assuming that $X$ is nonzero. Write $X$ as a linear combination of a monomial basis (due to  Letzter and Kolb for $\Ui$). Then $X$ descends to a nonzero element in a central reduction $\Ui$ and then to a nonzero element $x\in U(\hg^{\omega_\tau})$, which commutes with $t_{i,0}$ and $b_{i,l}$, for all $i\in \{1,2\}, l\in \Z$. This contradicts with Lemma~\ref{lem:u=0}.
	\end{proof}

	\section{A Drinfeld type presentation}
	\label{sec:Dr1}

	In this section, we shall give a Drinfeld type presentation for the quasi-split affine $\imath$quantum group $\tUi$ of rank $1$.

	\subsection{The definition}
	\label{subsec:sym}
 
	Recall $\I=\{0,1,2\}$ and $(c_{ij})_{i,j \in \I}$ denotes the Cartan matrix of affine type $A_2^{(1)}$. We shall denote by $\Sym_{k_1,k_2}$ the symmetrization with respect to (current) indices $k_1, k_2 \in \Z$ in the sense $\Sym_{k_1,k_2} f(k_1, k_2) =f(k_1, k_2) +f(k_2, k_1)$; we will never apply the symmetrization to indices in $\I$. Introduce the shorthand notation
		\begin{align}
			\label{eq:SS}
			\bS(k_1,k_2|l;i) : = \Sym_{k_1,k_2}\Big(B_{i,k_1}B_{i,k_2}B_{\tau i,l}-[2]B_{i,k_1} B_{\tau i,l} B_{i,k_2} + B_{\tau i,l} B_{i,k_1}B_{i,k_2}\Big).
		\end{align}

	\begin{definition}
		\label{def:iDR}
		Let $\tUiD$ be the $\Q(v)$-algebra generated by the elements $B_{i,l}$, $H_{i,m}$, $\bK_i^{\pm1}$, $C^{\pm1}$, where $i\in \{1,2\}$, $l\in\Z$ and $m \in \Z_{\ge 1}$, subject to the following relations: for $m, n \ge 1, l, k, k_1, k_2 \in \Z$, and $i, j \in \{1,2\}$,
		\begin{align}
			C \text{ is central,} \quad &
			\K_i\K_j=\K_j\K_i, \quad
			\K_i H_{j,m}=H_{j,m}\K_i,\quad
			\bK_iB_{j,l}=v^{c_{\tau i,j}-c_{ij}} B_{j,l} \bK_i,
			\label{qsiA1DR1} \\
			[H_{i,m},H_{j,n}] &=0,\label{qsiA1DR2}
			\\
			[H_{i,m},B_{j,l}] &=\frac{[mc_{ij}]}{m} B_{j,l+m}-\frac{[mc_{\tau i,j}]}{m} B_{j,l-m}C^m,\label{qsiA1DR3}
			\\
			\label{qsiA1DR4}
			[B_{i,k},B_{i,l+1}]_{v^{-2}} & -v^{-2}[B_{i,k+1},B_{i,l}]_{v^{2} }=0,
			\\
			[B_{i,k},B_{\tau i,l+1}]_v & -v[B_{i,k+1},B_{\tau i,l}]_{v^{-1}} = -\Theta_{{\tau i},l-k+1}C^k \bK_{i} +v \Theta_{ {\tau i},l-k-1}C^{k+1}\bK_{i}
			\notag \\
			& \qquad\qquad\qquad\qquad\quad \;\, -\Theta_{i,k-l+1}C^l\bK_{{\tau i}} +v \Theta_{i,k-l-1}C^{l+1}\bK_{\tau i},
			\label{qsiA1DR5}  \\
			\mathbb{S}(k_1,k_2\mid l;i )
			=  [2]&\Sym_{k_1,k_2}\sum_{p\geq 0}v^{2p}
            \big[\TH_{\tau i,l-k_2-p}\K_i-v\TH_{\tau i,l-k_2-p-2}C\K_i, B_{i,k_1-p} \big]_{v^{-4p-1}}C^{k_2+p}
            \notag 
            \\
             +v[2]&\Sym_{k_1,k_2}\sum_{p\geq 0}v^{2p} \big[ B_{i,k_1+p+1},\TH_{i,k_2-l-p+1}\K_{\tau i}-v\TH_{i,k_2-l-p-1}C \K_{\tau i}\big]_{v^{-4p-3}} C^{l-1}.
			\label{qsiA1DR6}
		\end{align}
		Here $H_{i,m}$ are related to $\Theta_{i,m}$ by the following equation:
		\begin{align}
			\label{exp h}
			1+ \sum_{m\geq 1} (v-v^{-1})\Theta_{i,m} u^m  = \exp\Big( (v-v^{-1}) \sum_{m\geq 1} H_{i,m} u^m \Big).
		\end{align}
	\end{definition}
	 
	\begin{remark}
		Using Proposition~\ref{prop:climit}, one can show that the classical limits of relations \eqref{qsiA1DR1}-\eqref{qsiA1DR6} are given by \eqref{eq:bb0}-\eqref{eq:kap-b3}; see also \em{Remark}~\ref{rmk:climit}.
	\end{remark}
	
	The $\Q(v)$-algebra $\tUiD$ admits the following translation symmetry.
	
	\begin{lemma}  \label{lem:aut}
		There exists an automorphism $\t$ of the algebra $\tUiD$ given by
		\[
		\t (B_{j,k}) =B_{j,k-1}, \quad
		\t (H_{j,m}) =H_{j,m}, \quad
		\t (\K_j) = \K_j C^{-1}, \quad
		\t(C) =C,
		\]
		(and hence $\t (\Theta_{j,m}) = \Theta_{j,m}$), for all $k\in \Z, m\ge 1$, and $j \in \{1,2\}$.
	\end{lemma}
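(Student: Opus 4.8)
The plan is to exploit the fact that $\tUiD$ is presented by the generators $B_{i,l}, H_{i,m}, \K_i^{\pm1}, C^{\pm1}$ together with the relations \eqref{qsiA1DR1}--\eqref{qsiA1DR6} of Definition~\ref{def:iDR}: it suffices to check that the prescribed images of the generators satisfy every defining relation, so that the assignment extends to an algebra endomorphism of $\tUiD$; invertibility will then follow cheaply. It is cleanest to set up, for each $n\in\Z$, the assignment
\[
\t^{(n)}\colon\quad B_{j,k}\mapsto B_{j,k+n},\qquad H_{j,m}\mapsto H_{j,m},\qquad \K_j\mapsto \K_j C^{n},\qquad C\mapsto C,
\]
so that the desired $\t$ is $\t^{(-1)}$. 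First I would record two easy observations: since $C$ and $\K_j C^{n}$ are units, $\t^{(n)}$ is well defined on the Laurent variables $C^{\pm1}$ and $\K_j^{\pm1}$; and since by \eqref{exp h} each $\Theta_{j,m}$ is a fixed $\Q(v)$-polynomial in $H_{j,1},\dots,H_{j,m}$, the assignment $\t^{(n)}$ leaves every $\Theta_{j,m}$ fixed. Once $\t$ is known to be an algebra map, this yields the parenthetical claim $\t(\Theta_{j,m})=\Theta_{j,m}$.

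The substance of the proof is the relation check, and the guiding principle is that the families \eqref{qsiA1DR1}--\eqref{qsiA1DR6} are invariant under simultaneously shifting all the $\Z$-indices by $k\mapsto k+n$ and replacing $\K_i\mapsto\K_i C^{n}$. I would go through them in order. Relations \eqref{qsiA1DR2}, \eqref{qsiA1DR3}, \eqref{qsiA1DR4} contain no $\K_i$ and depend on the $\Z$-indices only through differences on the operator side, so $\t^{(n)}$ carries the instance with parameters $k,l$ (resp.\ $k$) exactly to the instance with those parameters shifted by $n$; the commutation identities in \eqref{qsiA1DR1} are handled the same way, using centrality of $C$ to match the surplus factors $C^{n}$. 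For \eqref{qsiA1DR5} and \eqref{qsiA1DR6} I would use in addition that $\t^{(n)}(\Theta_{i,r}\,C^{a}\K_i)=\Theta_{i,r}\,C^{a+n}\K_i$, together with the key numerical point that the $\Theta$-subscripts occurring there --- expressions such as $l-k\pm1$, $k-l\pm1$, $l-k_2-3p-\ast$, $k_2-l-3p-\ast$ --- depend only on differences of the $\Z$-indices, hence are unchanged under $(k,l)\mapsto(k+n,l+n)$, resp.\ $(k_1,k_2,l)\mapsto(k_1+n,k_2+n,l+n)$; pulling the surplus $C^{n}$ through the $v$-commutators by centrality then aligns the image with the correspondingly shifted relation. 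This shows each $\t^{(n)}$ descends to an algebra endomorphism of $\tUiD$.

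Finally, $\t^{(m)}\circ\t^{(n)}$ and $\t^{(m+n)}$ agree on all generators, hence on $\tUiD$; in particular $\t^{(-1)}$ and $\t^{(1)}$ are mutually inverse, so $\t:=\t^{(-1)}$ is an automorphism with the stated effect on $B_{j,k}, H_{j,m}, \K_j, C$ (and $\t(\Theta_{j,m})=\Theta_{j,m}$, as noted). I anticipate no conceptual difficulty here; the only laborious step --- the main obstacle, such as it is --- is the bookkeeping of the $C$-powers and of the $\Theta$-subscripts in the Serre relation \eqref{qsiA1DR6}, which is purely mechanical.
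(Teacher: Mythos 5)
Your proposal is correct and takes the same approach as the paper, namely verifying that the images of the generators under $\t$ satisfy the defining relations \eqref{qsiA1DR1}--\eqref{qsiA1DR6}; the paper compresses this entirely into the single sentence ``Follows by inspection of the defining relations.'' Your one-parameter family $\t^{(n)}$ and the composition law $\t^{(m)}\circ\t^{(n)}=\t^{(m+n)}$ is a clean way to package invertibility, but it is only a tidier presentation of the same inspection argument.
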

	
	\begin{proof}
		The proof follows by inspection of the defining relations for $\tUiD$ in Definition~\ref{def:iDR}.
	\end{proof}
	
	We introduce the following generating functions in a variable $z$:
	\begin{align}
		\label{eq:Genfun}
		\begin{split}
			\bB_{i}(z) & =\sum_{k\in\Z} B_{i,k}z^{k},
			\qquad\quad
			\bTH_{i}(z) =1+ \sum_{m \ge 1} (v-v^{-1})\Theta_{i,m}z^{m},
			\\
			\bH_i(z) &=\sum_{m\ge 1} H_{i,m} z^{m},
			\qquad\quad
			\bDel(z)  =\sum_{k\in\Z} C^k z^k,
		\end{split}
	\end{align}
	where $\bTH_i(z)$ and $\bH_i(z)$ are related by
	\begin{align}
		\label{eq:ThH}
		\bTH_i(z)=\exp \big( (v-v^{-1})\bH_i(z) \big).
	\end{align}
	
	The following lemma will be used later.
	
	\begin{lemma}
		\label{lem:relationsreform}
		The following equivalences hold:
		
		(1) The  identity \eqref{qsiA1DR2} is equivalent to
		\begin{align}
			\label{qsiA1DR2reform}
			[\TH_{i,m},\TH_{j,n}]=0,\quad \forall i,j=1,2, \text{ and }m,n\geq1.
		\end{align}
		(2) The identity \eqref{qsiA1DR3} is equivalent to
		\begin{align}
			\label{qsiA1DR3reform}
			&[\TH_{i,m},B_{j,k}]+v^{c_{i,j}-c_{\tau i,j}}[\TH_{i,m-2},B_{j,k}]_{v^{2(c_{\tau i,j}-c_{i,j})}}C
			\\\notag
			&-v^{c_{i,j}}[\TH_{i,m-1},B_{j,k+1}]_{v^{-2c_{ i,j}}}- v^{-c_{\tau i, j}}[\TH_{i,m-1},B_{j,k-1}]_{v^{2c_{\tau i,j}}}C
			=0,
		\end{align}
		for any $m>0$ and $k\in\Z$.
	\end{lemma}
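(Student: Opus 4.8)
The plan is to treat the two equivalences separately: part (1) by a ``same subalgebra'' argument, and part (2) by packaging each side into a single generating-function identity and moving between $\bH_i$ and $\bTH_i$ via the exponential relation \eqref{eq:ThH}. For part (1), the first step is to note that \eqref{exp h} lets one pass freely between the two families of generators with a \emph{fixed} index $i$: expanding the exponential writes each $\Theta_{i,m}$ as a $\Q(v)$-polynomial in $H_{i,1},\dots,H_{i,m}$, while the formal logarithm writes each $H_{i,m}$ as a $\Q(v)$-polynomial in $\Theta_{i,1},\dots,\Theta_{i,m}$. Hence the subalgebra $\mathcal H\subseteq\tUiD$ generated by $\{H_{i,m}\mid i=1,2,\ m\ge1\}$ coincides with the subalgebra $\widetilde{\mathcal H}$ generated by $\{\Theta_{i,m}\mid i=1,2,\ m\ge1\}$. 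Since the subalgebra generated by a set of pairwise commuting elements is commutative, \eqref{qsiA1DR2} (which says precisely that the chosen generators $H_{i,m}$ of $\mathcal H$ pairwise commute) is equivalent to $\mathcal H$ being commutative, hence to $\widetilde{\mathcal H}=\mathcal H$ being commutative, hence to \eqref{qsiA1DR2reform}. I expect part (1) to be essentially immediate once phrased this way.

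For part (2), I would first rewrite \eqref{qsiA1DR3} as a generating-function identity. Multiplying by $z^kw^m$, summing over $k\in\Z$ and $m\ge1$, and using that $C$ is central together with $\sum_k B_{j,k+m}z^k=z^{-m}\bB_j(z)$ and $\sum_k B_{j,k-m}z^k=z^m\bB_j(z)$, relation \eqref{qsiA1DR3} becomes
\[
[\bH_i(w),\bB_j(z)]=\big(f_{ij}(w/z)-f_{\tau i,j}(Cwz)\big)\,\bB_j(z),\qquad f_{ab}(t):=\sum_{m\ge1}\frac{[mc_{ab}]}{m}t^m .
\]
Write $g(w)$ for the bracketed factor; its ``coefficients'' are $\Q(v)$-combinations of the central element $C$ and powers of the formal variables $w,z$, so $g(w)$ commutes with $\bB_j(z)$, $\bH_i(w)$ and $\bTH_i(w)$.

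The next step is to exponentiate. From $[\bH_i(w),\bB_j(z)]=g(w)\bB_j(z)$ one gets $\bH_i(w)^n\bB_j(z)=\bB_j(z)(\bH_i(w)+g(w))^n$, and multiplying by $(v-v^{-1})^n/n!$ and summing gives, via \eqref{eq:ThH} and the closed form $\exp\big((v-v^{-1})f_{ab}(t)\big)=\frac{1-v^{-c_{ab}}t}{1-v^{c_{ab}}t}$ (immediate from $\sum_m t^m/m=-\log(1-t)$), the identity
\[
\bTH_i(w)\,\bB_j(z)\,(1-v^{c_{ij}}w/z)(1-v^{-c_{\tau i,j}}Cwz)=\bB_j(z)\,\bTH_i(w)\,(1-v^{-c_{ij}}w/z)(1-v^{c_{\tau i,j}}Cwz).
\]
I would then extract the coefficient of $z^kw^m$ from this polynomial identity, using $\Theta_{i,0}=\tfrac1{v-v^{-1}}$ and $\Theta_{i,m}=0$ for $m<0$, and check that it is exactly relation \eqref{qsiA1DR3reform} for each $m>0$, $k\in\Z$ (the four summands there correspond to the four terms of the expanded products $(1-v^{\pm c_{ij}}w/z)(1-v^{\mp c_{\tau i,j}}Cwz)$). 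This proves \eqref{qsiA1DR3}$\Rightarrow$\eqref{qsiA1DR3reform}; for the converse the same chain runs backwards: \eqref{qsiA1DR3reform} is the coefficientwise form of the displayed identity, which after dividing by the invertible series $(1-v^{c_{ij}}w/z)(1-v^{-c_{\tau i,j}}Cwz)$ says that conjugation by $\bTH_i(w)$ sends $\bB_j(z)$ to $\bB_j(z)\exp\big((v-v^{-1})g(w)\big)$, and taking the logarithm of this automorphism returns $[\bH_i(w),\bB_j(z)]=g(w)\bB_j(z)$, i.e.\ \eqref{qsiA1DR3}.

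The one point requiring care — and the step I would write out in detail — is the passage back and forth between $\bH_i(w)$ and $\bTH_i(w)=\exp\big((v-v^{-1})\bH_i(w)\big)$. All the $\exp$/$\log$ manipulations should be performed in the completion of $\tUiD[z^{\pm1}]$ with respect to the $w$-adic filtration, where $\exp$ and $\log$ are honest mutually inverse bijections on series with no constant term in $w$ and $\mathrm{Ad}(\bTH_i(w))=\exp\big(\mathrm{ad}\,(v-v^{-1})\bH_i(w)\big)$; since $g(w)$ has positive order in $w$ and commutes with everything in sight, the logarithm used in the converse direction converges and the exponential identities hold verbatim. Apart from this bookkeeping, the substantive labor is the routine but slightly lengthy coefficient extraction matching the four terms of \eqref{qsiA1DR3reform}.
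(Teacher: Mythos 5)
Your proposal is correct and follows essentially the same route as the paper: part (1) is the paper's terse ``follows directly from \eqref{exp h}'' made precise via the observation that the exponential and formal logarithm make $\{\Theta_{i,m}\}$ and $\{H_{i,m}\}$ generate the same subalgebra, and part (2) is exactly the paper's generating-function argument (the paper's ``via integration'' is your explicit exponentiation of the $\mathrm{ad}$-relation), followed by clearing denominators and extracting coefficients. The added bookkeeping about the $w$-adic completion is sound but not substantively different from what the paper does.
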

	
	\begin{proof}
		The equivalence in (1) follows directly from \eqref{exp h}.
		
		The proof of the equivalence in (2) is very similar to \cite[Proposition 2.8]{LW21b}, via a generating function formalism \eqref{eq:Genfun}. We outline the main steps below.
		
		The identity \eqref{qsiA1DR3} can be equivalently reformulated as a generating function identity
		\begin{align*}
			(v-v^{-1}) [\bH_i(z), \bB_j(w)]
			&=  \ln
			\frac{(1 -v^{-c_{ij}}zw^{-1})(1 -v^{c_{\tau i,j}} zw C)}{(1 -v^{c_{ij}}zw^{-1}) (1 -v^{-c_{\tau i,j}}zw C)}
			\cdot \bB_j(w).
		\end{align*}
		Via integration and \eqref{eq:ThH}, this identity is equivalent to the identity
		\begin{align*}
			\bTH_i(z) \bB_j(w) \bTH_i(z)^{-1}
			& = \frac{(1 -v^{-c_{ij}}zw^{-1})(1 -v^{c_{\tau i,j}} zw C)}{(1 -v^{c_{ij}}zw^{-1}) (1 -v^{-c_{\tau i,j}}zw C)}
			\bB_j(w),
		\end{align*}
		or equivalently, $$
		\bTH_i(z) \bB_j(w)
		= \frac{(1 -v^{-c_{ij}}zw^{-1})(1 -v^{c_{\tau i,j}} zw C)}{(1 -v^{c_{ij}}zw^{-1}) (1 -v^{-c_{\tau i,j}}zw C)}
		\bB_j(w) \bTH_i(z).$$
		Comparing the coefficients of $z^m w^k$ of both sides of the last identity, for $m\ge 1, k\in \Z$, we obtain the equivalent identity \eqref{qsiA1DR3reform}.
	\end{proof}	
	\subsection{The isomorphism}

	\begin{theorem}
		\label{thm:Dr1}
		There is a $\Q(v)$-algebra isomorphism ${\Phi}: \tUiD \longrightarrow\tUi$, which sends
		\begin{align}
			\label{eq:isom}
			B_{i,l}\mapsto B_{i,l}, \quad \Theta_{i,m} \mapsto \Theta_{i,m},
			\quad
			\K_i\mapsto \K_i, \quad C\mapsto C,
			\quad \text{ for } m\ge 1,\; l\in \Z,\; i \in \{1,2\}.
		\end{align}
		The inverse ${\Phi}^{-1} : \tUi \longrightarrow \tUiD$ sends
		\begin{align*}
			\K_0\mapsto & -v^{-1} C \K_1^{-1}\K_2^{-1},
			\quad
			\K_i\mapsto  \K_i,
			\quad
			B_i\mapsto   B_{i,0},
			\quad \text{ for }i \in\{1,2\},
			\\
			B_0\mapsto
			& o(1) v^{-1} \big(\TH_{1,1}-v [B_1,B_{2,-1}]_{v^{-1}}  C\K_2^{-1}\big)\K_1^{-1}.
		\end{align*}
	\end{theorem}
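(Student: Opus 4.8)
The plan is to construct $\Phi$ on generators via the assignment \eqref{eq:isom}, verify that the images satisfy the defining relations \eqref{qsiA1DR1}--\eqref{qsiA1DR6} of $\tUiD$ (so that $\Phi$ is a well-defined algebra homomorphism), then prove surjectivity and injectivity separately, and finally identify the inverse. Showing $\Phi$ is a homomorphism means proving that the real root vectors $B_{i,l}$ and imaginary root vectors $\TH_{i,m}$ of $\tUi$ defined in \S\ref{sec:rtvector} actually obey all the listed Drinfeld type relations; as the text itself flags, this is ``the most involved part of the proof'' and constitutes the entire Section~\ref{sec:verify1}. I would postpone it: the structural part of the present theorem (well-definedness of $\Phi$ granted, establish iso) is what I address here, together with the inverse formula.

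\emph{Surjectivity.} Granting that $\Phi$ is a homomorphism, surjectivity is immediate: $\tUi$ is generated by $B_0, B_1, B_2, \K_0^{\pm1}, \K_1^{\pm1}, \K_2^{\pm1}$. The elements $B_1 = B_{1,0}$ and $B_2 = B_{2,0}$ lie in the image by \eqref{eq:isom}. Since $C = -v\K_\delta = -v\K_0\K_1\K_2$ by \eqref{eq:C}, one has $\K_0 = -v^{-1}C\K_1^{-1}\K_2^{-1} \in \operatorname{Im}\Phi$ and $\K_1, \K_2 \in \operatorname{Im}\Phi$. For $B_0$: from \eqref{B1-1} and the formula \eqref{TH} for $\TH_{1,1}$ one solves for $B_0$ in terms of $\TH_{1,1}$, $B_1$, $B_{2,-1}$, $C$, $\K_1$, $\K_2$, yielding exactly the expression $B_0 \mapsto o(1)v^{-1}\big(\TH_{1,1} - v[B_1, B_{2,-1}]_{v^{-1}}C\K_2^{-1}\big)\K_1^{-1}$ in the inverse; reversing this shows $B_0 \in \operatorname{Im}\Phi$. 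Hence $\Phi$ is onto.

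\emph{Injectivity via filtration.} The plan is to use the filtered structure of $\tUi$ from \eqref{eq:filt1R1}--\eqref{eq:filtR1} together with Proposition~\ref{prop:graded}, $\gr\tUi \cong \U^- \otimes \Q(v)[\K_i^{\pm}]$. One equips $\tUiD$ with a compatible filtration (by total degree in the $B_{i,l}$'s, with the $H_{i,m}$ treated via \eqref{exp h} as expressions of filtration degree $2$ in the $B$'s, as the recursions \eqref{TH}--\eqref{THn} dictate), so that $\Phi$ is a filtered map. On associated graded, the relations \eqref{qsiA1DR5} and \eqref{qsiA1DR6} degenerate (the $C$-terms drop, as noted right after Theorem~\ref{thm:B}) to the defining relations of the Drinfeld presentation of $\U^-(\widehat{\sll}_3)$ tensored with the polynomial algebra in the $\K_i$. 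Thus $\gr\Phi$ is identified with the classical Drinfeld isomorphism for $\widehat{\sll}_3$ (Beck \cite{Be94}, Damiani \cite{Da93}), which is injective. A filtered map whose associated graded is injective is injective, so $\Phi$ is injective, hence an isomorphism.

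\emph{Inverse and remaining obstacle.} Once $\Phi$ is known to be an isomorphism, the displayed formula for $\Phi^{-1}$ is verified by checking $\Phi \circ \Phi^{-1} = \mathrm{id}$ on the generators $B_0, B_i, \K_i$ of $\tUi$ --- for $\K_0$ and $B_i$ this is the tautology above, and for $B_0$ it is precisely the rearrangement of \eqref{B1-1} and \eqref{TH} used in the surjectivity step. The genuine obstacle, deferred to Section~\ref{sec:verify1}, is proving that $\Phi$ respects the relations: relation \eqref{qsiA1DR3} (equivalently \eqref{qsiA1DR3reform}) and the Serre type relation \eqref{qsiA1DR6} are the hardest, requiring the spiral induction described in \S\ref{subsec:strategy}, the $\TT_{\bome}$-invariance of $\TH_{i,n}$ (Theorem~\ref{thm:C}), and the vanishing criterion Lemma~\ref{lem:X0} to kill error terms. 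I expect that verifying \eqref{qsiA1DR6}, given its four-fold $\Sym_{k_1,k_2}$ sums over $p$, will be where essentially all the difficulty concentrates.
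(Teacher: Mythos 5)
The overall skeleton of your argument---defer the homomorphism verification to \S\ref{sec:verify1}, prove surjectivity by exhibiting $B_0$ in the image, prove injectivity by a filtration argument reducing to the Drinfeld isomorphism for $\widehat{\sll}_3$---is the same as the paper's, and the surjectivity step is correct. The injectivity step, however, has a genuine gap.

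First, the filtration you propose is not the one for which the conclusion holds. You want to filter $\tUiD$ by ``total degree in the $B_{i,l}$'s, with $H_{i,m}$ of degree $2$'' (degree counted in the root vectors). But the only filtration on $\tUi$ whose associated graded we control is \eqref{eq:filt1R1}--\eqref{eq:filtR1}, counting the number of \emph{Chevalley} generators $B_j$, $j\in\I$, and in that filtration $\Phi(B_{i,1})$ already has degree $4$ (see \eqref{eq:B11}), not $1$, while $\Phi(\Theta_{i,m})$ has degree $3m$, not $2$. So with your assignments $\Phi$ is not a filtered map relative to \eqref{eq:filtR1}, and Proposition~\ref{prop:graded} cannot be invoked. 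The paper instead uses the height function $\text{ht}^+$ of \eqref{eq:ht}, assigning $B_{i,m}$ degree $\hgt(m\delta+\alpha_i)=3m+1$ and $\Theta_{j,n}$ degree $3n\,\hgt(\delta)/\hgt(\delta)=3n$; this is precisely what matches the $B_j$-count on the target.

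Second, and more fundamentally, this height filtration only makes sense on the ``positive'' subalgebras $\tUiD_>$ and $\tUi_>$ (generated by $B_{i,m}$, $H_{i,m}$, $\K_i$ with $m\ge1$), since $m\delta+\alpha_i$ is not a positive root when $m<0$. The paper therefore \emph{first} reduces the injectivity of $\Phi$ to the injectivity of $\Phi\colon\tUiD_>\to\tUi_>$ by using the translation automorphism $\t$ of Lemma~\ref{lem:aut} and its compatibility $\Phi\circ\t=\TT_{\bome}\circ\Phi$: an element of $\ker\Phi$ is pushed by $\t^{-N}$, $N\gg0$, into $\ker(\Phi|_{\tUiD_>})$. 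This reduction is absent from your argument, and without it the filtration step does not go through. Once on the positive part, the associated graded of $\tUiD_>$ is, as you correctly anticipate, presented by the $C=0$ degeneration of the relations (half of the Drinfeld presentation), and one runs the commutative diagram with the isomorphisms $\phi$ of \eqref{eq:phi} (Beck--Damiani) and $\mathbb G$ of \eqref{eq:grade1} to conclude. To repair your write-up you should insert the reduction via $\t$ and replace your coarse degree by the height-function filtration \eqref{eq:filtD}.
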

	
	(We shall refer to $\tUiD$ the {\em Drinfeld type presentation} of $\tUi$. The proof that $\Phi$ is a homomorphism requires long computations, and will be carried out in Section~\ref{sec:verify1}.)

	\begin{proof}
		To show $\Phi$ is a homomorphism, we shall verify that all the defining relations in $\tUiD$ (see Definition~\ref{def:iDR}) are preserved by $\Phi$ in the next Section~\ref{sec:verify1}. More precisely, the relations \eqref{qsiA1DR1}--\eqref{qsiA1DR6}  in $\tUiD$ hold for the images  of the generators of $\tUiD$ under $\Phi: \tUiD \rightarrow\tUi$, thanks to Lemma~\ref{lem:4.1},  Propositions~\ref{prop:verfDR2}, \ref{prop:qsiA1DR3}, \ref{prop:BBcom}, \ref{prop:verfDR5} and \ref{prop:verfqsiA1DR6}, respectively.
		
		Next we show that $\Phi$ is surjective. To that end, it suffices to show that the generator $B_0$ of $\tUi$ lies in the image of $\Phi$ (as other generators clearly do). By definition of $\TH_{1,1}$ in \eqref{TH}, $\Phi$ maps $v^{-1}o(1)\big(\TH_{1,1}-v [B_1,B_{2,-1}]_{v^{-1}}  C\K_2^{-1}\big)\K_1^{-1}$ to $B_0$, and hence the surjectivity of $\Phi$ follows.
		
		The injectivity of $\Phi$ follows by an analogous argument as for the injectivity in  \cite[Theorem~3.13]{LW21b}. For the sake of completeness, we sketch below.
		
		We set $\II =\{1,2\}$ in this proof. Denote by $\tUi_>$ (respectively, $\tUiD_>$) the subalgebra of $\tUi$ (respectively, $\tUiD$) generated by $B_{i,m}$, $H_{i,m}, \K_i$, for $m\ge 1$, and $i\in \II$. Then ${\Phi}: \tUiD \longrightarrow\tUi$ restricts to a surjective homomorphism ${\Phi}: \tUiD_> \longrightarrow\tUi_>$.
		
		The translation symmetries $\t$ on $\tUiD$ (see Lemma~\ref{lem:aut}) and $\TT_{{\bome}}$ in \eqref{eq:Tomeg} are compatible under $\Phi$, i.e.,
		\[
		\Phi \circ \t =\TT_{{\bome}} \circ \Phi.
		\]
		The injectivity of ${\Phi}: \tUiD \rightarrow\tUi$ is then reduced to the injectivity of ${\Phi}: \tUiD_> \rightarrow\tUi_>$, since any element in the kernel of ${\Phi}: \tUiD \rightarrow\tUi$ gives rise to (via a translation automorphism $\t^{-N}$, for $N\gg 0$) to an element in the kernel of ${\Phi}: \tUiD_> \rightarrow\tUi_>$.
		
		It remains to prove the injectivity of ${\Phi}: \tUiD_> \longrightarrow\tUi_>$. We shall accomplish this by examining a certain filtration and its associated graded algebra.
		
		Define a filtration on $\tUiD_>$ by
		\begin{align}  \label{eq:filt1D}
			(\tUiD_>)^0 \subset (\tUiD_>)^1 \subset \cdots \subset (\tUiD_>)^m \subset \cdots
		\end{align}
		by setting
		\begin{align}  \label{eq:filtD}
			(\tUiD_>)^m &=\Q(v)\text{-span} \big\{x=B_{i_1,m_1} B_{i_2,m_2} \ldots B_{i_r,m_r} \Theta_{j_1,n_1} \Theta_{j_2,n_2} \ldots \Theta_{j_s,n_s} \K_\mu
			\\
			&\quad
			\mid \mu \in \N\I, i_1, \ldots, i_r, j_1, \ldots j_s, \in \II, m_1,\ldots, m_r, n_1, \ldots, n_s \ge 1, \text{ht}^+(x)\leq m \big\}.
			\notag
		\end{align}
		Here we have denoted
		\begin{align}  \label{eq:ht}
			\text{ht}^+(x) :=\sum_{a=1}^r \hgt(m_a\de +\alpha_{i_a}) +\sum_{b=1}^s n_b \hgt(\de),
		\end{align}
		where $\hgt(\beta)$ denotes the height of a positive root $\beta$. Recalling $\widetilde{\U}^{\imath,0}$ from \eqref{eq:UiCartanR1}, we have $(\tUiD_>)^0 = \widetilde{\U}^{\imath,0} =\Q(v) [ \K_i^{\pm 1} \mid i\in \I ].$ The filtration \eqref{eq:filt1D}--\eqref{eq:filtD} on $\tUiD_>$ defined via a height function is compatible with the filtration \eqref{eq:filt1R1}--\eqref{eq:filtR1} on $\tUi$ under $\Phi$, and thus the surjective homomorphism ${\Phi}: \tUiD_> \longrightarrow\tUi_>$ induces a surjective homomorphism
		\begin{align}  \label{eq:gradeP}
			{}^{\text{gr}} {\Phi}_>: \tUiDgr_> \longrightarrow\tUigr_>.
		\end{align}

		The Drinfeld presentation ${}^{\text{Dr}}\U$ of the affine quantum group $\U$ has generators $x_{i,k}^{\pm 1}$, $h_{i,m}$, $K_i^{\pm 1}, \texttt C^{\pm 1/2}$, for $i\in \II, k\in \Z, m\in \Z\backslash \{0\}$, cf. \cite{Dr87, Be94}; moreover, we have
		an isomorphism $\phi:  {}^{\text{Dr}}\U \rightarrow \U$. Denote by ${}^{\text{Dr}}\U^-_<$ the $\Q(v)$-subalgebra of $\U$ generated by $x^-_{i,-k}$, for $i\in \II, k>0$, and denote $\U^-_< =\phi({}^{\text{Dr}}\U^-_<)$. Then $\phi$ restricts to an isomorphism \cite{Be94, Da15}
		\begin{align}  \label{eq:phi}
			\phi:  {}^{\text{Dr}}\U^-_< \stackrel{\cong}{\longrightarrow} \U^-_<.
		\end{align}
		
		Recall the following algebra isomorphism from \eqref{eq:filterR1} with respect to the filtration on $\tUi$ \eqref{eq:filt1R1}--\eqref{eq:filtR1}:
		\begin{align*}
			\mathbb G: \U^- \otimes \widetilde{\U}^{\imath,0} \longrightarrow \tUigr,
			\qquad
			F_i \mapsto \overline{B}_i, \quad
			\K_i \mapsto \overline{\K}_i,
		\end{align*}
		where $\U^- =\langle F_i \mid i\in \I \rangle$. The homomorphism $\mathbb G$ above restricts to an isomorphism
		\begin{align}  \label{eq:grade1}
			\mathbb G: \U^-_< \otimes \widetilde{\U}^{\imath,0} \stackrel{\cong}{\longrightarrow} \tUigr_>.
		\end{align}
		
		Finally, by definition \eqref{eq:filtD} of the filtration on $\tUiD_>$, its associated graded algebra is compatible with setting $C=0$ in the defining relations \eqref{qsiA1DR1}--\eqref{qsiA1DR6} of $\tUiD$, which reproduces the defining relations of half the affine quantum group in its Drinfeld presentation. Thus,
		we have a surjective homomorphism
		\begin{align}  \label{eq:Xi}
			\Xi: {}^{\text{Dr}}\U^-_< \otimes \widetilde{\U}^{\imath,0}
			\longrightarrow
			\tUiDgr_>,
		\end{align}
		which sends $x_{i,-k}^- \mapsto \ov{B}_{i,k}$, for $k>0$ (note the opposite signs in indices).
		
		Combining \eqref{eq:gradeP}--\eqref{eq:Xi}, we have obtained the following commutative diagram
		\begin{align*}  
			\xymatrix{
				{}^{\text{Dr}}\U^-_< \otimes \widetilde{\U}^{\imath,0}
				\ar[rr]^{\Xi}
				\ar[d]^{\phi,\cong}
				&& \tUiDgr_>
				\ar[d]^{{}^{\text{gr}}\Phi_>}
				\\
				\U^-_< \otimes \widetilde{\U}^{\imath,0}
				\ar[rr]^{\mathbb G, \cong}
				&& \tUigr_>   }
		\end{align*}
		Since $\Xi$ and ${}^{\text{gr}}\Phi_>$ are surjective while $\phi$ and $\mathbb G$ are isomorphisms, we conclude that ${}^{\text{gr}}\Phi_>: \tUiDgr_> \longrightarrow\tUigr_>$ is injective (and indeed an isomorphism), and so is $\Xi$.
		
		The proof of Theorem~\ref{thm:Dr1} is completed.
	\end{proof}
	
	\begin{remark}\label{rmk:qlimit}
		By definition, the algebra $\tUiD$ (and hence $\tUi$ by Theorem~\ref{thm:Dr1}) is $\Z$-graded by letting
		\[
		\deg C =2, \qquad
		\deg \K_i =0, \qquad
		\deg  B_{i,k} =k, \qquad
		\deg \Theta_{i,n} =n \quad (i=1,2).
		\]
		Compare Remark~\ref{rmk:climit}.
	\end{remark}

	\subsection{Presentation via generating functions}
	
	Recall from \eqref{eq:Genfun} the generating functions $\bB_{i}(z)$, $\bTH_{i}(z)$, $\bH_i(z)$ and $\bDel(z)$ in a variable $z$.
	For $i=1,2$ and variables $w_1, w_2$, we also denote
	\begin{align}
		&\mathbb{S}(w_1,w_2\mid z;i)\\\notag
		:&=\Sym_{w_1,w_2}\Big(\bB_i(w_1)\bB_i(w_2)\bB_{\tau i}(z)-[2]\bB_i(w_1)\bB_{\tau i}(z)\bB_i(w_2)+\bB_{\tau i}(z)\bB_i(w_1)\bB_i(w_2)\Big).
	\end{align}
	Theorem~\ref{thm:Dr1} admits the following reformulation via generating functions.
	\begin{theorem}
		\label{thm:DrqsA1}
		$\tUi$ is isomorphic to the $\Q(v)$-algebra generated by the elements $B_{il}$, $H_{ik}$, $\bK_i^{\pm1}$, $C^{\pm1}$ where $i=1,2$, $l\in\Z$ and $k>0$, subject to the following relations, for $i,j \in\{1, 2\}$:
		\begin{align}
			\label{qsiA1DRG1}
			\K_i\K_j =\K_j\K_i, \quad
			\K_i\bH_j(z) &=\bH_j(z)\K_i,\quad
			\K_i\bB_{j}(z) =v^{c_{\tau i,j}-c_{ij}} \bB_{j}(z) \bK_i, \quad C \text{ is central,}
			\\
			\label{qsiA1DRG2}
			[\bH_{i}(z), \bH_{j}(w)] &=0,
			\\
			\label{qsiA1DRG3}
			\bTH_i(z) \bB_j(w)
			&= \frac{(1 -v^{-c_{ij}}zw^{-1})(1 -v^{c_{\tau i,j}} zw C)}{(1 -v^{c_{ij}}zw^{-1}) (1 -v^{-c_{\tau i,j}}zw C)}
			\bB_j(w)  \bTH_i(z),
			\\
			\label{qsiA1DRG4}
			(v^2z -w) \bB_i(z) \bB_i(w) & +(v^2w-z) \bB_i(w) \bB_i(z)=0,
			\\
			(v^{-1}z-w) \bB_i(z) \bB_{\tau i} (w) & +(v^{-1}w-z) \bB_{\tau i}(w) \bB_i(z)
			\notag \\
			&  = \frac{\bDel(zw) }{1-v^2}
			\big((z -vw) \K_{{i}} \bTH_{\tau i} (w) + (w -vz) \K_{\tau i} \bTH_i(z) \big),
			\label{qsiA1DRG5}
		\end{align}
		\begin{align}
			\label{qsiA1DRG6}
			\mathbb{S}(w_1,w_2\mid z;i)
			&= -\frac{v^{-1}[2]}{{v-v^{-1}}}\Sym_{w_1,w_2}\bDel(w_2 z) \frac{1-v w_2^{-1} z}{1-v^{-2} w_1 w_2^{-1}}\bB_i(w_1)\bTH_{\tau i}(z)\bK_i
            \notag \\\notag
            &\quad +\frac{[2]}{{v-v^{-1}}}\Sym_{w_1,w_2}\bDel(w_2 z) \frac{1-vw_2^{-1}z}{1-v^2 w_1 w_2^{-1}}\bTH_{\tau i}(z)\bK_i\bB_i(w_1)
            \\\notag
            &+\frac{v[2]}{{v-v^{-1}}}\Sym_{w_1,w_2}\bDel(w_2 z)\frac{w_1^{-1}z-v w_1^{-1} w_2}{1-v^2 w_1^{-1} w_2}\bB_i(w_1)\bTH_{i}(w_2)\bK_{\tau i}
            \\
            &+\frac{v^{-2}[2]}{{v-v^{-1}}}\Sym_{w_1,w_2}\bDel(w_2 z) \frac{v w_1^{-1}w_2 - w_1^{-1}z}{1-v^{-2}w_1^{-1}w_2} \bTH_{i}(w_2)\bK_{\tau i}\bB_i(w_1).
		\end{align}
	\end{theorem}
	 
	\begin{proof}
		We simply rewrite the relations \eqref{qsiA1DR1}--\eqref{qsiA1DR6} in Definition \ref{def:iDR} by using the generating functions \eqref{eq:Genfun}. For example, the relation \eqref{qsiA1DRG6} is obtained by multiplying both sides of the relation \eqref{qsiA1DR6} by $w_1^{k_1}w_2^{k_2}z^{l}$ and summing over $k_1,k_2,l\in\Z$.
	\end{proof}
(The factors $\frac1{v-v^{-1}}$ on the RHS of \eqref{qsiA1DRG6} were missing in the published version. Thanks to Li Luo and Zheming Xu for alerting us about this.)

 \subsection{Drinfeld type presentation via different root vectors}
 The alternative imaginary root vectors $\acute{\TH}_{i,m}$ defined in Remark~\ref{rmk:aTH} lead to the following presentation of $\tUi$, which is a variant of the one given in Definition~\ref{def:iDR}. The (new) $H_{i,m}$ used in the theorem below is defined through the old formula \eqref{exp h} (with $\TH_{im}$ therein replaced by $\acute{\TH}_{i,m}$). Recall the notation $\mathbb{S}(k_1,k_2\mid l;i )$ from \eqref{eq:SS}.
 
 \begin{theorem}
 \label{thm:Drvariant}
 $\tUi$ is isomorphic to the $\Q(v)$-algebra generated by the elements $B_{i,l}$, $H_{i,m}$, $\bK_i^{\pm1}$, $C^{\pm1}$, where $i\in \{1,2\}$, $l\in\Z$ and $m \in \Z_{\ge 1}$, subject to the relations \eqref{qsiA1DR1}-\eqref{qsiA1DR4} and the following two relations \eqref{qsiA1DR5BK}--\eqref{qsiA1DR6BK} (in place of \eqref{qsiA1DR5}-\eqref{qsiA1DR6}):
		\begin{align}
		[B_{i,k},B_{\tau i,l+1}]_v & -v[B_{i,k+1},B_{\tau i,l}]_{v^{-1}} = -\acute{\TH}_{{\tau i},l-k+1}C^k \bK_{i} + \acute{\TH}_{ {\tau i},l-k-1}C^{k+1}\bK_{i}
			\notag \\
			& \qquad\qquad\qquad\qquad\quad \;\, -\acute{\TH}_{i,k-l+1}C^l\bK_{{\tau i}} + \acute{\TH}_{i,k-l-1}C^{l+1}\bK_{\tau i},
			\label{qsiA1DR5BK}  \\
		\mathbb{S}(k_1,k_2\mid l;i )
			=[2]&\Sym_{k_1,k_2}\sum_{p\geq 0}v^{2p}
            \big[\acute{\TH}_{\tau i,l-k_2-p}\K_i-\acute{\TH}_{\tau i,l-k_2-p-2}C\K_i, B_{i,k_1-p} \big]_{v^{-4p-1}}C^{k_2+p}
            \notag 
            \\
             +v[2]&\Sym_{k_1,k_2}\sum_{p\geq 0}v^{2p} \big[ B_{i,k_1+p+1},\acute{\TH}_{i,k_2-l-p+1}\K_{\tau i}-\acute{\TH}_{i,k_2-l-p-1}C \K_{\tau i}\big]_{v^{-4p-3}} C^{l-1},
			\label{qsiA1DR6BK}
		\end{align}
  for $m, n \ge 1, k_1, k_2, k, l \in \Z$, and $i, j \in \{1,2\}$.
 \end{theorem}

	\section{Verification of the current relations}
	\label{sec:verify1}
	
	In this section, we verify that all the defining relations in $\tUiD$ are preserved by the homomorphism $\Phi$ in \eqref{eq:isom}. This completes the proof of Theorem~\ref{thm:Dr1}.

	\subsection{Strategy of proofs}
	\label{subsec:strategy}
	
	Let us explain the general strategy of the proofs before getting to the technical details. The approach of \cite{BK20} for $q$-Onsager algebra (see also \cite{Da93} for quantum affine $\sll_2$) provides a helpful though rough guideline for the overall inductive arguments. However, our quasi-split affine rank one setting behaves with complexity of affine rank two, as there are 2 infinite series of real (and respectively, imaginary) root vectors as Drinfeld type generators; in contrast, there is only one infinite series of real (and respectively, imaginary) root vectors for $q$-Onsager algebra. Accordingly, the approach developed in \cite{LW21b} and especially in \cite{Z21} dealing with Serre relations in higher ranks also helps to inspire new ways to get around various technical difficulties.
	
	A central relation \eqref{qsiA1DR2} concerns about the commutativity among imaginary root vectors $\TH_{i,n}$, for $i=1,2$ and $n\ge 1$; a closely related property is the $\TT_{\bome}$-invariance of $\TH_{i,n}$. Assuming these properties, several additional relations among root vectors for $\tUi$ can be proved.
	However, $\TH_{i,n}$ is defined in terms of (a linear combination of) $v$-commutators between real root vectors. To establish the commutativity among $\TH_{i,n}$, one has to first understand to some extent commutators of $\TH_{i,n}$ with real root vectors, and so we would run in a vicious circle if we were not very careful.
	
	By direct computations, we shall establish $\TT_{\bome} (\TH_{j,1}) =\TH_{j,1}$, and some formula for the commutator $[\TH_{j,1}, B_1]$. This suffices to derive fully the first nontrivial Relation~\eqref{qsiA1DR4}.
	
	The actual inductive proofs of Relations~\eqref{qsiA1DR2}, \eqref{qsiA1DR3}, and \eqref{qsiA1DR5} run like a spiral.
	Under the assumption on the partial commutativity between $\TH_{j,1}$ and $\TH_{i,m}$, for $m<n$ (for some fixed positive integer $n$), we show that $\TT_{\bome} (\TH_{i,n}) =\TH_{i,n}$, and then establish part of (bounded by $n$) the relations for commutators between real root vectors; the Serre relations  \eqref{qsiA1DR6} are also partially proved along the way, which imply partial relations for commutators between imaginary and real root vectors. We then use these to establish fully the commutativity among $\TH_{i,m}$, for $m\le n$, and then all these relations hold unconditionally. The proof of  Relation~\eqref{qsiA1DR3} by induction independent of (the proof of) the Serre relations \eqref{qsiA1DR6} follows closely the approach developed in \cite{Z21} (instead of \cite{LW21b}). Finally, we fully prove the Serre relations \eqref{qsiA1DR6} for $\tUi$, imitating  \cite{Z21} again.

	\subsection{Relation \eqref{qsiA1DR1} }
	
	\begin{lemma}
		\label{lem:4.1}
		Relation \eqref{qsiA1DR1} holds in $\tUi$.
	\end{lemma}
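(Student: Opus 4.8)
The plan is to verify the four clauses of \eqref{qsiA1DR1} separately, reducing each to the $\Z\I$-grading on $\tUi$ and the commutation formula of Lemma~\ref{lem:KiX}. Throughout, recall that under the identification in Theorem~\ref{thm:Dr1} the symbols $B_{j,l}, \Theta_{j,m}, \K_i, C$ denote the root vectors constructed in Section~\ref{sec:rtvector}, with $C=-v\K_0\K_1\K_2$ by \eqref{eq:C}, so that ``holds in $\tUi$'' means these specific elements satisfy the stated identities.

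First I would dispose of the two easy clauses: $\K_i\K_j=\K_j\K_i$ is the first identity in \eqref{kB}, and for $\K_iB_{j,l}=v^{c_{\tau i,j}-c_{ij}}B_{j,l}\K_i$ one uses that $B_{j,l}\in\tUi_{l\delta+\alpha_j}$ by Lemma~\ref{lem:degree root}, so Lemma~\ref{lem:KiX} gives $\K_iB_{j,l}=v^{-(\alpha_i-\alpha_{\tau i},\,l\delta+\alpha_j)}B_{j,l}\K_i$; since $(\cdot,\delta)=0$ and $(\alpha_i-\alpha_{\tau i},\alpha_j)=c_{ij}-c_{\tau i,j}$ by \eqref{BF}, the exponent is exactly $c_{\tau i,j}-c_{ij}$. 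Next, for $\K_iH_{j,m}=H_{j,m}\K_i$, I would note that by \eqref{exp h} each $H_{j,m}$ is a polynomial in $\Theta_{j,1},\dots,\Theta_{j,m}$, so it suffices to show $\K_i$ commutes with every $\Theta_{j,n}$; and $\Theta_{j,n}\in\tUi_{n\delta}$ by Lemma~\ref{lem:degree root}, whence $\K_i\Theta_{j,n}=\Theta_{j,n}\K_i$ by the last assertion of Lemma~\ref{lem:KiX}.

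Finally, to see that $C=-v\K_0\K_1\K_2$ is central, $C$ commutes with each $\K_j$ by \eqref{kB}, and for each $j\in\I$ the relation $\K_iB_j=v^{c_{\tau i,j}-c_{ij}}B_j\K_i$ applied successively for $i=0,1,2$ shows that $\K_0\K_1\K_2$ commutes with $B_j$, since $\sum_{i\in\I}(c_{\tau i,j}-c_{ij})=0$ because $\tau$ permutes $\I$; as the $\K_j,B_j$ generate $\tUi$, this proves $C$ is central. (Alternatively $\K_0\K_1\K_2\in\tUi_{2\delta}$, so one can invoke the $k\delta$ case of Lemma~\ref{lem:KiX} once more.) I do not expect any genuine obstacle here; the only point deserving a half-line of care is that $H_{j,m}$ is not one of the generators of $\tUi$, so the third clause must be routed through the $\Theta_{j,n}$ via \eqref{exp h} before the weight-grading argument applies.
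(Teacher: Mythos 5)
Your proof is correct and follows essentially the same route as the paper's: all four clauses are reduced to the $\Z\I$-grading of Lemma~\ref{lem:degree root} and the commutation rule of Lemma~\ref{lem:KiX}, with the $H_{j,m}$ clause routed through \eqref{exp h} just as the paper asserts $H_{j,m}\in\tUi_{m\delta}$. The only quibble is the parenthetical ``alternative'' for centrality of $C$: invoking the $k\delta$ case of Lemma~\ref{lem:KiX} with $X=\K_0\K_1\K_2$ only gives $[\K_i,C]=0$, not centrality against the $B_j$'s, so your main argument (summing the exponents $c_{\tau i,j}-c_{ij}$ over $i\in\I$) is the one that actually does the work there.
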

	
	\begin{proof}
		Recall that $C=-vK_\de=-v\K_0\K_1\K_2$. Then $C$ is central and $\K_i\K_j =\K_j\K_i$ by \eqref{kB}.
		
		We have $H_{j,m} \in \tUi_{m\delta}$ by using \eqref{exp h} and noting that $\Theta_{j,m} \in \tUi_{m\delta}$ (see Lemma~\ref{lem:degree root}). Hence, by Lemma~\ref{lem:KiX}, we have $\K_i H_{j,m}=H_{j,m}\K_i$.
		
		The last relation $\bK_iB_{j,l}=v^{c_{\tau i,j}-c_{ij}} B_{j,l} \bK_i$ in \eqref{qsiA1DR1} follows by Lemma~ \ref{lem:KiX} and Lemma~\ref{lem:degree root}.
	\end{proof}

	\subsection{Properties of $B_{i,1}$ and $\TH_{i,1}$}
	
	\begin{lemma}
		\label{lem:Bi1}
		For $i=1,2$, we have
		\begin{align}
			\label{eq:B11}
			B_{i,1}= o(i)\Big( \big[[B_i,B_{\tau i}]_v,[B_{i},B_0]_v\big]_v-v[B_i,B_0]_{v^{3}}\bK_{\tau i} -v^{-1} [2] [B_i,B_0]_v \bK_i\Big).
		\end{align}
	\end{lemma}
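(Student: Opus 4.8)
The plan is to compute $B_{1,1}$ directly from its definition and then obtain the $i=2$ case for free from the $\btau$-symmetry \eqref{eq:switchroot}. By \eqref{eq:Bik} together with $\TT_{\bome}=\TT_0\TT_1$ from \eqref{eq:Tomeg}, one has $B_{i,1}=\big(o(i)\TT_{\bome}\big)^{-1}(B_i)=o(i)\,\TT_1^{-1}\TT_0^{-1}(B_i)$. Proposition~\ref{prop:T0} gives $\TT_0^{-1}(B_i)=[B_0,B_i]_v$, so, since $\TT_1^{-1}$ is an algebra automorphism,
\[
B_{i,1}=o(i)\,\TT_1^{-1}\big([B_0,B_i]_v\big)=o(i)\big[\TT_1^{-1}(B_0),\TT_1^{-1}(B_i)\big]_v .
\]
Everything thus reduces to an explicit evaluation using the action of $\TT_1^{-1}$ on the generators of $\tUi$ supplied by Theorem~\ref{thm:T-1}.

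Take $i=1$. By \eqref{eq:T-1B1B2}, $\TT_1^{-1}(B_1)=-vB_1\K_1^{-1}$. By Lemma~\ref{lem:Tigrade} (and the fact that $s_1s_2s_1$ is an involution), $\TT_1^{-1}(B_0)\in\tUi_{s_1s_2s_1(\alpha_0)}=\tUi_{\delta+\alpha_1+\alpha_2}$, and since $(\alpha_1-\alpha_2,\,\delta+\alpha_1+\alpha_2)=0$ with respect to the form \eqref{BF}, Lemma~\ref{lem:KiX} shows that $\K_1$ commutes with $\TT_1^{-1}(B_0)$. Hence $\K_1^{-1}$ can be pulled past $\TT_1^{-1}(B_0)$ to the right, and
\[
B_{1,1}=-o(1)\,v\,\big[\TT_1^{-1}(B_0),B_1\big]_v\,\K_1^{-1}.
\]
It remains to expand the $v$-bracket $\big[\TT_1^{-1}(B_0),B_1\big]_v$ using the explicit formula for $\TT_1^{-1}(B_0)$; I expect the variant \eqref{eq:T1B0a} to be the most convenient, since there each summand already carries its Cartan factor $\K_1$ or $\K_2$ on the right. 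Expanding summand by summand and using \eqref{eq:KB} to commute the $\K_j$'s past $B_1$, the term $vB_0\K_1\K_2$ contributes $v[B_0,B_1]_v\K_1\K_2$, while the remaining summands contribute $v$-commutators in $B_0,B_1,B_2$ in which $B_1$ occurs with multiplicity at least two.

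The main obstacle is the step that follows: collapsing these higher $v$-commutators down to the compact degree-$4$ expression in \eqref{eq:B11}. The plan is to do this by repeatedly applying the defining Serre relations of $\tUi$ — \eqref{B0ii} and \eqref{Bi00} to absorb a repeated $B_1$ against $B_0$, and \eqref{B211}--\eqref{B122} to absorb a repeated $B_1$ against $B_2$ — reorganizing the nested $v$-brackets via the standard $v$-Jacobi/Leibniz identities. After grouping the surviving contributions according to the Cartan monomials $\K_1\K_2$ and $\K_1^2$ and multiplying through by $-o(1)v\K_1^{-1}$ (so that $\K_1\K_2\K_1^{-1}=\K_2$ and $\K_1^2\K_1^{-1}=\K_1$), the expression should collapse to exactly
\[
B_{1,1}=o(1)\Big(\big[[B_1,B_2]_v,[B_1,B_0]_v\big]_v-v[B_1,B_0]_{v^{3}}\bK_2-v^{-1}[2][B_1,B_0]_v\bK_1\Big),
\]
which is \eqref{eq:B11} for $i=1$. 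This bookkeeping is elementary but long, so in the write-up I would display only the decisive cancellations.

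Finally, the case $i=2$ costs nothing. Applying the algebra involution $\btau$ of \eqref{Phi} — which swaps $B_1\leftrightarrow B_2$ and $\bK_1\leftrightarrow\bK_2$ and fixes $B_0$ — to the identity just established, and using \eqref{eq:switchroot} in the form $B_{2,1}=-\btau(B_{1,1})$ together with $o(2)=-o(1)$, one reads off \eqref{eq:B11} for $i=2$.
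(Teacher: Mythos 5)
Your high-level strategy is identical to the paper's: write $B_{1,1}=o(1)\,\TT_1^{-1}\TT_0^{-1}(B_1)=o(1)\big[\TT_1^{-1}(B_0),\TT_1^{-1}(B_1)\big]_v$, substitute the variant formula \eqref{eq:T1B0a} for $\TT_1^{-1}(B_0)$ together with $\TT_1^{-1}(B_1)=-vB_1\K_1^{-1}$, expand, and collapse using the defining relations \eqref{B211}--\eqref{Bi00}; then obtain $i=2$ from $\btau$-symmetry \eqref{eq:switchroot} and $o(2)=-o(1)$. Your preliminary observation that $\K_1$ commutes with $\TT_1^{-1}(B_0)$ (because the latter lies in $\tUi_{\delta+\alpha_1+\alpha_2}$ and $(\alpha_1-\alpha_2,\delta+\alpha_1+\alpha_2)=0$) is a correct and clean way to peel off the Cartan factor; the paper reaches the same point with slightly different bookkeeping.

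The gap is that you stop exactly where the proof begins. The phrase ``the expression should collapse to exactly \eqref{eq:B11}'' is a prediction, not an argument, and this collapse is not a matter of a few decisive cancellations. In the paper, after routine $v$-Jacobi and Serre reductions the whole problem is isolated in a single degree-six identity, Lemma~\ref{lem:grade6}, which evaluates $\big[\big[[B_1,B_2]_v,[B_1,B_0]_v\big]_v,[B_1,B_2]_v\big]_v$ in terms of degree-four monomials times Cartan factors; proving that lemma requires a \emph{complete} straightening of degree-six words $B_{i_1\cdots i_6}$ against the Serre relations (on the order of forty explicit identities in Appendix~\ref{App:B}), followed by several further degree-four straightening identities to match coefficients. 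There is no shortcut that reduces this to ``displaying only the decisive cancellations''; the cancellations are only visible after the full expansion. Until that expansion is carried out and shown to yield the right-hand side of \eqref{eq:B11}, what you have is a correct plan, not a complete proof.
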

	
	The proof of Lemma \ref{lem:Bi1} is long and can be found in the Appendix~\ref{App:B}.
	
	\begin{lemma}
		\label{lem:THi1fixed}
		We have $\TT_{\bome}(\TH_{i,1})=\TH_{i,1}$, for $i=1,2$.
	\end{lemma}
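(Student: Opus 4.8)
The plan is to prove $\TT_{\bome}(\TH_{i,1})=\TH_{i,1}$ by a direct computation, after reducing to the case $i=1$. For the reduction, observe that the involution $\btau$ of \eqref{Phi} commutes with both $\TT_0$ and $\TT_1$, hence with $\TT_{\bome}=\TT_0\TT_1$: for $\TT_0$ this follows from the formulas in Proposition~\ref{prop:T0} together with $\tau s_0\tau=s_0$; for $\TT_1$, the formulas of Theorem~\ref{thm:T1} on $\K_i,B_1,B_2$ are manifestly $\btau$-equivariant, while $\btau(\TT_1(B_0))=\TT_1(B_0)$ because $\btau\circ\TT_1\circ\btau$ satisfies the same intertwining relation \eqref{eq:conjb} as $\TT_1$ (the datum $\tT_{\bs_1}(\tfX_1^{-1})$ being $\tau$-symmetric), hence equals $\TT_1$ by uniqueness. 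Since $\TH_{2,1}=-\btau(\TH_{1,1})$ by \eqref{eq:switchroot}, the case $i=2$ then follows from the case $i=1$.

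It thus suffices to show $\TT_{\bome}(\TH_{1,1})=\TH_{1,1}$, equivalently $\TT_1(\TH_{1,1})=\TT_0^{-1}(\TH_{1,1})$. First I would compute the right-hand side in closed form. The Serre relation \eqref{Bi00} gives $\big[[B_0,B_j]_v,B_0\big]_v=\K_0 B_j$ for $j=1,2$; using this, the centrality of $\K_0$, and $\TT_0^{-1}(\K_1)=\K_0\K_1$, a short calculation from Proposition~\ref{prop:T0} yields
\[
\TT_0^{-1}(\TH_{1,1})=-o(1)\Big(\big[[B_0,B_1]_v,B_2\big]_{v^2}-v B_0\K_1\Big).
\]

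For the left-hand side I would substitute \eqref{eq:T1K}, \eqref{eq:T1B1B2} and \eqref{eq:T1B0} (or the variant \eqref{eq:T1B0b}) into $\TH_{1,1}=-o(1)\big(\big[B_1,[B_2,B_0]_v\big]_{v^2}-v B_0\K_1\big)$. A useful preliminary is that $\TT_1(B_0)$ commutes with $\K_1$ and $\K_2$: by Lemma~\ref{lem:Tigrade} it lies in $\tUi_{\alpha_0+2\alpha_1+2\alpha_2}$, and $(\alpha_1-\alpha_2,\alpha_0+2\alpha_1+2\alpha_2)=0$, so the claim follows from Lemma~\ref{lem:KiX} (while $B_0$ commutes with $\K_1,\K_2$ by \eqref{eq:KB}). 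Pulling all $\K$-factors to the right via \eqref{eq:KB} then gives
\begin{align*}
\TT_1\big(\big[B_1,[B_2,B_0]_v\big]_{v^2}\big) &=v^{-1}\big[B_1,[B_2,\TT_1(B_0)]_v\big]_{v^2}\,\K_1^{-1}\K_2^{-1},
\\
\TT_1(v B_0\K_1) &=\TT_1(B_0)\K_2^{-1},
\end{align*}
so that $\TT_1(\TH_{1,1})=\TT_0^{-1}(\TH_{1,1})$ reduces, after clearing the trailing $\K$'s, to the single identity
\[
v^{-1}\big[B_1,[B_2,\TT_1(B_0)]_v\big]_{v^2}-\TT_1(B_0)\K_1
=\K_1\K_2\big[[B_0,B_1]_v,B_2\big]_{v^2}-v B_0\K_1^2\K_2 .
\]
This I would verify by expanding $\TT_1(B_0)$ via \eqref{eq:T1B0b}, forming the iterated $v$-commutator with $B_2$ and then $B_1$, and collapsing the result with the $q$-Serre relations \eqref{B211}--\eqref{Bi00} and the $\K$-commutations \eqref{eq:KB}.

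The hard part is precisely this last step. Since $\TT_1(B_0)$ is a noncommutative polynomial of degree $5$ in $B_0,B_1,B_2$ (together with $\K$-dressed terms of degree $3$), the iterated commutator $\big[B_1,[B_2,\TT_1(B_0)]_v\big]$ is a bulky expression whose degree-$6$ and degree-$5$ parts must cancel outright and whose degree-$3$ parts must recombine into $\K_1\K_2[[B_0,B_1]_v,B_2]_{v^2}$, through repeated use of \eqref{B0ii}--\eqref{Bi00} (each application lowering the $B_0$-degree) and \eqref{B211}--\eqref{B122}. There is no conceptual difficulty, only lengthy bookkeeping -- the analogue here of the $q$-Onsager computations in \cite{BK20}. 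As an alternative route, one may instead apply $\TT_0$ (via Proposition~\ref{prop:T0}) directly to a collapsed form of $\TT_1(\TH_{1,1})$ and check that $\TT_0\TT_1(\TH_{1,1})=\TH_{1,1}$.
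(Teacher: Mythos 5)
Your strategy is sound in outline, but the proof you have sketched is essentially a plan to do a computation that you have not done, and the unexecuted step is precisely where the paper's proof gets its leverage.

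The paper proves the claim by applying $\TT_{\bome}^{-1}=\TT_1^{-1}\TT_0^{-1}$ \emph{in that order} to $\TH_{1,1}$. Applying $\TT_0^{-1}$ first yields a collapse: $\TT_0^{-1}([B_2,B_0]_v)=[[B_0,B_2]_v,B_0\K_0^{-1}]_v=B_2$ by the Serre relation \eqref{Bi00}, so the bracket term in $\TH_{1,1}$ becomes $[\TT_0^{-1}(B_1),B_2]_{v^2}$ before $\TT_1^{-1}$ is ever applied. The paper then substitutes $\TT_1^{-1}\TT_0^{-1}(B_1)=o(1)B_{1,1}$, with the \emph{collapsed} degree-$\le 4$ formula of Lemma~\ref{lem:Bi1} (proved in Appendix~\ref{App:B}), and $\TT_1^{-1}(B_2)=-vB_2\K_2^{-1}$, so the whole expansion lives in degree $\le 5$. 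You instead verify $\TT_1(\TH_{1,1})=\TT_0^{-1}(\TH_{1,1})$; your right-hand side benefits from the same collapse (your formula for $\TT_0^{-1}(\TH_{1,1})$ is correct), but your left-hand side $v^{-1}[B_1,[B_2,\TT_1(B_0)]_v]_{v^2}\K_1^{-1}\K_2^{-1}-\TT_1(B_0)\K_2^{-1}$ must be expanded raw, and $\TT_1(B_0)$ has degree $5$, so the iterated commutator has degree $7$ (not $6$, as you write). You label this "lengthy bookkeeping with no conceptual difficulty", but this is exactly the work the paper organizes through Lemma~\ref{lem:Bi1}, whose proof alone fills several pages; leaving it as bookkeeping is a real gap, not a formality.

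Two smaller comments. First, your reduction of $i=2$ to $i=1$ via $\btau$-equivariance of $\TT_{\bome}$ is a reasonable observation; the $\btau$-equivariance of $\TT_0$ is clear, while for $\TT_1$ you need $\btau(\TT_1(B_0))=\TT_1(B_0)$, which is not visible from \eqref{eq:T1B0}. Your appeal to uniqueness of the intertwining datum can be made to work, but it rests on the $\tau$-invariance of $\tfX_1$ and of $\tT_{\bs_1}$, which you should at least justify (the paper also only writes out the $i=1$ case, so this symmetry is implicitly used there too). Second, the alternative route you mention at the end ("apply $\TT_0$ to a collapsed form of $\TT_1(\TH_{1,1})$") is precisely the paper's route run in reverse; running it in the paper's order ($\TT_0^{-1}$ first, then $\TT_1^{-1}$, invoking Lemma~\ref{lem:Bi1}) is what makes the calculation tractable.
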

	
	\begin{proof}
		Recall from \eqref{TH} that
		$\TH_{i,1} = -o(i)\big(\big[ B_i,[ B_{\tau i},B_0 ]_v\big]_{v^{2}}- v B_0 \bK_i\big)$.
		It suffices to check that
		\begin{align}
			\TT_1^{-1} \TT_0^{-1} \big(\big[ B_1,[ B_2 ,B_0 ]_v\big]_{v^{2}}- v B_0 \bK_1\big)= \big[ B_1,[ B_2 ,B_0 ]_v\big]_{v^{2}}- v B_0 \bK_1.
		\end{align}
		Since $B_{1,1}=o(1)\TT_1^{-1}\TT_0^{-1}(B_1)$ by definition, we have
		by Lemma~\ref{lem:Bi1} that
		\begin{align*}
			\TT_1^{-1}\TT_0^{-1}(B_1)=\Big( \big[[B_1,B_2]_v,[B_1,B_0]_v\big]_v-v[B_1,B_0]_{v^{3}}\bK_2 -v^{-1} [2] [B_1,B_0]_v \bK_1\Big).
		\end{align*}
		Then we compute
		\begin{align*}
			&\TT_1^{-1} \TT_0^{-1}  \Big(\big[ B_1,[ B_2 ,B_0 ]_v\big]_{v^{2}}\Big)
			\\
			&= \TT_1^{-1} \Big( \big[ \TT_0^{-1}(B_1),[[B_0,B_2]_v, B_0\K_0^{-1} ]_v \big]_{v^2} \Big)
			\\
			&=\TT_1^{-1} \Big( \big[ \TT_0^{-1}(B_1),[[B_0,B_2]_v, B_0 ]_v \big]_{v^2} \Big) \K_0^{-1}
			\\
			&=\TT_1^{-1} \Big( \big[ \TT_0^{-1}(B_1), B_2\K_0 \big]_{v^2} \Big) \K_0^{-1}
			\\
			&=\TT_1^{-1} \Big( \big[ \TT_0^{-1}(B_1), B_2 \big]_{v^2} \Big)
			\\
			&=\Big[\big[[B_1,B_2]_v,[B_1,B_0]_v\big]_v-v[B_1,B_0]_{v^{3}}\bK_2 -v^{-1} [2] [B_1,B_0]_v \bK_1,-vB_2\K_2^{-1}\Big]_{v^2}
			\\
			&=-v\Big[\big[[B_1,B_2]_v,[B_1,B_0]_v\big]_v- v[B_1,B_0]_{v^{3}}\bK_2 -v^{-1} [2] [B_1,B_0]_v \bK_1,B_2\Big]_{v^{-1}}\K_2^{-1}
			\\
			&=\Big[B_2,\big[[B_1,B_2]_v,[B_1,B_0]_v\big]_v-v[B_1,B_0]_{v^{3}}\bK_2 -v^{-1} [2] [B_1,B_0]_v \bK_1\Big]_{v}\K_2^{-1}
			\\
			&=\Big[B_2,\big[[B_1,B_2]_v,[B_1,B_0]_v\big]_v\Big]_v \K_2^{-1}
            \\
            &\quad -  v\big[ B_2, [B_1,B_0]_{v^{3}}\bK_2\big]_v \K_2^{-1}- v^{-1}[2] \big[B_2,[B_1,B_0]_v \bK_1\big]_{v}\K_2^{-1}
			\\
			&=\Big[B_2,\big[[B_1,B_2]_v,[B_1,B_0]_v\big]_v\Big]_v \K_2^{-1}- v\big[ B_2, [B_1,B_0]_{v^{3}}\big]_{v^{-2}} - v^{-1}[2] \big[B_2,[B_1,B_0]_v\big]_{v^4}\bK_1\K_2^{-1}.
		\end{align*}
		
		Also, by the formula \eqref{eq:T-1B0} for $\TT_1^{-1}(B_0)$, we have
		\begin{align*}
			-v	\TT_1^{-1} \TT_0^{-1}(B_0\K_1)&= -\TT_1^{-1}(B_0) \K_2^{-1}
			\\
			&=-v\Big[[B_1,B_2]_v,\big[ B_2,[B_1,B_0]_v\big] \Big] \bK_2^{-1}+ \big[ [B_1,B_2]_{v^3},B_0 \big]-vB_0\bK_1.
		\end{align*}
		Summing the above 2 computations and using the Serre relation \eqref{B122}, we have
		\begin{align*}
			&\TT_1^{-1} \TT_0^{-1} \big(\big[ B_1,[ B_2 ,B_0 ]_v\big]_{v^{2}}- v B_0 \bK_1\big)
			\\
			& =\Big[B_2,\big[[B_1,B_2]_v,[B_1,B_0]_v\big]_v\Big]_v \K_2^{-1}- v\big[ B_2, [B_1,B_0]_{v^{3}}\big]_{v^{-2}} - v^{-1}[2] \big[B_2,[B_1,B_0]_v\big]_{v^4}\bK_1\K_2^{-1}
			\\
			& \quad -v\Big[[B_1,B_2]_v,\big[ B_2,[B_1,B_0]_v\big] \Big] \bK_2^{-1}+ \big[ [B_1,B_2]_{v^3},B_0 \big]-vB_0\bK_1
			\\
			&=-[2]\Big[ [B_1,B_0]_v, B_2\bK_2+v^3B_2\bK_1 \Big]_{v^{-1}}\bK_2^{-1} - v\big[ B_2, [B_1,B_0]_{v^{3}}\big]_{v^{-2}}
			\\
			& \quad -v^{-1}[2] \big[B_2,[B_1,B_0]_v\big]_{v^4} \bK_1\K_2^{-1}
			+ \big[ [B_1,B_2]_{v^3},B_0 \big]-vB_0\bK_1
			\\
			&=-[2] \big[ [B_1,B_0]_v,B_2\big]_{v^2} -v\big[ B_2, [B_1,B_0]_{v^3}\big]_{v^{-2}} + \big[ [B_1,B_2]_{v^3},B_0 \big]-vB_0\bK_1
			\\
			&=\big[ B_1,[ B_2 ,B_0 ]_v\big]_{v^{2}}-v B_0 \bK_1.
		\end{align*}
		The lemma is proved.
	\end{proof}
	
	We prove a very special $m=1$ case of \eqref{qsiA1DR3reform} (equivalent to Relation \eqref{qsiA1DR3}) below.
	
	\begin{lemma}
		\label{lem:THi1Bj}
		For $i=1,2$, and $l\in\Z$, we have
		\begin{align}
			\label{eq:TH11B1}
			[\TH_{i,1},B_{j,l}]=[c_{ij}]B_{j,l+1}-[c_{\tau i,j}]B_{j,l-1}C.
		\end{align}
	\end{lemma}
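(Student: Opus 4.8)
The plan is to reduce to the base case $l=0$ by means of the $\TT_{\bome}$-invariance of $\TH_{i,1}$ (Lemma~\ref{lem:THi1fixed}), and then to verify the base case by a direct computation with the explicit formulas \eqref{TH} for $\TH_{i,1}$, \eqref{eq:B11} for $B_{j,1}$, and \eqref{B1-1} for $B_{j,-1}$.

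For the reduction, recall $B_{j,l}=(o(j)\TT_{\bome})^{-l}(B_j)=o(j)^{l}\TT_{\bome}^{-l}(B_j)$ (using $o(j)^{\pm1}=o(j)$), and note that $\TT_{\bome}$ fixes $C=-v\bK_\de$, since $\TT_0(\bK_\de)=\bK_{s_0\de}=\bK_\de$ by Proposition~\ref{prop:T0} and $\TT_1(\bK_\de)=\bK_\de$ by Theorem~\ref{thm:T1}. Applying the algebra automorphism $\TT_{\bome}^{-l}$ to the $l=0$ identity $[\TH_{i,1},B_j]=[c_{ij}]B_{j,1}-[c_{\tau i,j}]B_{j,-1}C$, and using $\TT_{\bome}^{-l}(\TH_{i,1})=\TH_{i,1}$, $\TT_{\bome}^{-l}(B_j)=o(j)^{l}B_{j,l}$, $\TT_{\bome}^{-l}(B_{j,\pm1})=o(j)^{l}B_{j,l\pm1}$, and $\TT_{\bome}^{-l}(C)=C$, one obtains \eqref{eq:TH11B1} after cancelling the common factor $o(j)^{l}$. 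Hence it remains to treat $l=0$.

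For $l=0$, the symmetry \eqref{eq:switchroot} together with the involution $\btau$ of \eqref{Phi} — which, since $c_{\tau i,\tau j}=c_{ij}$, carries the $(i,j)$-identity to the $(\tau i,\tau j)$-identity — reduces matters to $i=1$, i.e., to the two identities $[\TH_{1,1},B_1]=[2]B_{1,1}+B_{1,-1}C$ and $[\TH_{1,1},B_2]=-B_{2,1}-[2]B_{2,-1}C$ (since $[c_{11}]=[2]$, $[c_{12}]=[c_{21}]=[-1]=-1$, $[c_{22}]=[2]$). Into each I would substitute \eqref{TH}, \eqref{eq:B11} and \eqref{B1-1} (so that $B_{j,-1}C=o(j)v^{-1}[B_j,B_0]_v\bK_j$), expand the left-hand side with the commutation relations \eqref{kB}--\eqref{eq:KB} between the $\bK$'s and the $B$'s, and then verify the identity using the Serre relations \eqref{B211}--\eqref{Bi00} together with the Jacobi identity for $v$-commutators.

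The main obstacle is the case $j=i$: here the degree-five expression for $B_{i,1}$ enters, and the term $\big[[B_i,[B_{\tau i},B_0]_v]_{v^2},B_i\big]$ has to be reorganized — first moving $B_0$ past the two copies of $B_i$ via \eqref{B0ii}, then applying \eqref{B211} to the resulting $B_i^2B_{\tau i}$-subword — so as to produce exactly the $\bK_i$- and $\bK_{\tau i}$-contributions on the right-hand side with the correct $v$-powers (the scalar bookkeeping, e.g. the identity $v^{-1}(1-[2]^2)=-v^{-1}[3]$, is the delicate point). The case $j=\tau i$ is comparatively easy: there the $\bK$-terms cancel directly, leaving only the $\bK$-free identity $-\big[[B_i,[B_{\tau i},B_0]_v]_{v^2},B_{\tau i}\big]=\big[[B_{\tau i},B_i]_v,[B_{\tau i},B_0]_v\big]_v$, which one checks reduces to $0=0$ after a single application of \eqref{B0ii}. (Alternatively the whole base case could be settled by brute-force expansion in a PBW-type monomial basis, but the route above keeps the computation organized.)
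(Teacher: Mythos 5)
Your proposal is correct and takes essentially the same approach as the paper: reduce to $l=0$ via the $\TT_{\bome}$-invariance of $\TH_{i,1}$ (Lemma~\ref{lem:THi1fixed}), reduce from $i=2$ to $i=1$ via the involution $\btau$, and verify the two base cases $j=1,2$ by direct computation with \eqref{TH}, \eqref{eq:B11}, \eqref{B1-1} and the Serre relations. The only cosmetic difference is the order in which the two reductions are performed; your sketch of the $j=\tau i$ case is also accurate (the $\bK_1$-terms match, the $\bK_2$-terms in the right-hand side cancel between $B_{\tau i,1}$ and $B_{\tau i,-1}C$, and the remaining identity reduces to $0=0$ after one use of \eqref{B0ii}).
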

	
	\begin{proof}
		We only need to consider $i=1$ thanks to the symmetry $\btau$.
		
		First, assume $j=1$. The identity \eqref{eq:TH11B1} for $l=0$ reads as
		\begin{equation}
			\label{eq:TH11Bl=1}
			[\TH_{1,1},B_{1}]=[2]B_{1,1}+B_{1,-1}C.
		\end{equation}
		Then \eqref{eq:TH11B1} for a general $l$ follows by applying the $(-l)$th power of $\TT_{\bome}$ to \eqref{eq:TH11Bl=1} since $\TH_{i,1}$ is fixed by $\TT_{\bome}$ (see Lemma \ref{lem:THi1fixed}). Hence, it remains to prove the identity \eqref{eq:TH11Bl=1}.
		
		To that end, we compute
		\begin{align*}
			[\TH_{1,1},B_1]= &- o(1)\Big[\big[ B_1,[ B_2 ,B_0 ]_v\big]_{v^{2}}-v B_0 \bK_1,B_1\Big]
			\\
			&=-o(1)\Big[\big[ B_1,[ B_2 ,B_0 ]_v\big]_{v^{2}},B_1\Big]+ vo(1) [B_0 \bK_1,B_1]
			\\
			&=-o(1)\Big[\big[ B_1,[ B_2 ,B_0 ]_v\big]_{v^{2}},B_1\Big]+ v^{-2}o(1) [B_0,B_1]_{v^3}\bK_1.
		\end{align*}
		On the other hand, we compute
		\begin{align*}
			&\Big[\big[ B_1,[ B_2 ,B_0 ]_v\big]_{v^{2}},B_1\Big]
			\\
			&=-v^2\Big[ B_2,\big[ B_1,[B_1,B_0]_v\big]_{v^{-1}} \Big]_{v^{-1}} + \big[[B_1,B_0]_v,[B_2,B_1]_v \big]_v
			\\
			& \quad +v^3\Big[ B_0,\big[ B_1,[B_1,B_2]_v\big]_{v^{-1}} \Big]_{v^{-3}} -v^{-1} \big[[B_1,B_2]_v,[B_1,B_0]_v \big]_{v^{3}}
			\\
			&=-[2]\big[[B_1,B_2]_v,[B_1,B_0]_v \big]_{v}-v[2][B_0,B_1]_{v^{-3} }\bK_1 -v^4[2] [B_0,B_1]_{v^{-3}} \bK_2.
		\end{align*}
		Combining the above 2 computations, we obtain
		\begin{align*}
			[\TH_{1,1},B_1]
			&=[2] o(1)\big[[B_1,B_2]_v,[B_1,B_0]_v \big]_{v} +[2]vo(1) [B_0,B_1]_{v^{-3} }\bK_1\\
			&\quad +[2]v^{4}o(1) [B_0,B_1]_{v^{-3}} \bK_2+ v^{-2}o(1) [B_0,B_1]_{v^3}\bK_1.
		\end{align*}
		This formula can then be converted to the desired identity \eqref{eq:TH11Bl=1} by using \eqref{B1-1} and \eqref{eq:B11}.

		Now assume $j=2$. As above, the proof of \eqref{eq:TH11B1} for a general $l$ follows from the case with $l=0$; that is, it remains to prove
		\begin{equation}
			\label{eq:TH11B2l=1}
			[\TH_{1,1},B_{2}]=-B_{2,1}-[2]B_{2,-1}C=-B_{2,1}+v[2]B_{2,-1}\K_\de.
		\end{equation}
		Indeed, we have
		\begin{align*}
			[\TH_{1,1},B_2]= &-o(1)\Big[\big[ B_1,[ B_2 ,B_0 ]_v\big]_{v^{2}}-v B_0 \bK_1,B_2\Big]
			\\
			&=-o(1)\Big[\big[ B_1,[ B_2 ,B_0 ]_v\big]_{v^{2}},B_2\Big] +vo(1) [B_0 \bK_1,B_2]
			\\
			&=-o(1)\Big[\big[ B_1,[ B_2 ,B_0 ]_v\big]_{v^{2}},B_2\Big]+v^4o(1) [B_0,B_2]_{v^{-3}}\bK_1
			\\
			&=o(1)\Big[[ B_2,B_1]_v,[ B_2 ,B_0 ]_v\Big]_{v}+ v^{4}o(1) [B_0,B_2]_{v^{-3}}\bK_1.
		\end{align*}
		This formula can be converted to \eqref{eq:TH11B2l=1} by using \eqref{B1-1}, \eqref{eq:B11} and $o(1)=-o(2)$.
	\end{proof}

	\subsection{Relation \eqref{qsiA1DR4}}
	
	\begin{lemma}
		\label{BB1}
		For $i\in \{1,2\}$ and $k\in\Z$, we have
		$[B_{i,k-1},B_{i,k}]_{v^{-2}}=0.$
	\end{lemma}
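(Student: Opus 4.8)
The plan is to reduce the assertion to its single instance $k=0$ using the translation automorphism $\TT_{\bome}$, and then to identify that instance with the Serre relation~\eqref{B0ii}.

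First I would record that, by the definition~\eqref{eq:Bik} of the real root vectors, $\TT_{\bome}^{-1}(B_{i,j})=o(i)\,B_{i,j+1}$ for every $j\in\Z$ (the only subtlety being that $o(i)\TT_{\bome}$ is not itself an algebra map, so one tracks the sign $o(i)$ separately). Applying the algebra automorphism $\TT_{\bome}^{-1}$ to $[B_{i,k-1},B_{i,k}]_{v^{-2}}$ then produces $o(i)^2[B_{i,k},B_{i,k+1}]_{v^{-2}}=[B_{i,k},B_{i,k+1}]_{v^{-2}}$, so the vanishing of $[B_{i,k-1},B_{i,k}]_{v^{-2}}$ is independent of $k$; it therefore suffices to prove $[B_{i,-1},B_i]_{v^{-2}}=0$.

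For this I would substitute the closed formula~\eqref{B1-1}, namely $B_{i,-1}=-o(i)v^{-2}[B_i,B_0]_v\,\bK_i\bK_\de^{-1}$, and move the Cartan factor $\bK_i\bK_\de^{-1}$ past $B_i$ using~\eqref{eq:KB} (which gives $\bK_iB_i=v^{-3}B_i\bK_i$) together with the centrality of $\bK_\de$ on degree-$\alpha_i$ elements (Lemma~\ref{lem:KiX}). After collecting the resulting scalar, $[B_{i,-1},B_i]_{v^{-2}}$ rewrites as a nonzero multiple of $[B_i,[B_i,B_0]_v]_{v^{-1}}\,\bK_i\bK_\de^{-1}$; expanding the iterated $v$-bracket gives $[B_i,[B_i,B_0]_v]_{v^{-1}}=B_i^2B_0-[2]B_iB_0B_i+B_0B_i^2$, which is precisely the left-hand side of the Serre relation~\eqref{B0ii} and hence vanishes.

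I do not expect a genuine obstacle here: the computation is short, and the only points that need care are the sign bookkeeping with $o(i)$ in the reduction step and the exact $v$-powers when commuting $\bK_i$ past $B_i$. The conceptual content of the lemma is just that the first real root vector $B_{i,-1}$ still $v^{-2}$-commutes with $B_i$, which is forced by — indeed equivalent to — the Serre relation between $B_i$ and $B_0$.
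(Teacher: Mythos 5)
Your proof is correct and follows essentially the same route as the paper: reduce to a single instance via the translation automorphism $\TT_{\bome}$ (the paper happens to say ``$k=1$'' but then computes $[B_{i,-1},B_i]_{v^{-2}}$, i.e.\ the $k=0$ case you use), then substitute the closed form~\eqref{B1-1}, commute the Cartan factor past $B_i$, and recognize the Serre relation~\eqref{B0ii}. The only cosmetic difference is that the paper writes the resulting bracket as $[[B_i,B_0]_v,B_i]_v$ while you write $[B_i,[B_i,B_0]_v]_{v^{-1}}$; these differ by the factor $-v$, and both expand to $B_i^2B_0-[2]B_iB_0B_i+B_0B_i^2$.
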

	
	\begin{proof}
		We note that the formula in the lemma for different $k$ are equivalent by applying suitable powers of $\TT_{\bome}$.
		
		It remains to prove the desired formula for $k=1$.
		By the formula \eqref{B1-1} for $B_{i,-1}$, we have
		\begin{align*}
			-v^{2}o(1)\cdot [B_{i,-1},B_i]_{v^{-2}}&= [B_i,B_0]_v  \bK_i B_i\bK_\delta^{-1}-v^{-2} B_i[B_i,B_0]_v \bK_i\bK_\delta^{-1}
			\\
			&=v^{-3} \big[[B_i,B_0]_v,B_i\big]_v\bK_i\bK_\delta^{-1}
			=0,
		\end{align*}
		where the last equality follows by the Serre relation \eqref{B0ii}.
	\end{proof}

	\begin{proposition}
		\label{prop:BBcom}
		Relation \eqref{qsiA1DR4} holds in $\tUi$, i.e.,
		\begin{equation*}
			[B_{i,k},B_{i,l+1}]_{v^{-2}}+[B_{i,l},B_{i,k+1}]_{v^{-2}}=0,
		\end{equation*}
		for $i\in \{1,2\}$, and $k,l\in\Z$.
	\end{proposition}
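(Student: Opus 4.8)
The plan is to prove the equivalent identity
\[
X(k,l):=[B_{i,k},B_{i,l+1}]_{v^{-2}}+[B_{i,l},B_{i,k+1}]_{v^{-2}}=0
\qquad\text{for all }k,l\in\Z,
\]
by induction on $n:=|k-l|$. Observe first that $X(k,l)=X(l,k)$, so it suffices to treat $l\ge k$, and that $X(k,l)$ is exactly the left-hand side of \eqref{qsiA1DR4} once $-v^{-2}[B_{i,k+1},B_{i,l}]_{v^2}$ is expanded. The base case $n=0$ is Lemma~\ref{BB1}: it gives $[B_{i,k},B_{i,k+1}]_{v^{-2}}=0$, and hence $X(k,k)=2[B_{i,k},B_{i,k+1}]_{v^{-2}}=0$.

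For the inductive step, fix $l=k+n$ with $n\ge1$ and assume $X(a,b)=0$ whenever $|a-b|<n$; in particular $X(k,l-1)=0$. The idea is to ``differentiate'' this relation using the two commutator formulas coming from Lemma~\ref{lem:THi1Bj}, namely
\[
[\TH_{i,1},B_{i,m}]=[2]B_{i,m+1}+C\,B_{i,m-1},\qquad
[\TH_{\tau i,1},B_{i,m}]=-B_{i,m+1}-[2]\,C\,B_{i,m-1},
\]
the second being \eqref{eq:TH11B1} specialized with $c_{\tau i,i}=-1$ and $c_{ii}=2$. Since $C$ is central and $[\TH_{j,1},-]$ is a derivation (so it distributes over the $v^{-2}$-brackets), applying $[\TH_{i,1},-]$ and then $[\TH_{\tau i,1},-]$ to $X(k,l-1)=0$ and regrouping the resulting terms into $X$'s produces
\begin{align*}
[2]X(k+1,l-1)+[2]X(k,l)+C\,X(k-1,l-1)+C\,X(k,l-2)&=0,\\
X(k+1,l-1)+X(k,l)+[2]\,C\,X(k-1,l-1)+[2]\,C\,X(k,l-2)&=0,
\end{align*}
in which $X(k+1,l-1)$ and $X(k,l-2)$ have index-difference $n-2$, while $X(k-1,l-1)$ has index-difference $n$.

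Subtracting the first identity from $[2]$ times the second gives $([2]^2-1)\,C\,\big(X(k-1,l-1)+X(k,l-2)\big)=0$. As $[2]^2-1=v^2+1+v^{-2}$ is a nonzero scalar and $C=-v\K_\delta$ is a non-zero-divisor in $\tUi$ (its image under the embedding $\imath$ of \eqref{eq:embed} is invertible in $\tU$), we deduce $X(k-1,l-1)+X(k,l-2)=0$; feeding this back into the first identity yields $X(k+1,l-1)+X(k,l)=0$. If $n\ge2$, then $X(k+1,l-1)=0$ by the inductive hypothesis, whence $X(k,l)=0$. If $n=1$, then $X(k+1,l-1)=X(k+1,k)=X(k,k+1)=X(k,l)$ by the symmetry of $X$, so $2X(k,l)=0$ and again $X(k,l)=0$. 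This closes the induction and establishes Relation~\eqref{qsiA1DR4}.

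The step-by-step calculation is routine once Lemmas~\ref{BB1} and \ref{lem:THi1Bj} are in hand; the one genuinely necessary idea — and the point I expect to be the main subtlety — is that $\TH_{i,1}$ alone yields only a single relation, which merely recycles the unknown index-difference-$n$ terms among themselves and cannot close the argument. One must therefore also bring in $\TH_{\tau i,1}$, whose commutator with $B_{i,m}$ carries a genuinely different combination of coefficients (available precisely because $[2]\neq\pm1$), and then cancel the central factor $C$. Note that the $\TT_{\bome}$-invariance of $\TH_{j,1}$ (Lemma~\ref{lem:THi1fixed}) enters only indirectly here, through the proof of Lemma~\ref{lem:THi1Bj} which supplies the two commutator formulas above.
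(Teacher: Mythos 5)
Your proof is correct. The argument deviates from the paper's in two ways worth recording. First, the induction scheme is different: you induct directly on the index-difference $|k-l|$ and keep both indices floating, whereas the paper normalizes $l$ to $0$ (setting $\bF_k := X(k,0)$), proves $\bF_k=0$ for all $k$ by an induction on $|k|$, and then recovers general $(k,l)$ by applying a power of $\TT_{\bome}$. Second, the mechanism for handling the $C$-shifted terms is organized differently: the paper pre-combines the two operators into $[2]\TH_{1,1}+\TH_{2,1}$, which by Lemma~\ref{lem:THi1Bj} acts as the pure forward shift $B_{1,k}\mapsto [3]B_{1,k+1}$ with no $C$-term, so the resulting recursion $[3]\big(\bF_{k+1}+\TT_{\bome}^{-1}\bF_{k-1}\big)=0$ is clean from the start. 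You instead apply $[\TH_{i,1},-]$ and $[\TH_{\tau i,1},-]$ separately, obtain a $2\times 4$ linear system in the quantities $X$, and eliminate the $C$-terms post hoc using $[2]^2-1=[3]\ne 0$ and the invertibility of $C$. Both proofs hinge on exactly the same two ingredients (Lemma~\ref{BB1} for the base case and Lemma~\ref{lem:THi1Bj} for the derivations), and both crucially require bringing in \emph{both} $\TH_{i,1}$ and $\TH_{\tau i,1}$; your version trades the clever a~priori choice of the $C$-killing combination for an explicit elimination, and avoids the additional explicit use of $\TT_{\bome}$ in the final reduction to general $(k,l)$.

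One small remark: you could state more directly that $C$ is invertible in $\tUi$ (it is a product of the invertible generators $\K_i^{\pm 1}$), rather than appealing to the embedding into $\tU$; but the argument as given is also fine.
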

	
	\begin{proof}
		Without loss of generality, we assume that $i=1$.
		It follows by \eqref{eq:TH11B1} that
		\begin{equation}
			\label{eq:ThThB}
			\big[[2]\TH_{1,1}+\TH_{2,1},B_{1,k}\big]=[3]B_{1,k+1}.
		\end{equation}
		Denote $\bF_{k}=[B_{1,k},B_{1, 1}]_{v^{-2}}+[B_{1},B_{1,k+1}]_{v^{-2}}$, for $k\in\Z$. Then we have
		\begin{equation}\label{BB2}
			\big[[2]\TH_{1,1}+\TH_{2,1}, \bF_{k}\big]=[3](\bF_{k+1}+\TT_{\bome}^{-1}\bF_{k-1}).
		\end{equation}
		In particular, we have
		$\big[[2]\TH_{1,1}+\TH_{2,1}, \bF_{0}\big]= [3] (\bF_{1} +\TT_{\bome}^{-1}\bF_{-1}).$
		Since $\bF_0=0$ by Lemma \ref{BB1} and $\bF_1=\TT_{\bome}^{-1}\bF_{ -1}$, we obtain $\bF_1=\bF_{-1}=0$. It then follows by \eqref{BB2} and an induction on $k$ that $\bF_k=0 $, for all $k\in \Z$.
		
		The desired relation now follows by applying a suitable power of $\TT_{\bome}$ to $\bF_{k-l} =0$.
	\end{proof}

	\subsection{Partial commutativity and $\TT_{\omega}$-invariance  of $\TH_{i,n}$}
	
	Recall $D_{i,n}$ and $\TH_{i,n}$ defined in \eqref{Dn} and \eqref{THn}, respectively.
	
	\begin{lemma}
		\label{propTH1}
		For $i=1,2$, we have
		\begin{align}
			D_{i,-1}=-[B_{\tau i},B_{i,-1}]_{v^{-1}}-[B_i, B_{\tau i,-1}]_{v^{-1}}&=-v^{-1}(\TH_{\tau i,1}\bK_i+ \TH_{i,1} \bK_{\tau i})C^{-1}.
			\label{root0}
		\end{align}
	\end{lemma}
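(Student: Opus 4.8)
The plan is to verify the identity \eqref{root0} by a direct computation starting from the definitions of $D_{i,-1}$, the real root vectors $B_{i,-1}$ in \eqref{B1-1}, and the imaginary root vector $\TH_{i,1}$ in \eqref{TH}. By the symmetry $\btau$ (see \eqref{Phi} and \eqref{eq:switchroot}), it suffices to treat one value of $i$, say $i=1$; the case $i=2$ then follows by applying $\btau$, using $\btau(B_{j,-1}) = -B_{\tau j,-1}$, $\btau(\TH_{j,1}) = -\TH_{\tau j,1}$, and $\btau(\bK_j) = \bK_{\tau j}$.

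First I would rewrite $D_{1,-1} = -[B_2, B_{1,-1}]_{v^{-1}} - [B_1, B_{2,-1}]_{v^{-1}}$ by substituting the closed formula $B_{i,-1} = -o(i)v^{-2}[B_i,B_0]_v \bK_i \K_\de^{-1}$ from \eqref{B1-1}. Since $\bK_i \K_\de^{-1} = \bK_i (\bK_1\bK_2\bK_0)^{-1}$ and the $\bK$'s only contribute $v$-power scalars when commuted past the degree-homogeneous elements $B_j$ (via Lemma~\ref{lem:KiX}, i.e.\ \eqref{eq:KB} and its $1\leftrightarrow 2$ swap), each bracket $[B_{\tau i}, B_{i,-1}]_{v^{-1}}$ collapses to a scalar multiple of $\big[B_{\tau i}, [B_i, B_0]_v\big]_{v^{a}} \bK_i \K_\de^{-1}$ for an appropriate power $a$ determined by the weights. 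The point is that $\K_\de = \bK_1\bK_2\bK_0$ is central (it has weight $\delta$, and $(\cdot,\delta)=0$, cf.\ Lemma~\ref{lem:KiX}), so it factors out cleanly, and one is left with comparing nested commutators of $B_1, B_2, B_0$ against the nested commutators appearing in $\TH_{1,1}$ and $\TH_{2,1}$.

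Next I would expand $\TH_{1,1}\bK_2 + \TH_{2,1}\bK_1$ using \eqref{TH}: $\TH_{j,1} = -o(j)\big(\big[B_j,[B_{\tau j},B_0]_v\big]_{v^2} - vB_0\bK_j\big)$. Multiplying through by the appropriate $\bK$ and by $C^{-1} = (-v\bK_\de)^{-1} = -v^{-1}\K_\de^{-1}$, the right-hand side of \eqref{root0} becomes $v^{-2}\K_\de^{-1}\big(\TH_{2,1}\bK_1 + \TH_{1,1}\bK_2\big)$, a sum of two quintic-degree-$\le 3$ expressions in the $B$'s times $\K_\de^{-1}$. The verification then reduces to a Jacobi-identity manipulation showing that
\[
[B_2,[B_1,B_0]_v]_{v^{b_1}} + [B_1,[B_2,B_0]_v]_{v^{b_2}} = (\text{scalars})\cdot\big(\big[B_1,[B_2,B_0]_v\big]_{v^2} + \big[B_2,[B_1,B_0]_v\big]_{v^2}\big) + (\text{scalars})\cdot B_0\,(\text{Cartan terms}),
\]
where the $B_0$-terms on both sides must match after invoking the mixed Serre relations \eqref{B0ii} and \eqref{Bi00} (these govern how $B_0$ interacts with $B_1, B_2$ and are exactly what produces the $\bK_0$-proportional corrections). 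I expect the main obstacle to be bookkeeping: tracking the numerous $v$-power coefficients generated by moving the $\bK_i$'s through the $B_j$'s (each move costs a $v^{\pm 3}$ or $v^0$ per \eqref{eq:KB}), and correctly applying the Serre relations \eqref{B0ii}, \eqref{Bi00} to reduce the degree-3 monomials so that the $B_0$-linear terms assemble into $\TH_{\tau i,1}\bK_i + \TH_{i,1}\bK_{\tau i}$. This is the same flavor of computation as in the proof of Lemma~\ref{lem:THi1Bj}, where analogous nested-commutator expansions were carried out, so I would model the manipulation on that proof, and I would expect the identity to close on the nose once all the Cartan-weight scalars are accounted for.
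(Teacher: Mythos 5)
Your plan to expand $D_{i,-1}$ from the closed formula \eqref{B1-1} for $B_{i,-1}$, commute the $\bK$'s past the $B$'s via \eqref{eq:KB}, and then match against the definition \eqref{TH} of $\TH_{i,1}$ is essentially the paper's route, and would succeed if carried out. However, you have misidentified the key mechanism in two related places.

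First, no Serre relation is used anywhere in this lemma. After commuting $\bK_i$ past $B_{\tau i}$ (which raises the $v$-parameter from $v^{-1}$ to $v^2$), the bracket $[B_{\tau i},B_{i,-1}]_{v^{-1}}$ becomes a scalar multiple of $[B_{\tau i},[B_i,B_0]_v]_{v^2}\,\bK_i C^{-1}$, and the expression $[B_{\tau i},[B_i,B_0]_v]_{v^2}$ is \emph{literally} part of the definition of $\TH_{\tau i,1}$ — no reduction of degree-$3$ monomials via \eqref{B0ii} or \eqref{Bi00} is needed. In particular there is no ``Jacobi-identity manipulation'' to perform: once the $v$-parameters are computed, the two brackets $[B_{\tau i},[B_i,B_0]_v]_{v^2}$ and $[B_i,[B_{\tau i},B_0]_v]_{v^2}$ appear directly, not as something to be simplified.

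Second, and more importantly, you do not identify what actually kills the leftover $B_0$-linear terms. Substituting $\TH_{\tau i,1}=-o(\tau i)\big([B_{\tau i},[B_i,B_0]_v]_{v^2}-vB_0\bK_{\tau i}\big)$ produces, for each of the two brackets in $D_{i,-1}$, a correction proportional to $B_0\bK_0^{-1}$ with coefficient $o(i)$ and $o(\tau i)$ respectively. These corrections come from the $-vB_0\bK_j$ term built into the definition of $\TH_{j,1}$, not from any Serre relation, and they cancel \emph{only} because of the sign convention $o(1)+o(2)=0$. This cancellation is the crux of the lemma — it is exactly why $D_{i,-1}$ has the clean form stated — and your write-up omits it; without recognizing it, you would not know why the $\bK_0^{-1}$-terms disappear at the end.

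So: same route, but be aware that the Serre relations are a red herring here, and make the $o(1)+o(2)=0$ cancellation explicit.
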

	
	\begin{proof}
		We assume $i=1$. By \eqref{B1-1} and \eqref{TH}, we have
		\begin{align}
			\label{pf1}
			-[B_1,B_{2,-1}]_{v^{-1}} &=-v^{-1}\TH_{1,1}\bK_2C^{-1}-o(1)v^{-1}B_0 \bK_0^{-1},\\
			-[B_2,B_{1,-1}]_{v^{-1}} &=-v^{-1}\TH_{2,1}\bK_1C^{-1}-o(2)v^{-1}B_0 \bK_0^{-1}.
			\notag
		\end{align}
		Hence, \eqref{root0} follows.
	\end{proof}
	
	We prove a very special $m=2$ case of \eqref{qsiA1DR3reform} (equivalent to Relation \eqref{qsiA1DR3}) below.
	\begin{lemma}
		\label{propTH2}
		For $i=1,2$, we have
		\begin{align*}
			[\TH_{i,2},B_i]+v^3[\TH_{i,0},B_i]_{v^{-6}}C=v^2[\TH_{i,1},B_{i,1}]_{v^{-4}}+v [\TH_{i,1},B_{i,-1}]_{v^{-2}}C.
		\end{align*}
	\end{lemma}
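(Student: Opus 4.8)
By the $\btau$-symmetry in \eqref{eq:switchroot} it suffices to prove the identity for $i=1$, the $i=2$ case then following by applying $\btau$. (It is the $m=2$, $k=0$, $j=i$ case of \eqref{qsiA1DR3reform}, equivalently of Relation~\eqref{qsiA1DR3}; but since \eqref{qsiA1DR3} is not yet at our disposal, we argue directly from the definitions.) The plan is to expand the left-hand side via the defining formula \eqref{eq:TH12} for $\TH_{1,2}$ and \eqref{Dn} for $D_{1,0}$. The summand $v\TH_{1,0}C$ of $\TH_{1,2}$ commutes with $B_1$, since $\TH_{1,0}$ is a scalar and $C$ is central (Lemma~\ref{lem:4.1}); the summand $-\TH_{2,0}C\bK_2^{-1}\bK_1$ contributes an explicit multiple of $B_1C\bK_2^{-1}\bK_1$ via \eqref{eq:KB}; and pushing the factor $\bK_2^{-1}$ in the remaining summand $-vD_{1,0}C\bK_2^{-1}$ past $B_1$ is, by \eqref{eq:KB}, again a scalar operation. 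Hence the computation of $[\TH_{1,2},B_1]$ reduces to that of $[D_{1,0},B_1]$, together with a product $B_1D_{1,0}$ produced by the non-trivial $\bK_2^{-1}$-conjugation, up to explicit $\bK$-monomial terms; the term $v^3[\TH_{1,0},B_1]_{v^{-6}}C=[3]B_1C$ on the left is then absorbed as well.

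The next step is to compute $[D_{1,0},B_1]$, where $D_{1,0}=-[B_2,B_1]_{v^{-1}}-[B_{1,1},B_{2,-1}]_{v^{-1}}$ by \eqref{Dn}. The piece $\big[[B_2,B_1]_{v^{-1}},B_1\big]$ is evaluated using the Serre relation \eqref{B211}. For the piece $\big[[B_{1,1},B_{2,-1}]_{v^{-1}},B_1\big]$ I would apply the derivation identity $[[A,B]_{v^{-1}},C]=[[A,C],B]_{v^{-1}}+[A,[B,C]]_{v^{-1}}$, which reduces it to $[B_{1,1},B_1]$ --- equal to $(1-v^{-2})B_{1,1}B_1$ by Lemma~\ref{BB1} --- and to $[B_{2,-1},B_1]$, which is rewritten through the formula for $[B_1,B_{2,-1}]_{v^{-1}}$ obtained from \eqref{B1-1} and \eqref{TH} (namely \eqref{pf1}). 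The product $B_1D_{1,0}$ is treated in the same spirit, via $B_1D_{1,0}=D_{1,0}B_1-[D_{1,0},B_1]$ and then commuting $B_1$ to the right with Lemma~\ref{BB1} and the Serre relations \eqref{B0ii}, \eqref{Bi00}; Proposition~\ref{prop:BBcom} (Relation~\eqref{qsiA1DR4}, already established) is available to rearrange the $B_{1,k}$.

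For the right-hand side I would use Lemma~\ref{lem:THi1Bj} to write $[\TH_{1,1},B_{1,1}]=[2]B_{1,2}+B_1C$ and $[\TH_{1,1},B_{1,-1}]=[2]B_1+B_{1,-2}C$, rewrite the $v$-commutators as ordinary commutators plus the correction terms $(v^2-v^{-2})B_{1,1}\TH_{1,1}$ and $(v-v^{-1})B_{1,-1}\TH_{1,1}C$, and express $B_{1,2}$ and $B_{1,-2}$ as the $\TT_{\bome}^{\mp1}$-translates of \eqref{eq:B11} and \eqref{B1-1}. Both sides are then brought to a common normal form --- an ordered monomial in $B_0,B_1,B_2$ times $\bK_\mu$, as supplied by Proposition~\ref{prop:graded} --- and compared. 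I expect this final matching to be the crux: both sides carry ``mixed'' contributions, namely products of $B_{1,1}$ (resp.\ $B_{1,-1}$) with $\TH_{1,1}$ as well as monomials of degree $7$ in the $B_j$'s, which cancel only after repeated use of the Serre relations \eqref{B211}--\eqref{Bi00} together with \eqref{qsiA1DR4}; keeping track of all the powers of $v$ is delicate, but the computation is entirely finite and is of the same nature as the appendix-level verification of \eqref{eq:B11} in Lemma~\ref{lem:Bi1}.
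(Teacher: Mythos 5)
Your plan starts the same way the paper does --- peel off the scalar pieces of $\TH_{1,2}$, reduce to the $D_{1,0}$-term, split by a $v$-Jacobi identity, invoke \eqref{pf1} and \eqref{eq:B11} --- but it diverges at the two places where the paper's computation is kept tractable, and the endgame you propose (full normal-form comparison) would almost certainly not go through by hand. First, after pushing $\bK_1\bK_0$ past $B_1$ one is computing $[B_1,[B_{1,1},B_{2,-1}]_{v^{-1}}]_{v^{-3}}$, and the paper chooses the Jacobi identity
$[B_1,[B_{1,1},B_{2,-1}]_{v^{-1}}]_{v^{-3}}=\big[[B_1,B_{1,1}]_{v^{-2}},B_{2,-1}\big]_{v^{-2}}+v^{-2}\big[B_{1,1},[B_1,B_{2,-1}]_{v^{-1}}\big]_v$
precisely so that the first summand dies by Lemma~\ref{BB1}; the identity you propose, $[[A,B]_{v^{-1}},C]=[[A,C],B]_{v^{-1}}+[A,[B,C]]_{v^{-1}}$, instead produces the non-vanishing $[B_{1,1},B_1]=(1-v^{-2})B_{1,1}B_1$ and replaces $[B_1,B_{2,-1}]_{v^{-1}}$ by the ordinary commutator $[B_{2,-1},B_1]$, which no longer matches \eqref{pf1} directly --- both generate stray products you would have to carry along. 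Second, and more importantly, the surviving Jacobi term \emph{already} produces $v^2[\TH_{1,1},B_{1,1}]_{v^{-4}}$ on the nose (this is \eqref{root1}), so the paper never expands that $v$-commutator on the right-hand side at all. Your plan of breaking $v^2[\TH_{1,1},B_{1,1}]_{v^{-4}}$ and $v[\TH_{1,1},B_{1,-1}]_{v^{-2}}C$ into ordinary commutators plus corrections $(v^2-v^{-2})B_{1,1}\TH_{1,1}$ and $(v-v^{-1})B_{1,-1}\TH_{1,1}C$ goes exactly the wrong way: those products, once fully expanded, have degree~$8$ in $B_0,B_1,B_2$, which is well beyond what a hand-normal-form argument can manage (the appendix check of Lemma~\ref{lem:Bi1} at degree~$6$ already runs several pages). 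The paper sidesteps this entirely: the only $v$-commutator it expands is $[B_{1,1},B_0]_v$ (degree~$6$), and the remaining right-hand term $v[\TH_{1,1},B_{1,-1}]_{v^{-2}}C$ is not expanded but rather \emph{moved to the left} and combined with $-v^{-1}o(1)[B_{1,1},B_0]_v\bK_1$ so that the $[2]\TH_{1,1}+\TH_{2,1}$ combination of \eqref{eq:ThThB} and the Serre relation \eqref{Bi00} produce the closed-form cancellation \eqref{root2}. So while your proposal correctly identifies the relevant tools (Jacobi, Lemma~\ref{BB1}, \eqref{pf1}, Lemma~\ref{lem:Bi1}, Lemma~\ref{lem:THi1Bj}), the brute-force final comparison you gesture at is not a viable substitute for the paper's reorganization; those two tricks --- the zero-producing Jacobi split and the \eqref{eq:ThThB}-driven recombination --- are the crux you would need to supply.
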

	
	\begin{proof}
		We assume $i=1$. We shall compute $[\TH_{1,2},B_1]$.
		Using $C= -v\bK_2\bK_1\bK_0$ from \eqref{eq:C}, recall $\TH_{1,2}$ from \eqref{eq:TH12}:
		\begin{align}
			\label{eq:Th12b}
			\TH_{1,2}&= -v^2 \Big([B_2,B_1]_{v^{-1}} + [B_{1,1},B_{2,-1}]_{v^{-1}}\Big) \bK_1\bK_0 +v\TH_{1,0}C -\TH_{2,0}C\bK_2^{-1}\bK_1.
		\end{align}
		The main part in $[\TH_{1,2},B_1]$ is
		\begin{align}\notag
			-v^2 \big[[B_{1,1},B_{2,-1}]_{v^{-1}}&\bK_1\bK_0,B_1\big]
			=v^2 \big[B_1,[B_{1,1},B_{2,-1}]_{v^{-1}}\big]_{v^{-3}}\bK_1\bK_0\\\notag
			&=v^2 \big[[B_1,B_{1,1}]_{v^{-2}},B_{2,-1}\big]_{v^{-2}}\bK_1\bK_0
			+ \big[B_{1,1},[B_1,B_{2,-1}]_{v^{-1}}\big]_v\bK_1\bK_0\\
			&=v^2 \big[\TH_{1,1},B_{1,1}\big]_{v^{-4}}+ v^{-1}o(1)[B_{1,1},B_0]_v\bK_1, \label{root1}
		\end{align}
		where we used $[B_1,B_{1,1}]_{v^{-2}}=0$ (by Lemma \ref{BB1}) and \eqref{pf1} in the last step.
		
		Next, we compute the term $[B_{1,1},B_0]_v$ in \eqref{root1}. Recall the formula for $B_{i,1}$ from \eqref{eq:B11}:
		\begin{align}  \label{Bi1}
			B_{i,1}= o(i)\Big( \big[[B_i,B_{\tau i}]_v,[B_{i},B_0]_v\big]_v-v[B_i,B_0]_{v^{3}}\bK_{\tau i}-v^{-1} [2] [B_i,B_0]_v \bK_i\Big).
		\end{align}
		Then the main part in $[B_{1,1},B_0]_v$ is
		\begin{align*}
			&o(1)\Big[\big[[B_1,B_2]_v,[B_1,B_0]_v\big]_v ,B_0\Big]_v \\
			&=o(1)\Big[[B_1,B_2]_v,\big[[B_1,B_0]_v,B_0\big]_{v^{-1}}\Big]_{v^3}-vo(1)\Big[[B_1,B_0]_v,\big[[B_1,B_2]_v,B_0\big]_{v^2}\Big]_{v^{-2}}\\
			&=o(1)\Big[[B_1,B_2]_v,\big[[B_1,B_0]_v,B_0\big]_{v^{-1}}\Big]_{v^3}\\
			&\quad -vo(1)\Big[[B_1,B_0]_v,\big[B_1,[B_2,B_0]_v\big]_{v^2}\Big]_{v^{-2}}+v^2o(1)\Big[[B_1,B_0]_v,\big[B_2,[B_1,B_0]_v\big] \Big]_{v^{-2}}\\
			&=-o(1)v^{-1}\big[[B_1,B_2]_v,B_1\big]_{v^3}\bK_0 \\
			&\quad + v^{-1}\big[\TH_{1,1}+vo(1)B_0\bK_1,[B_1,B_0]_v\big]_{v^2}+\big[\TH_{2,1}-vo(1)B_0\bK_2,[B_1,B_0]_v\big]\\
			&=-o(1)\Big(v^{-1}\big[[B_1,B_2]_v,B_1\big]_{v^3}\bK_0-\big[B_0\bK_1,[B_1,B_0]_v\big]_{v^2}+v\big[B_0\bK_2,[B_1,B_0]_v\big]\Big)\\
			&\quad +o(1)v^2\Big(v^{-1}\big[\TH_{1,1}, B_{1,-1}\big]_{v^2}+\big[\TH_{2,1}, B_{1,-1}\big]\Big)\bK_2 \bK_0.
		\end{align*}
		Plugging this new formula into the computation of $[B_{1,1},B_0]_v$ via the formula \eqref{Bi1}, we obtain
  \begin{align*}
      &-v^{-1}{o(1)} [B_{1,1},B_0]_v\bK_1 + v[\TH_{1,1},B_{1,-1}]_{v^{-2}}C
			\\
   &=\Big(v^{-2}\big[[B_1,B_2]_v,B_1\big]_{v^3}\bK_0\bK_1 -v^{-1}\big[B_0\bK_1,[B_1,B_0]_v\big]_{v^2} \bK_1+\big[B_0\bK_2,[B_1,B_0]_v\big]\bK_1\Big)\\
	&\quad +\Big(v^{-1}\big[\TH_{1,1}, B_{1,-1}\big]_{v^2}+\big[\TH_{2,1}, B_{1,-1}\big]\Big)C
 \\
 &\quad + \big[[B_1,B_0]_{v^{3}}\bK_{2},B_0\big]_v\bK_1 +v^{-2} [2] \big[[B_1,B_0]_v \bK_1,B_0\big]_v\bK_1
 + v[\TH_{1,1},B_{1,-1}]_{v^{-2}}C.
  \end{align*}
This can be simplified by collecting the like terms together:
		\begin{align}
  \notag
			-v^{-1} & {o(1)}  [B_{1,1},B_0]_v\bK_1 + v[\TH_{1,1},B_{1,-1}]_{v^{-2}}C\\
   \notag	&=\big[[2]\TH_{1,1}+\TH_{2,1},B_{1,-1}\big]C +v^{- 2}\big[[B_1,B_2]_v,B_1\big]_{v^3}\bK_1\bK_0
			\\\notag
			&\quad +v^{-1}[3](B_1B_0^2-[2]B_0 B_1 B_0 + B_0^2 B_1)\bK_1^2
			\\
			&=[3]B_1  C +v^{- 2}\big[[B_1,B_2]_v,B_1\big]_{v^3}\bK_1\bK_0 -v^{-2}[3] B_1 \bK_1^2 \bK_0,
			\label{root2}
		\end{align}
		where we have used \eqref{eq:ThThB} and the Serre relation \eqref{Bi00} in the last equality.
		Arranging the pieces \eqref{eq:Th12b}, \eqref{root1} and \eqref{root2} together, we have
		\begin{align*}
			[\TH_{1,2},B_1] &+v^3[\TH_{1,0},B_1]_{v^{-6}}C-v^2[\TH_{1,1},B_{1,1}]_{v^{-4}}-v[\TH_{1,1},B_{1,-1}]_{v^{-2}}C
			\\
			&=-v^2\big[[B_2,B_1]_{v^{-1}}\bK_1\bK_0,B_1\big]+\frac{v}{v-v^{-1}} [\bK_1^2\bK_0, B_1]
			\\
			& \quad  -v^{-2}\big[[B_1,B_2]_v,B_1\big]_{v^3}\bK_1\bK_0 +v^{-2}[3] B_1 \bK_1^2 \bK_0 \\
			& =0.
		\end{align*}
		The lemma is proved.
	\end{proof}
	
	\begin{lemma}
		\label{lem:TH1THkcom1}
		Let $n\geq 1$, and $i,j\in \{1,2\}$. Assume that
		\[
		\begin{cases}
			\TT_{\bome}(\TH_{i,m})=\TH_{i,m} , \qquad \text{ for } m\leq n , \\
			[\TH_{j,1},\TH_{i,m}] =0 ,\qquad \text{ for } m< n .
		\end{cases}
		\]
		Then $[\TH_{j,1},\TH_{i,n}]=[c_{ij}](1- \TT_{\bome})\TH_{i,n+1}$.
	\end{lemma}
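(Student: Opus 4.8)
The plan is to peel $\TH_{i,n}$ down to the $v$-commutators $D_{i,k}$ of \eqref{Dn} via the recursion \eqref{THn}, compute $[\TH_{j,1},D_{i,k}]$ from the already established commutator formula of Lemma~\ref{lem:THi1Bj}, and then feed in the two hypotheses.

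The first step is a mechanical auxiliary identity: for all $k\in\Z$ and $i,j\in\{1,2\}$,
\[
[\TH_{j,1},D_{i,k}]=[c_{ij}]\bigl(D_{i,k+1}+C\,\TT_{\bome}(D_{i,k+1})\bigr)-[c_{\tau i,j}]\bigl(\TT_{\bome}^{-1}(D_{i,k-1})+C\,D_{i,k-1}\bigr).
\]
To obtain it I would expand $[\TH_{j,1},D_{i,k}]$ using the Leibniz rule for $v$-commutators, substitute $[\TH_{j,1},B_{r,m}]=[c_{rj}]B_{r,m+1}-[c_{\tau r,j}]B_{r,m-1}C$ (Lemma~\ref{lem:THi1Bj}) at each of the four occurrences of a $B$, and regroup: the eight resulting $v$-commutators split into those with no extra $C$ and those with an extra $C$, and each group reassembles into $\pm D_{i,k\pm1}$ and a $\TT_{\bome}^{\pm1}$-translate thereof, using only $\TT_{\bome}(B_{r,m})=o(r)B_{r,m-1}$ (immediate from \eqref{eq:Bik}) together with $o(i)o(\tau i)=-1$. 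Alongside this I would record how $\TT_{\bome}$ acts on the toral part, reading off Proposition~\ref{prop:T0} and Theorem~\ref{thm:T1} (equivalently Lemma~\ref{lem:aut}): $\TT_{\bome}(C)=C$ and $\TT_{\bome}(\bK_r)=-\bK_r C^{-1}$ for $r=1,2$, whence $\TT_{\bome}^{-1}(\bK_r)=-\bK_r C$ and $\TT_{\bome}(\bK_{\tau i}^{-1}\bK_i)=\bK_{\tau i}^{-1}\bK_i$; the minus sign here is what makes the argument close.

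Now for $n\geq4$: apply $[\TH_{j,1},-]$ to $\TH_{i,n}=v\TH_{i,n-2}C-vD_{i,n-2}C\bK_{\tau i}^{-1}$ (this is \eqref{THn} with $m=n$). Since $n-2<n$ the hypothesis kills $[\TH_{j,1},\TH_{i,n-2}]$, and as $C\bK_{\tau i}^{-1}$ commutes with $\TH_{j,1}$ one gets $[\TH_{j,1},\TH_{i,n}]=-v[\TH_{j,1},D_{i,n-2}]C\bK_{\tau i}^{-1}$. Plug in the auxiliary identity with $k=n-2$. For the $[c_{ij}]$-block, rewrite $D_{i,n-1}$ via \eqref{THn} as $\TH_{i,n-1}\bK_{\tau i}-v^{-1}\TH_{i,n+1}C^{-1}\bK_{\tau i}$; since $\TT_{\bome}(\TH_{i,n-1})=\TH_{i,n-1}$ (hypothesis, $n-1\le n$) while $\TH_{i,n+1}$ need not be fixed, the toral data yields $D_{i,n-1}+C\,\TT_{\bome}(D_{i,n-1})=-v^{-1}C^{-1}\bK_{\tau i}\,(1-\TT_{\bome})\TH_{i,n+1}$. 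For the $[c_{\tau i,j}]$-block, rewrite $D_{i,n-3}=\TH_{i,n-3}\bK_{\tau i}-v^{-1}\TH_{i,n-1}C^{-1}\bK_{\tau i}$; now every $\TH$ that occurs has index $\le n$, hence is $\TT_{\bome}$-fixed, and the sign $-1$ forces $\TT_{\bome}^{-1}(D_{i,n-3})+C\,D_{i,n-3}=0$. Substituting both blocks back and cancelling the central factors gives precisely $[\TH_{j,1},\TH_{i,n}]=[c_{ij}](1-\TT_{\bome})\TH_{i,n+1}$. The cases $n=2,3$ run identically, except that $D_{i,n-3}$ is replaced by $D_{i,0}$ (governed by \eqref{eq:TH12}) or by $D_{i,-1}$ (governed by Lemma~\ref{propTH1}); in each case the extra scalar pieces $\TH_{i,0},\TH_{\tau i,0}$ are $\TT_{\bome}$-fixed and the corresponding vanishing $\TT_{\bome}^{-1}(D_{i,0})+C\,D_{i,0}=0$ (resp. with $D_{i,-1}$) is checked the same way. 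The base case $n=1$ does not fit this scheme — running the auxiliary identity at $k=-1$ produces $D_{i,-2}$, for which no recursion is available — so it must be done by hand: using the explicit formula \eqref{TH} for $\TH_{i,1}$ and Lemma~\ref{lem:THi1fixed}, and rewriting the target via \eqref{eq:TH12} as $(1-\TT_{\bome})\TH_{i,2}=-v\bigl(D_{i,0}+C\,\TT_{\bome}(D_{i,0})\bigr)C\bK_{\tau i}^{-1}$, the claim becomes a direct degree-two identity in the $B_{r,m}$'s.

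I expect the toral bookkeeping in the middle step to be the main obstacle: cancellation of the $\TH_{i,n-1}$ and $\TH_{i,n-3}$ contributions hinges on the exact signs and $C$-powers produced by $\TT_{\bome}$ on the $\bK_r$'s (the input $\TT_{\bome}(\bK_r)=-\bK_r C^{-1}$, with its minus sign, being decisive), and the auxiliary identity itself has many shifted-index $v$-commutators to keep straight; the base case $n=1$ is a second, more localized nuisance.
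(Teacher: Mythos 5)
Your proposal follows essentially the same route as the paper's proof: compute $[\TH_{j,1},D_{i,k}]$ from Lemma~\ref{lem:THi1Bj}, express the answer in terms of $D_{i,k\pm1}$ and their $\TT_{\bome}^{\pm1}$-images, then plug in the recursion solved for $D$ and let the $\TT_{\bome}$-fixedness hypothesis collapse the lower-index block; your auxiliary identity $[\TH_{j,1},D_{i,k}]=[c_{ij}]\big(D_{i,k+1}+C\TT_{\bome}(D_{i,k+1})\big)-[c_{\tau i,j}]\big(\TT_{\bome}^{-1}(D_{i,k-1})+CD_{i,k-1}\big)$ is exactly the paper's first display, written with the honest operator $\TT_{\bome}$ and the $o(i)o(\tau i)=-1$ sign made explicit, whereas the paper's notation silently conflates $\TT_{\bome}$ with the sign-free translation $\t$ of Lemma~\ref{lem:aut} (hence the apparent sign mismatch in $\TT_{\bome}(\K_r)=-\K_rC^{-1}$ vs.\ $\t(\K_r)=\K_rC^{-1}$), so your bookkeeping is the more careful one. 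You also rightly flag $n=1,2,3$ as lying outside the range of \eqref{THn} — the paper applies the recursion formally for all $n$ without comment — and while your treatment of $n=2,3$ via Lemma~\ref{propTH1} and \eqref{eq:TH12} is fine, the $n=1$ case is genuinely harder than ``a direct degree-two identity'' suggests: $D_{i,-2}$ has no $\TH$-expression, and computing $[\TH_{j,1},\TH_{i,1}]$ from \eqref{TH} runs into $[\TH_{j,1},B_0]$, which is not covered by Lemma~\ref{lem:THi1Bj}, so this base case still wants an honest separate verification.
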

	
	\begin{proof}
		By definition of $D_{i,k}$ and using Lemma \ref{lem:THi1Bj}, we have
		\begin{align*}
			&[\TH_{i,1},D_{i,n-2 }]\\
			&=[B_{\tau i,1},B_{i,n-2 }]_{v^{-1}}+[2][B_{\tau i,-1},B_{i,n-2}]_{v^{-1}}C-[2][B_{\tau i},B_{i,n-1}]_{v^{-1}}-[B_{\tau i},B_{i,n-3}]_{v^{-1}}C
			\\
			& \quad+[B_{ i,n-1},B_{\tau i }]_{v^{-1}}+[2][B_{i,n-1},B_{\tau i, -2}]_{v^{-1}}C-[2][B_{ i,n},B_{\tau i, -1}]_{v^{-1}}-[B_{i,n-2},B_{\tau i,-1}]_{v^{-1}}C
			\\
			&=[2](1-C \TT_{\bome})D_{i,n-1}-\TT_{{\bome}}^{-1}(1-C \TT_{\bome}) D_{i,n-3},
		\end{align*}
		for any $n\geq0$. By the assumption $\TT_{\bome}(\TH_{i,m})=\TH_{i,m}$ for $m\leq n$, we have
		\begin{align*}
			(1-C \TT_{\bome})D_{i,n-1}&=(1-C \TT_{\bome})\big(-v^{-1}\TH_{i,n+1}\bK_{\tau i}C^{-1}+\TH_{i,n-1}\bK_{\tau i} \big)
			\\
			&=-v^{-1}(1-\TT_{\bome})\TH_{i,n+1} \bK_{\tau i}C^{-1},
			\\
			(1-C \TT_{\bome}) D_{i,n-3}&=(1-C \TT_{\bome}) \big(-v^{-1} \TH_{i,n-1}\bK_{\tau i}C^{-1}+\TH_{i,n-3}\bK_{\tau i} \big)=0.
		\end{align*}
		So
		\begin{align*}
			[\TH_{i,1},D_{i,n-2 }]=-v^{-1}(1-\TT_{\bome})\TH_{i,n+1} \bK_{\tau i}C^{-1}.
		\end{align*}
		Replace $D_{i,n-2}$ by $-v^{-1}\TH_{i,n}\bK_{\tau i}C^{-1} +\TH_{i,n-2}\bK_{\tau i}$. By the assumption $[\TH_{i,1},\TH_{i,n-2}]=0$, we have
		\begin{align*}
			[\TH_{i,1},\TH_{i,n }]\bK_{\tau i}C^{-1} =[2](1-\TT_{\bome})\TH_{i,n+1} \bK_{\tau i}C^{-1},
		\end{align*}
		and then the desired formula follows.
		
		For $j=\tau i$, the proof is entirely similar by using now the identity
		\begin{align*}
			[\TH_{\tau i,1},D_{i,n-2}]=-(1-C \TT_{\bome})D_{i,n-1}+[2]\TT_{{\bome}}^{-1}(1-C \TT_{\bome}) D_{i,n-3}.
		\end{align*}
		We omit the details.
	\end{proof}
	
	The following corollary will be used repeatedly in the subsequent inductive arguments.
	\begin{corollary}
		\label{cor:fix}
		Let $n\geq 1$ and $i, j\in \{1,2\}$. Assume that $[\TH_{j,1},\TH_{i,m}] =0$, for all $m<n$. Then $ \TH_{i,m}$  are fixed by $\TT_{\bome}$, for $1\leq m \leq n$.
	\end{corollary}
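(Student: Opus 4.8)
The plan is to prove Corollary~\ref{cor:fix} by induction on $n$, feeding the output of Lemma~\ref{lem:TH1THkcom1} back into its own hypotheses. The base case $n=1$ is exactly Lemma~\ref{lem:THi1fixed}, which asserts $\TT_{\bome}(\TH_{i,1}) =\TH_{i,1}$ for $i=1,2$, with no commutativity assumption needed. So suppose $n\ge 2$ and that $\TH_{i,m}$ is fixed by $\TT_{\bome}$ for all $1\le m\le n-1$ (and all $i\in\{1,2\}$); we must upgrade this to $m=n$ under the standing hypothesis that $[\TH_{j,1},\TH_{i,m}]=0$ for all $m<n$ and $i,j\in\{1,2\}$.

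First I would check that the two hypotheses of Lemma~\ref{lem:TH1THkcom1} are met with this $n$: the condition $\TT_{\bome}(\TH_{i,m})=\TH_{i,m}$ for $m\le n$ is \emph{not} yet known for $m=n$ — but reading Lemma~\ref{lem:TH1THkcom1} carefully, its proof only uses $\TT_{\bome}$-invariance of $\TH_{i,m}$ for $m\le n$ in the guise of the vanishing of $(1-C\TT_{\bome})D_{i,n-1}$ and $(1-C\TT_{\bome})D_{i,n-3}$ after substituting \eqref{THn}; the first of these expressions is $-v^{-1}(1-\TT_{\bome})\TH_{i,n+1}\bK_{\tau i}C^{-1}$, which requires knowing $\TT_{\bome}$ fixes $\TH_{i,n-1}$ only, not $\TH_{i,n}$. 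Thus the genuinely needed input is $\TT_{\bome}$-invariance up to level $n-1$, which is the inductive hypothesis, together with $[\TH_{j,1},\TH_{i,m}]=0$ for $m<n$, which is the standing hypothesis. (If the bookkeeping does not line up this cleanly, one shifts the index: apply Lemma~\ref{lem:TH1THkcom1} with $n$ replaced by $n-1$ to get $[\TH_{j,1},\TH_{i,n-1}]=[c_{ij}](1-\TT_{\bome})\TH_{i,n}$, then combine with the standing hypothesis $[\TH_{j,1},\TH_{i,n-1}]=0$.) Either way, the conclusion is $[c_{ij}](1-\TT_{\bome})\TH_{i,n}=0$ for all $i,j\in\{1,2\}$.

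Next I would choose $j$ so that $c_{ij}\ne 0$: taking $j=i$ gives $c_{ii}=2$, so $[2]\ne 0$ in $\Q(v)$, and dividing by $[2]$ yields $(1-\TT_{\bome})\TH_{i,n}=0$, i.e.\ $\TT_{\bome}(\TH_{i,n})=\TH_{i,n}$. This holds for $i=1$ and $i=2$ simultaneously, completing the inductive step and hence the corollary. The argument is short precisely because all the analytic content — the recursion \eqref{THn} for $\TH_{i,m}$, the formulas \eqref{root0} and \eqref{pf1} expressing $D_{i,-1}$ in terms of $\TH_{\bullet,1}$, and the commutator identities for $[\TH_{i,1},D_{i,n-2}]$ — has already been absorbed into Lemma~\ref{lem:TH1THkcom1}.

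The main obstacle is not any computation but the circularity inherent in the spiral structure described in \S\ref{subsec:strategy}: one must be scrupulous that Lemma~\ref{lem:TH1THkcom1}, when invoked at level $n$, truly consumes only data established at levels $<n$ (or at most the $\TT_{\bome}$-invariance at level $n-1$), so that the induction is well-founded and does not secretly presuppose $[\TH_{j,1},\TH_{i,n}]=0$ or $\TT_{\bome}(\TH_{i,n})=\TH_{i,n}$. In the write-up I would therefore state explicitly which instance of Lemma~\ref{lem:TH1THkcom1} is being applied and verify its two displayed hypotheses against the induction hypothesis and the corollary's standing assumption before concluding. Everything else is the trivial observation that $[2]$ is invertible.
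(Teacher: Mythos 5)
Your proposal is correct and follows essentially the same route as the paper, which simply cites Lemmas~\ref{lem:THi1fixed} and \ref{lem:TH1THkcom1} and says ``induction on $m$.'' Your index-shifted reading (apply Lemma~\ref{lem:TH1THkcom1} with $n$ replaced by $n-1$, so that $0=[\TH_{j,1},\TH_{i,n-1}]=[c_{ij}](1-\TT_{\bome})\TH_{i,n}$, then divide by $[c_{ii}]=[2]$) is the clean way to make the induction well-founded using the lemma as a black box, and it matches the intended argument.
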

	
	\begin{proof}
		The proof follows from Lemmas \ref{lem:THi1fixed}, \ref{lem:TH1THkcom1} and an induction on $m$.
	\end{proof}
	\subsection{The commutator $[\TH_{i,n}, B_j]$}
	
	For $i=1,2$ and $n\geq 0$, we denote by $\ch_{i,n}$ the $\Q(v)$-subalgebra of $\tUi$ generated by $\{\TH_{i,m},\K_1,\K_2,\K_0\mid 1\leq m < n\}$.
	
	\begin{proposition}
		\label{lem:ind}
		Let $n\geq1$, and assume that $[\TH_{j,1},\TH_{i,m}] =0$ for all $i, j \in \{1,2\}$ and all $m<n$. Then there exist $X_{k,n}^{(i)} \in \ch_{i,n}$, for $-n\le k \le n$, such that
		\begin{align}
			[\TH_{i,n},B_1]=\sum_{k=-n}^n B_{1,k} X_{k,n}^{(i)}.
			\label{eq:Th1B1}
		\end{align}
	\end{proposition}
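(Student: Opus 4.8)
The plan is to prove the slightly more symmetric assertion that $[\TH_{i,n},B_j]\in\sum_{k=-n}^{n}B_{j,k}\,\ch_{i,n}$ for all $i,j\in\{1,2\}$, of which \eqref{eq:Th1B1} is the case $j=1$. The involution $\btau$ of \eqref{Phi} satisfies $\btau(\TH_{i,m})=(-1)^m\TH_{\tau i,m}$, $\btau(B_{j,k})=(-1)^k B_{\tau j,k}$ and $\btau(\ch_{i,n})=\ch_{\tau i,n}$ by \eqref{eq:switchroot}, so the four choices of $(i,j)$ pair up as $(i,j)\leftrightarrow(\tau i,\tau j)$; it therefore suffices to establish the cases $[\TH_{1,n},B_1]$ and $[\TH_{1,n},B_2]$. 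We induct on $n$, noting throughout that $\TH_{i,m}$ is $\TT_{\bome}$-invariant for $m\le n$ by Corollary~\ref{cor:fix} (using the hypothesis).

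The case $n=1$ is Lemma~\ref{lem:THi1Bj}, whose coefficients lie in $\Q(v)+\Q(v)C\subset\ch_{i,1}$. For $n=2$ and $j=1$ one combines Lemma~\ref{propTH2} with Lemma~\ref{lem:THi1Bj} (to rewrite $[\TH_{1,1},B_{1,\pm1}]$ and $[\TH_{1,0},B_1]$), using that $\TH_{1,1}\in\ch_{1,2}$, so that converting a $v^a$-commutator $[\TH_{1,1},B_{1,k}]_{v^a}$ to a genuine commutator only introduces harmless terms $B_{1,k}\TH_{1,1}\in B_{1,k}\ch_{1,2}$. For $n=2$ and $j=2$ one argues directly from \eqref{eq:TH12}: the terms $v\TH_{1,0}C$ and $\TH_{2,0}C\bK_2^{-1}\bK_1$ of $\TH_{1,2}$ contribute to $[\TH_{1,2},B_2]$ only elements of $B_2\,\ch_{1,2}$ (by Lemma~\ref{lem:KiX}), so the problem reduces to $[D_{1,0},B_2]$, which is computed by expanding $D_{1,0}=-[B_2,B_1]_{v^{-1}}-[B_{1,1},B_{2,-1}]_{v^{-1}}$ via the $v$-Leibniz rule and eliminating the resulting triple products using the Serre relations \eqref{B211}--\eqref{Bi00} together with the known formulas \eqref{B1-1}, \eqref{eq:B11} for $B_{2,-1}$, $B_{1,1}$.

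For the inductive step $n\ge3$ we use the recursion \eqref{THn}: $\TH_{i,n}=v\TH_{i,n-2}C-vD_{i,n-2}C\bK_{\tau i}^{-1}$. Since $C$ is central and $D_{i,n-2}$, of weight $(n-2)\delta+\alpha_1+\alpha_2$, commutes with every $\K_p$ by Lemma~\ref{lem:KiX}, one obtains $[\TH_{i,n},B_j]=vC\,[\TH_{i,n-2},B_j]-vC\,\bK_{\tau i}^{-1}\,[D_{i,n-2},B_j]_{v^{a_{ij}}}$, where $a_{ij}$ is the exponent produced by commuting $\bK_{\tau i}^{-1}$ past $B_j$. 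The first summand has the desired form by the induction hypothesis ($\ch_{i,n-2}\subseteq\ch_{i,n}$ and $vC\in\ch_{i,n}$). For the second, one expands $[D_{i,n-2},B_j]$ by the $v$-Leibniz rule from $D_{i,n-2}=-[B_{\tau i},B_{i,n-2}]_{v^{-1}}-[B_{i,n-1},B_{\tau i,-1}]_{v^{-1}}$; the same-index commutators among the real root vectors that appear are reorganized by relation \eqref{qsiA1DR4} (Proposition~\ref{prop:BBcom}), and the opposite-index ones by the instances of relation \eqref{qsiA1DR5} with index gaps at most $n-1$, which convert them into expressions in $\TH_{i',m}$ ($m\le n-1$, hence in $\ch_{i,n}$) times $\K$'s and $C$'s. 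One then checks that the a priori higher terms (products of three real root vectors) cancel, so that the remainder collects into the claimed form $\sum_{|k|\le n}B_{j,k}X_{k,n}^{(i)}$ with $X_{k,n}^{(i)}\in\ch_{i,n}$; the range $|k|\le n$ is forced by a weight count.

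The main obstacle is precisely this collapse: the expansion of $[D_{i,n-2},B_j]$ a priori produces triple products $B_{\cdot}B_{\cdot}B_{\cdot}$, and the fact that they cancel — so that $D_{i,n-2}$ behaves like the imaginary root vector it essentially is — is not formal, but rests on the bounded portions of relations \eqref{qsiA1DR4}--\eqref{qsiA1DR6} and of the commutativity \eqref{qsiA1DR2reform}. Consequently Proposition~\ref{lem:ind} cannot be proved in isolation; it must be threaded, as in the global strategy of \S\ref{subsec:strategy}, into the single induction on $n$ that simultaneously verifies those relations, so that at stage $n$ one only ever invokes their strictly-lower instances. The delicate point is to check that every such appeal respects this height bound and that the accompanying bookkeeping of $v$-powers and $\K$-conjugations never pushes a coefficient outside $\ch_{i,n}$ or an index $k$ outside $[-n,n]$.
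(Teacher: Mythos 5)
Your base cases and the use of Corollary~\ref{cor:fix} match the paper, but your proposed inductive step diverges from, and is weaker than, the paper's. The paper does not work directly from the recursion~\eqref{THn} and a Leibniz expansion of $[D_{i,n-2},B_j]$; instead it first establishes Lemma~\ref{lem:THB}, which gives the clean four-term recursion $Y_{i,n}^k=0$, i.e.\ that $[\TH_{1,n},B_{1,k}]$ equals a $v$-commutator combination of $[\TH_{1,n-1},B_{1,k\pm1}]$ and $[\TH_{1,n-2},B_{1,k}]$ only. With that lemma in hand, the induction closes at once: the right-hand side involves only lower-indexed $\TH$'s (so the inductive hypothesis applies), and passing from $v$-commutators to genuine commutators costs only terms of the allowed shape $B_{1,k}Z_{k,n}$ with $Z_{k,n}\in\ch_{1,n}$; the index bound $|k|\le n$ is checked by the $\TT_{\bome}$-shift argument at the end.

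Your approach replaces this by a direct expansion of $[D_{i,n-2},B_j]$, and you correctly observe that this produces triple products of real root vectors whose collapse "is not formal." That is exactly the problem: you have not exhibited how the bounded instances of~\eqref{qsiA1DR4}--\eqref{qsiA1DR6} make these triple products disappear, and they do not do so in any term-by-term way after a Leibniz expansion — those relations are very specific antisymmetrized combinations, not cancellation rules for arbitrary triple products. Resolving this would amount to re-deriving Lemma~\ref{lem:THB} inline, which is the real content you are missing. Relatedly, your conclusion that the proposition "cannot be proved in isolation" and must be threaded into a global simultaneous induction is an overstatement of what the paper actually does: once Lemma~\ref{lem:THB} is established (conditionally, from the same hypothesis), the proof of the proposition is a self-contained induction and does not need to be proved in lockstep with the relations themselves. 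The gap, concretely, is the absence of the intermediate reformulation $Y_{i,n}^k=0$; without it your inductive step does not close.
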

	
	In order to prove Proposition \ref{lem:ind}, we need to prepare some notations and lemmas. For any $k_1,k_2\in\Z$ and $i=1,2$, we denote
	\begin{align}
		\bR(k_1,k_2|l;i)& :=\Sym_{k_1,k_2}\Big[ -v^{-1} \TH_{\tau i,l-k_2+1}C^{k_2}\bK_{i}+\TH_{\tau i,l-k_2-1}C^{k_2+1}\bK_{i}  , B_{i,k_1}\Big]_v
		\notag \\
		&\quad + \Sym_{k_1,k_2}\Big[-v^{-1}\TH_{i,k_2-l +1}C^l\bK_{\tau i} +\TH_{i,k_2 -l-1}C^{l+1}\bK_{\tau i}, B_{i,k_1}\Big]_v,
		\label{eq:RR} \\
		\bP(k_1,k_2|l;i)& :=\Sym_{k_1,k_2}\Big[ B_{i,k_1}, -v^{-1}\TH_{\tau i,l-k_2+1}C^{k_2-1}\bK_{i}+\TH_{\tau i,l-k_2-1}C^{k_2}\bK_{i} \Big]_v
		\notag \\
		&\quad +\Sym_{k_1,k_2}\Big[B_{i,k_1}, -v^{-1}\TH_{i,k_2-l +1}C^{l-1}\bK_{\tau i} +\TH_{i,k_2 -l-1}C^{l}\bK_{\tau i} \Big]_v.
		\label{eq:PP}
	\end{align}
	
	Recall $\bS(k_1,k_2|l;i)$ from \eqref{eq:SS}.
	\begin{lemma}
		Let $n\geq 3$, and $i\in \{1,2\}$. Assume that $[\TH_{i,1},\TH_{i,m}]=[\TH_{\tau i,1},\TH_{i,m}]=0$ for all $m<n$. Then, for $|k_1-l|\leq n-1,|k_2-l|\leq n-1$,
		\begin{align}\label{SR}
			&\bS(k_1+1,k_2|l;i)+ \bS(k_1,k_2+1|l;i)-[2] \bS(k_1,k_2|l+1;i)=[2]\bR(k_1,k_2|l;i),\\
			&\bS(k_1-1,k_2|l;i)+ \bS(k_1,k_2-1|l;i)-[2] \bS(k_1,k_2|l-1;i)=[2]\bP(k_1,k_2|l;i)\label{SP}.
		\end{align}
	\end{lemma}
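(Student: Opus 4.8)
The plan is to establish the two identities \eqref{SR}--\eqref{SP} by reducing them to combinations of the already-verified Relations~\eqref{qsiA1DR4} and Lemma~\ref{lem:THi1Bj}, together with the inductive hypothesis $[\TH_{i,1},\TH_{i,m}]=[\TH_{\tau i,1},\TH_{i,m}]=0$ for $m<n$ (which, via Corollary~\ref{cor:fix}, grants $\TT_{\bome}$-invariance of $\TH_{i,m}$ for $m\le n$). First I would observe that $\bS(k_1,k_2\mid l;i)$ is built entirely from products of three real root vectors $B_{i,k_1}, B_{i,k_2}, B_{\tau i,l}$, so the left-hand side of \eqref{SR}, namely $\bS(k_1+1,k_2\mid l;i)+\bS(k_1,k_2+1\mid l;i)-[2]\bS(k_1,k_2\mid l+1;i)$, can be expanded and reorganized by commuting the ``index-shift'' past one of the factors. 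The key algebraic input is that shifting an index on a single $B$ inside such an expression is governed by the commutator $[\TH_{i,1}\ \text{or}\ \TH_{\tau i,1}, B_{\cdot,k}]$ — precisely \eqref{eq:TH11B1} — so the symmetrized alternating sum telescopes against the $\TH_{\bullet,1}$-action. Concretely, one writes the left side as $\big[[2]\TH_{i,1}+\TH_{\tau i,1}$ (or the analogous combination acting on the $\tau i$-slot), applied to $\bS(k_1,k_2\mid l;i)$, minus correction terms coming from the $C$-dependent part of \eqref{eq:TH11B1}; those correction terms are then recognized as $[2]\bR(k_1,k_2\mid l;i)$.

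The detailed execution runs as follows. Step one: use \eqref{eq:TH11B1} in the form $[\TH_{i,1},B_{j,k}]=[c_{ij}]B_{j,k+1}-[c_{\tau i,j}]B_{j,k-1}C$ to compute $[\,[2]\TH_{i,1}+\TH_{\tau i,1},\ \bS(k_1,k_2\mid l;i)\,]$ by the Leibniz rule, noting that the ``raising'' part of the bracket produces exactly $\bS(k_1+1,k_2\mid l;i)+\bS(k_1,k_2+1\mid l;i)-[2]\bS(k_1,k_2\mid l+1;i)$ (up to the scalars $[c_{ij}]$, which conspire to $[3]$ and $[2]$ appropriately since $c_{ii}=2,\ c_{i,\tau i}=-1$), while the ``lowering'' part produces a $C$-multiple of a shifted $\bS$. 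Step two: invoke the Serre relation \eqref{qsiA1DR6} — valid for the relevant range of indices by the running spiral induction (this is the range restriction $|k_1-l|\le n-1$, $|k_2-l|\le n-1$ that lets us use the already-proved portion of \eqref{qsiA1DR6}) — to rewrite that $C$-multiple of $\bS$ in terms of $\TH$-and-$B$ commutators, and also to rewrite the ``diagonal'' term $\bS(k_1,k_2\mid l;i)$ itself. Step three: using the commutativity $[\TH_{\bullet,1},\TH_{i,m}]=0$ for $m<n$ one moves the outer $\TH_{\bullet,1}$ past the $\TH$'s appearing in \eqref{qsiA1DR6}, collapsing everything into the shape $[2]\bR(k_1,k_2\mid l;i)$ as defined in \eqref{eq:RR}. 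Step four: the second identity \eqref{SP} follows by the mirror-image argument, lowering indices instead of raising, or alternatively by applying the $\TT_{\bome}$-translation automorphism (Lemma~\ref{lem:aut}) to \eqref{SR} combined with the $\btau$-symmetry \eqref{eq:switchroot}; one checks the index bookkeeping matches \eqref{eq:PP}.

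The main obstacle I anticipate is the bookkeeping of the $C$-powers and the $\bK$-factors: every $\TH_{\bullet,m}$ carries a $\bK$ and the relation \eqref{eq:TH11B1} introduces a $C$ on the lowering term, so the ``alternating sum telescopes'' heuristic must be tracked with exact powers $C^{k_1},C^{k_2},C^l$ and the correct $\bK_i$ versus $\bK_{\tau i}$ on each summand, matching the precise definitions \eqref{eq:RR}--\eqref{eq:PP}. A secondary subtlety is ensuring the index range $|k_1-l|\le n-1,\ |k_2-l|\le n-1$ is exactly what is needed so that only the already-established instances of the Serre relation \eqref{qsiA1DR6} and of $[\TH_{\bullet,1},\TH_{i,m}]=0$ (for $m<n$) are invoked — i.e.\ that the argument does not secretly use the $n$-th instance it is meant to help prove. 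Once these are pinned down, the identity is a finite, if lengthy, manipulation; I would relegate the explicit term-by-term expansion to the appendix (cf.\ Appendix~\ref{App:A}) and keep only the structure of the four steps in the main text.
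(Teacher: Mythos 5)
There is a genuine gap. Your Step~1 does not produce the required linear combination. Using \eqref{eq:TH11B1} and the Leibniz rule, one computes
\[
\big[[2]\TH_{i,1}+\TH_{\tau i,1},\ \bS(k_1,k_2\mid l;i)\big]
= [3]\big(\bS(k_1+1,k_2\mid l;i)+\bS(k_1,k_2+1\mid l;i)-\bS(k_1,k_2\mid l-1;i)\,C\big),
\]
since $[[2]\TH_{i,1}+\TH_{\tau i,1},B_{i,k}]=[3]B_{i,k+1}$ while $[[2]\TH_{i,1}+\TH_{\tau i,1},B_{\tau i,l}]=-[3]B_{\tau i,l-1}C$. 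The left side of \eqref{SR}, by contrast, has the term $-[2]\bS(k_1,k_2\mid l+1;i)$: the $l$-shift goes the opposite way, the scalar is $[2]$ rather than $1$, and there is no factor of $C$. So the telescoping you propose does not hit the target combination; closing the gap would require an extra identity relating $\bS(k_1,k_2\mid l+1;i)$ to $C\,\bS(k_1,k_2\mid l-1;i)$, which is essentially the Serre relation \eqref{qsiA1DR6} itself. That feeds into the second, more serious problem: your Step~2 invokes \eqref{qsiA1DR6}, but the present lemma is precisely the main input into the eventual proof of \eqref{qsiA1DR6} (Proposition~\ref{prop:verfqsiA1DR6}), so there is no ``already-proved portion'' of it available here. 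The argument would be circular.

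The paper's proof avoids \eqref{qsiA1DR6} entirely. The hypothesis together with Corollary~\ref{cor:fix} gives $\TT_{\bome}$-invariance of $\TH_{i,m}$ for $m\le n$, whence \eqref{qsiA1DR5} holds for $|l-k|\le n-1$ by translating \eqref{THn} with powers of $\TT_{\bome}$. The left side of \eqref{SR} is then expanded directly from the definition \eqref{eq:SS} of $\bS$, reorganized using Proposition~\ref{prop:BBcom} (i.e.\ $\Sym_{k_1,k_2}[B_{i,k_1},B_{i,k_2+1}]_{v^{-2}}=0$), and reduced to sums of the form $[B_{i,k+1},B_{\tau i,l}]_{v^{-1}}+[B_{\tau i,l+1},B_{i,k}]_{v^{-1}}$ with $|k-l|\le n-1$; substituting the partial \eqref{qsiA1DR5} into these yields exactly $[2]\bR(k_1,k_2\mid l;i)$. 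Thus the range restriction controls the applications of \eqref{qsiA1DR5}, not of \eqref{qsiA1DR6}. Your Step~4 for \eqref{SP} is a reasonable remark, but Steps~1--3 need to be replaced by this direct expansion.
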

	
	\begin{proof}
		If $[\TH_{i,1},\TH_{i,m}]=[\TH_{\tau i,1},\TH_{i,m}]=0$ for all $m<n$, then  $\TH_{1,m},\TH_{2,m}, 1\leq m\leq n$ are fixed by $\TT_{\bome}$ by Corollary~ \ref{cor:fix}. Thus, by applying $\TT_{\bome} $ to the identity \eqref{THn} we obtain the relation \eqref{qsiA1DR5} for $|l-k|\leq n-1$.
		
		Below we shall work only with $i=1$. For $|k_1-l|\leq n-1,|k_2-l|\leq n-1$, we compute
		\begin{align*}
			&\bS(k_1+1,k_2|l;1)+\bS(k_1,k_2+1|l;1)\\\notag
			&=\Sym_{k_1,k_2}\Big(B_{1,k_1+1}B_{1,k_2}B_{2,l}-[2]B_{1,k_1+1} B_{2,l} B_{1,k_2} + B_{2,l} B_{1,k_1+1}B_{1,k_2}\Big)\\\notag
			& \quad +\Sym_{k_1,k_2}\Big( B_{1,k_1}B_{1,k_2+1}B_{2,l}-[2]B_{1,k_1} B_{2,l} B_{1,k_2+1} + B_{2,l} B_{1,k_1}B_{1,k_2+1}\Big)\\\notag
			&=\Sym_{k_1,k_2}\Big(v^2 B_{1,k_2}B_{1,k_1+1}B_{2,l}-[2]B_{1,k_1+1} B_{2,l} B_{1,k_2} + B_{2,l} B_{1,k_1+1}B_{1,k_2}\Big)\\\notag
			& \quad +\Sym_{k_1,k_2}\Big( B_{1,k_1}B_{1,k_2+1}B_{2,l}-[2]B_{1,k_1} B_{2,l} B_{1,k_2+1} +v^{-2} B_{2,l}B_{1,k_2+1}B_{1,k_1}\Big)\\\notag
			&=[2]\Sym_{k_1,k_2}\big(v B_{1,k_1}B_{1,k_2+1}B_{2,l}- B_{1,k_1} B_{2,l} B_{1,k_2+1}-B_{1,k_1+1} B_{2,l} B_{1,k_2} + v^{-1}B_{2,l} B_{1,k_1+1}B_{1,k_2}\big)\\
			&=[2]\Sym_{k_1,k_2}\big(v B_{1,k_1} [B_{1,k_2+1},B_{2,l}]_{v^{-1}} -[B_{1,k_1+1},B_{2,l}]_{v^{-1}}B_{1,k_2}\big),
		\end{align*}
		where the second equality follows from
		$\Sym_{k_1,k_2} [B_{1,k_1},B_{1,k_2+1}]_{v^{-2}}=0$; see Proposition \ref{prop:BBcom}.

		Hence, we have
		\begin{align}
			\label{eq:SSSmi}
			&\bS(k_1+1,k_2|l;1)+\bS(k_1,k_2+1|l;1)-[2]\bS(k_1,k_2|l+1;1)
			\\\notag
			&=[2]\Sym_{k_1,k_2}\Big(v B_{1,k_1} [B_{1,k_2+1},B_{2,l}]_{v^{-1}} -[B_{1,k_1+1},B_{2,l}]_{v^{-1}}B_{1,k_2}\Big)
			\\\notag
			& \; +[2]\Sym_{k_1,k_2}\Big(v B_{1,k_1} [B_{2,l+1},B_{1,k_2}]_{v^{-1}} -[B_{2,l+1},B_{1,k_1}]_{v^{-1}}B_{1,k_2}\Big).
		\end{align}
		Applying the relation \eqref{qsiA1DR5} for $|l-k|\leq n-1$ (which holds as shown in the first paragraph of this proof) to sum up each of the 2 columns of the right hand side of \eqref{eq:SSSmi}, we have
		\begin{align}
			\notag
		&\bS(k_1+1,k_2|l;1)+\bS(k_1,k_2+1|l;1)-[2]\bS(k_1,k_2|l+1;1)
			\\\notag
			&=[2]\Sym_{k_1,k_2}\Big[  -v^{-1}\TH_{2,l-k_2+1}C^{k_2}\bK_{1}+\TH_{2,l-k_2-1}C^{k_2+1}\bK_{1}  , B_{1,k_1}\Big]_v
			\\
			& \quad +[2]\Sym_{k_1,k_2}\Big[-v^{-1}\TH_{1,k_2-l +1}C^l\bK_{2} +\TH_{1,k_2 -l-1}C^{l+1}\bK_{2}, B_{1,k_1}\Big]_v\label{Serre2}
			\\\notag
			&=[2]\bR(k_1,k_2|l;1).
		\end{align}
		This proves the identity \eqref{SR}.
		
		The proof of the remaining identity \eqref{SP} is similar and will be omitted here.
	\end{proof}
	
	\begin{lemma}
		Let $n\geq 3$ and assume that $[\TH_{i,1},\TH_{i,m}]=[\TH_{\tau i,1},\TH_{i,m}]=0$ for any $i=1,2$, and  all $m<n$. We have, for $|k_1-l|\leq n-2,|k_2-l|\leq n-2$,
		\begin{align}
			\label{RRRPPP}
			\bR(k_1-1,k_2|l;i) & +\bR(k_1,k_2-1 |l;i)-[2]\bR(k_1,k_2|l-1;i)\\\notag
			&=\bP(k_1+1,k_2|l;i)+\bP(k_1,k_2+1 |l;i)-[2]\bP(k_1,k_2|l+1;i).
		\end{align}
		In particular, for $|k-l|\leq n-2$, we have
		\begin{align}
			\label{RRPP}
			\bR(k -1,k |l) & +\bR(k ,k-1 |l)-[2]\bR(k ,k |l-1)\\\notag
			&=\bP(k +1,k |l)+\bP(k ,k +1 |l)-[2]\bP(k ,k |l+1).
		\end{align}
	\end{lemma}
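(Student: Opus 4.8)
The plan is to deduce \eqref{RRRPPP} as a purely formal consequence of the two identities \eqref{SR}--\eqref{SP} just established, together with the (trivial) commutativity of the shift operators appearing in them; no new computation inside $\tUi$ should be needed beyond careful bookkeeping of index ranges.

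First I would repackage \eqref{SR}--\eqref{SP} in operator form. For a map $F\colon\Z^3\to\tUi$, $(k_1,k_2,l)\mapsto F(k_1,k_2,l)$, introduce the difference operators $\nabla^+F(k_1,k_2,l):=F(k_1+1,k_2,l)+F(k_1,k_2+1,l)-[2]F(k_1,k_2,l+1)$ and $\nabla^-F(k_1,k_2,l):=F(k_1-1,k_2,l)+F(k_1,k_2-1,l)-[2]F(k_1,k_2,l-1)$. Then \eqref{SR} reads $\nabla^+\bS=[2]\,\bR$ and \eqref{SP} reads $\nabla^-\bS=[2]\,\bP$, both valid (for fixed $i$, under the standing hypothesis) on the region $R_{n-1}:=\{(k_1,k_2,l)\mid |k_1-l|\le n-1,\ |k_2-l|\le n-1\}$; moreover the left- and right-hand sides of \eqref{RRRPPP} are exactly $\nabla^-\bR(k_1,k_2,l)$ and $\nabla^+\bP(k_1,k_2,l)$. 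Since $\nabla^{\pm}$ are fixed $\Q(v)$-linear combinations of the pairwise commuting translations $(k_1,k_2,l)\mapsto(k_1\pm1,k_2,l)$, $(k_1,k_2,l)\mapsto(k_1,k_2\pm1,l)$, $(k_1,k_2,l)\mapsto(k_1,k_2,l\pm1)$, they commute as operators: $\nabla^-\nabla^+=\nabla^+\nabla^-$ on all maps $\Z^3\to\tUi$, unconditionally.

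Next, fix $(k_1,k_2,l)\in R_{n-2}$. The three arguments $(k_1-1,k_2,l),(k_1,k_2-1,l),(k_1,k_2,l-1)$ occurring in $\nabla^-\bR(k_1,k_2,l)$ all lie in $R_{n-1}$, so \eqref{SR} applies at each of them and gives $[2]\,\nabla^-\bR(k_1,k_2,l)=\nabla^-(\nabla^+\bS)(k_1,k_2,l)$; likewise the arguments $(k_1+1,k_2,l),(k_1,k_2+1,l),(k_1,k_2,l+1)$ occurring in $\nabla^+\bP(k_1,k_2,l)$ lie in $R_{n-1}$, so \eqref{SP} gives $[2]\,\nabla^+\bP(k_1,k_2,l)=\nabla^+(\nabla^-\bS)(k_1,k_2,l)$. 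By the commutativity $\nabla^-\nabla^+=\nabla^+\nabla^-$ the two right-hand sides agree, and since $[2]=v+v^{-1}$ is invertible in $\Q(v)$ we get $\nabla^-\bR=\nabla^+\bP$ on $R_{n-2}$, which is precisely \eqref{RRRPPP}. Specializing $k_1=k_2=k$ (so the condition $(k,k,l)\in R_{n-2}$ becomes $|k-l|\le n-2$) yields \eqref{RRPP} at once.

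The only point requiring genuine care — and the reason the hypothesis is tightened from $|k_1-l|,|k_2-l|\le n-1$ in \eqref{SR}--\eqref{SP} to $|k_1-l|,|k_2-l|\le n-2$ here — is the range bookkeeping: one must check that every invocation of \eqref{SR} or \eqref{SP} (whose proofs rest on Relation~\eqref{qsiA1DR5} for $|k-l|\le n-1$, Proposition~\ref{prop:BBcom}, and the partial Serre relations \eqref{Serre2}, all under the assumption $[\TH_{i,1},\TH_{i,m}]=[\TH_{\tau i,1},\TH_{i,m}]=0$ for $m<n$) is made at a point of $R_{n-1}$. This amounts to the elementary observations $|k_1-l|\le n-2\Rightarrow|(k_1\pm1)-l|\le n-1$ and $|k_1-(l\pm1)|\le n-1$, and the same for $k_2$. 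Once these are verified the lemma follows, and I do not expect any further obstacle.
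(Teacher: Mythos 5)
Your proof is correct and takes essentially the same approach as the paper, which simply says the lemma ``follows by replacing both $\bR$-terms and $\bP$-terms in \eqref{RRRPPP} by the $\bS$-terms via \eqref{SR}--\eqref{SP}.'' Your operator formalism ($\nabla^\pm$, commutativity of shifts) and the explicit range bookkeeping (checking that each invocation of \eqref{SR} or \eqref{SP} stays in $R_{n-1}$) merely spell out what the paper leaves implicit.
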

	
	\begin{proof}
		The proof follows by replacing both $\bR$-terms and $\bP$-terms in \eqref{RRRPPP} by the $\bS$-terms via \eqref{SR}--\eqref{SP}.
	\end{proof}
	
	For $m\geq 0,k\in\Z$, we define
	\begin{align}
		\label{def:Y1ps}
		Y_{1,m}^k & :=
		[\TH_{1,m},B_{1,k}]+v^3[\TH_{1,m-2},B_{1,k}]_{v^{-6}}C-v^2[\TH_{1,m-1},B_{1,k+1}]_{v^{-4}}-v [\TH_{1,m-1},B_{1,k-1}]_{v^{-2}}C,
		\\
		\label{def:Y2ps}
		Y_{2,m}^k & :=
		[\TH_{2,m},B_{1,k}]+v^{-3}[\TH_{2,m-2},B_{1,k}]_{v^{6}}C-v^{-1}[\TH_{2,m-1},B_{1,k+1}]_{v^{2}}- v^{-2}[\TH_{2,m-1},B_{1,k-1}]_{v^{4}}C.
	\end{align}
	(Note that the desired relation \eqref{qsiA1DR3} can be formulated as $Y_{1,m}^k =0 =Y_{2,m}^k$, by Lemma~ \ref{lem:relationsreform}.)
	For convenience, we set $Y_{1,-m}^l=0=Y_{2,-m}^l$ for $m>0,l\in\Z$.
	We rewrite \eqref{RRPP} as
	\begin{align}
		\label{YYY}
		\begin{split}
			(Y_{2,l-k}^k C-v^{-1}Y^k_{2,l-k+2})C^{k-1}\bK_1 +(v^3 Y^k_{1,k-l}C - v^2 Y^k_{1,k-l+2})C^{l-1}\bK_2 &=0,
			\\
			\text{ for }|k-l| & \leq n-2.
		\end{split}
	\end{align}
	
	\begin{lemma}
		\label{lem:THB}
		Let $n\geq1$, and assume that $[\TH_{j,1},\TH_{i,m}] =0$, for $i,j =1,2$ and all $m<n$. Then we have
		\begin{equation}
			\label{Ys=0}
			Y_{1,n}^k=0,\qquad Y_{2,n}^k=0,\quad \forall k\in\Z.
		\end{equation}
	\end{lemma}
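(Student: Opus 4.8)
The plan is to prove Lemma~\ref{lem:THB} by induction on $n$, using the identities \eqref{YYY} together with the base cases established earlier. Recall that by Lemma~\ref{lem:THi1Bj} we already have $Y_{i,1}^k=0$ for all $k$ and $i=1,2$ (this is exactly the $m=1$ case of \eqref{qsiA1DR3}, reformulated via Lemma~\ref{lem:relationsreform}), and by Lemma~\ref{propTH2} we have $Y_{i,2}^0=0$, hence $Y_{i,2}^k=0$ for all $k$ after applying powers of $\TT_{\bome}$ — here one uses Corollary~\ref{cor:fix}, which under the hypothesis $[\TH_{j,1},\TH_{i,m}]=0$ for $m<n$ guarantees $\TT_{\bome}$-invariance of $\TH_{i,m}$ for $m\le n$, so that $\TT_{\bome}$ carries $Y_{i,m}^k$ to $Y_{i,m}^{k-1}$. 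Thus, assuming $n\ge 3$ (the cases $n=1,2$ being done) and that $Y_{1,m}^k=Y_{2,m}^k=0$ for all $m<n$ and all $k\in\Z$, it remains to deduce $Y_{1,n}^k=Y_{2,n}^k=0$.

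First I would specialize the system \eqref{YYY}. Since the hypotheses of the ambient lemma (namely $[\TH_{j,1},\TH_{i,m}]=0$ for $m<n$) are exactly what is needed for \eqref{RRPP}, hence \eqref{YYY}, to be valid in the range $|k-l|\le n-2$, we may substitute. In \eqref{YYY} the terms $Y^k_{2,l-k+2}$, $Y^k_{1,k-l+2}$ have second index possibly as large as $n$ when $|k-l|=n-2$, while $Y^k_{2,l-k}$, $Y^k_{1,k-l}$ have second index at most $n-2<n$ and so vanish by the inductive hypothesis. Taking $l-k = n-2$ kills the $Y_{1,\ast}$ contribution (its index is $k-l = -(n-2)<0$, so $Y^k_{1,k-l}=0$ and $Y^k_{1,k-l+2}=Y^k_{1,-(n-4)}$, which vanishes as well once $n\ge 5$; for $n=3,4$ one treats these low cases by hand or notes the index is still $<n$), leaving $-v^{-1}Y^k_{2,n}C^{k-1}\bK_1 = 0$, whence $Y^k_{2,n}=0$ for all $k$ after the usual $\TT_{\bome}$-translation argument. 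Symmetrically, taking $k-l=n-2$ isolates $-v^2 Y^k_{1,n}C^{l-1}\bK_2=0$, giving $Y^k_{1,n}=0$. The translation step is legitimate precisely because $\TH_{i,m}$ for $m\le n$ are $\TT_{\bome}$-fixed (Corollary~\ref{cor:fix}) and $\TT_{\bome}(B_{1,k})=B_{1,k-1}$.

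The delicate point — and the one I would write out most carefully — is the bookkeeping on the ranges of validity: the identity \eqref{YYY} only holds for $|k-l|\le n-2$, so one must check that for \emph{every} value of $k$ one can choose $l$ with $|k-l|\le n-2$ achieving the extremal configuration $l-k=n-2$ (resp.\ $k-l=n-2$) needed to isolate $Y^k_{2,n}$ (resp.\ $Y^k_{1,n}$); this is immediate since $l$ ranges over all of $\Z$. One must also confirm that the ``other'' $Y$-term appearing at the boundary (the one with shifted index $\pm 2$) genuinely has second index strictly less than $n$ — it equals $n-4$ in the relevant extremal case — so that the inductive hypothesis applies; the sporadic small cases $n=3,4$, where $n-4\le 0$, are covered by the convention $Y_{i,-m}^l=0$ for $m>0$ together with the already-established cases $Y_{i,1}^k=Y_{i,2}^k=0$. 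Finally, since the conclusion $Y_{1,n}^k=Y_{2,n}^k=0$ holds under the stated hypothesis on commutators of $\TH$'s, this lemma feeds into the spiral: once the commutativity $[\TH_{j,1},\TH_{i,m}]=0$ is upgraded to $m=n$ (handled in a companion lemma via Lemma~\ref{lem:TH1THkcom1}), relation \eqref{qsiA1DR3} becomes unconditional. I expect the main obstacle to be not any single computation but the careful tracking of these index ranges so that the induction closes without circularity.
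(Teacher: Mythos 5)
Your proposal is correct and follows essentially the same argument as the paper's own proof: induct on $n$, dispatch $n=1,2$ via Lemmas~\ref{lem:THi1Bj} and \ref{propTH2} together with the $\TT_{\bome}$-translation legitimized by Corollary~\ref{cor:fix}, and for $n\geq 3$ specialize \eqref{YYY} at the boundary configurations $l-k=n-2$ and $k-l=n-2$ to isolate $Y^k_{2,n}$ and $Y^k_{1,n}$ after the remaining terms vanish by the inductive hypothesis or the convention. One minor slip worth noting: the shifted second index in the extremal case is $4-n$, not $n-4$, so for $n=4$ one in fact needs $Y_{i,0}^k=0$, which is not literally covered by the convention for negative indices but is immediate since $\TH_{i,0}$ is a central scalar and $\TH_{i,m}=0$ for $m<0$ (the paper absorbs this under its assertion that $Y_{i,p}=0$ for $p\le 0$).
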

	
	\begin{proof}
		Since $\TH_{i,m},\TH_{\tau i,m}, 1\leq m \leq n$ are fixed by $\TT_{\bome}$ by Corollary \ref{cor:fix}, $\TT_{\bome}$ acts on $Y_{i,m}^k$ by shifting the superscript $k$. Hence, for fixed $1\leq m\leq n$, if $Y_{i,m}^k=0$ for some $k$, then $Y_{i,m}^k=0$ for all $k \in \Z.$
		
		Denote $Y_{i,m}=Y_{i,m}^0$. It remains to show by induction on $n$ that $Y_{i,n}=0$ for $i=1,2$. This holds for $n=1,2$ by Lemma~ \ref{lem:THi1Bj} and Lemma~ \ref{propTH2}. Let $n\geq3$. Applying a suitable power of $\TT_{\bome}$ to \eqref{YYY} gives us the identity
		\begin{equation}\label{YY}
			(Y_{2,l-k} C-v^{-1}Y_{2,l-k+2})C^{k-1}\bK_1 +(v^3 Y_{1,k-l} C - v^2 Y_{1,k-l+2})C^{l-1}\bK_2 =0,
		\end{equation}
		for $|k-l|\leq n-2$. For $n= 3$, set $k-l=1$ in \eqref{YY}. Note $Y_{i,p}=0$ for $p\leq 0$. Then
		\begin{align*}
			Y_{1,3}\bK_2=-v^{-3}Y_{2,1} C \bK_1 +v  Y_{1,1} C \bK_2=0.
		\end{align*}
		Similarly, we have $Y_{2,3}\bK_1=vY_{2,1} C \bK_1 -v^3  Y_{1,1} C \bK_2=0.$
		Let $n\geq 4$. Setting $k -l=n-2$ in \eqref{YY} gives us
		$Y_{1,n}= v Y_{1,n-2} C$, while setting $l -k=n-2$ in \eqref{YY} gives us $Y_{2,n}= v Y_{2,n-2} C.$
		Therefore, by induction on $n$, we have proved $Y_{i,n}=0$ for $i=1,2$.
	\end{proof}
	
	\begin{proof}[Proof of Proposition \ref{lem:ind}]
		For the sake of notational simplicity, we shall work with $i=1$ in this proof (the case for $i=2$ is entirely similar).

		We proceed by an induction on $n$.
		The statement for $n=1,2$ follows by Lemma~\ref{lem:THi1Bj} and Lemma~\ref{propTH2}. Suppose that the statement holds for $[\TH_{1,m},B_1]$, where $1\le m \le n-1.$ By Lemma \ref{lem:THB} (see \eqref{def:Y1ps} for notation $Y_{i,n}^k$), we have
		\begin{align}
			[\TH_{1,n},B_1]&=-v^3[\TH_{1,n-2},B_1]_{v^{-6}}C+v^2[\TH_{1,n-1},B_{1,1}]_{v^{-4}} +v[\TH_{1,n-1},B_{1,-1}]_{v^{-2}}C
			\notag \\
			&=-v^3\TH_{1,n-2}B_1C+ v^{-3}B_1\TH_{1,n-2}C +v^2\TH_{1,n-1}B_{1,1}-v^{-2}B_{1,1} \TH_{1,n-1}
			\notag \\
			& \quad +v\TH_{1,n-1} B_{1,-1}C -v^{-1} B_{1,-1}\TH_{1,n-1}C
			\notag \\
			&=-v^3\TH_{1,n-2}B_1C +v^2\TH_{1,n-1}B_{1,1} +v\TH_{1,n-1} B_{1,-1}C +\sum_{k=-n}^n B_{1,k}Z_{k,n}
			\label{eq:XTB}
		\end{align}
		for some $Z_{k,n}\in \ch_{1,n}$. Furthermore, by the inductive assumption, we have
		\begin{align}
			\TH_{1,n-2}B_1C &= B_1\TH_{1,n-2}C + \sum_{k=-n+2}^{n-2} B_{1,k}X'_{k,n-2},
			\notag \\
			\label{eq:TH1n-1B1}
			\TH_{1,n-1}B_{1} &= B_{1}\TH_{1,n-1} + \sum_{k=-n+1}^{n-1} B_{1,k} X''_{k,n-1},
		\end{align}
		for some $X'_{k,n-2}\in \ch_{1,n-2}$ and $X''_{k,n-1}\in \ch_{1,n-1}$. By applying $\TT_{\bome}^{\pm1}$ to \eqref{eq:TH1n-1B1}, we have
		\begin{align*}
			\TH_{1,n-1}B_{1,1}= &B_{1,1}\TH_{1,n-1} + \sum_{k=-n+1}^{n-1} B_{1,k+1} \Ddot{X}'_{k,n-1},
			\\
			\TH_{1,n-1}B_{1,-1}C&= B_{1,-1}\TH_{1,n-1}C + \sum_{k=-n+1}^{n-1} B_{1,k-1} \Ddot{X}''_{k,n-1},
		\end{align*}
		for some $\Ddot{X}'_{k,n-1}, \Ddot{X}''_{k,n-1} \in \ch_{1,n-1}$,
		since $\TT_{\bome}(\TH_{1,n-1})= \TH_{1,n-1}$ and $\TT_{\bome}(\ch_{1,n-1})\subseteq \ch_{1,n-1}$ by Corollary \ref{cor:fix}. Thus the desired formula \eqref{eq:Th1B1} follows from \eqref{eq:XTB}.
	\end{proof}

	\subsection{Relation \eqref{qsiA1DR2}}
	
	With all the technical preparation in the prior subsections, we are ready to prove the crucial commutativity among the imaginary root vectors $\TH_{i,m}$.

	%
	
	
	
	\begin{lemma}
		\label{prop:commTHTH}
		Let $n\geq1$, and assume that $[\TH_{j,1},\TH_{i,k}] =0$ for all $i,j \in \{1,2\}$, and all $k<n$. Then for all $1\leq m_1,m_2 \leq n$ and $i,j \in \{1,2\}$, we have
		\begin{enumerate}
			\item $\big[\TH_{i,m_1},[\TH_{j,m_2},B_1]\big]=\big[\TH_{j,m_2},[\TH_{i,m_1},B_1]\big]$;
			\item
			$[\TH_{i,m_1},\TH_{j,m_2}] =0.$
		\end{enumerate}
	\end{lemma}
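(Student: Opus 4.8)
The plan is to prove the two statements of Lemma~\ref{prop:commTHTH} simultaneously by an induction on $n$, following the ``spiral'' strategy outlined in \S\ref{subsec:strategy}. The base case $n=1$ is already the content of the hypothesis (vacuously, or from Lemma~\ref{lem:THi1fixed} together with $[\TH_{j,1},\TH_{i,1}]$ being what we want to verify — here I will treat $n=1$ carefully using the explicit formula \eqref{TH} for $\TH_{i,1}$ and the commutator formula \eqref{eq:TH11B1} of Lemma~\ref{lem:THi1Bj}). For the inductive step, assume $[\TH_{j,1},\TH_{i,k}]=0$ for all $i,j\in\{1,2\}$ and all $k<n$; by Corollary~\ref{cor:fix} this gives $\TT_{\bome}(\TH_{i,m})=\TH_{i,m}$ for $1\le m\le n$, and by Lemma~\ref{lem:THB} it gives $Y^k_{i,n}=0$ for all $k$, i.e.\ the relation \eqref{qsiA1DR3reform} holds up to level $n$; finally Proposition~\ref{lem:ind} provides the expansion $[\TH_{i,n},B_1]=\sum_{k=-n}^{n}B_{1,k}X^{(i)}_{k,n}$ with $X^{(i)}_{k,n}\in\ch_{i,n}$, and similarly (by the $\btau$-symmetry \eqref{eq:switchroot} or a parallel argument) an expansion of $[\TH_{i,n},B_2]$.

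For part (1), I would proceed as follows. Fix $1\le m_1,m_2\le n$ and consider $\big[\TH_{i,m_1},[\TH_{j,m_2},B_1]\big]-\big[\TH_{j,m_2},[\TH_{i,m_1},B_1]\big]$. By the Jacobi identity this equals $\big[[\TH_{i,m_1},\TH_{j,m_2}],B_1\big]$, so (1) for the pair $(m_1,m_2)$ is literally equivalent to $[\TH_{i,m_1},\TH_{j,m_2}]$ commuting with $B_1$ — but that is not obviously zero yet. Instead the useful direction is: expand $[\TH_{j,m_2},B_1]$ using Proposition~\ref{lem:ind} (valid since $m_2\le n$ and, when $m_2=n$, the hypothesis is exactly what is assumed), apply $\TH_{i,m_1}$, and commute $\TH_{i,m_1}$ past each $B_{1,k}$ and past each coefficient $X^{(j)}_{k,m_2}\in\ch_{j,m_2}$. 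Commuting past the coefficients is free when $m_1<n$ by the inductive hypothesis $[\TH_{i,m_1},\TH_{j,k}]=0$ for $k<n$, and when $m_1=n$ one uses that $\ch_{j,m_2}$ only involves $\TH_{j,k}$ with $k<m_2\le n$. Commuting $\TH_{i,m_1}$ past $B_{1,k}$ introduces $[\TH_{i,m_1},B_{1,k}]$, which by $\TT_{\bome}$-invariance of $\TH_{i,m_1}$ is a shift of $[\TH_{i,m_1},B_1]$, again expanded by Proposition~\ref{lem:ind}. Carrying this out for both orders $\big[\TH_{i,m_1},[\TH_{j,m_2},B_1]\big]$ and $\big[\TH_{j,m_2},[\TH_{i,m_1},B_1]\big]$ and subtracting, everything is rewritten as $\sum_{k}B_{1,k}(\text{coefficient in }\ch)$, and one must check the two expressions literally agree. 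This is where the explicit structure of the $X^{(i)}_{k,n}$ — in particular that the ``top'' terms are controlled (they involve only $\TH_{i,n-1}$ and lower) — is used; the matching should reduce to the already-established lower-level instances of (1) and (2) plus Relation~\eqref{qsiA1DR3} up to level $n$.

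For part (2), with (1) in hand for all $m_1,m_2\le n$, I argue that $[\TH_{i,m_1},\TH_{j,m_2}]$ commutes with $B_1$ and with $B_2$ (the latter by the symmetric version of (1), obtained via $\btau$), and also with $\K_1,\K_2$ (since $[\TH_{i,m_1},\TH_{j,m_2}]\in\tUi_{(m_1+m_2)\delta}$ by Lemma~\ref{lem:degree root}, and central weight-$k\delta$ elements commute with all $\K_\ell$ by Lemma~\ref{lem:KiX}). Moreover $[\TH_{i,m_1},\TH_{j,m_2}]$ is $\TT_{\bome}$-invariant, being the commutator of two $\TT_{\bome}$-invariant elements, and it is a noncommutative polynomial in $B_0,B_1,B_2$ with coefficients in $\Q(v)[\K^{\pm1}_\ell]$ with no constant term (it has positive $\delta$-degree). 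Then the vanishing criterion Lemma~\ref{lem:X0} applies directly and forces $[\TH_{i,m_1},\TH_{j,m_2}]=0$, completing the induction.

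The main obstacle I anticipate is the bookkeeping in part (1): making precise that when we expand $\big[\TH_{i,m_1},[\TH_{j,m_2},B_1]\big]$ via Proposition~\ref{lem:ind} and commute through, the resulting coefficients land in the right $\ch$-subalgebra and that the two orders genuinely coincide term-by-term. One has to be careful that the range of validity of \eqref{qsiA1DR3} (only up to level $n$, from Lemma~\ref{lem:THB}) is not exceeded, which is why the statement is phrased with the standing hypothesis $[\TH_{j,1},\TH_{i,k}]=0$ for $k<n$ and the conclusion only for $m_1,m_2\le n$; the induction must be organized so that (1) for level-$n$ pairs uses only lower-level (2), and (2) at level up to $n$ is deduced at the end via Lemma~\ref{lem:X0}. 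I expect the cleanest route is to first nail down (1) in the ``mixed'' cases where at least one index is $<n$ using only the inductive hypothesis, then handle the diagonal top case $(n,n)$, and finally invoke Lemma~\ref{lem:X0} once for all pairs.
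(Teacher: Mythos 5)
Your high-level strategy (expand commutators via Proposition~\ref{lem:ind}, commute through, finish with the vanishing criterion Lemma~\ref{lem:X0}) is indeed the one used in the paper, and your argument for part (2) from part (1) via the Jacobi identity and Lemma~\ref{lem:X0} is exactly right. However, your organizational plan for part (1) has a genuine gap.

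You propose to ``first nail down (1) in the mixed cases where at least one index is $<n$ using only the inductive hypothesis, then handle the diagonal top case $(n,n)$, and finally invoke Lemma~\ref{lem:X0} once for all pairs.'' This deferral of part (2) to the very end cannot work. Already in the mixed case $(m_1,m_2)=(n,2)$, the coefficients $X^{(j)}_{l,2}\in\ch_{j,2}$ involve $\TH_{j,1}$, and to push $\TH_{i,n}$ out of the bracket $\big[\TH_{i,n},B_{1,l}X^{(j)}_{l,2}\big]$ one must know $[\TH_{i,n},\TH_{j,1}]=0$. That is precisely part (2) for the pair $(n,1)$. It is \emph{not} supplied by the lemma's standing hypothesis, which only gives $[\TH_{j,1},\TH_{i,k}]=0$ for $k<n$ (i.e.\ $k\le n-1$, not $k=n$), and it is not supplied by the outer induction on $n$ either, which only controls pairs with both indices $\le n-1$. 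Your sentence ``when $m_1=n$ one uses that $\ch_{j,m_2}$ only involves $\TH_{j,k}$ with $k<m_2\le n$'' names the source of the needed commutator but does not justify its vanishing. So parts (1) and (2) cannot be separated: (2) at smaller pairs is a prerequisite for (1) at larger pairs.

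The paper resolves this by a lexicographic induction on $(m_1,m_2)$ (after WLOG $m_1\ge m_2$): the inner inductive hypothesis is $[\TH_{i,k},\TH_{j,l}]=0$ for all $(k,l)<(m_1,m_2)$, and \emph{each} step proves (1) for the current pair and then immediately proves (2) for the same pair via Jacobi plus Lemma~\ref{lem:X0}, before moving on. With this ordering, commuting $\TH_{i,m_1}$ past $X^{(j)}_{l,m_2}\in\ch_{j,m_2}$ needs $[\TH_{i,m_1},\TH_{j,k'}]=0$ for $k'<m_2$, i.e.\ pairs $(m_1,k')<(m_1,m_2)$, and commuting the two coefficient families $X^{(i)}_{k,m_1}$, $X^{(j)}_{l,m_2}$ against each other needs only pairs with both indices $<m_1$, all lexicographically smaller. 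If you replace your ``mixed then diagonal, then (2) once at the end'' plan with this interleaved lexicographic induction, the rest of your outline (use of Corollary~\ref{cor:fix}, Lemma~\ref{lem:THB}, Proposition~\ref{lem:ind}, the $\btau$-symmetry for $B_2$, the weight/$\K$-commutativity via Lemmas~\ref{lem:degree root} and~\ref{lem:KiX}) goes through exactly as you describe.
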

	
	\begin{proof}
		Without loss of generality, we assume $m_1 \ge m_2$.
		Let ``$<$'' denote the lexicographic ordering on $\mathbb{N}^2$. We use induction on $(m_1,m_2)$ in lexicographic ordering. Assume that
		\begin{equation}
			\label{indhyp}
			[\TH_{i,k},\TH_{j,l}] =0
		\end{equation}
		for all $i,j$ and all $(k,l)<(m_1,m_2)$. By Corollary \ref{cor:fix}, $ \TH_{i,k},\TH_{j,l} $ are fixed by $\TT_{{\bome}}$. By Proposition \ref{lem:ind},
		there exist $X_{k,m}^{(i)} \in \ch_{i,m}$ such that
		\begin{align}
			[\TH_{i,m},B_{1,r}]=\sum_{k=-m}^m B_{1,r+k} X_{k,m}^{(i)},
		\end{align}
		for $1\leq m \leq n,r\in \Z$. Hence,
		\begin{align*}
			\big[\TH_{i,m_1},[\TH_{j,m_2},B_1]\big]
			&=\sum_{l=-m_2}^{m_2}\big[\TH_{i,m_1}, B_{1,l} X_{l,m_2}^{(j)} \big]
			\\
			&=\sum_{l=-m_2}^{m_2}\big[\TH_{i,m_1}, B_{1,l} \big] X_{l,m_2}^{(j)}
			\\
			& =\sum_{l=-m_2}^{m_2}\sum_{k=-m_1}^{m_1}  B_{1,k+l} X_{k,m_1}^{(i)}X_{l,m_2}^{(j)}
			\\
			&=\sum_{k=-m_1}^{m_1}\sum_{l=-m_2}^{m_2}  B_{1,k+l} X_{l,m_2}^{(j)}X_{k,m_1}^{(i)}
			=\big[\TH_{j,m_2},[\TH_{i,m_1},B_1]\big],
		\end{align*}
		where the second equality follows from $[\TH_{i,m_1}, X_{l,m_2}]=0$ by the inductive assumption \eqref{indhyp}, and the commutativity of $X_{k,m_1},X_{l,m_2}$ used in the fourth equality also follows from \eqref{indhyp} and $X_{k,m_1}\in \ch_{1,m_1},X_{l,m_2}\in\ch_{1,m_2}$. This proves (1).
		
		It follows by the Jacobi identity that $\big[[\TH_{i,m_1},\TH_{j,m_2}\big], B_1] =0$. By symmetry, we also have $\big[[\TH_{i,m_1},\TH_{j,m_2}\big], B_2] =0$. By Lemma~\ref{lem:KiX} and Lemma~ \ref{lem:degree root}, we have $\big[[\TH_{i,m_1},\TH_{j,m_2}],\bK_a \big]=0$, for all $a\in\I$. Therefore, Lemma~ \ref{lem:X0} is applicable and implies $[\TH_{i,m_1},\TH_{j,m_2}] =0$, which proves (2).
	\end{proof}
	
	\begin{proposition}
		\label{prop:verfDR2}
		Relation \eqref{qsiA1DR2} holds in $\tUi$.
	\end{proposition}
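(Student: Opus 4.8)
The plan is to deduce this at once from Lemma~\ref{lem:relationsreform}(1) together with the conditional statement in Lemma~\ref{prop:commTHTH}. By Lemma~\ref{lem:relationsreform}(1), Relation~\eqref{qsiA1DR2} is equivalent to \eqref{qsiA1DR2reform}, i.e.\ to $[\TH_{i,m},\TH_{j,n}]=0$ for all $i,j\in\{1,2\}$ and all $m,n\ge1$. So it suffices to prove the latter, which I would do by induction on $N\ge 1$ on the statement ``$[\TH_{i,m},\TH_{j,n}]=0$ for all $i,j\in\{1,2\}$ and all $m,n\le N$''. For an arbitrary pair $(m,n)$ one then takes $N=\max(m,n)$.

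For the base case $N=1$ the only instance is $[\TH_{i,1},\TH_{j,1}]=0$; I would obtain this by applying Lemma~\ref{prop:commTHTH} with $n=1$, whose hypothesis ``$[\TH_{j,1},\TH_{i,k}]=0$ for $k<1$'' is vacuous (equivalently, it holds since $\TH_{i,0}=\tfrac{1}{v-v^{-1}}$ is central). For the inductive step, assume the statement holds for $N-1$; in particular $[\TH_{j,1},\TH_{i,k}]=0$ for all $i,j$ and all $k\le N-1$, i.e.\ for all $k<N$. This is exactly the hypothesis of Lemma~\ref{prop:commTHTH} taken with $n=N$, whose conclusion gives $[\TH_{i,m},\TH_{j,n}]=0$ for all $m,n\le N$, closing the induction. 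Applying Lemma~\ref{lem:relationsreform}(1) once more converts \eqref{qsiA1DR2reform} back into \eqref{qsiA1DR2}.

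I do not expect a genuine obstacle here, since the substantive work has already been carried out in the preceding ``spiral'': the mutual bootstrapping of the $\TT_{\bome}$-invariance of the $\TH_{i,n}$ (Corollary~\ref{cor:fix}), the structural expansion of $[\TH_{i,n},B_1]$ in the real root vectors with coefficients in $\ch_{i,n}$ (Proposition~\ref{lem:ind}), and the partial commutativity in Lemma~\ref{prop:commTHTH}. The only point requiring care is that the induction hypothesis be aligned precisely with the hypothesis of Lemma~\ref{prop:commTHTH} --- which it is, because for positive integers ``$k<n$'' is the same as ``$k\le n-1$'' --- and that the final passage between the $\TH_{i,m}$ and the $H_{i,m}$ generators go through Lemma~\ref{lem:relationsreform}(1).
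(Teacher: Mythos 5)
Your proof matches the paper's argument: both invoke Lemma~\ref{lem:relationsreform}(1) to reduce to the commutativity \eqref{qsiA1DR2reform} and then run an induction powered by Lemma~\ref{prop:commTHTH}, with the base case handled by noting $\TH_{i,0}=\tfrac{1}{v-v^{-1}}$ is scalar. You spell out the induction slightly more explicitly than the paper, but the approach and the lemmas used are the same.
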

	
	\begin{proof}
		Since $\TH_{i,0}= \frac{1}{v-v^{-1}}$, we have $[\TH_{j,1},\TH_{i,0}]=0$, for all $i,j$. Then it follows from Lemma~ \ref{prop:commTHTH} that $[\TH_{i,m_1},\TH_{j,m_2}] =0$ by induction. So \eqref{qsiA1DR2} holds in $\tUi$ by Lemma~ \ref{lem:relationsreform}(1).
	\end{proof}
	
	Now we can remove the assumption in Corollary \ref{cor:fix}.
	
	\begin{theorem}
		\label{thm:fix1}
		We have $\TT_{\bome}(\TH_{i,n})=\TH_{i,n}$, for all $n\geq1$ and $i\in \{1,2\}$.
	\end{theorem}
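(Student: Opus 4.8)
The plan is to observe that, once the preceding subsections are in place, Theorem~\ref{thm:fix1} follows immediately from the now-unconditional commutativity of the imaginary root vectors. Concretely, Proposition~\ref{prop:verfDR2} asserts that Relation~\eqref{qsiA1DR2} holds in $\tUi$, which by Lemma~\ref{lem:relationsreform}(1) is equivalent to $[\TH_{i,m},\TH_{j,n}]=0$ for all $i,j\in\{1,2\}$ and all $m,n\geq 1$, with no restriction on the indices. In particular $[\TH_{j,1},\TH_{i,m}]=0$ for every $m\geq 1$.

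Given this, I would fix an arbitrary $n\geq 1$ and apply Corollary~\ref{cor:fix} with that value of $n$: since $[\TH_{j,1},\TH_{i,m}]=0$ holds for all $m<n$ (indeed for all $m$), the corollary yields that $\TH_{i,m}$ is fixed by $\TT_{\bome}$ for $1\leq m\leq n$. As $n$ was arbitrary, this gives $\TT_{\bome}(\TH_{i,n})=\TH_{i,n}$ for all $n\geq 1$ and $i\in\{1,2\}$, which is precisely the assertion of Theorem~\ref{thm:fix1}.

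The point to be careful about — and the only conceivable obstacle — is the logical ordering: one must check that this last step does not circularly invoke Theorem~\ref{thm:fix1} itself. Inspecting the chain, the proof of Proposition~\ref{prop:verfDR2} rests only on Lemma~\ref{prop:commTHTH} together with the trivial base case $[\TH_{j,1},\TH_{i,0}]=0$ (since $\TH_{i,0}=\frac{1}{v-v^{-1}}$ is a scalar); Lemma~\ref{prop:commTHTH} and the supporting Proposition~\ref{lem:ind} in turn invoke Corollary~\ref{cor:fix} only under the strictly weaker hypothesis ``$[\TH_{j,1},\TH_{i,m}]=0$ for $m<n$'', never the full conclusion. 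Thus the whole bootstrap is non-circular, all the genuine work (the $\TT_{\bome}$-invariance of $\TH_{i,1}$ in Lemma~\ref{lem:THi1fixed}, the inductive mechanism in Lemma~\ref{lem:TH1THkcom1}, the commutator formula for $[\TH_{i,n},B_j]$, and the removal of the conditional hypotheses) having already been carried out, and Theorem~\ref{thm:fix1} drops out in one line.
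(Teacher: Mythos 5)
Your proof is correct and matches the paper's own one-line argument exactly: both invoke Proposition~\ref{prop:verfDR2} to discharge the hypothesis of Corollary~\ref{cor:fix} and conclude. Your additional paragraph verifying that the bootstrap (Lemma~\ref{lem:THi1fixed} $\to$ Lemma~\ref{lem:TH1THkcom1} $\to$ Corollary~\ref{cor:fix} $\to$ Proposition~\ref{lem:ind} $\to$ Lemma~\ref{prop:commTHTH} $\to$ Proposition~\ref{prop:verfDR2}) never circularly invokes the unconditional form of Theorem~\ref{thm:fix1} is a legitimate concern that the paper leaves implicit, and you have resolved it correctly.
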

	
	\begin{proof}
		The proof follows by Corollary \ref{cor:fix} and Proposition \ref{prop:verfDR2}.
	\end{proof}

	\subsection{Relation \eqref{qsiA1DR3}}
	\label{subsec:5.3}
	
	\begin{proposition}
		\label{prop:qsiA1DR3}
		Relation \eqref{qsiA1DR3}  holds in $\tUi$.
	\end{proposition}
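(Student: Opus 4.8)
The plan is to read off Relation~\eqref{qsiA1DR3} from the machinery already assembled in this section, now that the commutativity of the imaginary root vectors (Relation~\eqref{qsiA1DR2}) is available unconditionally via Proposition~\ref{prop:verfDR2}. First I would invoke Lemma~\ref{lem:relationsreform}(2) to replace \eqref{qsiA1DR3} by the equivalent form \eqref{qsiA1DR3reform}. Since the relevant Cartan integers for affine type $\widehat A_2$ are $c_{11}=c_{22}=2$ and $c_{12}=c_{21}=-1$, with $\tau$ swapping $1$ and $2$, an inspection shows that \eqref{qsiA1DR3reform} for $(i,j)=(1,1)$ is literally the identity $Y_{1,m}^k=0$ and for $(i,j)=(2,1)$ is literally $Y_{2,m}^k=0$, in the notation \eqref{def:Y1ps}--\eqref{def:Y2ps}. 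So the whole proposition reduces to establishing $Y_{1,n}^k=Y_{2,n}^k=0$ for all $n\ge1$ and all $k\in\Z$, together with the two remaining cases $(i,j)=(2,2),(1,2)$.

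Those remaining two cases I would obtain by applying the diagram involution $\btau$ of \eqref{Phi}, using $\btau(\TH_{i,m})=(-1)^m\TH_{\tau i,m}$ and $\btau(B_{i,k})=(-1)^kB_{\tau i,k}$ from \eqref{eq:switchroot} and $\btau(C)=C$. Every term appearing in $Y_{1,m}^k$ (resp. $Y_{2,m}^k$) picks up the same overall scalar $(-1)^{m+k}$ under $\btau$, so $\btau$ carries the identity $Y_{1,m}^k=0$ to exactly the $(i,j)=(2,2)$ instance of \eqref{qsiA1DR3reform}, and $Y_{2,m}^k=0$ to the $(i,j)=(1,2)$ instance. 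To prove $Y_{1,n}^k=Y_{2,n}^k=0$ I would simply apply Lemma~\ref{lem:THB}: its sole hypothesis is $[\TH_{j,1},\TH_{i,m}]=0$ for $i,j\in\{1,2\}$ and all $m<n$, which is now granted for every $n$ by Proposition~\ref{prop:verfDR2}. Hence Lemma~\ref{lem:THB} holds with no restriction, giving $Y_{1,n}^k=Y_{2,n}^k=0$ for all $n\ge1$ and $k\in\Z$; combined with the $\btau$-symmetry and Lemma~\ref{lem:relationsreform}(2), Relation~\eqref{qsiA1DR3} follows.

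At this point there is essentially no remaining obstacle: the genuine difficulty lived in the spiral of identities \eqref{SR}--\eqref{RRPP} feeding into Lemma~\ref{lem:THB}, and in the bootstrap to the unconditional commutativity of the $\TH_{i,n}$ carried out in Proposition~\ref{prop:verfDR2} (and recorded in Theorem~\ref{thm:fix1}). The only thing that demands a little care in writing up the present proof is the bookkeeping — matching the Cartan integers $c_{ij}$, $c_{\tau i,j}$ of $\widehat A_2$ against the explicit $v$-powers in \eqref{def:Y1ps}--\eqref{def:Y2ps}, and tracking the $(-1)^{m+k}$ sign when transporting the $j=1$ identities to the $j=2$ identities via $\btau$.
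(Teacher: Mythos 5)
Your proposal is correct and follows essentially the same route as the paper's proof: both reduce to \eqref{qsiA1DR3reform} via Lemma~\ref{lem:relationsreform}(2), feed the unconditional commutativity from Proposition~\ref{prop:verfDR2} into Lemma~\ref{lem:THB} to conclude $Y_{1,n}^k=Y_{2,n}^k=0$ for all $n,k$, and dispatch the $j=2$ cases by the involution $\btau$. You spell out the Cartan-integer bookkeeping and the $(-1)^{m+k}$ sign tracking that the paper leaves implicit, but the argument is the same.
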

	
	\begin{proof}
		We shall prove the identity \eqref{qsiA1DR3reform}, which is equivalent to \eqref{qsiA1DR3} by Lemma~ \ref{lem:relationsreform}(2).
		
		By Proposition \ref{prop:verfDR2}, the assumption of Lemma \ref{lem:THB} holds for any $n\geq1$, and hence
		\begin{equation*}
			Y_{1,n}^k=0,\qquad Y_{2,n}^k=0,\quad \forall n\geq 1, \,k\in\Z.
		\end{equation*}
		That is, the identity \eqref{qsiA1DR3reform} holds for $j=1$; it holds also for $j=2$ by the symmetry $\btau$ in \eqref{Phi}.
	\end{proof}

	\subsection{Relation \eqref{qsiA1DR5} }
	
	The proof of Relation \eqref{qsiA1DR5} is rather straightforward as we now have Theorem~\ref{thm:fix1} at our disposal.
	
	\begin{proposition}
		\label{prop:verfDR5}
		Relation \eqref{qsiA1DR5} holds in $\tUi$.
	\end{proposition}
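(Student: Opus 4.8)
The plan is to derive \eqref{qsiA1DR5} directly from the recursive definition of the imaginary root vectors, now that Theorem~\ref{thm:fix1} is available. First I would dispose of the ``base family'' $l=-1$: using the elementary identity $[A,B]_v=-v[B,A]_{v^{-1}}$ to rewrite the first $v$-commutator occurring in $D_{i,k}$ of \eqref{Dn}, one checks that $vD_{i,k}$ is precisely the left-hand side $[B_{i,k},B_{\tau i,0}]_v-v[B_{i,k+1},B_{\tau i,-1}]_{v^{-1}}$ of \eqref{qsiA1DR5} with $l=-1$. Solving \eqref{THn} (for $k\ge 1$), \eqref{eq:TH12} (for $k=0$), and the formula for $D_{i,-1}$ from Lemma~\ref{propTH1} (for $k=-1$) for this quantity, and using the conventions $\Theta_{i,m}=0$ for $m<0$ and $\Theta_{i,0}=\tfrac{1}{v-v^{-1}}$, produces exactly the right-hand side of \eqref{qsiA1DR5} with $l=-1$ and $k\ge -1$ (for $k\ge 1$ the two $\Theta_{\tau i}$-terms drop out, their index being negative, while for $k\in\{-1,0\}$ all four terms survive).

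Next I would propagate the identity to all index pairs. Since $\TT_{\bome}$ is an algebra automorphism fixing every $\TH_{i,n}$ (Theorem~\ref{thm:fix1}), fixing $C$, sending $\bK_i$ to itself up to a power of $C$, and sending $B_{i,k}$ to $o(i)B_{i,k-1}$, applying $\TT_{\bome}^{-j}$ to the $l=-1$ instance with first index $k$ yields the instance with indices $(k+j,-1+j)$; letting $k\ge -1$ and $j\in\Z$ vary, this covers all pairs $(k,l)$ with $k\ge l$. The remaining range $k\le l$ then follows by applying the involution $\btau$ of \eqref{Phi}, which by \eqref{eq:switchroot} sends $B_{i,m}\mapsto(-1)^mB_{\tau i,m}$, $\TH_{i,m}\mapsto(-1)^m\TH_{\tau i,m}$, and $\bK_i\leftrightarrow\bK_{\tau i}$, and thereby interchanges the two halves of \eqref{qsiA1DR5}.

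Because the genuinely hard input---the $\TT_{\bome}$-invariance of all $\TH_{i,n}$, i.e.\ Theorem~\ref{thm:fix1}---is already in hand, there is no real obstacle in this proposition; indeed the relation \eqref{qsiA1DR5} is essentially the recursion \eqref{THn} read in reverse. The only delicate point is keeping track consistently of the sign factors $o(i)$ (and the attendant powers of $C$ coming from $\TT_{\bome}(\bK_i)$) when translating the $l=-1$ instances by $\TT_{\bome}$, so that the signs appearing on the left-hand side of \eqref{qsiA1DR5} match those already built into the right-hand side.
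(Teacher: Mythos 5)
Your proposal is correct and follows essentially the same route as the paper's proof: read off the $l=-1$ base cases directly from the recursive definition of $\TH_{i,m}$ in \eqref{TH}--\eqref{THn} together with Lemma~\ref{propTH1}, translate by $\TT_{\bome}$ (using Theorem~\ref{thm:fix1}) to cover $k\geq l$, and apply $\btau$ to reach $k<l$; the paper merely separates the diagonal $R(k,k)$ from $R(k,-1)$, $k\geq 0$, whereas you bundle them as $R(k,-1)$, $k\geq -1$. (A small slip in your parenthetical: for $k=0$ exactly three of the four $\Theta$-terms survive and for $k=-1$ exactly two, not all four — but this does not affect the argument.)
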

	
	\begin{proof}
		In the proof below, we refer to the relation \eqref{qsiA1DR5} as $R(k,l)$.
		
		Relation $R(k,k)$ follows by applying a suitable power of $\TT_{\bome}$ to the identity \eqref{root0}, thanks to the $\TT_{\bome}$-invariance of $\Theta_{i,m}$ by Theorem~\ref{thm:fix1}.
		
		Relation $R(k,-1)$ for $k\geq0$ follows by definition of $\TH_{i,m}$ in \eqref{TH}--\eqref{THn}. Then $R(k,l)$ for $k>l$ follows by applying $\TT_{\bome}$ to the known case $R(k-l-1, -1)$ and noting that $\TH_{i,m}$ is $\TT_{\bome}$-invariant by Theorem~\ref{thm:fix1}.
		
		For $k<l$, we apply the involution $\btau$  in \eqref{Phi} and reduce to the case $R(l, k)$ already established above	(the second equation below follows by a simple rewriting of the commutators):
		\begin{align*}
			\btau\big( \text{LHS}\eqref{qsiA1DR5} \big)
			&= [B_{\tau i,k},B_{i,l+1}]_v-v[B_{\tau i,k+1},B_{i,l}]_{v^{-1}}
			\\
			&= [B_{ i,l},B_{\tau i,k+1}]_v-v[B_{ i,l+1},B_{\tau i,k}]_{v^{-1}}
			\\
			&=-\Theta_{{\tau i},k-l+1}C^l \bK_{i} +v \Theta_{ {\tau i},k-l-1}C^{l+1}\bK_{i}
			-\Theta_{i,l-k+1}C^k\bK_{{\tau i}} +v \Theta_{i,l-k-1}C^{k+1}\bK_{\tau i}.
		\end{align*}
		Another application of the involution $\btau$ to the right hand side above turns it into the right hand side of \eqref{qsiA1DR5}. This completes the proof of the proposition.
	\end{proof}

	\subsection{Relation \eqref{qsiA1DR6}}
	
	Finally we shall prove the Drinfeld type Serre relations.
	
	\begin{proposition}
		\label{prop:verfqsiA1DR6}
		Relation \eqref{qsiA1DR6} holds in $\tUi$.
	\end{proposition}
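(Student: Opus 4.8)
The plan is to follow the roadmap of the split-type argument in \cite{Z21} (rather than \cite{LW21b}), which is precisely the strategy announced in \S\ref{subsec:strategy}. Recall the Serre expression $\bS(k_1,k_2\mid l;i)$ from \eqref{eq:SS}, and write $\bS_{i}(k_1,k_2\mid l)$ for the difference between its left- and right-hand sides in \eqref{qsiA1DR6}; our goal is $\bS_i(k_1,k_2\mid l)=0$ for all $k_1,k_2,l\in\Z$ and $i\in\{1,2\}$. By the $\btau$-symmetry \eqref{eq:switchroot}, \eqref{Phi} it suffices to treat $i=1$. The first reduction is the familiar one: since $\TH_{j,m}$ is $\TT_{\bome}$-invariant by Theorem~\ref{thm:fix1} and $\TT_{\bome}(B_{j,k})=B_{j,k-1}$, the operator $\TT_{\bome}$ shifts all three indices $k_1,k_2,l$ simultaneously in $\bS_1(k_1,k_2\mid l)$; hence it is enough to prove $\bS_1(k_1,k_2\mid l)=0$ for one representative in each $\TT_{\bome}$-orbit, e.g.\ whenever $\min(k_1,k_2)=0$ or, more conveniently, to prove the two-term recursions \eqref{SR}--\eqref{SP} \emph{unconditionally} (they were only established under a partial-commutativity hypothesis which, by Proposition~\ref{prop:verfDR2} and Theorem~\ref{thm:fix1}, now always holds) and then to anchor the induction at a single base case.

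Concretely, I would proceed as follows. First, invoke Proposition~\ref{prop:verfDR2} and Theorem~\ref{thm:fix1} to note that the hypotheses in the lemmas surrounding \eqref{SR}--\eqref{SP} and \eqref{RRRPPP}--\eqref{YYY} are satisfied for all $n$, so those identities hold with no restriction on $k_1,k_2,l$; in particular the relations \eqref{qsiA1DR2}--\eqref{qsiA1DR5}, already proved, are available throughout. Next, observe that \eqref{SR}--\eqref{SP} express $\bS_1$ at a ``larger'' index in terms of $\bS_1$ at smaller indices plus the explicit terms $\bR,\bP$; combined with the relation \eqref{RRPP} (equivalently \eqref{YYY}, now giving $Y_{i,n}^k=0$ by Proposition~\ref{prop:qsiA1DR3}), the inhomogeneous terms $\bR,\bP$ are consistent, so the whole family $\{\bS_1(k_1,k_2\mid l)\}$ is determined by finitely many base cases — those with $|k_1-l|\le 1$ and $|k_2-l|\le 1$. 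After translating by $\TT_{\bome}$ this reduces to checking $\bS_1(k_1,k_2\mid 0)=0$ for $k_1,k_2\in\{-1,0,1\}$. Each such base case is a bounded expression in $B_0,B_1,B_2$ and $\K_i^{\pm1}$: one rewrites $B_{1,\pm1},B_{2,\pm1}$ via the closed formulas \eqref{B1-1}, \eqref{eq:B11}, expresses the $\TH_{j,m}$ on the right-hand side of \eqref{qsiA1DR6} (only $m\le$ a small bound occurs) via \eqref{TH}--\eqref{THn}, and reduces everything using the defining Serre relations \eqref{B211}--\eqref{Bi00} of $\tUi$; alternatively, and more cleanly, one verifies the base cases after specializing to the classical limit via Proposition~\ref{prop:climit} — there \eqref{qsiA1DR6} becomes \eqref{eq:kap-b3}, which is proved — and then upgrades to $\tUi$ by the kind of ``no lower-order terms'' argument used in Lemma~\ref{lem:X0}. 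I would most likely present the honest quantum computation for the handful of base cases and defer the bulk to an appendix, parallel to how Lemmas~\ref{lem:Bi1} and the identity \eqref{eq:tqcI2'} are handled.

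A cleaner alternative to the brute-force base case, which I would try first, is to use the vanishing criterion Lemma~\ref{lem:X0} directly. For fixed $(k_1,k_2,l)$, set $X:=\bS_1(k_1,k_2\mid l)$; it is a noncommutative polynomial in $B_0,B_1,B_2$ with coefficients in $\Q(v)[\K_i^{\pm1}]$ and no constant term, and it lies in $\tUi_{\gamma}$ with $\gamma=(k_1+k_2+l)\delta+2\alpha_1+\alpha_{\tau 1}$ (or its $\btau$-image), so $[X,\K_i]=0$ automatically. By the $\TT_{\bome}$-shift remarked above, $\TT_{\bome}(X)=\bS_1(k_1-1,k_2-1\mid l-1)$, which is \emph{not} literally $X$; to apply Lemma~\ref{lem:X0} one instead works with the finite-type-invariant combination or, better, argues as in the proof of Lemma~\ref{prop:commTHTH}: once one shows $[X,B_j]=[X,\K_j]=0$ for $j=1,2$ using the already-established relations \eqref{qsiA1DR3}--\eqref{qsiA1DR5} together with the recursions \eqref{SR}--\eqref{SP} (so that $X$ commutes with the whole finite-type subalgebra, hence with all $B_{j,r}$), and that $X$ is $\TT_{\bome}$-invariant up to the established identities, Lemma~\ref{lem:X0} forces $X=0$. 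Establishing $[X,B_j]=0$ is where the real work sits: it amounts to feeding \eqref{qsiA1DR3}, \eqref{qsiA1DR5}, the $\bR,\bP$ identities \eqref{eq:RR}--\eqref{eq:PP}, and the Jacobi identity into a bookkeeping computation analogous to \cite[\S5]{Z21}. This — the verification that the Serre expression commutes with the lower-rank generators, so that the vanishing criterion applies — is the step I expect to be the main obstacle, both conceptually and in length; the index arithmetic in \eqref{qsiA1DR6}, with its four $\Sym_{k_1,k_2}$-sums over $p\ge0$ and the shifts by $3p$, makes the bookkeeping considerably heavier than in the split case, and care is needed that the induction in $p$ (equivalently in $|k_1-l|+|k_2-l|$) closes without circularity against the relations proved earlier in this section.
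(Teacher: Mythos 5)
Your proposal is in the right spirit but takes a considerably heavier and only partially resolved route compared to what the paper actually does. You correctly identify that, once Propositions~\ref{prop:verfDR2}, \ref{prop:qsiA1DR3} and Theorem~\ref{thm:fix1} are in place, the recursions \eqref{SR}--\eqref{SP} hold unconditionally and should drive the argument, and that a $\TT_{\bome}$-shift reduces the family $\bS_1(k_1,k_2\mid l)$ to a bounded set of indices. The paper, however, avoids your finite base-case check entirely: it rewrites \eqref{SR} and \eqref{SP} as generating-function identities \eqref{Serre3'}--\eqref{Serre4'}, multiplies \eqref{Serre3'} by $w_1w_2$ and subtracts \eqref{Serre4'} to obtain a single identity \eqref{Serre7} of the form $(z-w_1w_2z^{-1})\,\mathbb{S}(w_1,w_2\mid z;i)=(\text{explicit RHS})$, observes that the constant component of \eqref{Serre7} is precisely the known finite-type Serre relation \eqref{B211}--\eqref{B122}, and then divides by $(z-w_1w_2z^{-1})$ to land exactly on \eqref{qsiA1DRG6}. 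In effect, the division bakes in your proposed induction and collapses all nine of your base cases to the single anchor $(0,0\mid 0)$, which is already a defining relation of $\tUi$; the $\sum_{p\ge 0}$ sums with the $3p$-shifts in \eqref{qsiA1DR6} are nothing but the geometric expansion of the factors $\frac{1}{1-w_1^{\pm1}w_2^{\pm1}z^{\mp2}}$ appearing after division. This is both shorter and structurally cleaner than verifying $\bS_1(k_1,k_2\mid 0)$ for $k_1,k_2\in\{-1,0,1\}$ by hand or via the classical limit.

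Your ``cleaner alternative'' via Lemma~\ref{lem:X0} has a genuine gap that you acknowledge but do not close: $\bS_1(k_1,k_2\mid l)$ is \emph{not} $\TT_{\bome}$-invariant (it is shifted to $\bS_1(k_1-1,k_2-1\mid l-1)$), so Lemma~\ref{lem:X0} does not apply to it directly, and ``$\TT_{\bome}$-invariant up to the established identities'' is not a hypothesis the vanishing criterion accepts. Moreover, showing $[X,B_j]=0$ for $j=1,2$ already requires essentially the full strength of \eqref{SR}--\eqref{SP} plus \eqref{qsiA1DR3}--\eqref{qsiA1DR5}, at which point the generating-function division is strictly simpler. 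If you want a coefficient-wise proof, your first approach (anchor at finitely many base cases and propagate via \eqref{SR}--\eqref{SP}) is the one to pursue, but you should be explicit about which finite set is actually sufficient as a transversal to the orbits of the $\TT_{\bome}$-shift together with the $l\mapsto l\pm 1$ moves coming from \eqref{SR}/\eqref{SP}, and you should confirm the base cases reduce to the finite-type Serre relation rather than attempting all of them by brute force.
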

	\begin{proof}
		Since \eqref{qsiA1DR6} is equivalent to the relation \eqref{qsiA1DRG6} in generating function form, it suffices to prove \eqref{qsiA1DRG6}.
		
		We can rewrite \eqref{SR} as
		\begin{align}
			\label{Serre3'}
			(w_1^{-1}+&w_2^{-1}-[2]z^{-1})\mathbb{S}(w_1,w_2\mid z;i)=-\frac{v^{-1}[2]}{{v-v^{-1}}}\Sym_{w_1,w_2}\bDel
			(w_2 z)\\
			\notag
			&\times \Big((z^{-1}-vw_2^{-1})[\bTH_{\tau i}(z)\bK_i,\bB_i(w_1)]_v + (w_2^{-1}-vz^{-1})[\bTH_i(w_2)\bK_{\tau i},\bB_i(w_1)]_v\Big).
		\end{align}
		Similarly, we rewrite \eqref{SP} as
		\begin{align}
			\label{Serre4'}
			(w_1+&w_2-[2]z)\mathbb{S}(w_1,w_2\mid z;i)=-\frac{v^{-1}[2]}{{v-v^{-1}}}\Sym_{w_1,w_2}\bDel(w_2 z)\\
			\notag
			&\times \Big(( w_2-vz)[\bB_i(w_1),\bTH_{\tau i}(z)\bK_i]_v + ( z-v w_2)[\bB_i(w_1),\bTH_i(w_2)\bK_{\tau i}]_v\Big).
		\end{align}
	   
    We calculate \eqref{Serre3'}$\times [2]z$ $+$\eqref{Serre4'}$\times (w_1^{-1}+w_2^{-1})$ as follows:
   \begin{align}\notag
   &(w_1-v^2w_2)(w_2^{-1}-v^{-2}w_1^{-1})\mathbb{S}(w_1,w_2\mid z;i)\\\notag
   =&-\frac{v^{-1}[2]}{{v-v^{-1}}}\Sym_{w_1,w_2}\bDel(w_2 z)(w_1^{-1}-v^2 w_2^{-1})(w_2-v z) \bB_i(w_1)\bTH_{\tau i}(z)\bK_i\\  \label{Serre7}
    &-\frac{v^{-1}[2]}{{v-v^{-1}}}\Sym_{w_1,w_2}\bDel(w_2 z)v^{-1}(w_1-v^2 w_2)(w_1^{-1}-v w_1^{-1} w_2^{-1}z)\bTH_{\tau i}(z)\bK_i\bB_i(w_1)
    \\\notag
    &+\frac{v[2]}{{v-v^{-1}}}\Sym_{w_1,w_2}\bDel(w_2 z)(w_2^{-1}-v^{-2}w_1^{-1})(z-v w_2)\bB_i(w_1)\bTH_{i}(w_2)\bK_{\tau i}
    \\\notag
    &+\frac{v^{-1}[2]}{{v-v^{-1}}}\Sym_{w_1,w_2}\bDel(w_2 z)v^{-1}(w_1-v^{2}w_2)(v w_1^{-1}- w_1^{-1} w_2^{-1} z)\bTH_{i}(w_2)\bK_{\tau i}\bB_i(w_1).
   \end{align}
   
   Note that the constant component of \eqref{Serre7} is exactly the relations \eqref{B211}--\eqref{B122}. Then, dividing both sides of \eqref{Serre7} by $(w_1-v^2w_2)(w_2^{-1}-v^{-2}w_1^{-1})$, we obtain the desired relation \eqref{qsiA1DRG6}.
	\end{proof}
	
	
	Therefore, we have verified that all the relations \eqref{qsiA1DR1}--\eqref{qsiA1DR6} are preserved under $\Phi: \tUiD \rightarrow \tUi$; that is,  $\Phi: \tUiD \rightarrow \tUi$ defined in \eqref{eq:isom} is a homomorphism.
	
	\appendix

	\section{Proof of Identity \eqref{eq:tqcI2'}} 
	
	\label{App:AB}

	\subsection{Proof of Identity \eqref{eq:tqcI2'}}
	\label{App:A}
	
	In this subsection, we prove the identity \eqref{eq:tqcI2'}, and hence complete the proof of the identity \eqref{eq:interB0}. When combining with discussions in \S\ref{subsec:steps}--\S\ref{subsec:S3a}, we have completed the proof of Theorem~\ref{thm:T-1}.
	
	%
	
	The identity $\Ddot{\mathfrak{F}} =\tT_{\bs_1}^{-1}(F_0 )$ in \eqref{eq:tqcI2'} is not homogeneous in $\tU$, and it consists of homogeneous summands of 4 different weights: $-(\alpha_0 + 2\alpha_1 + 2\alpha_2)$, $-(\alpha_0+\alpha_1 +\alpha_2)$, $-\alpha_0$, and $-\alpha_0 + \alpha_1 +\alpha_2$, respectively. Accordingly, the proof of the identity \eqref{eq:tqcI2'} is reduced to the proofs of the 4 identities \eqref{eq:tqcI4}--\eqref{eq:tqcI7} below:
	
	\begin{align}\label{eq:tqcI4}
		&v\Big[ \big[F_1,F_2\big]_v, \Big[F_2,[F_1,F_0]_v\Big]\Big]=\tT_{\bs_1}^{-1}(F_0 ),
	\end{align}
	\begin{align}\notag
		&\Big[ \big[F_1,E_1 K_2\big]_v, \big[F_2,[F_1,F_0]_v\big]\Big]+\Big[ \big[F_1,F_2\big]_v, \big[E_1 K_2,[F_1,F_0]_v\big]\Big]\\\label{eq:tqcI5}
		&\quad + \textstyle v \Big[ \frac{K_1 K_2 -K_1 K_2'}{v-v^{-1}}, \big[F_2,[F_1,F_0]_v\big]\Big]=\big[ [F_1 , F_2 ]_{v^3},F_0 \big]K_1'K_2,
	\end{align}
	\begin{align}\notag
		&\Big[ \big[E_2 K_1,E_1 K_2\big]_v, \big[F_2,[F_1,F_0]_v\big]\Big]+\Big[ [F_1,E_1 K_2 ]_v , \big[E_1 K_2,[F_1,F_0]_v\big]\Big]\\\notag
		&\quad +\textstyle v\Big[\frac{K_1 K_2 -K_1 K_2'}{v-v^{-1}},\big[E_1 K_2,[F_1,F_0]_v\big]\Big]\\
		\label{eq:tqcI6}
		&= \big[ [  E_2 K_1, F_2 ]_{v^3} +[F_1, E_1 K_2]_{v^3},F_0 \big]K_2 K_1' - v^2 F_0 K_1 K_2 K_1' K_2',
	\end{align}
	\begin{align}
		\label{eq:tqcI7}
		&\Big[ \big[E_2 K_1,E_1 K_2\big]_v , \big[E_1 K_2,[F_1,F_0]_v\big]\Big]=\big[ [ E_2 K_1,  E_1 K_2]_{v^3},F_0 \big]K_2 K_1'.
	\end{align}
	The identity \eqref{eq:tqcI4} holds by Lemma \ref{lem:tqcI0}. The proofs for \eqref{eq:tqcI5}-\eqref{eq:tqcI7} require only elementary computations and are given in the following 3 subsections.

	\subsubsection{Proof of \eqref{eq:tqcI5}}
	
	Denote $X:=[F_1,F_0]_v$.
	\begin{lemma}\label{lem:tqcI7}
		We have
		\begin{itemize}
			\item[(1)] $X F_1 = v F_1 X$;
			\item[(2)] $ \big[E_1, X\big]=F_0 K_1'$;
			\item[(3)] $ X E_1 K_2= v^{-2} E_1 K_2 X-  F_0 K_1' K_2$.
		\end{itemize}
	\end{lemma}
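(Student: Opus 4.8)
The plan is to dispatch the three identities one at a time, each by a short direct computation inside $\tU$ that uses only the defining relations: the $v$-commutation rules between $K_i^{\pm1}, K_i'^{\pm1}$ and $E_j, F_j$, the commutator $[E_i,F_j]=\delta_{ij}(K_i-K_i')/(v-v^{-1})$, and a single quantum Serre relation. Throughout, the relevant Cartan integers of affine type $\widehat A_2$ are all equal to $-1$, i.e. $c_{01}=c_{02}=c_{12}=-1$, and $X=[F_1,F_0]_v=F_1F_0-vF_0F_1$.

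For part (1), I would note that the assertion $XF_1=vF_1X$ is exactly the statement $[[F_1,F_0]_v,F_1]_v=0$. Expanding the iterated $v$-bracket gives
\[
[[F_1,F_0]_v,F_1]_v=-v\big(F_1^2F_0-[2]F_1F_0F_1+F_0F_1^2\big),
\]
which vanishes by the quantum Serre relation between $F_1$ and $F_0$ (the Cartan integer being $c_{01}=-1$). For part (2), I would expand $[E_1,X]=[E_1,F_1F_0-vF_0F_1]$ by the Leibniz rule; since $0\ne 1$, the terms involving $[E_1,F_0]$ drop out, leaving
\[
[E_1,X]=[E_1,F_1]F_0-vF_0[E_1,F_1]=\frac{K_1-K_1'}{v-v^{-1}}F_0-vF_0\,\frac{K_1-K_1'}{v-v^{-1}}.
\]
Pushing the Cartan elements past $F_0$ via $K_1F_0=vF_0K_1$ and $K_1'F_0=v^{-1}F_0K_1'$ (again from $c_{01}=-1$) collapses the right-hand side to $F_0\cdot\frac{(v-v^{-1})K_1'}{v-v^{-1}}=F_0K_1'$, as claimed.

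For part (3), I would feed (2) back in as $XE_1=E_1X-F_0K_1'$, multiply on the right by $K_2$, and then commute $K_2$ to the left across $X$: since $X$ is homogeneous of weight $-\alpha_0-\alpha_1$ and $(\alpha_2,-\alpha_0-\alpha_1)=-c_{20}-c_{21}=2$, one has $XK_2=v^{-2}K_2X$. Hence $XE_1K_2=E_1XK_2-F_0K_1'K_2=v^{-2}E_1K_2X-F_0K_1'K_2$.

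I do not expect any genuine obstacle here: all three steps are elementary manipulations in $\tU$. The only thing requiring care is the bookkeeping of the $v$-powers that arise from the weight gradings and the commutation relations; so in writing this up I would mainly make sure those exponents are tracked consistently, using that every off-diagonal Cartan integer in $\widehat A_2$ equals $-1$.
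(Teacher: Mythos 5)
Your proof is correct, and it is exactly the kind of short direct verification the paper has in mind (the paper states Lemma~\ref{lem:tqcI7} without proof, treating it as elementary). Part (1) correctly reduces to the quantum Serre relation between $F_1$ and $F_0$; in part (2) the cancellation using $K_1F_0=vF_0K_1$ and $K_1'F_0=v^{-1}F_0K_1'$ is right; and in part (3) the weight computation $K_2X=v^2XK_2$ (from $\wt X=-\alpha_0-\alpha_1$ and $c_{20}=c_{21}=-1$) correctly gives the factor $v^{-2}$.
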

	
	Note the third term on the left hand side of \eqref{eq:tqcI5} is $0$, i.e.,
	$\textstyle  \Big[ \frac{K_1 K_2 -K_1 K_2'}{v-v^{-1}}, \big[F_2,[F_1,F_0]_v\big]\Big]=0.$
	We now compute using Lemma \ref{lem:tqcI7} as follows:
	\begin{align*}
		&\big[ [F_1,E_1 K_2]_v,[F_2,X]\big]+\big[ [F_1,F_2]_v, [E_1 K_2,X]\big]
		\\
		&=\big[F_1,E_1 K_2\big]_v F_2 X - \big[F_1,E_1 K_2\big]_v X F_2 -F_2 X \big[F_1,E_1 K_2\big]_v + X F_2 \big[F_1,E_1 K_2\big]_v
		\\
		&\quad +[F_1,F_2]_v E_1 K_2 X - [F_1,F_2]_v X E_1 K_2- E_1 K_2 X[F_1,F_2]_v + X E_1 K_2[F_1,F_2]_v\\
		&=\Big( [F_1,E_1 K_2 ]_v F_2-v^{-1} F_2  [F_1,E_1 K_2 ]_v   + [F_1,F_2]_v E_1 K_2 - v^{-2}[F_1,F_2]_v E_1 K_2\Big)X\\
		&\quad +v F_2 F_1 F_0 K_1' K_2 -v^4 F_2 F_0 F_1 K_1'K_2+[F_1,F_2]_v  F_0 K_1' K_2\\
		&\quad +X\Big( F_2[F_1,E_1 K_2]_v-v\big[F_1,E_1 K_2\big]_v  F_2 + E_1 K_2 [F_1,F_2]_v - v^2 E_1 K_2 [F_1,F_2]_v \Big)\\
		&\quad -v^{-1}F_1 F_0 F_2 K_1' K_2 + v^2 F_0 F_1 F_2 K_1' K_2 -v^2 F_0 [F_1,F_2]_v K_1' K_2 \\
		&= - F_2 K_1' K_2 X+  F_1 F_2 F_0 K_1' K_2 -v^4 F_2 F_0 F_1 K_1'K_2 \\
		&\quad  + v^{-1} X F_2 K_1' K_2+v^3 F_0 F_2 F_1 K_1' K_2 -v^{-1}F_1 F_0 F_2 K_1' K_2 \\
		&= \big[ [F_1 , F_2 ]_{v^3},F_0 \big]K_1'K_2.
	\end{align*}
	This proves the identity \eqref{eq:tqcI5}.

	\subsubsection{Proof of \eqref{eq:tqcI6}}
	
	Denote again $X:=[F_1,F_0]_v$. Denote $\E_1:=E_1 K_2,\E_2:=E_2 K_1.$
	\begin{lemma}\label{lem:tqcI9}
		We have
		\begin{itemize}
			\item[(1)] $X \E_2 =v \E_2 X$;
			\item[(2)] $\big[\E_2,\E_1]_v,F_2\big]_{v^{-1}}=-v  \E_1 K_1 K_2$;
			\item[(3)] $\big[[\E_2,\E_1]_v ,X\big]_v=v^2[\E_2,F_0]_{v^3}K_1'K_2$;
			\item[(4)] $\textstyle[F_1,\E_1]_{v^{-1}}= \frac{K_1'-K_1}{v-v^{-1}}K_2$;
			\item[(5)] $\big[[F_1,\E_1]_v,X\big]_v=v^2 [F_1,F_0]_{v^3} K_1' K_2$.
		\end{itemize}
	\end{lemma}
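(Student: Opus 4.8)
The plan is to verify each of the five identities by a direct computation inside $\tU=\tU(\widehat{\g})$, using only the defining relations: the commutations $K_iE_j=v^{c_{ij}}E_jK_i$, $K_iF_j=v^{-c_{ij}}F_jK_i$ together with their $K_i'$-analogues, the relation $[E_i,F_j]=\delta_{ij}(K_i-K_i')/(v-v^{-1})$, the quantum Serre relations, the centrality of each $K_iK_i'$, and the Leibniz rule for $v$-commutators. Throughout one uses that for the affine $A_2$ Cartan matrix $c_{11}=c_{22}=2$ and $c_{01}=c_{02}=c_{12}=-1$, and in particular that $E_2$ commutes with $F_0$ and $F_1$, that $F_0$ commutes with $E_1$ and $E_2$, and that the only nonzero cross-commutators among $\{E_0,E_1,E_2,F_0,F_1,F_2\}$ relevant here are $[E_1,F_1]$ and $[E_2,F_2]$.

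Items (1) and (4) are immediate. For (1): $E_2$ commutes with $X=[F_1,F_0]_v$ since $[E_2,F_1]=[E_2,F_0]=0$, while $X$ has weight $-\alpha_1-\alpha_0$, whence $K_1X=v^{-(\alpha_1,\,\alpha_1+\alpha_0)}XK_1=v^{-1}XK_1$; combining, $X\E_2=XE_2K_1=E_2(vK_1X)=v\E_2X$. For (4): expanding $[F_1,E_1K_2]_{v^{-1}}=F_1E_1K_2-v^{-1}E_1K_2F_1$ and pushing $K_2$ past $F_1$ via $K_2F_1=vF_1K_2$ gives $(F_1E_1-E_1F_1)K_2=-\tfrac{K_1-K_1'}{v-v^{-1}}K_2=\tfrac{K_1'-K_1}{v-v^{-1}}K_2$.

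The remaining three identities (2), (3), (5) all follow the same two-step scheme. First, normal-order the inner $v$-commutator by moving the Cartan factors to the right: the $K$-relations give $[\E_2,\E_1]_v=v^2[E_2,E_1]_vK_1K_2$ and $[F_1,\E_1]_v=[F_1,E_1]_{v^2}K_2$. Second, form the outer $v$-commutator against $F_2$ (for (2)) or against $X=[F_1,F_0]_v$ (for (3) and (5)), again commuting the Cartan factor to the right; since $E_2$ commutes with $F_0,F_1$ and $F_0$ commutes with $E_1,E_2$, the only surviving contributions come from $[E_1,F_1]$ (and, for (2), also $[E_2,F_2]$), and the resulting Cartan expressions collapse via elementary identities such as $\tfrac{v^{-1}K-vK}{v-v^{-1}}=-K$, $\tfrac{v^2-1}{v-v^{-1}}=v$, and $\tfrac{v^2-v^{-2}}{v-v^{-1}}=[2]$. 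For example (2) reduces to $v\,[[E_2,E_1]_v,F_2]\,K_1K_2$, one computes $[[E_2,E_1]_v,F_2]=-E_1K_2$, and hence $[[\E_2,\E_1]_v,F_2]_{v^{-1}}=-v\E_1K_1K_2$.

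The steps requiring the most bookkeeping are (3) and (5), where the outer commutator is taken against the length-two $v$-commutator $X=[F_1,F_0]_v$. For (3), after normal-ordering one is left with $v^3\,[[E_2,E_1]_v,[F_1,F_0]_v]\,K_1K_2$; expanding the inner double commutator and using that only $[E_1,F_1]$ contributes yields $[[E_2,E_1]_v,[F_1,F_0]_v]=(1-v^2)F_0E_2K_1'$, and one then matches $v^3(1-v^2)F_0E_2K_1'K_1K_2$ with the right-hand side $v^2[\E_2,F_0]_{v^3}K_1'K_2=v^2(v-v^3)F_0E_2K_1K_1'K_2$, invoking the centrality of $K_1K_1'$. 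Identity (5) is entirely analogous (now the inner factor $[F_1,E_1]_{v^2}$ has weight $0$, so it commutes with all $K_i$). The only genuine obstacle is keeping track of the many $v$-powers produced by pushing Cartan elements through root vectors; no input beyond the defining relations of $\tU$ is needed, and none of the five identities refers to $\tUi$.
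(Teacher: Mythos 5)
Your proposal is correct, and it verifies all five identities, but it follows a genuinely different computational route from the paper's. The paper's proof of (3) and (5) is organized around two $v$-Jacobi identities,
\[
\big[[\E_2,\E_1]_v, X\big]_v = \big[\E_2,[\E_1,X]_{v^2}\big]-v\big[\E_1,[\E_2,X]_{v^{-1}}\big]_v,
\qquad
\big[[F_1,\E_1]_v, X\big]_v = \big[F_1,[\E_1,X]_{v^2}\big]-v\big[\E_1,[F_1,X]_{v^{-1}}\big]_v,
\]
together with the preparatory Lemma~\ref{lem:tqcI7} (which supplies $[E_1,X]=F_0K_1'$, $XF_1=vF_1X$ and $X\E_1=v^{-2}\E_1X-F_0K_1'K_2$) and part (1) to kill the second term. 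Your approach instead normal-orders the \emph{inner} $v$-bracket by extracting the Cartan factor, e.g.\ $[\E_2,\E_1]_v=v^2[E_2,E_1]_vK_1K_2$ and $[F_1,\E_1]_v=[F_1,E_1]_{v^2}K_2$, moves that Cartan factor through the outer bracket, and then evaluates a much simpler bracket of the residual $E$-part against $F_2$ or $X$, using only that $[E_1,F_1]$ (and, for (2), $[E_2,F_2]$) is the only nonzero contact. What you gain is self-containedness: you do not need the auxiliary Lemma~\ref{lem:tqcI7} nor the Jacobi identities, and you essentially re-derive $[E_1,X]=F_0K_1'$ inline. What the paper's route gains is that the Jacobi decomposition cleanly isolates the vanishing term (via part (1)) and reduces (3) and (5) to a single already-recorded commutator, at the price of maintaining the extra lemma. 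One small imprecision in your write-up: for (5), after pulling $K_2$ out, the residual outer bracket is $v^2[[F_1,E_1]_{v^2},X]_{v^{-1}}K_2$ -- a $v^{-1}$-commutator, not a plain one as in (3), so ``entirely analogous'' is slightly optimistic, though the computation still closes as you claim.
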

	
	\begin{proof}
		Parts (1)-(2) and (4) follow by a direct computation. Parts (3) and (5) follow from (1), Lemma \ref{lem:tqcI7}, and the following two $v$-Jacobi identities, respectively:
		\begin{align*}
			\big[[\E_2,\E_1]_v, X \big]_v & = \big[\E_2,[\E_1,X]_{v^2}\big]-v\big[\E_1,[\E_2,X]_{v^{-1}}\big]_v,
			\\
			\big[[F_1,\E_1]_v, X\big]_v & = \big[F_1,[\E_1,X]_{v^2}\big]-v\big[ \E_1, [F_1,X]_{v^{-1}}\big]_v.
		\end{align*}
		The lemma is proved.
	\end{proof}
	
	\begin{proposition}\label{prop:tqcI4}
		We have
		\begin{align}\notag
			\big[[\E_2, & \E_1]_v, [F_2,X]\big]+\textstyle v\Big[\frac{K_1 K_2 -K_1 K_2'}{v-v^{-1}}, [\E_1,X ]\Big]
			\\
			&=\big[[\E_2,F_2]_{v^3},F_0\big] K_1' K_2 - v^2 F_0 K_1 K_2 K_1' K_2'.
			\label{eq:tqcI9}
		\end{align}
	\end{proposition}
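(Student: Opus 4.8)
The plan is to reduce the main term $\big[[\E_2,\E_1]_v,[F_2,X]\big]$ to the commutators already evaluated in Lemma~\ref{lem:tqcI9}(2)--(3) by a single $v$-Jacobi identity, then to observe that the output cancels against the second summand on the left-hand side of \eqref{eq:tqcI9}, leaving just one residual term to be matched with the right-hand side. Write $Z=[\E_2,\E_1]_v$. First I would invoke the $v$-Jacobi identity in the form $[A,[B,C]_u]_w=[[A,B]_x,C]_y+x[B,[A,C]_s]_t$ (valid whenever $u=xt$, $w=xs$, $y=xst$), specialized at $A=Z$, $B=F_2$, $C=X$, $x=v^{-1}$, which gives
\[
[Z,[F_2,X]]=\big[[Z,F_2]_{v^{-1}},X\big]_v+v^{-1}\big[F_2,[Z,X]_v\big]_v .
\]
Substituting $[Z,F_2]_{v^{-1}}=-v\,\E_1K_1K_2$ from Lemma~\ref{lem:tqcI9}(2) and $[Z,X]_v=v^2[\E_2,F_0]_{v^3}K_1'K_2$ from Lemma~\ref{lem:tqcI9}(3) turns the main term into $-v[\E_1K_1K_2,X]_v+v\big[F_2,[\E_2,F_0]_{v^3}K_1'K_2\big]_v$.

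Next I would evaluate $-v[\E_1K_1K_2,X]_v$ using Lemma~\ref{lem:tqcI7}(2)--(3): rewriting $\E_1K_1K_2=E_1K_1K_2^2$, pushing the Cartan factors past $X$ (which has weight $-\alpha_1-\alpha_0$), and using $XE_1=E_1X-F_0K_1'$ one gets $-v[\E_1K_1K_2,X]_v=-(v^2-1)\,\E_1XK_1K_2-v^2F_0K_1K_1'K_2^2$. Independently, for the second summand of \eqref{eq:tqcI9}, I would substitute the closed form $[\E_1,X]=(1-v^{-2})\E_1X+F_0K_1'K_2$ read off from Lemma~\ref{lem:tqcI7}(3), note that $K_1K_2'$ commutes with every element of weight $-\alpha_0$ while $K_1K_2$ acquires a factor $v^2$, and use $\tfrac{v(v^2-1)}{v-v^{-1}}=v^2$ to obtain $v\big[\tfrac{K_1K_2-K_1K_2'}{v-v^{-1}},[\E_1,X]\big]=(v^2-1)\,\E_1XK_1K_2+v^2F_0K_1K_1'K_2^2$. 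These two contributions cancel term by term, so the left-hand side of \eqref{eq:tqcI9} collapses to $v\big[F_2,[\E_2,F_0]_{v^3}K_1'K_2\big]_v$.

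Finally I would prove the residual identity $v\big[F_2,[\E_2,F_0]_{v^3}K_1'K_2\big]_v=\big[[\E_2,F_2]_{v^3},F_0\big]K_1'K_2-v^2F_0K_1K_2K_1'K_2'$, which lives entirely in the subalgebra generated by $E_2,F_2,F_0$ and the $K$'s, where $[E_2,F_0]=0$ and $[E_2,F_2]=\tfrac{K_2-K_2'}{v-v^{-1}}=:D$. Here $[\E_2,F_0]_{v^3}=v(1-v^2)F_0E_2K_1$ and $[\E_2,F_2]_{v^3}=\big((v-v^3)E_2F_2+v^3D\big)K_1$, so the computation hinges on the clean reorderings $vE_2F_2F_0-F_0E_2F_2=E_2(vF_2F_0-F_0F_2)=vE_2[F_2,F_0]_{v^{-1}}$ (using $[E_2,F_0]=0$) and $v\,D\,F_0-F_0\,D=vF_0K_2$; these yield $\big[[\E_2,F_2]_{v^3},F_0\big]=\big((v^2-v^4)E_2[F_2,F_0]_{v^{-1}}+v^4F_0K_2\big)K_1$. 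Carrying these through on both sides and using $\tfrac{v^2-v^4}{v-v^{-1}}=-v^3$ to expand $DF_0$, one checks that both sides reduce to the same expression $(v^2-v^4)E_2[F_2,F_0]_{v^{-1}}K_1K_1'K_2+v^4F_0K_1K_1'K_2^2-v^2F_0K_1K_2K_1'K_2'$, completing the proof. The main obstacle is purely bookkeeping: one must track the $v$-powers produced by commuting the numerous $K$-factors past root vectors of mixed weight and keep the $v$-graded brackets consistent through the Jacobi reduction; the final $E_2,F_2,F_0$-identity is where the cancellations are most delicate and is safest verified by bringing both sides to a common PBW-ordered form.
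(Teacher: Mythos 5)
Your argument is correct and follows essentially the same route as the paper: the same $v$-Jacobi decomposition of $\big[[\E_2,\E_1]_v,[F_2,X]\big]$, the same inputs from Lemma~\ref{lem:tqcI9}(2)--(3), the same cancellation against the Cartan commutator, and then the same residual identity in the subalgebra generated by $E_2,F_2,F_0$ and the $K$'s. The only (harmless) difference is that you explicitly expand $[\E_1,X]$ via Lemma~\ref{lem:tqcI7} to see the cancellation and bring the residual to a PBW-ordered form, whereas the paper cancels $-v^2[\E_1,X]\K_1\K_2$ against $+v^2[\E_1,X]\K_1\K_2$ without expanding and verifies the residual by a direct monomial calculation.
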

	
	\begin{proof}
		By Lemma \ref{lem:tqcI9}(2)-(3), the first term on the left hand side of \eqref{eq:tqcI9} is equal to
		\begin{align*}
			\big[[\E_2,\E_1]_v, [F_2,X]\big]&=\Big[\big[[\E_2,\E_1]_v ,F_2 \big]_{v^{-1}},X\Big]_v+v^{-1}\Big[F_2, \big[[\E_2,\E_1]_v ,X\big]_{v }\Big]_v
			\\
			&=-v^2\Big[ \E_1 ,X\Big]  K_1 K_2 +v \Big[F_2, [\E_2,F_0]_{v^3}\Big]_{v^{-2}}K_1'K_2.
		\end{align*}
		Hence, the left hand side of \eqref{eq:tqcI9} is simplified as
		\begin{align*}
			&\big[[\E_2,\E_1]_v, [F_2,X]\big]+\textstyle v\Big[\frac{K_1 K_2 -K_1 K_2'}{v-v^{-1}}, [\E_1,X ]\Big]
			\\
			&=-v^2 [ \E_1 ,X ]  K_1 K_2+v \Big[F_2, [\E_2,F_0]_{v^3}\Big]_{v^{-2}}K_1'K_2+ v^2 [\E_1,X ]  K_1 K_2
			\\
			&=v \Big[F_2, [\E_2,F_0]_{v^3}\Big]_{v^{-2}}K_1'K_2
			\\
			&=\big[[\E_2,F_2]_{v^3},F_0\big] K_1' K_2 - v^2 F_0 K_1 K_2 K_1' K_2',
		\end{align*}
		as desired. Here the last equality follows by
		\begin{align*}
			& v \Big[F_2, [\E_2,F_0]_{v^3}\Big]_{v^{-2}}-\big[[\E_2,F_2]_{v^3},F_0\big]
			\\
			=&(v-v^3)F_2\E_2F_0+(v^2-1)F_0\E_2F_2-\E_2F_2F_0+v^3F_2\E_2F_0+F_0\E_2F_2-v^3F_0F_2\E_2
			\\
			=&v[F_2,E_2]K_1F_0+v^3F_0[E_2,F_2]K_1
			\\
			=&-v\frac{K_2-K_2'}{v-v^{-1}}K_1F_0+v_3F_0 \frac{K_2-K_2'}{v-v^{-1}} K_1
			\\
			=&-v^2F_0K_1K_2'.
		\end{align*}
		Hence, Proposition \ref{prop:tqcI4} is proved.
	\end{proof}
	
	\begin{proposition}\label{prop:tqcI5}
		We have
		\begin{align*}
			\Big[ [F_1,\E_1]_v , [\E_1,X ]\Big]=\big[ [F_1, E_1 K_2]_{v^3},F_0 \big] K_1' K_2.
		\end{align*}
	\end{proposition}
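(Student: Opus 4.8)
The plan is to prove Proposition~\ref{prop:tqcI5} by the same strategy as Proposition~\ref{prop:tqcI4}: expand the outer $v$-commutator using a $v$-Jacobi identity and reduce everything to the elementary $v$-commuting and straightening relations collected in Lemma~\ref{lem:tqcI7} and Lemma~\ref{lem:tqcI9}. Writing $X=[F_1,F_0]_v$ and $\E_1 = E_1K_2$ as before, the first step is to apply the identity
\[
\big[[F_1,\E_1]_v,[\E_1,X]\big] = \Big[F_1,\big[\E_1,[\E_1,X]\big]_{v}\Big]_{?} - v\Big[\E_1,\big[F_1,[\E_1,X]\big]\Big]_{?},
\]
more precisely the $v$-Jacobi identity in the normalization already used in Lemma~\ref{lem:tqcI9}, namely $\big[[F_1,\E_1]_v,Y\big]_v = \big[F_1,[\E_1,Y]_{v^2}\big] - v\big[\E_1,[F_1,Y]_{v^{-1}}\big]_v$, applied with $Y=[\E_1,X]$ after first accounting for the missing outer subscript (note $\big[[F_1,\E_1]_v,[\E_1,X]\big]$ has no outer $v$-twist, so one splits it as the corresponding $\big[\cdot,\cdot\big]_v$-expression minus a correction coming from the weight-shift of $[\E_1,X]$). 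This turns the problem into evaluating $\big[F_1,[\E_1,[\E_1,X]]\big]$ and $\big[\E_1,[F_1,[\E_1,X]]\big]$ up to scalar-times-Cartan factors.

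The second step is to compute the two inner brackets $[\E_1,X]$ and $[F_1,X]$ explicitly. For $[\E_1,X] = [E_1K_2,[F_1,F_0]_v]$, I expect to use Lemma~\ref{lem:tqcI7}(3), which gives $X\E_1 = v^{-2}\E_1 X - F_0K_1'K_2$, equivalently $[\E_1,X]$ in terms of $F_0K_1'K_2$ and a $v^{-2}$-commutator; since $F_0 K_1'K_2$ $v$-commutes in a controlled way with $\E_1$ (both are products of root vectors with known quantum Serre / commutation behaviour), the double bracket $[\E_1,[\E_1,X]]$ collapses. For $[F_1,X]$ one uses Lemma~\ref{lem:tqcI7}(1), $XF_1 = vF_1X$, so $[F_1,X]$ is just a scalar multiple of $F_1 X$; combined with Lemma~\ref{lem:tqcI9}(4), $[F_1,\E_1]_{v^{-1}} = \tfrac{K_1'-K_1}{v-v^{-1}}K_2$, the term $\big[\E_1,[F_1,[\E_1,X]]\big]$ also reduces to a single monomial $[F_1,\E_1]_{v^3}$-type expression times $K_1'K_2$. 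Assembling these two contributions, cancelling the Cartan-factor cross terms exactly as in the proof of Proposition~\ref{prop:tqcI4}, and finally rewriting $[F_1,F_0]$-containing monomials back into the form $[F_1,E_1K_2]_{v^3}$ via $[E_1,F_0]$ commutation should yield precisely $\big[[F_1,E_1K_2]_{v^3},F_0\big]K_1'K_2$.

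The main obstacle I anticipate is bookkeeping of the $v$-powers and Cartan factors $K_1,K_1',K_2,K_2'$ that appear when moving $E_1$ past $F_0, F_1$ (each such move produces a $\tfrac{K-K'}{v-v^{-1}}$ term), and making sure these Cartan-valued correction terms cancel in pairs rather than contributing to the final answer. Since $F_0 = f_\theta\otimes t^{-1}$ lives in a different root-space-shifted copy than $F_1$, the relevant commutators $[E_1,F_0]$ must be treated carefully: $E_1$ and $F_0$ are not a Chevalley pair for $\widehat\sll_3$ in the naive sense, so $[E_1,F_0]$ should actually vanish (or reduce to a single off-diagonal term) by weight considerations, and verifying this cleanly is the delicate point. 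Once that vanishing/reduction is pinned down, the remainder is a routine but lengthy $v$-commutator manipulation parallel to the already-completed proofs in this subsection, so I would present it compactly, citing Lemmas~\ref{lem:tqcI7} and~\ref{lem:tqcI9} at each straightening step and leaving the final scalar arithmetic to the reader.
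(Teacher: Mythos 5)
Your proposal takes a genuinely different $v$-Jacobi decomposition than the paper's, and that difference is not cosmetic. The paper expands $\bigl[[F_1,\E_1]_v,[\E_1,X]\bigr]$ by keeping $[F_1,\E_1]_v$ \emph{intact} as one of the three atoms of a Jacobi identity of the shape $[A,[B,C]]=[[A,B]_{v^{-1}},C]_v+v^{-1}[B,[A,C]_v]_v$ (with $A=[F_1,\E_1]_v$, $B=\E_1$, $C=X$). This choice is the whole point: the first resulting bracket $\bigl[[F_1,\E_1]_v,\E_1\bigr]_{v^{-1}}=\bigl[[F_1,\E_1]_{v^{-1}},\E_1\bigr]_v$ collapses immediately via Lemma~\ref{lem:tqcI9}(4) into a Cartan factor, and the second reduces to $v^2[F_1,F_0]_{v^3}K_1'K_2$ by Lemma~\ref{lem:tqcI9}(5) after $[F_1,X]_{v^{-1}}=0$. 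You instead propose splitting $[F_1,\E_1]_v$ into its constituents $F_1$ and $\E_1$, which puts you in a symmetric-looking but in fact much messier position: it produces the nested bracket $\bigl[\E_1,[\E_1,X]\bigr]$, and the specific twists $v^2$, $v^{-1}$ in the Jacobi identity $[[F_1,\E_1]_v,Y]_v=[F_1,[\E_1,Y]_{v^2}]-v[\E_1,[F_1,Y]_{v^{-1}}]_v$ are only ``natural'' (in the sense of matching the Cartan-weight pairing) when $Y=X$, not when $Y=[\E_1,X]$, so none of the stated Lemma \ref{lem:tqcI7}/\ref{lem:tqcI9} relations apply directly to the resulting inner brackets.

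Two concrete problems follow. First, you observe that the outer bracket in the statement is an ordinary commutator rather than a $v$-commutator, and say you will absorb this as ``a correction coming from the weight-shift of $[\E_1,X]$''; but $[F_1,\E_1]_v$ has Cartan-adjoint weight zero, so the natural outer twist is in fact $v^0=1$ --- there is no weight-driven correction, which is precisely why the paper's choice of Jacobi identity (which has an untwisted outer bracket) is the right one, and why your version (with an outer $\,{}_v$-twist) forces you to carry an extra term $(v-1)[\E_1,X][F_1,\E_1]_v$ that your outline does not account for. Second, the worry that $[E_1,F_0]$ might ``not vanish cleanly'' and is ``the delicate point'' is a misunderstanding: $[E_i,F_j]=\delta_{ij}\frac{K_i-K_i'}{v-v^{-1}}$ is a defining relation of $\tU$, so $[E_1,F_0]=0$ is immediate, not delicate. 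Because the proposal is an outline rather than an executed computation --- the asserted collapse of $[\E_1,[\E_1,X]]$ is not demonstrated --- and because the correction term is not handled, I would not accept this as a proof as written. The underlying strategy could plausibly be made to work, but you would be reproving Lemma~\ref{lem:tqcI9}(4)--(5) from scratch in a less convenient normalization; the paper's choice of Jacobi decomposition is the shorter and safer path.
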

	
	\begin{proof}
		By Lemma \ref{lem:tqcI9}(4)-(5), we have
		\begin{align*}\notag
			\Big[ [F_1,\E_1]_v  ,  [\E_1,X ]\Big]
			&= \Big[\big[ [F_1,\E_1]_v ,\E_1\big]_{v^{-1}}, X \Big]_v +v^{-1} \Big[ \E_1, \big[[F_1,\E_1]_v, X\big]_v\Big]_v \\\notag
			&= \Big[\big[ [F_1,\E_1]_{v^{-1}},\E_1\big]_v , X \Big]_v +v^{-1} \Big[ \E_1, \big[F_1,[\E_1, X]\big]_{v^2}\Big]_v\\
			&= \Big[\big[ \frac{K_1'-K_1}{v-v^{-1}}K_2,\E_1\big]_v , X \Big]_v +v \big[ \E_1,[F_1,F_0]_{v^3} K_1' K_2\big]_v\\
			&= (v+v^{-1}) \big[ [F_1,F_0]_v,\E_1\big]_{v^2} K_1' K_2+v \big[ \E_1,[F_1,F_0]_{v^3}\big]_{v^{-2}} K_1' K_2
			\\
			&\stackrel{(*)}{=} -v(v+v^{-1}) \big[F_0, [F_1,\E_1]_{v^3}\big]_{v^{-2}} K_1' K_2- v^{-2} \big[ [F_1,\E_1]_{v^{3}},F_0\big]_{v^{4}} K_1' K_2\\
			&= \big[ [F_1, E_1 K_2]_{v^3},F_0 \big] K_1' K_2.
		\end{align*}
		where for the equation $(*)$ above we have used the following identities
		\begin{align*}
			\big[ [F_1,F_0]_v,\E_1\big]_{v^2}= -v\big[F_0, [F_1,\E_1]_{v^3}\big]_{v^{-2}},
			\quad
			\big[ \E_1,[F_1,F_0]_{v^3}\big]_{v^{-2}}=\big[ [\E_1,F_1]_{v^{-3}},F_0\big]_{v^{4}}.
		\end{align*}
		The proposition is proved.
	\end{proof}
	The identity \eqref{eq:tqcI6} follows by Proposition \ref{prop:tqcI4} and Proposition \ref{prop:tqcI5}.

	\subsubsection{Proof of \eqref{eq:tqcI7}}
	
	We continue to denote $\E_1:=E_1 K_2,\E_2:=E_2 K_1,$ and $X:=[F_1,F_0]_v.$
	
	\begin{lemma}\label{lem:tqcI8}
		We have
		\begin{itemize}
			\item[(1)] $[\E_1,F_1]_v=v\frac{K_1-K_1'}{v-v^{-1}}K_2$;
			\item[(2)] $[\E_1,F_0]_v=[\E_2,F_0]_v=0$;
			\item[(3)] $[\E_2,F_1]_{v^{-2}}=0$;
			\item[(4)] $\big[[\E_2,\E_1]_v,\E_1\big]_{v^{-1}}=0$;
			\item[(5)] $\big[\E_2, X\big]_{v^{-1}}=0$;
			\item[(6)] $\big[\E_1,X\big]_{v^2}=v^2 F_0 K_1' K_2$;
			\item[(7)] $ \big[[\E_2,\E_1]_v, X\big]_v= v^2\big[\E_2,F_0 K_1' K_2\big]$.
		\end{itemize}
	\end{lemma}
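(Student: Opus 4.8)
The plan is to verify all seven identities by direct computation inside the Drinfeld double $\tU$, using nothing beyond its defining relations: the cross relations $K_iE_j=v^{c_{ij}}E_jK_i$, $K_iF_j=v^{-c_{ij}}F_jK_i$ together with their $K_i'$-analogues, the commutator $[E_i,F_j]=\delta_{ij}(K_i-K_i')/(v-v^{-1})$, and the quantum Serre relations among $E_0,E_1,E_2$. Recall that the affine $A_2$ Cartan matrix has $c_{ij}=-1$ for all $i\neq j$, which will be used throughout. The organizing bookkeeping fact is that a weight vector $Y$ of weight $\gamma$ satisfies $K_iY=v^{(\alpha_i,\gamma)}YK_i$; in particular $X=[F_1,F_0]_v$ has weight $-(\alpha_0+\alpha_1)$, so that $K_1X=v^{-1}XK_1$, $K_2X=v^2XK_2$, $K_1K_2X=vXK_1K_2$, while $[E_2,E_1]_v$ has weight $\alpha_1+\alpha_2$.

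Parts (1)--(3) are one-line computations: pulling the right-hand $K$-factor of $\E_1=E_1K_2$ or $\E_2=E_2K_1$ through $F_1$ or $F_0$ reduces each left-hand side to a scalar multiple of $[E_1,F_1]$, $[E_1,F_0]$, $[E_2,F_0]$, or $[E_2,F_1]$ times a $K$-monomial, after which the commutator relation (and $\delta_{ij}=0$ for $i\neq j$) finishes the job. For (4), moving all $K$-factors to the right turns $[\E_2,\E_1]_v$ into $v^2[E_2,E_1]_vK_1K_2$, and a second round of $K$-bookkeeping reduces $\big[[\E_2,\E_1]_v,\E_1\big]_{v^{-1}}$ to $v^3\big[[E_2,E_1]_v,E_1\big]_{v^{-1}}K_1K_2^2$; the inner bracket equals $E_2E_1^2-[2]E_1E_2E_1+E_1^2E_2$, which vanishes by the quantum Serre relation. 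For (5) one uses that $E_2$ commutes with both $F_1$ and $F_0$, hence with $X$, so after extracting $K$-factors $[\E_2,X]_{v^{-1}}=v^{-1}[E_2,X]K_1=0$.

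Part (6) is the only step requiring a genuine cancellation. Expanding $[E_1,X]=[E_1,F_1F_0-vF_0F_1]$ and using $[E_1,F_0]=0$ gives $[E_1,X]=\tfrac{K_1-K_1'}{v-v^{-1}}F_0-vF_0\tfrac{K_1-K_1'}{v-v^{-1}}$; commuting $K_1$ and $K_1'$ through $F_0$ (producing factors $v$ and $v^{-1}$ respectively) makes the $K_1$-contributions cancel and leaves $[E_1,X]=F_0K_1'$, whence $[\E_1,X]_{v^2}=v^2[E_1,X]K_2=v^2F_0K_1'K_2$. Finally, (7) combines the formula $[\E_2,\E_1]_v=v^2[E_2,E_1]_vK_1K_2$ from (4) with the Leibniz-type expansion of $\big[[E_2,E_1]_v,X\big]$ and parts (5)--(6): since $[E_2,X]=0$ and $[E_1,X]=F_0K_1'$, one gets $\big[[E_2,E_1]_v,X\big]=[E_2,F_0K_1']_v$, and then pulling $K$-factors through both sides of $\big[[\E_2,\E_1]_v,X\big]_v$ and comparing with a direct expansion of $[\E_2,F_0K_1'K_2]$ yields the stated identity. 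This lemma then feeds into the proof of \eqref{eq:tqcI7}.

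The computations are entirely mechanical; the only point demanding care is tracking the powers of $v$ that arise when moving $K_i$, $K_i'$ past $E_j$, $F_j$ and past the weight vectors $X$ and $[E_2,E_1]_v$, since it is precisely the correctness of these powers that makes the right-hand sides of (6) and (7) collapse to single monomials and makes (4) reduce cleanly to a Serre relation.
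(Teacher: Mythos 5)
Your proof is correct and follows essentially the same route as the paper's: parts (1)--(3) and (5) by direct $K$-bookkeeping and the defining commutators, part (4) by reducing to a quantum Serre relation after extracting $K$-factors, and parts (6)--(7) by the same cancellation, merely organized by direct expansion of $[E_1,X]$ and $[[E_2,E_1]_v,X]$ rather than by the paper's explicit $v$-Jacobi identities $\big[\E_1,[F_1,F_0]_v\big]_{v^2}=\big[[\E_1,F_1]_v,F_0\big]_{v^2}+v\big[F_1,[\E_1,F_0]_v\big]$ and $\big[[\E_2,\E_1]_v,X\big]_v=\big[\E_2,[\E_1,X]_{v^2}\big]-v\big[\E_1,[\E_2,X]_{v^{-1}}\big]_v$ — the same computation in a slightly different packaging.
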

	
	\begin{proof}
		Parts (1)-(3) are clear, (4) is a Serre relation, and (5) follows from (1)-(3). Part~(6) follows from (1)-(3) and a $v$-Jacobi identity
		$\big[\E_1,[F_1,F_0]_v\big]_{v^2}=\big[[\E_1, F_1]_v , F_0\big]_{v^2}+v\big[ F_1,[\E_1,F_0]_v\big].$
		Part~(7) follows from (5)-(6) and
		$\big[[\E_2,\E_1]_v, X\big]_v=\big[\E_2, [\E_1, X ]_{v^2}\big]-v\big[\E_1, [\E_2, X ]_{v^{-1}}\big]_v.$
	\end{proof}
	
	We now compute the left hand side of \eqref{eq:tqcI7} by applying Lemma \ref{lem:tqcI8}:
	\begin{align*}
		\Big[ \big[\E_2,\E_1\big]_v , [\E_1 ,X]\Big]
		&=\Big[ \big[[ \E_2,\E_1]_v ,\E_1\big]_{v^{-1}} ,X\Big]_v + v^{-1}\Big[ \E_1 ,\big[[\E_2,\E_1]_v, X\big]_v\Big]_v\\
		&=v \Big[ \E_1 , \big[\E_2,F_0 K_1' K_2\big]\Big]_v\\
		&=v \Big[ \E_1 , \big[\E_2,F_0\big]_{v^3}\Big]_{v^{-2}} K_1' K_2\\
		&=v \Big(\E_1 \E_2 F_0-v^3\E_1 F_0\E_2- v^{-2}\E_2 F_0\E_1+v F_0 \E_2\E_1\Big)K_1' K_2\\
		&=v(v- v^{-1}) F_0\Big( - v^3\E_1\E_2+\E_2\E_1\Big)K_1' K_2\\
		&= (v^2- 1) F_0[\E_2,\E_1]_{v^3} K_1' K_2.
	\end{align*}
	On the other side, by Lemma \ref{lem:tqcI8}, we have  $[\E_2,\E_1]_{v^3} F_0 =v^2 F_0 [\E_2,\E_1]_{v^3}$, and thus
	\begin{align*}
		\big[[\E_2,\E_1]_{v^3},F_0 \big]=(v^2- 1) F_0[\E_2,\E_1]_{v^3}.
	\end{align*}
	Hence, \eqref{eq:tqcI7} follows from the above computations.

	\subsection{Proof of Lemma \ref{lem:Bi1}}
	\label{App:B}
	
	In this subsection, we shall prove Lemma \ref{lem:Bi1}.
	\subsubsection{A preparatory lemma}
	
	We denote
	$B_{i_1i_2\cdots i_r}:=B_{i_1}B_{i_2}\cdots B_{i_r}$.
	First, we prepare the following lemma.
	\begin{lemma}
		\label{lem:grade6}
		We have
		\begin{align*}
			&\Big[\big[ [B_1,B_2]_v,  [B_1,B_0]_v\big]_v,[B_1,B_2]_v\Big]_v
			\\
			&=v^3B_{0112}\K_1-(v^4-2)B_{0112}\K_2+v^2B_{0121}\K_1+v^5B_{0121}\K_2
			\\
			&+vB_{0211}\K_1+v^6B_{0211}\K_2-(2v^2+v^4)B_{1012}\K_1-(v+2v^{-1})B_{1012}\K_2
			\\
			&-(2v+v^{-1})B_{1021}\K_1-(2v^4+v^2)B_{1021}\K_2+v[3]B_{1102}\K_1+(v^2+2)B_{1102}\K_2
			\\
			&-B_{1120}\K_1-vB_{1120}\K_2+(v^2+2) B_{1201}\K_1+(v^5+2v^3)B_{1201}\K_2
			\\
			&-v^{-1}B_{1210}\K_1-v^2B_{1210}\K_2-v[2]B_{2011}\K_1-v^6[2]B_{2011}\K_2
			\\
			&+v^{-1}B_{2101}\K_1+v^8B_{2101}\K_2-(v^7-v^3)B_{2110}\K_2.
		\end{align*}
	\end{lemma}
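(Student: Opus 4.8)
The statement is a degree-six identity inside the subalgebra of $\tUi$ generated by $B_0,B_1,B_2$, and I would prove it by combining one conceptual observation with a direct, if bulky, computation. Write $a=[B_1,B_2]_v$ and $b=[B_1,B_0]_v$; expanding the outer $v$-bracket shows that the left-hand side equals $-v\big(a^2b-[2]\,aba+ba^2\big)$. Under the identification $\gr\tUi\cong\U^-\otimes\Q(v)[\K_i^{\pm1}]$ of Proposition~\ref{prop:graded}, the images $\overline a,\overline b$ are the root vectors for the real roots $\alpha_1+\alpha_2$ and $\alpha_0+\alpha_1$ of $\widehat\sll_3$, whose Cartan integer is $-1$; hence they satisfy the quantum Serre relation $\overline a^{\,2}\overline b-[2]\,\overline a\,\overline b\,\overline a+\overline b\,\overline a^{\,2}=0$ in $\U^-$. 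Therefore the left-hand side lies in the lower filtration part $\widetilde{\U}^{\imath,5}$ of \eqref{eq:filtR1}, and a weight/height count in weight $\alpha_0+3\alpha_1+2\alpha_2$ (only $B$-degrees $6,4,2,0$ are possible there, the missing weight being carried by $\K$-factors) forces it into $\widetilde{\U}^{\imath,4}$. The lemma simply pins down which element of that space it is.

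To compute that element I would expand $-v\big(a^2b-[2]\,aba+ba^2\big)$ into the roughly sixty length-six monomials in $B_0,B_1,B_2$ of letter multiset $\{0,1,1,1,2,2\}$, and then normalize using three kinds of moves: the Serre relations \eqref{B211} and \eqref{B122}, each of which rewrites a length-three $\{1,2\}$-subword as a length-one term times $\K_1$ or $\K_2$, lowering the $B$-degree by two; the $q$-Serre relation \eqref{B0ii}, used to slide the unique $B_0$ past an adjacent pair $B_1^2$ or $B_2^2$; and the commutation rule \eqref{eq:KB} (together with \eqref{kB}) used to push every $\K$-factor to the far right. As the previous paragraph guarantees, the degree-six contributions cancel collectively (this cancellation is exactly the $\gr$-Serre relation above), leaving a $\Q(v)$-combination of the twelve words $B_{i_1i_2i_3i_4}$ of multiset $\{0,1,1,2\}$ times $\K_1$ or $\K_2$; collecting coefficients yields the stated twenty-three-term expression. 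One then runs the same normalization on the claimed right-hand side (it is already of that shape) to confirm the two sides agree, or equivalently reduces their difference to a fixed monomial basis of the weight space and checks that it vanishes.

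The only real difficulty is the sheer volume: sixty degree-six monomials, each processed through several applications of \eqref{B211}, \eqref{B122}, \eqref{B0ii}, with all $v$-powers tracked throughout; the conceptual input above merely guarantees that the collapse to $B$-degree four must occur, so that the computation has a chance of closing. To keep the bookkeeping honest I would fix a degree-lexicographic order with $B_0<B_1<B_2$, orient \eqref{B211}, \eqref{B122}, \eqref{B0ii} and \eqref{eq:KB} as rewriting rules, and reduce greedily. Three cheap consistency checks catch arithmetic slips: every term must sit in the single weight space $\alpha_0+3\alpha_1+2\alpha_2$; applying the involution $\btau$ of \eqref{Phi} sends the identity to its $1\leftrightarrow2$ analogue, constraining the coefficients; and specializing $v\rightsquigarrow1$ (cf.\ Proposition~\ref{prop:climit} and the surrounding classical-limit discussion) degenerates it to an elementary iterated-bracket identity in $U(\hg)$. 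This lemma then feeds directly into the proof of Lemma~\ref{lem:Bi1}.
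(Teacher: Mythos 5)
Your proposal is correct and, at bottom, takes the same route as the paper: both prove the lemma by expanding the degree-six expression into monomials in $B_0,B_1,B_2$ and reducing via the Serre relations \eqref{B211}--\eqref{Bi00} together with \eqref{kB} to a normal form in degree four; the paper's Appendix just organizes this reduction by precomputing a table that expresses each length-six word in a fixed set $X_1,\dots,X_{16}$ plus lower-degree corrections, whereas you propose to reduce greedily against an ordered rewriting system. The one genuine added value in your write-up is the a priori cancellation argument: since $a=[F_1,F_2]_v=T_1^{-1}(F_2)$ and $b=[F_1,F_0]_v=T_1^{-1}(F_0)$, applying $T_1^{-1}$ to the ordinary Serre relation between $F_2,F_0$ in $\U^-$ gives $a^2b-[2]\,aba+ba^2=0$ in $\gr\tUi$, and the parity of $\K$-weights then forces the left side into filtration degree four; this guarantees in advance that the computation must close, a useful consistency check that the paper leaves implicit. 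Your other sanity checks (homogeneity in the single weight $\alpha_0+3\alpha_1+2\alpha_2$, $\btau$-symmetry relating the $1\leftrightarrow2$ coefficients, and the $v\rightsquigarrow 1$ degeneration) are sound and would indeed catch arithmetic slips in a computation of this size.
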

	
	Denote
	\begin{align*}
	& X_1=B_{011122}, X_2=B_{012211}, X_3=B_{022111}, X_4=B_{102211}, X_5=B_{110221}, X_6=B_{111022},
	\\
	& X_7=B_{111202}, X_8=B_{112012}, X_9=B_{112021}, X_{10}=B_{120211}, X_{11}=B_{122110},
	\\
	& X_{12}=B_{201112},
	X_{13}=B_{202111}, X_{14}=B_{210211}, X_{15}=B_{211102}, X_{16}=B_{221110}.
	\end{align*}
	Then we have
	\begin{align*}
		B_{101122}&=\frac{[2]}{[3]}X_1+\frac{1}{[3]}X_{6},
		\\
		B_{110122}&=\frac{1}{[3]}X_1+\frac{[2]}{[3]}X_6,
		\\
		B_{110212}&=\frac{1}{[3]!}X_1+\frac{1}{[2]}X_5+\frac{1}{[3]}X_6+vB_{1102}\K_1+v^{-2}B_{1102}\K_2,
		\\
		B_{021211}&=\frac{1}{[2]}X_2+\frac{1}{[2]}X_3+v^4B_{0211}\K_2+v^{-5}B_{0211}\K_1,
		\\
		B_{220111}&=[2]X_{13}-X_{3},
		\\
		B_{221011}&=\frac{[2]^2}{[3]}X_{13}-\frac{[2]}{[3]}X_3+\frac{1}{[3]}X_{16},
		\\
		B_{211210}&=X_{11},
		\\
		B_{211120}&=\frac{[3]}{[2]}X_{11}-\frac{1}{[2]}X_{16}-v^{-3}[3]B_{2110}\bK_1-v^2[3]B_{2110}\bK_1,
		\\
		B_{212110}&=\frac{1}{[2]}X_{11}+\frac{1}{[2]}X_{16}+v^{-5}B_{2110}\bK_1+v^4B_{2110}\bK_2
		\\
		B_{021112}&=\frac{[3]}{[2]}X_{2}-\frac{1}{[2]}X_3-v^{-3}[3]B_{0211}\bK_1-v^2[3]B_{0211}\bK_2,
		\\
		B_{021121}&=X_2,
		\\
		B_{011212}&=\frac{[2]}{[3]}X_1+\frac{1}{[2]}X_2-\frac{1}{[3]!}X_3+[2]B_{0112}\K_1+v^{-1}[2]B_{0112}\K_2-v^{-3}B_{0211}\K_1-v^2B_{0211}\K_2,
		\\
		B_{012112}&=B_{011221}=\frac{1}{[3]}X_1+X_2-\frac{1}{[3]}X_3-v^{-3}[2]B_{0211}\K_1-v^2[2]B_{0211}\K_2\\
		&\qquad\qquad\qquad+v^{-1}[2]B_{0112}\K_1+[2]B_{0112}\K_2,
		\\
		B_{012121}&=\frac{1}{[3]!}X_1+\frac{2}{[2]}X_2-\frac{1}{[3]!}X_3+v^{-1}B_{0112}\K_1+B_{0112}\K_2+v^{-2}B_{0121}\K_1+vB_{0121}\K_2\\
		&\quad -v^{-3}B_{0211}\K_1-v^2B_{0211}\K_2,
		\\
		B_{101212}&=\frac{[2]^2+1}{[3]![2]}X_1+\frac{1}{[2]^2}X_2-\frac{1}{[3]![2]}X_3+\frac{1}{[2]^2}X_{5}+\frac{1}{[3]!}X_6+B_{0112}\K_1+v^{-1}B_{0112}\K_2\\
		& \quad +\frac{v}{[2]}B_{1102}\K_1+\frac{v^{-2}}{[2]}B_{1102}\K_2-\frac{v^{-3}}{[2]}B_{0211}\K_1-\frac{v^2}{[2]}B_{0211}\K_2,
		\\
		B_{101221}&=B_{102112}=\frac{1}{[3]!}X_1+\frac{1}{[2]}X_2-\frac{1}{[3]!}X_3+\frac{1}{[2]}X_{5}+[2]B_{0112}\K_1+v^{-1}[2]B_{0112}\K_2\\
		& \qquad\qquad\qquad -v^{-3}B_{0211}\K_1-v^2B_{0211}\K_2-v[2]B_{1012}\K_1-v^{-2}[2]B_{1012}\K_2
		\\
		& \qquad\qquad\qquad	+vB_{1102}\K_1+v^{-2}B_{1102}\K_2,
		\\
		B_{102121}&=\frac{1}{[3]![2]}X_1+\frac{1}{[2]^2}X_2-\frac{1}{[3]![2]}X_3+\frac{1}{[2]}X_4+\frac{1}{[2]^2}X_5+B_{0112}\K_1+v^{-1}B_{0112}\K_2\\
		&  \quad -\frac{v^{-3}}{[2]}B_{0211}\K_1-\frac{v^2}{[2]}B_{0211}\bK_2-vB_{1012}\K_1-v^{-2}B_{1012}\K_2+v^{-2}B_{1021}\K_1
		\\	
		& \quad
		+vB_{1021}\K_2
		+\frac{v}{[2]}B_{1102}\K_1
		+\frac{v^{-2}}{[2]}B_{1102}\K_2,
		\\
		B_{121102}&=\frac{[2]}{[3]}X_{15}+\frac{1}{[3]}X_{7}+v^{-1}[2]B_{1102}\K_1+[2]B_{1102}\K_2,
		\\
		B_{112102}&=\frac{1}{[3]}X_{15}+\frac{[2]}{[3]}X_7+[2]B_{1102}\K_1+v^{-1}[2]B_{1102}\K_2,
		\\
		B_{111220}&=[2]X_7-X_6,
		\\
		B_{112120}&=\frac{[2]^2}{[3]}X_7-\frac{[2]}{[3]}X_6+\frac{1}{[2]}X_{11}-\frac{1}{[3]!}X_{16}+[2]B_{1120}\K_1+v^{-1}[2]B_{1120}\K_2
		\\
		&\quad-v^{-3}B_{2110}\K_1-v^2B_{2110}\K_2,
		\\
		B_{121120}&=B_{112210}=\frac{[2]}{[3]}X_7-\frac{1}{[3]}X_6+X_{11}-\frac{1}{[3]}X_{16}-v^{-3}[2]B_{2110}\K_1-v^2[2]B_{2110}\K_2
		\\
		& \quad +v^{-1}[2]B_{1120}\K_1+[2]B_{1120}\K_2,
		\\
		B_{121210}&=\frac{1}{[3]}X_{7}-\frac{1}{[3]!}X_6+\frac{2}{[2]}X_{11}-\frac{1}{[3]!}X_{16}+v^{-1}B_{1120}\K_1+B_{1120}\K_2+v^{-2}B_{1210}\K_1
		\\
		& \quad+vB_{1210}\K_2 -v^{-3}B_{2110}\K_1-v^2B_{2110}\K_2,
		\\
		B_{122011}&=[2]X_{10}-X_4,
		\\
		B_{122101}&=B_{211201}=X_{10}-\frac{1}{[2]}X_4+\frac{1}{[2]}X_{11},
		\\
		B_{212011}&=\frac{[2]}{[3]}X_{13}-\frac{1}{[3]}X_3+\frac{1}{[3]!}X_{16}+X_{10}-\frac{1}{[2]}X_{4}+v^{-5}B_{2011}\K_1+v^4B_{2011}\K_2,
		\\
		B_{212101}&= \frac{1}{[3]}X_{13}-\frac{1}{[3]!}X_3+\frac{1}{[3]}X_{16}+\frac{1}{[2]}X_{10}-\frac{1}{[2]^2}X_{4}+\frac{1}{[2]^2}X_{11}+v^{-5}B_{2101}\K_1+v^4B_{2101}\K_2,
		\\
		B_{201121}&=\frac{1}{[3]}X_{13}+\frac{[2]}{[3]}X_{12}+v^{-3}[2]B_{2011}\K_1+v^2[2]B_{2011}\K_2,
		\\
		B_{201211}&=\frac{[2]}{[3]}X_{13}+\frac{1}{[3]}X_{12}+v^{-4}[2]B_{2011}\K_1+v^3[2]B_{2011}\K_2,
		\\
		B_{210112}&=\frac{[2]}{[3]}X_{12}+\frac{1}{[3]}X_{15},
		\\
		B_{211012}&=\frac{1}{[3]}X_{12}+\frac{[2]}{[3]}X_{15},
		\\
		B_{210121}&=\frac{1}{[3]}X_{12}+\frac{1}{[3]!}X_{15}+\frac{1}{[2]}X_{14}+v^{-2}B_{2101}\K_1+vB_{2101}\K_2,
		\\
		B_{211021}&=\frac{1}{[3]}X_{15}+X_{14}-\frac{1}{[3]}X_{13}-v^{-3}[2]B_{2011}\K_1-v^2[2]B_{2011}\K_2\\
		&\quad+v^{-2}[2]B_{2101}\K_1+v[2]B_{2101}\K_2,
		\\
		B_{112201}&=[2]X_{9}-X_{5},
		\\
		B_{121021}&=\frac{1}{[2]}X_9+\frac{1}{[3]!}X_{15}+\frac{1}{[2]}X_{14}-\frac{1}{[3]!}X_{13}+v^{-2}B_{1021}\K_1+vB_{1021}\K_2
		\\
		&\quad -v^{-3}B_{2011}\K_1-v^2B_{2011}\K_2+v^{-2}B_{2101}\K_1+vB_{2101}\K_2,
		\\
		B_{121201}&=X_{9}-\frac{1}{[2]}X_5+\frac{1}{[2]}X_{10}-\frac{1}{[2]^2}X_4+\frac{1}{[2]^2}X_{11}+v^{-2}B_{1201}\K_1+vB_{1201}\K_2,
		\\
		B_{121012}&=\frac{1}{[2]}X_8+\frac{1}{[3]!}X_{12}+\frac{1}{[3]}X_{15}+v^{-2}B_{1012}\K_1+vB_{1012}\K_2
		\\
		B_{120112}&=X_8+\frac{1}{[3]}X_{12}-\frac{1}{[3]}X_{7}+v^{-2}[2]B_{1012}\K_1+v[2]B_{1012}\K_2-v^{-1}[2]B_{1102}\K_1-[2]B_{1102}\K_2.
	\end{align*}
	Lemma~\ref{lem:grade6} follows now by a routine computation using the above formulas.

	\subsubsection{Completing the proof of Lemma \ref{lem:Bi1}}	
	
	By definition, we have
	\begin{align*}
		o(1)B_{1,1}\K_1
		&= \TT_1^{-1} \TT_0 ^{-1}(B_1)\K_1
		=-v[\TT_1^{-1}(B_0),B_1\bK_1^{-1}]_v\K_1
		\\
		&= -v\bigg[ \Big[B_2,\big[ [B_1,B_2]_v,[B_1,B_0]_v\big]_v\Big]_v   ,B_1\bigg]_v
		+v^{2} \Big[ \big[ B_2, [B_1,B_0]_{v^{3}}\big]_{v^{-2}} \bK_2 ,B_1\Big]_v
		\\
		&+v \Big[ \big[ B_1,[ B_2 ,B_0 ]_v\big]_{v^{2}} \bK_2,B_1\Big]_v
		+  [2] \Big[  \big[B_2,[B_1,B_0]_v\big]_{v^4}\bK_1  ,B_1\Big]_v
		\\
		&
		-v^{2} [ B_0,B_1]_v\bK_1\bK_2
		\\
		&= v^2 \bigg[ B_1,\Big[B_2,\big[ [B_1,B_2]_v,[B_1,B_0]_v\big]_v\Big]_v  \bigg]_{v^{-1}}
		+v^{5} \Big[ \big[ B_2, [B_1,B_0]_{v^{3}}\big]_{v^{-2}} ,B_1\Big]_{v^{-2}}\bK_2
		\\
		&+v^{4} \Big[ \big[ B_1,[ B_2 ,B_0 ]_v\big]_{v^{2}},B_1\Big]_{v^{-2}}\bK_2
		+   v^{-3}[2] \Big[  \big[B_2,[B_1,B_0]_v\big]_{v^4}  ,B_1\Big]_{v^4}\bK_1
		\\
		&
		-v^{2} [ B_0,B_1]_v\bK_1\bK_2.
	\end{align*}
	Moreover, we have
	\begin{align*}
		& v^2 \bigg[ B_1,\Big[B_2,\big[ [B_1,B_2]_v,[B_1,B_0]_v\big]_v\Big]_v  \bigg]_{v^{-1}}
		\\
		&=v^3\Big[B_2,\big[ B_1,[ [B_1,B_2]_v,[B_1,B_0]_v]_v\big]_{v^{-2}}\Big]
	-v\Big[\big[ [B_1,B_2]_v,  [B_1,B_0]_v\big]_v,[B_1,B_2]_v\Big]_v
		\\
		&= -v^{2} \Big[B_2,-v^{-1}[2] \big[ [B_1,B_0]_v, B_1\big]_{v^{-3}}\bK_1- v^2[2] \big[ [B_1,B_0]_v, B_1\big]_{v^{3}}\bK_2\Big]
		\\
		&-v \Big[\big[ [B_1,B_2]_v,  [B_1,B_0]_v\big]_v,[B_1,B_2]_v\Big]_v
		\\
		&=v [2]\Big[B_2,\big[ [B_1,B_0]_v, B_1\big]_{v^{-3}}\Big]_{v^3} \bK_1+v^{4}[2] \Big[B_2,\big[ [B_1,B_0]_v, B_1\big]_{v^{3}}\Big]_{v^{-3}}\bK_2
		\\
		&-v\Big[\big[ [B_1,B_2]_v,  [B_1,B_0]_v\big]_v,[B_1,B_2]_v\Big]_v.
	\end{align*}
	So it suffices to check that
	\begin{align*}
		&-v\Big[\big[ [B_1,B_2]_v,  [B_1,B_0]_v\big]_v,[B_1,B_2]_v\Big]_v
		\\\notag
		&=- v[2] \Big[ B_2,\big[[B_1,B_0]_v,B_1\big]_{v^{-3}} \Big]_{v^3} \bK_1
		-v^{-3}[2] \Big[  \big[B_2,[B_1,B_0]_v\big]_{v^4}  ,B_1\Big]_{v^4} \bK_1
		\\\notag
		&\big[[B_1,B_2]_v,[B_1,B_0]_v \big]_v\bK_1-v^4[2] \Big[B_2,\big[[B_1,B_0]_v,B_1\big]_{v^3} \Big]_{v^{-3}}\K_2-v^4\Big[\big[B_1,[B_2,B_0]_v\big]_{v^{2}},B_1\Big]_{v^{-2}}\K_2
		\\\notag
		&-v^5\Big[\big[B_2,[B_1,B_0]_{v^3}\big]_{v^{-2}},B_1\Big]_{v^{-2}}\K_2
		-v^2[2][B_1,B_0]_v\K_2\K_1-v^{-1}[2][B_1,B_0]_v\K_1^2.
	\end{align*}
	By using Lemma \ref{lem:grade6}, this last identity follows by comparing the coefficients with the help of
	\begin{align*}
		B_{0121}&=\frac{1}{[2]}B_{0112}+\frac{1}{[2]}B_{0211}+v^{-2}B_{01}\K_1+vB_{01}\K_2,
		\\
		B_{1210}&=\frac{1}{[2]}B_{1120}+\frac{1}{[2]}B_{2110}+v^{-2}B_{10}\K_1+vB_{10}\K_2,
		\\
		B_{1012}&=\frac{1}{[2]}B_{0112}+\frac{1}{[2]}B_{1102},\qquad\qquad
		B_{2101}=\frac{1}{[2]}B_{2110}+\frac{1}{[2]}B_{2011}.
	\end{align*}
	This completes the proof of Lemma \ref{lem:Bi1}.


\end{document}